\newtheorem{corollary}{Corollary}[section]
\newtheorem{definition}{Definition}[section]
\newtheorem{lemma}{Lemma}[section]
\newtheorem{theorem}{Theorem}[section]
\newtheorem{problem}{Problem}[section]
\newtheorem{construction}{Construction}[section]
\newtheorem{claim}{Claim}[section]
\newtheorem{fact}{Fact}[section]
\newtheorem{conjecture}{Conjecture}[section]
\definecolor{cream}{RGB}{203, 237, 204}
\begin{document}

\title{The Hilton-Milner type results of $(k, \ell)$-sum-free sets in $\mathbb F_p^n$}

\author{Xin~Wei, Xiande~Zhang, and Gennian~Ge%, and Ka~Hin~Leung
%\thanks{\emph{2020 Mathematics Subject Classifications}: 05D05.}
\thanks{This project was supported by the National Key Research and Development Program of China under Grant 2025YFC3409900 and Grant 2023YFA1010200, the National Natural Science Foundation of China under Grant 12171452 and Grant 12231014,  Beijing Scholars Program, and Quantum Science and Technology-National Science and Technology Major Project 2021ZD0302902. This work was also funded by the Excellent Doctoral Students Study Abroad Support Program of the University of Science and Technology of China.}
 \thanks{X. Wei ({\tt weixinma@mail.ustc.edu.cn}) is with the School of Mathematical Sciences, University of Science and Technology of China, Hefei, 230026, Anhui, China.}
 \thanks{X. Zhang ({\tt drzhangx@ustc.edu.cn}) is with the School of Mathematical Sciences, University of Science and Technology of China, Hefei, 230026, Anhui, China, and with Hefei National Laboratory, University   of   Science   and   Technology   of   China,   Hefei, 230088, China.}
 \thanks{G. Ge ({\tt gnge@zju.edu.cn}) is with the School of Mathematical Sciences, Capital Normal University, Beijing, 100048, China.}
 %\thanks{K. H. Leung({\tt matlkh@nus.edu.sg}) is with the Department of Mathematics, National University of Singapore, Singapore 119260, Republic of Singapore.}
 \thanks{This work was completed while the first author was at the National University of Singapore, hosted by Professor Ka Hin Leung.}

}
\maketitle

\begin{abstract}
For a prime $p\equiv 2\pmod 3$, it is well known that the largest sum-free subsets of $\mathbb F_p^n$ have size $\frac{p+1}3p^{n-1}$, and the extremal sets must be a cuboid of the form $\{\frac{p+1}3,\frac{p+1}3+1,\ldots, \frac{2p-1}3\}\times\mathbb F_p^{n-1}$ up to isomorphism. Recently, Reiner and Zotova proved a Hilton-Milner type stability result showing that for large $p$,
 any sum-free set not contained in the  extremal cuboid  has size at most $\frac{p-2}3p^{n-1}$, and all possible structures attaining this bound were classified.

 % A subset $A$ of a given finite abelian group $G$ is called $(k, \ell)$-sum-free if the sum of $k$ (not necessarily distinct) elements of $A$ does not equal the sum of $l$ (not necessarily distinct) elements of $A$, and a $(2, 1)$-sum-free set is simply called a sum-free set. When $G=\mathbb F_p^n$ for some prime $p\equiv 2\mod 3$ and $n\ge 1$, it is well-known that the maximum-sized sum-free subset $A$ of $G$ is of size $(\frac{p+1}3)p^{n-1}$ with the unique extremum structure being a cuboid $[\frac{p+1}3, \frac{2p+1}3]\times\mathbb F_p^{n-1}$ up to additive group automorphisms of $\mathbb F_p^n$. Moreover, a Hilton-Milner type result of sum-free sets is proved recently by Reiner and Zotova saying that, when $p\ge 11$, a sum-free subset $A$ which cannot be seen as a subset of the extremum structure under automorphisms must have size at most $(\frac{p-2}3)p^{n-1}$, and the possible structures of $A$ reaching $|A|=(\frac{p-2}3)p^{n-1}$ are fully-determined.

  In this paper, we develop a general Hilton-Milner theory for $(k, \ell)$-sum-free sets in $\mathbb F_p^n$ for $k>\ell\geq 1$. We determine the maximum size of such sets for all $p\equiv \mu\pmod{k+\ell}$ with $2\leq \mu\leq k+\ell-1$, %$\mu\in {\color{blue}[2,k+\ell-1]}$,
  and show that the  extremal configurations are precisely $\lceil\frac{\mu-1}{2}\rceil$ non-isomorphic cuboids. Beyond the extremal regime, we prove sharp Hilton–Milner type stability results showing that, for all sufficiently large $p$, a $(k, \ell)$-sum-free set not contained in any of these extremal cuboids is uniformly bounded away from the maximum by a gap $p^{n-1}$, and we determine the full structure of all sets achieving this second-best bound in several broad parameter ranges. In particular, when $2\leq \mu\leq k+\ell-3$ %$k+\ell\geq \mu+3$
   (which is tight),  only two  structural types occur for all $k+\ell\geq 5$; and when $\mu=2$ or $3$, we obtain a complete classification for all $k>\ell\geq 1$.

  Our arguments combine additive combinatorics and Fourier-analytic methods, and make use of recent progress toward the long-standing $3k-4$
 conjecture, highlighting new connections between inverse additive number theory and extremal problems over finite vector spaces.

  %Progress toward the long-standing $3k-4$ conjecture plays an important roll in our proofs.
%
%  the largest $(k, \ell)$-sum-free sets have size  $\lceil\frac{p}{k+\ell}\rceil p^{n-1}$, and the extremal structures are $\lceil\frac{\mu-1}{2}\rceil$ non-isomorphic cuboids. Our Hilton-Milner type results show that for large $p$, the largest $(k, \ell)$-sum-free sets not contained in any of these extremal cuboids have size $\lfloor\frac{p}{k+\ell}\rfloor p^{n-1}$. Moreover, when  $k+\ell\geq \mu+3$ (which is tight),  only two possible structures occur, and when $\mu=2$ or $3$, all possible structures are fully classified for any  $k>\ell\geq 1$. Progress toward the long-standing $3k-4$ conjecture plays an important roll in our proofs.

%  In this paper, we study the Hilton-Milner type problem on $(k, \ell)$-sum-free sets for $k>\ell\geq 1$ in $\mathbb F_p^n$. For $p\equiv \mu\pmod{k+\ell}$ with $\mu\in [2,k]$ and $k+\ell\geq \mu+3$ a wide range of $k+\ell$, we get similar upper bounds for the size of $(k, \ell)$-sum-free subsets which cannot be seen as a subset of any extremum structure under automorphisms, and have determined the only two possible structures when $|A|$ reaches the upper bound for not small $p$ (depending on $k$ and $\ell$). Our range of $k+\ell$ reaching this result is optimal. If we turn to restrict $p$ for a fixed $k+\ell$, we totally solved the structures when $p$ satisfies $p\equiv 2, 3\pmod{k+\ell}$ and $p$ is not small with respect to $k+\ell$. Efforts for solving the long-standing $3k-4$ conjecture play an important roll in our proofs.

\end{abstract}

\begin{IEEEkeywords}
\boldmath $(k, \ell)$-sum-free sets, Hilton-Milner type theorem, arithmetic progressions, additive combinatorics.
\end{IEEEkeywords}

\section{Introduction}

Let $G$ be a finite abelian group. A subset $A$ of $G$ is called  \emph{sum-free} if the equation $x+y=z$ has no solutions with $x,y,z\in A$. Equivalently, if we take the standard notation $A+B:=\{x+y:x\in A,y\in B\}$ for subsets $A$ and $B$ in $G$, then $A$ is sum-free if and only if $A\cap (A+A)=\emptyset$. How large can a sum-free subset be in a finite abelian group? Although the first work dates back to 1969 by Yap~\cite{Yap1969},  the  question was only completely resolved by Green and Ruzsa in 2005 for all finite abelian groups~\cite{Green2005}. For sum-free sets over integers, research goes back to the pioneering work of Schur~\cite{Schur1916} when attacking the Fermat's last theorem, and it remains at the forefront of current research (see~\cite{Bourgain1997,Eberhard2014,Tran2018, Lev2024}).

% the best known results can be found in \cite{}.

%When $(k, \ell)=(2, 1)$, a $(2, 1)$-sum-free set is simply called a \emph{sum-free} set. Eberhard, Green, and Manners~\cite{Eberhard2014} showed that over the set of consecutive positive integers $\{1, 2, \ldots, n\}$, the largest sum-free subset $A$ always satisfies $\frac{n+2}3\le|A|\le (\frac13+o(1))n$, where the lower bound is from Bourgain~\cite{Bourgain1997}.

When $G$ is a vector space over finite fields, sum-free sets have connections to
projective geometry and coding theory. Let $G=\mathbb F_p^n$ with $n\ge 1$ and prime $p\equiv 2\pmod 3$,  Yap~\cite{Yap1969} described the unique structure of a maximum sum-free subset up to additive group isomorphisms as below.  Here, two subsets of $\mathbb F_p^n$ are isomorphic to each other if there is an (additive) isomorphism of $\mathbb F_p^n$ sending one to the other, i.e.  considering $\mathbb F_p^n$ as an additive group.

% Similar results are gotten for any nontrivial finite abelian group $G$ by Green and Ruzsa~\cite{Green2005}. For the case $G=\mathbb F_p^n$ with $n\ge 1$ and prime $p\equiv 2\pmod 3$, an early observation of Yap~\cite{Yap1969} described the unique structure of a maximum sum-free subset up to additive group isomorphisms. {\color{blue} Throughout our paper, we call two subsets of $\mathbb F_p^n$ isomorphic to each other if there is an (additive) isomorphism of $\mathbb F_p^n$ sending one to the other, i.e.  considering $\mathbb F_p^n$ as an additive group. We assume $n\geq 1$ unless otherwise stated. }%It needs to be clarified that

\begin{theorem}[~\cite{Yap1969}]~\label{thm_sum_free_cuboid}
Let $p=3m+2$ be a prime number with $m\geq 1$. If $A\subset\mathbb F_p^n$ is a sum-free set, then $|A|\le (m+1) p^{n-1}$. Moreover, if $|A|= (m+1) p^{n-1}$, $A$ is %additively
isomorphic to the following unique form,
$$A_{2, 1}\triangleq\left[m+1, 2m+1\right]\times \mathbb F_p^{n-1}.$$
Here, the notation $[a,b]$ with $a,b\in \mathbb F_p$ means the set $\{a,a+1,\ldots,b\}$ in $\mathbb F_p$.
%$$A_{2, 1}\triangleq\left[\frac{p+1}3, \frac{2p-1}3\right]\times \mathbb F_p^{n-1}.$$
\end{theorem}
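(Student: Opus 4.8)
The plan is to derive the statement over $\mathbb{F}_p^n$ from the one-dimensional case by a periodicity argument based on Kneser's theorem, while the one-dimensional case will follow from Cauchy--Davenport together with Vosper's theorem. First I would record the base case: if $S\subseteq\mathbb{F}_p$ is sum-free then $|S|\le m+1$, and equality forces $S=c\,[m+1,2m+1]$ for some $c\in\mathbb{F}_p^{\ast}$. The inequality is immediate: $S\cap(S+S)=\emptyset$ gives $|S|+|S+S|\le p$, and since $S+S\neq\mathbb{F}_p$ (otherwise it would meet the nonempty set $S$), Cauchy--Davenport gives $|S+S|\ge 2|S|-1$, so $3|S|-1\le p$. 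When equality holds we get $|S+S|=2|S|-1\le p-2$, so Vosper's theorem makes $S$ an arithmetic progression; multiplying by the inverse of its common difference turns $S$ into an interval of length $m+1$, and a short direct check shows the only sum-free interval of that length in $\mathbb{F}_p$ is $[m+1,2m+1]$. (For $m=1$ one has $|S|=2$, which is automatically a progression, so Vosper is not needed there.)

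For general $n\ge 1$, let $A\subseteq\mathbb{F}_p^n$ be nonempty and sum-free, and let $H=\{x\in\mathbb{F}_p^n : x+(A+A)=A+A\}$ be the stabilizer of $A+A$. Kneser's theorem gives $|A+A|\ge 2|A+H|-|H|\ge 2|A|-|H|$, while sum-freeness gives $|A+A|\le p^n-|A|$. Since $A\neq\emptyset$, also $A+A\neq\emptyset$, and $A+A=\mathbb{F}_p^n$ is impossible (it would meet $A$), so $H$ is a proper subgroup and hence $|H|\le p^{n-1}$. Combining, $2|A|-p^{n-1}\le p^n-|A|$, i.e.\ $|A|\le\frac{p^n+p^{n-1}}{3}=(m+1)p^{n-1}$ (using $\frac{p+1}{3}=m+1$), which is the asserted bound.

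Now suppose $|A|=(m+1)p^{n-1}$. Then all the inequalities above are tight: in particular $|H|=p^{n-1}$ (a strictly smaller $H$ would already give $|A|<(m+1)p^{n-1}$), and $A\sqcup(A+A)=\mathbb{F}_p^n$. Since $A+A$ is a union of cosets of the hyperplane $H$, so is its complement $A$; and because $|A|=(m+1)|H|$, the set $A$ is a union of exactly $m+1$ cosets of $H$. Passing to the quotient, write $\overline{A}$ for the corresponding subset of $\mathbb{F}_p^n/H\cong\mathbb{F}_p$; then $A+A$ is precisely the union of the cosets indexed by $\overline{A}+\overline{A}$, so $A$ is sum-free exactly when $\overline{A}$ is, and $|\overline{A}|=m+1$. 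By the base case, after composing with an automorphism of $\mathbb{F}_p^n/H$ we may assume $\overline{A}=[m+1,2m+1]$; fixing a splitting $\mathbb{F}_p^n\cong H\times(\mathbb{F}_p^n/H)$, extending this automorphism by the identity on $H$, and permuting coordinates so that the one-dimensional factor comes first, we obtain an element of $\mathrm{GL}_n(\mathbb{F}_p)$ carrying $A$ to $[m+1,2m+1]\times\mathbb{F}_p^{n-1}=A_{2,1}$.

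The genuinely non-elementary step, and the one needing the most care, is the structural collapse in the equality case: invoking Kneser's theorem to produce the stabilizer $H$ is routine, but one must argue that extremality forces $A$ itself, not just $A+A$, to be $H$-periodic --- this is what makes the problem purely one-dimensional. The remaining subtlety is bookkeeping with the automorphism group, since ``isomorphic'' here means related by an element of $\mathrm{GL}_n(\mathbb{F}_p)$ (not an affine map): one should check that rescaling a sum-free arithmetic progression to an interval, together with identifying $\mathbb{F}_p^n/H$ with a coordinate subspace, can indeed be realized by honest linear automorphisms.
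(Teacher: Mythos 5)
Your proof is correct and follows essentially the same route as the paper's proof of the generalization (Theorem~\ref{thm_k_l_max_structure}, via Lemma~\ref{lem_m+1_interval} and Lemma~\ref{lem_common_k}): Kneser/Cauchy--Davenport for the upper bound, a hyperplane stabilizer of the sumset to reduce the equality case to the one-dimensional quotient, and Vosper to identify the unique sum-free interval. The only cosmetic difference is that you obtain the $H$-periodicity of $A$ from $A$ being the exact complement of the $H$-periodic set $A+A$ in the extremal case, whereas the paper deduces $A=A+K_1$ from the maximality of $|A|$; both mechanisms are valid.
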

%In our paper,
The subset $[a,b]\subset \mathbb F_p$ is called an \emph{interval} in $\mathbb F_p$. A subset of $\mathbb F_p^n$  formed by an interval multiplied by a subspace of dimension $n-1$ is called a \emph{cuboid}.  Theorem~\ref{thm_sum_free_cuboid} shows that, the structure of a sum-free set of maximum size must be a cuboid with the unique parameters.
Once the extremal structure is known, it is natural to ask the corresponding
stability problem: what is the structure of sum-free sets in $\mathbb F_p^{n}$ of
size close to the maximum size?

% ny given integers $a\le b$, $[a, b]$ denotes the set of consecutive integers from $a$ to $b$, that is $[a, b]=\{a, a+1, \ldots, b\}$. If $a=1$, then write $[b]$ for short.
%Similarly when $a, b$ are from $\mathbb F_p$, then there is a unique integer $r\in [0, p-1]$ such that $b=a+r$, and hence we define $[a, b]=\{a, a+1, \ldots, a+r\}$ as a subset of $\mathbb F_p$.
%%Similarly when $a \neq b$ are from $\mathbb F_p$, we also denote $[a, b]=\{a, a+1, \ldots, b\}$ as a proper subset of $\mathbb F_p$.
%In both cases $[a, b]$ is an arithmetic progression of common distance $1$, which we call an {\it interval} afterwards. Such a structure of an interval times a subspace of dimension $n-1$ is called a \emph{cuboid}. Theorem~\ref{thm_sum_free_cuboid} shows that, the structure of a sum-free set of extremum size must be a cuboid with the unique parameters.

Recently Reiher and Zotova~\cite{Reiher2024} proved a Hilton-Milner type result of Theorem~\ref{thm_sum_free_cuboid}, which is a strong stability result. They showed that if a sum-free set is \emph{nontrivial}, i.e., not contained in the extremal cuboid, then there is a large gap between its size and the maximum size, which means a sum-free set of size close to the maximum size must be contained in the extremal cuboid.  They also characterized the possible structures for the nontrivial optimal sum-free sets.

\begin{theorem}[\cite{Reiher2024}]\label{thm_sum_free_0}
Let $p=3m+2$ be a prime number with $m\geq 1$.
%Let prime number $p=3m+2$ with $m\ge 3$ an odd integer and $n\ge 1$.
If a sum-free set $A\subset \mathbb F_p^n$ is not isomorphic to a subset of $A_{2, 1}$, then $|A|\le mp^{n-1}$.

Moreover, if $|A|= mp^{n-1}$, the structures of $A$ are determined up to isomorphism. More specifically, there exists an integer $s\in [n-1]$, and a subset $P \subset \mathbb F_p^{s}$ satisfying $0\notin P+P$ ($P$ can be empty), such that $A=A_0\times \mathbb F_p^{n-1-s}$, where $A_0\subset \mathbb F_p\times \mathbb F_p^{s}$ is defined by
$$A_0=\{(m, 0)\}\sqcup \left(\{m+1\}\times(\mathbb F_p^{s}\backslash P)\right)\sqcup \left([m+2, 2m-1]\times\mathbb F_p^{s} \right) \sqcup \left(\{2m\}\times(\mathbb F_p^{s}\backslash\{0\})\right) \sqcup \left(\{2m+1\}\times P \right).$$
Here the symbol ``$\sqcup$'' means disjoint unions. %and $A_0$ is a disjoint union of five parts as subsets of $\mathbb F_p\times \mathbb F_p^{s}$.

%some nonnegative integers $n_1$, $n_2$ such that $n_1+n_2=n-1$, and a subset $P \subset \mathbb F_p^{n_1}$ satisfying $0\notin P+P$ ($P$ can be empty), such that $A=A_0\times \mathbb F_p^{n_2}$, while
%$$A_0=\{(m, 0)\}\sqcup\{m+1\}\times\{\mathbb F_p^{n_1}\backslash P\}\sqcup [m+2, 2m-1]\times\mathbb F_p^{n_1} \sqcup \{2m\}\times(\mathbb F_p^{n_1}\backslash\{0\}) \sqcup \{2m+1\}\times P,$$
%in which the symbol ``$\sqcup$'' means disjoint unions, and $A_0$ is a disjoint union of five parts as subsets of $\mathbb F_p\times \mathbb F_p^{n_1}$.
\end{theorem}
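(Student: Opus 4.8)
\emph{Proof plan.} My plan is to project $A$ onto a suitable surjective linear functional $\mathbb F_p^n\to\mathbb F_p$, translate sum-freeness into a system of inequalities among the fibre sizes over $\mathbb Z/p$ and the structure of their support, and then feed in inverse additive-combinatorial theorems about (almost) sum-free subsets of $\mathbb Z/p$. Assume $A$ is not isomorphic to a subset of $A_{2,1}$ and, in order to prove the bound and classify the equality case, suppose $|A|\ge mp^{n-1}$; then the density of $A$ is at least $m/p\approx\tfrac13$, so the usual Fourier computation — writing the vanishing number of triples $x+y=z$ in $A$ as $|A|^{3}p^{-n}+\sum_{r\neq 0}\widehat{1_A}(r)^{2}\overline{\widehat{1_A}(r)}$ and applying Parseval — produces a nonzero frequency $r$ with $|\widehat{1_A}(r)|$ of order $p^{n}$. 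After an isomorphism I take $r$ to be the first coordinate functional and set $A_j=\{x\in\mathbb F_p^{n-1}:(j,x)\in A\}$, $a_j=|A_j|$, $S=\{j\in\mathbb Z/p:a_j>0\}$. The key local fact is that for every Schur triple $i+j\equiv k\pmod p$ with $i,j,k\in S$, the sets $A_i+A_j$ and $A_k$ are disjoint in $\mathbb F_p^{n-1}$, whence $a_k\le p^{n-1}-\max(a_i,a_j)$ and, more precisely, $a_i+a_j+a_k\le p^{n-1}+|H_{ij}|$ by Kneser's theorem, where $H_{ij}\subsetneq\mathbb F_p^{n-1}$ is the stabiliser of $A_i+A_j$.

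Next I would analyse $S\subseteq\mathbb Z/p$. If $S$ is sum-free then $|S|\le m+1$ by the $n=1$ case of Theorem~\ref{thm_sum_free_cuboid}; if $|S|=m+1$ then $S$ is a dilate of $[m+1,2m+1]$, so $A\subseteq S\times\mathbb F_p^{n-1}$ is isomorphic to a subset of $A_{2,1}$, contrary to assumption, while $|S|\le m$ already gives $|A|\le mp^{n-1}$. So $S$ is not sum-free. Here I would delete a minimum number of ``defect'' points to make $S$ sum-free and invoke a $3k-4$/stability theorem in $\mathbb Z/p$: a sum-free set $S'$ has $|S'+S'|\le p-|S'|$, so when $|S'|$ is near $m+1$ the doubling barely exceeds $2|S'|$ and $S'$ lies, after a dilation, in an interval of length at most $p-2|S'|+1\le m+3$. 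Consequently, after a dilation, $S$ is an interval of size $\le m+2$, or an interval of size $\le m+1$ together with a few detached points. In each case I list the Schur triples of $S$ and substitute them into the inequalities above; for the critical case $S=[m,2m+1]$ (up to reflection) the only triples are $(m,m,2m)$, $(m,m+1,2m+1)$ and $(2m+1,2m+1,m)$, which give $a_m+a_{2m}\le p^{n-1}$ and $a_{m+1}+a_{2m+1}\le p^{n-1}$, so $|A|=\sum_{j\in S}a_j\le (a_m+a_{2m})+(a_{m+1}+a_{2m+1})+(m-2)p^{n-1}\le mp^{n-1}$; every other surviving configuration carries strictly more Schur triples and hence more total deficiency, again giving $|A|\le mp^{n-1}$.

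For the equality case, assume $|A|=mp^{n-1}$ with $A$ not isomorphic to a subset of $A_{2,1}$. Then all inequalities above are tight, and after splitting off the coordinate directions on which every fibre is constant (which produces the factor $\mathbb F_p^{n-1-s}$) I would argue that tightness in $a_m+a_{2m}\le p^{n-1}$, via $|A_m+A_m|\ge a_m$, forces $a_m=1$, say $A_m=\{0\}$; the interior fibres $A_{m+2},\dots,A_{2m-1}$ must be full; and the forbidden sums $A_m+A_m=\{0\}$, $A_m+A_{m+1}=A_{m+1}$ and $A_{2m+1}+A_{2m+1}$ must avoid $A_{2m}$, $A_{2m+1}$ and $\{0\}$ respectively, which collapses to $A_{2m}=\mathbb F_p^{s}\setminus\{0\}$, $A_{m+1}=\mathbb F_p^{s}\setminus P$, $A_{2m+1}=P$ with the single constraint $0\notin P+P$ (with $P$ possibly empty). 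This is exactly the asserted form $A=A_0\times\mathbb F_p^{n-1-s}$, and a direct check that every such set is sum-free finishes the classification.

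The hard part will be the structural input on $S$: understanding a not-necessarily-sum-free subset of $\mathbb Z/p$ whose size sits right at the critical density $\approx p/3$ and whose fibres carry an unknown weighting. Showing that such an $S$ must be essentially an interval is exactly a $3k-4$-type statement in $\mathbb Z/p$ at a density that has only recently come within reach, and I would additionally need to dispose of the degenerate small-$m$ ranges (for instance $[m+2,2m-1]=\varnothing$) and to control the Kneser stabilisers $H_{ij}$ — the latter being what forces one first to quotient out the maximal subspace leaving $A$ invariant, which is the ultimate source of the factor $\mathbb F_p^{n-1-s}$ in the classification.
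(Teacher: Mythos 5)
This theorem is quoted from Reiher--Zotova and is not proved in the paper; the closest comparison is the paper's general machinery for $(k,\ell)$-sum-free sets (Sections 3--5), which your plan closely mirrors in outline (projection onto a hyperplane decomposition, Kneser/Cauchy--Davenport constraints on fibres over Schur triples of the support, a Fourier lower bound on a nontrivial coefficient, and an appeal to $3k-4$-type covering results). However, there is a genuine gap at the crux of the argument: you pass from ``there is a nonzero frequency $r$ with a large Fourier coefficient'' directly to analysing the support $S$ as if $|S|$ were already known to be close to $m+1$. A large Fourier coefficient only says the fibre sizes $a_j$ are far from constant; it does not bound $|S|$, which a priori could be as large as $p$. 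Your subsequent step --- delete a few ``defect'' points from $S$, observe the doubling of the resulting sum-free set barely exceeds $2|S'|$, and invoke a covering theorem --- presupposes both that $|S|$ is near $m+1$ and that the number of defects is small, neither of which has been established. The paper fills exactly this hole with a weight dichotomy (Lemmas~\ref{lem_l_upper_default} and~\ref{lem_ell_upper_bound}: under any decomposition either $\omega(A)\le m+2$ or $\omega(A)>p-\theta$), proved by partitioning the index set into $(k+\ell)$-blocks with large index sums and applying Lemma~\ref{coro_beta_sum}, and then kills the big-weight branch by combining the near-balance estimate of Lemma~\ref{lem_four_coeff_small} with the Fourier lower bound of Lemma~\ref{lem_exists_chi_small}. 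Moreover, the covering input is applied in the paper to $C_m$ (the $m$ largest fibres), which is shown to be genuinely sum-free via $k\beta_m\ge p+k-1$, not to a ``sum-free after deleting defects'' set. Without an analogue of this dichotomy your case analysis of $S$ does not get off the ground.

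Two further points. First, at density $|S|/p\approx 1/3$ the only available $3k-4$ results are the doubling-$2.0527$ theorem (Theorem~\ref{thm_new_vosper}), which forces $m$ to be large; so even after repairing the gap your route can only prove the statement for large $p$, not for all $m\ge 1$ as claimed. Second, the equality analysis is too quick in two places: tightness of $a_m+a_{2m}\le p^{n-1}$ forces $A_m+A_m$ to have full stabiliser behaviour, i.e.\ $A_m$ is a coset of a subspace $H$ --- it does not force $a_m=1$ until one has quotiented by $H$ (this quotient is precisely where the factor $\mathbb F_p^{n-1-s}$ comes from, and it needs to be set up before, not after, the fibre computations); and the assertion that every non-interval configuration of $S$ ``carries strictly more Schur triples and hence more total deficiency'' is not a proof --- the analogous case analyses in Section~\ref{sec_unique_extrem} of the paper require checking each configuration against Lemmas~\ref{lem_contradiction_list_m+1}--\ref{lem_contradiction_list} individually, and some configurations survive to give extra extremal families.
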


The classical Hilton-Milner theorem~\cite{Hilton1967} is a strong stability result for the famous Erd\H{o}s-Ko-Rado theorem~\cite{Erdos1961}. There has been a surge in Hilton-Milner type stability results for various extremal configurations,
%During the days, many Hilton-Milner type stability results are revealed over wide varieties of extremal objects,
including cross-intersecting families~\cite{Borg2013}, intersecting chains~\cite{Erdoes2000}, and generalized Tur\'{a}n problems~\cite{Zhao2025}.

In this paper, we generalize Theorem~\ref{thm_sum_free_cuboid} and Theorem~\ref{thm_sum_free_0} from sum-free sets to $(k, \ell)$-sum-free sets. For any positive integer $h\geq 2$,
 by denoting $2A:=A+A$, the set of all \emph{$h$-term sums} is defined by $hA:=(h-1)A+A$. This notation should be distinguished from $h\cdot A$, which means $\{ha: a\in A\}$ in our paper. Given positive integers $k>\ell\ge 1$, we say a nonempty set $A$ is \emph{$(k, \ell)$-sum-free} if $kA\cap\ell A=\emptyset$. Equivalently, this is to say, the equation
$$x_1+x_2+\cdots+ x_k= x_1'+\cdots +x_\ell'$$
has no solution with all unknowns $x_1, \ldots, x_k, x_1', \ldots,x_\ell'$ in $A$.
The maximum size of $(k, \ell)$-sum-free sets over abelian groups or positive integers has also been widely studied, see for example \cite{Bajnok2009, Eberhard2015, Jing2021, Jing2024}.
Further, many cornerstone results in additive combinatorics study structures or relationships of $h$-term sums for varying $h$. For example, Freiman-Ruzsa theorem and the
recently solved
polynomial Freiman-Ruzsa conjecture~\cite{Schoen2011, Green2007, Sanders2013, Gowers2025} describe the structure of sets with small $h$-term sum sets; Roth's theorem determines the maximum density of subsets of integers avoiding nontrivial $3$-term arithmetic progressions~\cite{Roth1953, Bourgain1999, Kelley2023}; and the sum-product phenomenon concerns the non-coexistence of small sum sets and small product sets within certain rings~\cite{Erdos1983,Tao2008, Bourgain2009, Tang2023,o2025sum}.

\subsection{Main results}
In the following, we always assume that $k>\ell\ge 1$ and $n, m\ge 1$ unless otherwise stated.
Analogous to Theorem~\ref{thm_sum_free_cuboid}, we first establish the following result.
Note that the upper bound in Theorem~\ref{thm_k_l_max_structure} can also be deduced from \cite[Theorem 5]{Bajnok2009}. %However, the characterization of the extremal cases can not be found in the literature.

\begin{theorem}\label{thm_k_l_max_structure}
Let  $p=(k+\ell)m+2+\lambda$ be a prime number with $\lambda\in [0, k+\ell-3]$.  If $A\subset \mathbb F_p^n$ is a $(k, \ell)$-sum-free subset, then $|A|\le (m+1)p^{n-1}$.

Moreover, there are in total $\lceil\frac{\lambda+1}2\rceil$ mutually non-isomorphic structures of $A$ with $|A|= (m+1)p^{n-1}$. More specifically, for any $j\in [0, \lceil\frac{\lambda+1}2\rceil-1]\subset \mathbb F_p$, denote $a_j:=-(km+1+j)(k-\ell)^{-1}$. Then the $j$-th structure of $A$ is a cuboid
$$A_{k, \ell, p, j}\triangleq[a_j, a_j+m]\times \mathbb F_p^{n-1}.$$
For convenience, we name them \emph{extremal cuboids}.
Especially, if $\lambda=0, 1$, the extremal cuboid is unique up to isomorphism.
\end{theorem}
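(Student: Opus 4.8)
The plan is to reduce everything to the one-dimensional situation by a standard projection/slicing argument, and then to solve the resulting problem in $\mathbb F_p$ completely via the structure theory of sets with small sumset (Vosper / Freiman $3k-4$ type results).

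First I would handle the case $n=1$. A $(k,\ell)$-sum-free set $A\subseteq\mathbb F_p$ satisfies $kA\cap\ell A=\emptyset$, so in particular $|kA|+|\ell A|\le p$. By the Cauchy–Davenport inequality, $|kA|\ge\min\{p,\,k|A|-k+1\}$ and $|\ell A|\ge\min\{p,\,\ell|A|-\ell+1\}$; combining these with the disjointness forces $|A|\le\frac{p+k+\ell-2}{k+\ell}=m+1$ once we substitute $p=(k+\ell)m+2+\lambda$ with $\lambda\le k+\ell-3$. This gives the bound. For the characterization in the case of equality, suppose $|A|=m+1$. Then $|kA|$ and $|\ell A|$ must both be within an additive constant of their Cauchy–Davenport minima, and crucially the slack $|kA|+|\ell A|\le p$ leaves room of at most $\lambda+1$ units. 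The inverse theorems (Vosper's theorem when the sumset is exactly $k|A|-k+1$, and the $3k-4$ theorem / its $\mathbb F_p$ analogue, cf.\ the references the paper promises to use) then force $A$ itself to be an arithmetic progression. By applying a dilation we may take the common difference to be $1$, i.e.\ $A=[a,a+m]$ is an interval of length $m+1$. The condition $kA\cap\ell A=\emptyset$ becomes $[ka,ka+km]\cap[\ell a,\ell a+\ell m]=\emptyset$ in $\mathbb F_p$, a purely arithmetic condition on $a$; writing $a$ in terms of the parameter via $(k-\ell)a\equiv -(km+1+j)$ one checks the admissible values of $a$ are exactly $a_j$ for $j\in[0,\lceil\frac{\lambda+1}{2}\rceil-1]$, and the pairing $j\leftrightarrow$ (reflection $x\mapsto -x$) identifies $a_j$ with $a_{\lambda-j}$, collapsing the count to $\lceil\frac{\lambda+1}{2}\rceil$ non-isomorphic intervals.

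Next I would pass from $n=1$ to general $n$ by the compression/averaging argument familiar from Theorem~\ref{thm_sum_free_cuboid}. Pick any nonzero linear functional $\varphi:\mathbb F_p^n\to\mathbb F_p$ and for $t\in\mathbb F_p$ let $A_t=\{x\in\varphi^{-1}(0):x+s\in A\}$ where $s$ is any fixed preimage of $t$; the "shadow" $B=\{t:A_t\neq\emptyset\}\subseteq\mathbb F_p$ is $(k,\ell)$-sum-free in $\mathbb F_p$, hence $|B|\le m+1$, giving $|A|\le\sum_{t\in B}|A_t|\le(m+1)p^{n-1}$. For equality one needs $|B|=m+1$ (so $B$ is one of the extremal intervals, after choosing $\varphi$ appropriately) and $|A_t|=p^{n-1}$ for every $t\in B$, i.e.\ each nonempty slice is a full hyperplane coset; this says precisely that $A$ is the cuboid $B\times\mathbb F_p^{n-1}$, and the isomorphism classification in dimension $n$ is inherited from that in dimension $1$.

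The main obstacle, and the step that needs the most care, is the inverse step in dimension one: deducing that $A$ is a genuine arithmetic progression from the combined smallness of $|kA|$ and $|\ell A|$ rather than of a single sumset. Vosper's theorem applies cleanly only at the exact Cauchy–Davenport threshold; with slack up to $\lambda+1$ one must invoke the finer $3k-4$-type structure theorem in $\mathbb F_p$ (a set with $|kA|\le k|A|-k+1+r$ and $k|A|$ not too large is contained in a short AP), and then rule out the possibility that $A$ is a proper dense subset of an AP rather than the whole AP — this is exactly where the bookkeeping on $\lambda$ and the precise length $m+1$ gets delicate, and where the condition $\lambda\le k+\ell-3$ is used to guarantee the hypotheses of the inverse theorem are met and that no extra "almost-progression" configurations survive. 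I expect this to be carried out as a separate lemma, with the $n\ge 2$ reduction and the final enumeration of the $a_j$ being comparatively routine.
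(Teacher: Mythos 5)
Your reduction from general $n$ to $n=1$ has a genuine gap: for an arbitrary nonzero functional $\varphi$, the support $B=\{t:A_t\neq\emptyset\}$ of a $(k,\ell)$-sum-free set is \emph{not} in general $(k,\ell)$-sum-free. A relation $t_1+\cdots+t_k=t'_1+\cdots+t'_\ell$ among support elements only says the corresponding fibers \emph{could} produce a forbidden solution, not that they do (e.g. $A=\{(1,0),(1,1),(2,5)\}\subset\mathbb F_7^2$ is sum-free but its first-coordinate support $\{1,2\}$ is not). So the inequality $|B|\le m+1$, and with it your entire bound $|A|\le(m+1)p^{n-1}$ for $n\ge2$, is unjustified. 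The paper's Lemma~\ref{lem_common_k} exists precisely to repair this: one must take $K=Sym(kA)$ (or $Sym(\ell A)$), show via Kneser's theorem that it is a hyperplane once $A$ is large enough (after first replacing $A$ by $A+K$ using maximality), and only then does $kA$ become a union of $K$-cosets so that the disjointness $kA\cap\ell A=\emptyset$ descends to the supports. The choice of hyperplane cannot be arbitrary.

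The $n=1$ equality case is also not sound as proposed. You appeal to ``the $3k-4$ theorem / its $\mathbb F_p$ analogue'' to conclude $A$ is an AP, but that analogue is Conjecture~\ref{conj_3k-4_origin}, which is open; the known partial results (Theorems~\ref{thm_new_vosper} and~\ref{thm_new_freiman_2.4}, encapsulated in the function $T$) would impose a lower bound on $m$, whereas Theorem~\ref{thm_k_l_max_structure} is stated for all $m\ge1$. Moreover, even granting a covering statement, you would still have to exclude $A$ being a proper dense subset of a slightly longer AP, which you acknowledge but do not do. The paper avoids all of this with a one-line Vosper argument in the contrapositive: if $A$ with $|A|=m+1$ is not an AP, then $|2A|\ge2|A|$ and hence $|kA|\ge k|A|=km+k$, while disjointness forces $|kA|\le p-\ell|A|=km+\lambda+2-\ell<km+k$ exactly because $\lambda\le k+\ell-3$ — a contradiction, so $A$ is an interval outright. (Your count of the admissible $a_j$ and the reflection pairing $j\leftrightarrow\lambda-j$ is correct and matches the paper; also, the available slack is $\lambda$, not $\lambda+1$.)
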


If $A$ is isomorphic to a subset of an extremal cuboid,  $A$ is said to be \emph{trivial}; otherwise, $A$ is \emph{nontrivial}.
Next, we state our main results Theorems~\ref{thm_with_fourier}-\ref{thm_3_1}, which are the corresponding Hilton-Milner type results of Theorem~\ref{thm_k_l_max_structure}.
All of them state that when $p$ is large enough (depending on $k$ and $\ell$), if $A$ is a nontrivial $(k, \ell)$-sum-free subset, then $|A|\le mp^{n-1}$, and if the equality holds, the structures of $A$ can be fully characterized. To lower bound $p$, we need  a function $T$ over the rational field whose definition is more involved. For a smoother reading experience, at this moment,  we only mention that $T$ is related to the Freiman's $(3k-4)$ conjecture, for which the formal definition will be introduced in Subsection~\ref{subsec:cp}.
% must belong to one of several specific types.
%The following is a collection of two certain types most commonly used in our paper.

%To show our corresponding Hilton-Milner type results of Theorem~\ref{thm_k_l_max_structure},  a function $T$ over the rational field is needed to lower bound the parameter $m$.
%For a smoother reading experience, at this moment,  we only mention that $T$ is related to the Freiman's $(3k-4)$ conjecture, for which the formal definition will be introduced in Subsection~\ref{subsec:cp}.
 %studies by what condition in $\mathbb F_p$ we can say a set is covered by a short arithmetic progression.

%With this definition, now we can state our main results, Theorems~\ref{thm_with_fourier}-\ref{thm_3_1}.
%All of them say that if $p$ is large enough (depending on $k$ and $\ell$), if $A$ is $(k, \ell)$-sum-free and $A$ is not a subset of anyone of those extremal case cuboid, then $|A|\le mp^{n-1}$, and if the equality holds, $A$ must belong to one of several specific types.
To characterize the structures of $A$, we define five types of subsets of size $mp^{n-1}$. The following is a collection of two types of structures that are commonly used in this paper.

\begin{definition}\label{cons_12}
Let $p=(k+\ell)m+2+\lambda$ be a prime number. Let $A\subset\mathbb F_p^n$ with $|A|=mp^{n-1}$. %We define the following two types of $A$.
\begin{itemize}
  \item[(1)]$A$ is said to be of \emph{type 1} if $A$ is isomorphic to $[a, a+m-1]\times\mathbb F_p^{n-1}$ for some $a$ in $\mathbb F_p$ satisfying that $\ell a-k(a+m-1)$ belongs to $[1,\ell]$ or $[\lambda+\ell+2, k]$. The latter case is valid only when $\lambda+\ell+2\le k$.
  \item[(2)] \emph{(When $\ell=1$).} $A$ is said to be of \emph{type 2} if $\ell=1$, $n\geq 2$
and $A$ is isomorphic to
%, $a=mk(\ell-k)^{-1}$, and $V\subset \mathbb F_p^{n-1}$ is any non-zero proper subspace. Then $A$ is isomorphic to
$$\left(\{a\}\times\{\mathbb F_p^{n-1}\backslash V\}\right)\sqcup \left([a+1, a+m-1]\times \mathbb F_p^{n-1}\right)\sqcup \left(\{a+m\}\times V\right),$$
where $a=mk(\ell-k)^{-1}\in \mathbb F_p$ is fixed, and $V\subset \mathbb F_p^{n-1}$ is any proper subspace.
\end{itemize}

\end{definition}

%It is easy to check that these two constructions for $A$ are $(k, \ell)$-sum-free.
%%, and these two types of constructions can be most commonly deduced in our following analysis.
%We will see later these constructions cannot be covered by $A_{k, \ell, p, j}$ for any $j\in [0, \lceil\frac{\lambda+1}2\rceil-1]$ under isomorphism. If for given $(k, \ell)$ pair, we can deduce that only these two types of constructions can be reached when $|A|=mp^{n-1}$, we say $A$ under such $(k, \ell)$ is {\it normal}. Our first result shows that large varieties of $(k, \ell)$ pairs leading to normal $A$ over large enough $p$.
If $A$ is of type 1 or 2,  $A$ is said to be \emph{normal}.
It can be proved that a normal $A$ is a nontrivial $(k, \ell)$-sum-free subset (see Section~\ref{sec:uniq}).  Our first result shows the other way, that is,  when $k+\ell$ and $p$ is large, any nontrivial $(k, \ell)$-sum-free subsets of size at least $mp^{n-1}$ must be normal.

\begin{theorem}[When $\lambda\in \lbrack 0, k+\ell-5 \rbrack$]~\label{thm_with_fourier}%[When $k+\ell\ge \lambda+5$]
Suppose $k+\ell\geq 5$. Let $p=(k+\ell)m+2+\lambda$ be a prime number with  $\lambda\in [0, k+\ell-5]$ and $m\ge \max\{ 5T(\frac1{k+\ell}), (k+\ell+\lambda+4)(1+\frac1{k+\ell})(k+\ell)^{1\slash(k+\ell-2)}\}$. If $A\subset \mathbb F_p^n$ is a nontrivial $(k, \ell)$-sum-free set of size at least $mp^{n-1}$, then $A$ is normal up to isomorphism.

%Let integers $k>\ell\ge 1$, $n\ge 1$, $\lambda\in [0, k-2]$ and $m$ satisfy that $p=(k+\ell)m+2+\lambda$ is a prime number.
%Let $A\subset \mathbb F_p^n$ have size at least $mp^{n-1}$ and $(k, \ell)$-sum-free. Further we require $k+\ell\ge \lambda+5$ and $m\ge \max\{ 5T(\frac1{k+\ell}), (2k+2\ell+\lambda)(1+\frac1{k+\ell})(k+\ell)^{1\slash(k+\ell-2)}\}$. Then up to isomorphism, {\color{red} $A$ is either trivial or normal.  }

%$A$ is either a subset of $A_{k, \ell, p, j}$ for some $j\in [0, \lceil\frac{\lambda+1}2\rceil-1]$, or $|A|=m{\color{blue}p}^{n-1}$ and $A$ is normal.
\end{theorem}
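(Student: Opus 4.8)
The plan is to reduce the $n$-dimensional problem to a one-dimensional inverse-additive question and then invoke the $(3k-4)$-type machinery encoded by the function $T$. First I would set up the Fourier-analytic skeleton. Given a nontrivial $(k,\ell)$-sum-free set $A\subset\mathbb F_p^n$ with $|A|\ge mp^{n-1}$, write $\alpha=|A|/p^n\ge m/p$, and expand the indicator $\mathbf 1_A$ in Fourier characters. The equation $x_1+\cdots+x_k=x_1'+\cdots+x_\ell'$ has no solution in $A$, so the relevant weighted count
\[
\sum_{\xi\in\widehat{\mathbb F_p^n}}\widehat{\mathbf 1_A}(\xi)^{k}\,\overline{\widehat{\mathbf 1_A}(\xi)}^{\,\ell}=0
\]
(with the appropriate normalization) must vanish; isolating the trivial character $\xi=0$ gives $\alpha^{k+\ell}=-\sum_{\xi\ne 0}\widehat{\mathbf 1_A}(\xi)^{k}\overline{\widehat{\mathbf 1_A}(\xi)}^{\,\ell}$, so some nonzero $\xi_0$ carries a large Fourier coefficient, $|\widehat{\mathbf 1_A}(\xi_0)|\ge c\,\alpha$ for an explicit constant $c=c(k,\ell)$. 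The lower bound on $m$ in the hypothesis — in particular the term $(k+\ell+\lambda+4)(1+\tfrac1{k+\ell})(k+\ell)^{1/(k+\ell-2)}$ — is exactly what makes this ``large spectral mass'' argument quantitatively effective.

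Next, the large coefficient $\xi_0$ lets me project onto the line it defines: after an isomorphism I may assume $\xi_0$ depends only on the first coordinate, so the ``column densities'' $f(t):=|A\cap(\{t\}\times\mathbb F_p^{n-1})|/p^{n-1}$ for $t\in\mathbb F_p$ form a function $[0,1]$-valued on $\mathbb F_p$ with $\sum_t f(t)=|A|/p^{n-1}\ge m$ and with $\widehat f(\xi_0)$ large, forcing $f$ to be concentrated on a short interval. A density-increment / compression step (replacing each column by an interval of the same density, or using Kneser's theorem on the level sets) then shows that $\mathrm{supp}(f)$ essentially lives in an arithmetic progression of length roughly $m$ or $m+1$, and because $A$ is $(k,\ell)$-sum-free the multiset of column densities must itself satisfy a one-dimensional $(k,\ell)$-sum-free type constraint. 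This is where the $(3k-4)$-conjecture enters: the projection $\pi(A)\subset\mathbb F_p$ of $A$ onto the first coordinate has $|k\pi(A)|$ and $|\ell\pi(A)|$ controlled, so if $|\pi(A)|$ is not much larger than $m$ then by the (near-)resolution of Freiman's $3k-4$ theorem captured by $T(\tfrac1{k+\ell})$, $\pi(A)$ is contained in a short arithmetic progression; after rescaling, $\pi(A)\subseteq[a,a+m]$ for some $a$, i.e.\ $A$ is contained in an $(m+1)$-column slab. The requirement $m\ge 5T(\tfrac1{k+\ell})$ is precisely the threshold that makes this inverse step applicable.

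Once $A$ sits inside $m+1$ consecutive columns, say columns $a,a+1,\dots,a+m$, I finish by a finite case analysis on the two ``boundary'' columns $a$ and $a+m$. Writing $b=a+m$ and letting $U=A\cap(\{a\}\times\mathbb F_p^{n-1})$, $W=A\cap(\{b\}\times\mathbb F_p^{n-1})$, the interior columns must be full (else $|A|<mp^{n-1}$, using that $\lambda\le k+\ell-5$ keeps the slab short enough that no interior column can be sacrificed) and all forbidden sums among the $m+1$ columns translate into a handful of conditions: the coefficient $\ell a-kb\pmod p$ must avoid the ``dangerous'' residues, the sets $U,W$ must be subspace-complements or single points in the coordinate directions forced by Theorem~\ref{thm_k_l_max_structure}, and the only surviving configurations are exactly the interval cuboids of type 1 and the $\ell=1$ punctured-slab of type 2. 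Comparing $|A|=mp^{n-1}$ with the count $|U|+(m-1)p^{n-1}+|W|$ pins down $|U|+|W|=p^{n-1}$, which together with the sum-free constraints forces either $U$ or $W$ to be empty (giving a genuine interval cuboid of length $m$, type 1) or $\ell=1$ with $U,W$ complementary to a common subspace (type 2).

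The main obstacle is the middle step: going from ``one large Fourier coefficient'' to ``$A$ is contained in an $(m+1)$-column slab'' is not a black box, because the column densities $f(t)$ are not $0/1$-valued and standard $3k-4$ statements are about sets, not fractional weights. I expect the real work to be in showing that the fractional version can be rounded — e.g.\ by passing to super-level sets $\{t: f(t)>\theta\}$ for a well-chosen threshold $\theta$ and arguing that the $(k,\ell)$-sum-free property of $A$ descends to a $(k,\ell)$-sum-free property (or a Kneser-type doubling bound) for each level set — and in tracking the constants so that the two explicit lower bounds on $m$ in the hypothesis suffice. The hypothesis $\lambda\le k+\ell-5$ (two below the $k+\ell-3$ of Theorem~\ref{thm_k_l_max_structure}) is presumably what guarantees enough ``slack'' in the slab length that the boundary-column analysis has only the two stated outcomes; pushing to $\lambda=k+\ell-4$ or $k+\ell-3$ would introduce the extra structural types the paper handles separately.
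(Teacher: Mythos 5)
Your outer architecture matches the paper's at the coarsest level — a Fourier identity produces a nonzero character with $|\widehat{\bm 1_A}(\chi)|\ge(\alpha^{k+\ell-1}/(1-\alpha))^{1/(k+\ell-2)}$ (Lemma~\ref{lem_exists_chi_small}), the kernel of that character supplies the codimension-one decomposition, the covering property encoded by $T$ yields a short interval cover, and a boundary-column analysis produces types 1 and 2. But your middle step is a genuine gap, and the logic runs in the wrong direction. A single large Fourier coefficient of the column-density function $f$ does \emph{not} force $\mathrm{supp}(f)$ into a short progression: $f$ could be the indicator of an interval of length about $p/2$, which has full correlation with a character yet huge support. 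What the paper actually proves is a dichotomy by purely additive means: partitioning the index set into blocks of size $k+\ell$ and applying Kneser/Cauchy--Davenport bounds to the ordered column sizes $\beta_i$ (Lemmas~\ref{coro_beta_sum}--\ref{lem_ell_upper_bound}) shows that under \emph{every} decomposition the weight $\omega(A)$ is either at most $m+1$ or greater than $p-\theta$; and in the latter case the column sizes are nearly equal (Lemma~\ref{lem_four_coeff_small}), so \emph{every} nonzero Fourier coefficient is at most $(2+\theta)/p$, contradicting the spectral lower bound. The Fourier input thus only \emph{excludes} the large-weight branch; it never produces concentration directly. Your proposed repair via super-level sets is not carried out and is not what closes the argument — the weight dichotomy is the paper's main technical content and is absent from your proposal.

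Two further points where the proposal glosses over necessary work. First, the $T$-based covering property is applied not to the full projection $\pi(A)$ but to $C_m$, the set of indices of the $m$ largest columns, and one must first prove that $C_m$ is $(k,\ell)$-sum-free (Lemma~\ref{lem_stru_of_Cm}, which needs $k\beta_m\ge p+k-1$, itself a consequence of the small-weight conclusion); the doubling bound $|2C_m|\le 2m+2$ then comes from comparing $|kC_m|+|\ell C_m|$ with $p$, which is exactly where $\lambda\le k+\ell-5$ is used. Second, the endgame is more delicate than counting $|U|+|W|=p^{n-1}$: when $\omega=m+1$ and $C_{m+1}$ is not sum-free, the paper must show the unique violated relation is $kb_{m+1}=\ell b_s$ with the two values on opposite sides, apply Vosper and Theorem~\ref{thm_3k-4_r=0} to pin $C_{m+1}$ to an interval, and then run a Kneser argument on $Sym(kA_{a+m})$ to force the boundary columns to be a subspace and its complement. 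Without these steps the case analysis does not reduce to only types 1 and 2.
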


 If $A$ is neither trivial nor normal,  $A$ is said to be \emph{abnormal}.
We remark that our requirement $\lambda\leq k+\ell-5$ %$k+\ell\ge \lambda+5$
 in Theorem~\ref{thm_with_fourier} is optimal. In fact, if
 $\lambda= k+\ell-4$, 
 there is always an abnormal example of $A$ as in the following definition.

\begin{definition}[When $\lambda= k+\ell-4$]~\label{cons_3}
Suppose $k+\ell\geq 5$. Let $p=(k+\ell)m+2+\lambda$ be a prime number with $\lambda= k+\ell-4$. Let $A\subset\mathbb F_p^n$ with $|A|=mp^{n-1}$. Choose $a=(\ell m+k-1)(k-\ell)^{-1}\in \mathbb F_p$.
Then $A$ is said to be of \emph{type 3} if $A$ is isomorphic to
$$([a-1, a+m]\backslash\{a, a+m-1\})\times\mathbb F_p^{n-1}.$$
\end{definition}

It can be proved that a set  of type $3$ is an abnormal $(k, \ell)$-sum-free subset (see Section~\ref{sec:uniq}).

%It is easy to check that such $A$ is $(k, \ell)$-sum-free, and we will check that all the different types are non-isomorphic to each other and cannot be contained in the extremal cuboid later.
Remember that when $\lambda=0$ or $1$, the extremal cuboid is unique up to isomorphism by Theorem~\ref{thm_k_l_max_structure}. As our second result, we give a Hilton-Milner type result for all pairs $(k, \ell)$ when $\lambda=0, 1$ and determine the extremal structures of nontrivial $(k, \ell)$-sum-free subsets for large $p$. Since the cases for $(k,\ell)=(2,1)$ and $k+\ell\ge \lambda+5$ have been settled in Theorem~\ref{thm_sum_free_0} and Theorem~\ref{thm_with_fourier}, respectively, we only need to consider the cases when $(k+\ell,\lambda)=(5,1)$ or $(4,1)$.
 For both cases,  our results conclude that $|A|\le mp^{n-1}$ as before but when $|A|=mp^{n-1}$ they are not always normal. We split them into two cases by defining two more types of abnormal sets.

%Among all the possible values of $\lambda$ with the extremal structures fully determined in Theorem~\ref{thm_k_l_max_structure}, there are two special choices $\lambda=0$ and $\lambda=1$, because when $\lambda=0$ or $1$ the extremal structure is unique up to isomorphism. As our second result, we give a Hilton-Milner type result for all pairs $(k, \ell)$ when $\lambda=0, 1$ and totally determined all the suboptimal structures cannot be covered by the unique extreme cuboid. As we have proved the scenarios when $k+\ell\ge \lambda+5$, we only need to add on the cases when $k+\ell=5; \lambda=1$ and $k+\ell=4; \lambda=1$. We summerize our results as follows. For both cases we can conclude $|A|\le mp^{n-1}$ like before but when $|A|=mp^{n-1}$ they are not always normal.

\begin{definition}[When $(k+\ell,\lambda)=(5,1)$]~\label{cons_4}
Let $p=5m+3$ be a prime number. Let $A\subset\mathbb F_p^n$ with $|A|=mp^{n-1}$ and $n\ge 2$. Then $A$ is said to be of \emph{type 4} if there exists a proper
subspace $V\subset \mathbb F_p^{n-1}$ such that $A$ is isomorphic to
$$\left(\{2m+1, 3m+2\}\times V \right)\sqcup \left(\{2m+2, 3m+1\}\times(\mathbb F_p^{n-1}\backslash V)\right)\sqcup \left([2m+3, 3m]\times \mathbb F_p^{n-1}\right).$$

%non-negative integers $k+\ell=5$, $k>\ell$,, $\lambda=1$ and $n\ge 2$. Say $A\subset \mathbb F_p^n$ is of type 4 if there exists a proper subspace $V\subset \mathbb F_p^{n-1}$ such that $A$ is isomorphic to
%$$\{2m+1, 3m+2\}\times V\sqcup \{2m+2, 3m+1\}\times(\mathbb F_p^{n-1}\backslash V)\sqcup [2m+3, 3m]\times \mathbb F_p^{n-1}.$$
\end{definition}

Note that if  $V$ is allowed to be $ \mathbb F_p^{n-1}$ in Definition~\ref{cons_4}, then  type 4 goes to type 3. To avoid this overlap, we let $V$ in Definition~\ref{cons_4} be proper. The arguments of reaching type 3 and type 4 are different in our proofs (See Section~\ref{sbsc:51}). We summarize the results for $(k+\ell,\lambda)=(5,1)$ as follows.

\begin{theorem}[When $(k+\ell,\lambda)=(5,1)$]~\label{thm_(5, 1)_unnormal}
Let $p=5m+3$ be a prime number with $m\ge \max\{  5 T(1\slash 5), 21\}$. Let $(k, \ell)=(4, 1)$ or $(3, 2)$.
%Let $(k, \ell, \lambda)=(4, 1, 1)$ or $(3, 2, 1)$ and integer $n\ge 1$.
If $A\subset\mathbb F_p^n$ is a nontrivial $(k, \ell)$-sum-free set with $|A|\ge mp^{n-1}$, then $A$ is either normal or of type 3 or 4.
%
%except the structure of $A$ is isomorphic to the following pattern with any non-zero subspace $V\subset F_p^{n-1}$:
%$$\{2m+1, 3m+2\}\times V\sqcup \{2m+2, 3m+1\}\times(\mathbb F_p^{n-1}\backslash V)\sqcup [2m+3, 3m]\times \mathbb F_p^{n-1}.$$
\end{theorem}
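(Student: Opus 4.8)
The plan is to treat Theorem~\ref{thm_(5, 1)_unnormal} as the $\lambda = k+\ell-4$ boundary case of the machinery built for Theorem~\ref{thm_with_fourier}, tracking precisely where the proof of the latter breaks down and showing that the only new structures that emerge are types~3 and~4. Since $k+\ell=5$ here, we have $\lambda = k+\ell-4 = 1$, so we are exactly at the first value of $\lambda$ excluded from Theorem~\ref{thm_with_fourier}. First I would reduce to an essentially one-dimensional analysis by a compression/fibering argument: writing $A \subseteq \mathbb F_p \times \mathbb F_p^{n-1}$, look at the ``column densities'' $d_x = |A \cap (\{x\}\times\mathbb F_p^{n-1})|/p^{n-1}$ and the column counting function. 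Because $|A| \ge mp^{n-1}$ and $A$ is $(k,\ell)$-sum-free, a Fourier/Kneser-type argument (the same one underlying Theorem~\ref{thm_k_l_max_structure}, cf.\ \cite{Bajnok2009}) forces the support of $(d_x)$ to be tightly concentrated and, after applying the $(3k-4)$-type input encoded in the function $T$, forces the ``heavy'' columns (those with $d_x$ close to $1$) to form an interval of length about $m-1$ in $\mathbb F_p$. This is where the hypothesis $m \ge \max\{5T(1/5), 21\}$ is consumed.

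The second step is the structural dichotomy on the boundary columns. After normalization we may assume the heavy block is $[2m+3, 3m] \times \mathbb F_p^{n-1}$ (the interior of an extremal cuboid shifted appropriately), and the question becomes: which points can $A$ contain in the two ``fringe'' columns $x = 2m+2, 3m+1$ and the two ``outer'' columns $x=2m+1, 3m+2$? The $(k,\ell)$-sum-free condition, read coordinate-by-coordinate in the first component, pins down exactly which fringe columns may be fully present, partially present, or empty; and for a partially-present column the set of allowed fibers must itself satisfy a sum-free-type constraint in $\mathbb F_p^{n-1}$, forcing its complement to be a coset of a subspace $V$ (this is the standard ``$0 \notin P+P$ implies structure'' phenomenon already visible in Theorem~\ref{thm_sum_free_0}). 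Carefully enumerating the admissible fringe/outer patterns that achieve equality $|A| = mp^{n-1}$, one finds exactly two families: the symmetric one where columns $2m+1$ and $3m+2$ carry a subspace $V$ and columns $2m+2, 3m+1$ carry its complement (type~4, with $V$ proper), which degenerates to type~3 when $V = \mathbb F_p^{n-1}$; together with the normal patterns of types~1 and~2. I would present this as a short case analysis keyed to the residue of the relevant linear forms $\ell x - k y \pmod 5$.

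The third step is to rule out every other configuration — in particular, to show no ``genuinely two-sided asymmetric'' abnormal example survives and that one cannot gain by spreading mass over three or more fringe columns. Here I would invoke the count-of-columns bound from step~1 (the support cannot be longer than $m+1$) combined with a convexity/exchange argument: any admissible configuration not of the listed types can be locally modified to strictly increase $|A|$ while preserving the $(k,\ell)$-sum-free property, contradicting maximality; alternatively it already violates $(k,\ell)$-sum-freeness outright. The arguments leading to type~3 versus type~4 are genuinely different — type~3 arises from a degenerate (full-space) fringe in the one-dimensional reduction, whereas type~4 requires the extra subspace freedom in the $n\ge 2$ setting — so these two subcases get handled separately, as flagged in Section~\ref{sbsc:51}.

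\textbf{The main obstacle} I anticipate is the boundary bookkeeping in step~2: at $\lambda = k+\ell-4$ the ``gap'' in the interval of available residues for the fringe columns shrinks to essentially nothing, so several linear forms that were safely separated in the $\lambda \le k+\ell-5$ regime now collide, and one must check by hand that exactly the type~3/type~4 patterns — and no near-miss variants obtained by shifting a fringe column by one — are consistent with $kA \cap \ell A = \emptyset$. A secondary technical point is making the Fourier concentration estimate in step~1 quantitatively strong enough to get the clean interval conclusion with the stated explicit bound $m \ge 21$ rather than an unspecified large constant; this requires being somewhat careful with the $(3k-4)$-conjecture input and is the reason the threshold is phrased in terms of $T(1/5)$.
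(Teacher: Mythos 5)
Your overall strategy (decompose $\mathbb F_p^n$ into columns, use Fourier analysis plus Kneser to concentrate the support, feed the $3k-4$ input through $T(1/5)$ to get interval structure, then do a fringe-column case analysis) is the same as the paper's, but two of your load-bearing claims are false as stated and they sit exactly where the new phenomena live. First, your step 3 relies on ``the support cannot be longer than $m+1$.'' That is wrong: type 4 (and type 5 in the $(4,1)$ setting) has support of size $m+2$. The correct bound here is $\omega(A)\le m+2$, and proving even that is nontrivial at $\lambda=k+\ell-4$, since the lemma that kills $\omega=m+2$ in the proof of Theorem~\ref{thm_with_fourier} (Lemma~\ref{lem_ell_upper_bound}) requires $\lambda\le k+\ell-5$; the paper needs separate arguments (Lemmas~\ref{lem_m+3} and \ref{lem_5_ell+2}) to rule out $\omega\ge m+3$, and then the entire $\omega=m+2$ branch must be analyzed rather than discarded. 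With your bound you would erroneously exclude type 4 itself. Second, your step 1 concludes that the heavy columns form an interval. They need not: the covering-property input only places $C_m$ inside an interval of length at most $m+3$, and at $\lambda=k+\ell-4$ the doubly-punctured configuration $\{a\}\cup[a+2,a+m-1]\cup\{a+m+1\}$ survives every interval-forcing argument (this is precisely the content of Corollary~\ref{coro_k+ell-4}(1) and the stated tightness of Theorem~\ref{thm_with_fourier}); it is the source of type 3. Your step 2 normalization ``the heavy block is $[2m+3,3m]\times\mathbb F_p^{n-1}$'' therefore presupposes the answer rather than deriving it.

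Beyond these two errors, the proposed elimination mechanism is not viable: there is no maximality hypothesis to contradict (the theorem assumes only $|A|\ge mp^{n-1}$), so ``locally modify to strictly increase $|A|$'' proves nothing unless you can iterate past $(m+1)p^{n-1}$, which you do not argue. The paper instead derives explicit contradictions from $(k,\ell)$-sums in $C_{m+1}$ or $C_{m+2}$ via quantitative criteria (each such sum involves at most two, respectively three, distinct values, and two sums involve at most four — Lemmas~\ref{lem_contradiction_list_m+1} and \ref{lem_contradiction_list}), and the resulting exhaustive enumeration over the possible gap patterns $(i,j)$ between $kC_m$ and $\ell C_m$, for each $\omega\in\{m,m+1,m+2\}$ and both $(k,\ell)=(4,1)$ and $(3,2)$, is the bulk of the proof. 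Your proposal leaves all of this unspecified, and the subspace-structure step for $V$ in type 4 (which in the paper requires a delicate $Sym$/Kneser argument chaining $|A_{a-1}|\le|3A_{a-1}|\le|H|\le|A_{a+m}|\le\cdots$ into forced equalities) is asserted rather than proved.
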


When $(k+\ell,\lambda)=(4,1)$, we define another type of abnormal subsets.
%For $(k, \ell, \lambda)=(3, 1, 1)$, a new kind of suboptimal construction appears as follows.

\begin{definition}[When $(k+\ell,\lambda)=(4,1)$]~\label{cons_5}
Let $p=4m+3$ be a prime number. Let $A\subset\mathbb F_p^n$ with $|A|=mp^{n-1}$. Then $A$ is said to be of \emph{type 5} if there exists an integer $s\in [0,n-1]$, and a nonempty subset $P \subset \mathbb F_p^{s}$ satisfying $0\notin 3P$, such that $A=A_0\times \mathbb F_p^{n-1-s}$, where $A_0\subset \mathbb F_p\times \mathbb F_p^{s}$ is defined by
\begin{align*}
  A_0=&\{(a-1, 0)\}\sqcup \left(\{a\}\times\{\mathbb F_p^{s}\backslash P\} \right)\sqcup \left([a+1, a+m-2]\times\mathbb F_p^{s}\right) \\
  &\sqcup \left(\{a+m-1\}\times(\mathbb F_p^{s}\backslash\{0\})\right) \sqcup \left(\{a+m\}\times P\right),
\end{align*}
where $a=(m+2)\slash2\in \mathbb F_p$.

%
%if there exist some nonnegative integers $n_1$, $n_2$ such that $n_1+n_2=n-1$, and a nonempty subset $P \subset \mathbb F_p^{n_1}$ satisfying $0\notin 3P$, such that $A=A_0\times \mathbb F_p^{n_2}$, while
%$$A_0=\{(a-1, 0)\}\sqcup\{a\}\times\{\mathbb F_p^{n_1}\backslash P\}\sqcup [a+1, a+m-2]\times\mathbb F_p^{n_1} \sqcup \{a+m-1\}\times(\mathbb F_p^{n_1}\backslash\{0\}) \sqcup \{a+m\}\times P,$$
%where $a=(m+2)\slash2$ with calculations over $\mathbb F_p$.
\end{definition}

%It is easy to see that if $A$ is of type 5 then $A$ is $(3, 1)$-sum-free even when $P=\emptyset$.
Note that if we allow $P=\emptyset$ in Definition~\ref{cons_5}, then type 5 goes to type 2 when $s\ge 1$, and goes to type $1$ when $s=0$. So we set $P\neq\emptyset$  in Definition~\ref{cons_5} to avoid the overlap. This structure is similar to that in Theorem~\ref{thm_sum_free_0}. We have the following corresponding theorem.

\begin{theorem}[When $(k+\ell,\lambda)=(4,1)$]~\label{thm_3_1}
Let $p=4m+3$ be a prime number with $m\ge \max\{  5 T(1\slash 4), 23\}$. %Let $(k, \ell)=(3, 1)$.
If $A\subset\mathbb F_p^n$ is a nontrivial $(3, 1)$-sum-free set with $|A|\ge mp^{n-1}$, then $A$ is either normal or of type 5.

%
%Let $(k, \ell, \lambda)=(3, 1, 1)$ and integer $n\ge 1$. Let $A\subset\mathbb F_p^n$ be a $(3, 1)$-sum-free set with $|A|\ge mp^{n-1}$.
%If we have $m\ge \max\{  5 T(1\slash 4), 23\}$. Then $A$ is normal except the structure of $A$ is of type 5.
\end{theorem}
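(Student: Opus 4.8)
\textbf{Proof proposal for Theorem~\ref{thm_3_1}.}
The plan is to run the same Fourier-analytic reduction used for Theorem~\ref{thm_with_fourier}, but now specialised to $(k,\ell)=(3,1)$, $p=4m+3$, and to push the structural analysis one step further because in this regime the ``obstruction'' at $\lambda=k+\ell-4=0$ does not arise, and instead one meets a more rigid combinatorial core. First I would normalise: by the extremal result (Theorem~\ref{thm_k_l_max_structure}) the unique extremal cuboid is $A_{3,1,p,0}=[a-1,a+m-1]\times\mathbb F_p^{n-1}$ up to isomorphism for the appropriate $a$; a nontrivial set $A$ with $|A|\ge mp^{n-1}$ is, after an isomorphism, ``close'' to a cuboid, and the first task is to show the projection $\pi(A)\subset\mathbb F_p$ onto a suitable coordinate is an interval-like set of length at most $m+1$ together with the fibre structure. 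Concretely I would argue that the $(3,1)$-sum-free condition $3A\cap A=\emptyset$ forces the support set $S:=\{x\in\mathbb F_p:A_x\neq\emptyset\}$ (where $A_x$ is the fibre over $x$) to satisfy $3S\cap S=\emptyset$ at the level of $\mathbb F_p$, hence $|S|\le m+1$; combined with $|A|\ge mp^{n-1}$ this pins $|S|\in\{m,m+1\}$ and shows all but $O(1)$ fibres are full ($=\mathbb F_p^{n-1}$).

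The heart of the proof is the inverse step. When $|S|=m$ and $S$ is ``almost an arithmetic progression'' one invokes the $3k-4$–type bound encoded in the function $T$: since $|3S|\le 3|S|-c$ for the relevant small constant $c$ (here $|3S|$ is the $3$-fold sumset in $\mathbb Z_p$, forced small because $3S$ must avoid $S$ inside $\mathbb F_p$ of size $4m+3$), the Freiman $3k-4$ machinery — valid because $m\ge 5T(1/4)$ — tells us $S$ is contained in an arithmetic progression of length $\le |S|+$const$=m+$const. After applying a dilation to make the common difference $1$, $S\subseteq[a-O(1),a+m-1+O(1)]$, and then a careful finite case analysis of the boundary fibres (the fibres over the $O(1)$ non-full coordinates near the two ends of the interval) is carried out. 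The $(3,1)$-sum-free constraint, read fibrewise as $A_{x}\cap(A_{y}+A_{z}+A_{w})=\emptyset$ whenever $x=y+z+w$ with $x,y,z,w\in S$, becomes a system of constraints among a bounded number of subsets $P_i\subset\mathbb F_p^{s}$ (the non-full fibres) and their sumsets. Solving this system, one finds either that all boundary fibres can be ``completed'' to a cuboid or the type-2 configuration — giving a normal $A$ after all — or one is forced into exactly the configuration of Definition~\ref{cons_5}: two fibres at one end contribute a single point $\{0\}$ and a non-full fibre $\mathbb F_p^s\setminus\{0\}$, the opposite end contributes a non-full fibre $\mathbb F_p^s\setminus P$ and a sparse fibre $P$ with $0\notin 3P$, and the value $a=(m+2)/2$ is pinned by the equation $3a\equiv$(the forbidden coordinate) that places the arithmetic progression symmetrically.

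For the case $|S|=m+1$ one has no slack at all: $3S\cap S=\emptyset$ inside $\mathbb F_p$ with $|S|=m+1$ and $p=4m+3$ forces $S$ itself to be a full interval $[a-1,a+m-1]$ (this is the uniqueness part of Theorem~\ref{thm_k_l_max_structure} applied at the level of $\mathbb F_p$, i.e. $n=1$), so $A$ would be contained in the extremal cuboid, contradicting nontriviality — unless some fibre fails to be full, in which case $|A|<mp^{n-1}$, again a contradiction; so this branch yields nothing new and can be dispatched quickly. Throughout, the Fourier input is used exactly as in the proof of Theorem~\ref{thm_with_fourier}: a large Fourier coefficient of $\mathbbm 1_A$ produces the coordinate along which $A$ ``looks like'' a cuboid, and a Bogolyubov/density-increment style argument localises the analysis to a single $\mathbb F_p$-direction, reducing everything to the one-dimensional inverse problem plus bounded fibre bookkeeping.

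\textbf{Main obstacle.} The delicate point is the boundary fibre analysis: because $\lambda=1$ there is genuinely one extra ``degree of freedom'' at each end of the interval compared with the $\lambda=0$ situation, and one must verify that the $(3,1)$-sum-free equations couple the two ends in a way that leaves precisely the one-parameter family (parametrised by the subset $P$ with $0\notin 3P$ and the split dimension $s$) of Definition~\ref{cons_5}, with no further sporadic solutions and no overlap with types~1--2 once $P\neq\emptyset$ is imposed. Getting the constant $a=(m+2)/2$ and the exact fibre pattern right — and confirming via Section~\ref{sec:uniq}-type checks that this family really is $(3,1)$-sum-free and abnormal — is where the real work lies; everything before it is a routine transcription of the general Fourier plus $3k-4$ argument to the parameters $(3,1)$, $p=4m+3$.
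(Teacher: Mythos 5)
There is a genuine gap at the very first structural step, and it is fatal. You claim that the $(3,1)$-sum-free condition forces the support $S=\{x\in\mathbb F_p:A_x\neq\emptyset\}$ to satisfy $3S\cap S=\emptyset$, hence $|S|\le m+1$. This is false: if $x=y+z+w$ with $x,y,z,w\in S$, the fibrewise consequence is only $(A_y+A_z+A_w)\cap A_x=\emptyset$, which is perfectly consistent with all four fibres being nonempty unless $A_y,A_z,A_w$ are so large that their sumset is forced to be all of $K$. That is why the paper can only prove sum-freeness for the heavy part of the support, namely $C_m$ (the $m$ indices with the largest fibres, where $3\beta_m\ge p+2$ holds), and, when $\omega=m+2$, only the weaker relation $3C_{m-1}\cap C_{m+2}=\emptyset$ (Lemma~\ref{lem_41_total}). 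Indeed, the target configuration of the theorem, type 5 of Definition~\ref{cons_5}, has $\omega(A)=|Supp(A)|=m+2$ — its support is the full interval $[a-1,a+m]$ — so your bound $|S|\le m+1$ would exclude the very structure the theorem asserts must appear. Everything downstream, including the claim $|S|\in\{m,m+1\}$ and the dismissal of the $|S|=m+1$ branch, inherits this error. (The $|S|=m+1$ branch is also not dispatched by ``some fibre is non-full, whence $|A|<mp^{n-1}$'': with $m+1$ nonempty fibres two partial fibres can sum to $p^{n-1}$ and still give $|A|=mp^{n-1}$; this is exactly how the normal type-2 configuration arises there.)

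Beyond this, the real work of the proof is not the Fourier reduction (which you transcribe correctly in outline) but the exhaustive finite case analysis when $\omega=m+2$: one locates $C_\omega\setminus C_m$ inside the small set $\mathbb F_p\setminus(C_m\cup 3C_m)$, eliminates almost every placement with the counting constraints of Lemmas~\ref{lem_contradiction_list_m+1} and~\ref{lem_contradiction_list}, and in the single surviving case $C_{m+2}\setminus C_m=\{a-1,3a-2\}$ with $a=(m+2)/2$ runs a Kneser/Vosper argument on the four boundary fibres $A_{a-1},A_a,A_{3a-3},A_{3a-2}$ to show $A_{a-1}$ is a coset of a subgroup $H$ and all four are unions of $H$-cosets, which is what produces the parameter $P$ with $0\notin 3P$. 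Your sketch gestures at such a ``boundary fibre system,'' but built on the incorrect support bound it cannot reach the $\omega=m+2$ regime where type 5 actually lives, so the argument as proposed does not establish the theorem.
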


\subsection{Function $T$ and $3k-4$ conjecture}\label{subsec:cp}
Now we define the function $T$ in the lower bounds of $m$ in Theorems~\ref{thm_with_fourier}-\ref{thm_3_1}.
Before that,
 we need to introduce some results about the $3k-4$ conjecture, which studies under what condition an additive set can be covered by a short arithmetic progression. If an additive set $A$ can be covered by an arithmetic progression of length $|2A|-|A|+1$, we say $A$ has the {\it covering property}. In the integer ring Freiman proved the following $3k-4$ theorem, see  \cite{Freiman1964} and \cite[Theorem 7.1]{Grynkiewicz2013}.
 %The original paper is written in Russian \cite{Freiman1964}, and see a modern version in, for example, Theorem 7.1 of \cite{Grynkiewicz2013}.
\begin{theorem}[Freiman $3k-4$ Theorem]~\label{thm_3k-4_origin}
Let $A$ be a finite set of integers satisfying $|2A| \le 3|A| - 4$. Then $A$ has the covering property.
\end{theorem}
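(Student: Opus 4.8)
The plan is to reduce the assertion to a purely combinatorial lower bound on $|2A|$ and then prove that bound by induction. First I would \emph{normalize} $A$: translate so that $\min A=0$, and then divide out $d=\gcd(A)$, replacing $A$ by $d^{-1}A$. This changes neither $|A|$ nor $|2A|$, and an arithmetic progression covering $d^{-1}A$ pulls back — after scaling by $d$ and translating — to a progression of the same length covering $A$; so it suffices to treat the case $\min A=0$, $\gcd(A)=1$. Write $A=\{0=a_0<a_1<\cdots<a_{k-1}\}$ and set $\ell=a_{k-1}$. Since $\gcd(A)=1$, the common difference of any progression covering $A$ divides every gap $a_{i+1}-a_i$, hence equals $1$; thus the shortest progression covering $A$ is $\{0,1,\dots,\ell\}$, of length $\ell+1$, and the covering property is \emph{equivalent} to the single inequality $\ell\le |2A|-k$.

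Next I would isolate the core estimate: for a normalized set as above,
\[
  |2A|\ \ge\ \min\{\,k+\ell,\ 3k-3\,\}.
\]
This yields the theorem immediately, since if $|2A|\le 3k-4<3k-3$ then the minimum must be $k+\ell$, giving $\ell\le|2A|-k$. To prove the core estimate I would induct on $k$. The base case is $\ell=k-1$, which forces $A=\{0,1,\dots,k-1\}$ and $|2A|=2k-1$, matching the bound. The baseline for the inductive step is the standard monotone chain $a_0+a_0<a_0+a_1<\cdots<a_0+a_{k-1}<a_1+a_{k-1}<\cdots<a_{k-1}+a_{k-1}$ of $2k-1$ distinct elements of $2A$; the real work is to produce enough \emph{extra} sums, or else to conclude that $\ell$ is small. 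Deleting the top point, put $B=A\setminus\{\ell\}$, with $|B|=k-1$ and diameter $\ell'=a_{k-2}<\ell$, and note $2A=2B\cup(\ell+A)$ while $2B\subseteq[0,2\ell']$; hence every element of $\ell+A$ that exceeds $2\ell'$ is a new sum, and $\ell+a_{k-2}$ and $2\ell$ always qualify. One then splits on $\gcd(B)$: if $\gcd(B)=1$ one applies the inductive bound to $B$ and argues that a \emph{third} new sum must also appear (this is where the gap pattern near the top of $A$ enters); if $\gcd(B)=d'>1$, then $A$ is $d'$ times a primitive set together with the single point $\ell$, one splits $2A$ across the residue classes $0,\ell,2\ell$ modulo $d'$, counts within each class, and closes the bound using $\max(B)/d'\ge k-2$ (with $d'=2$ needing the extra observation that $2\ell\notin 2B$ because $\ell$ exceeds $\max(B)$).

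The main obstacle is the branch $\gcd(B)=1$: deleting the top point only guarantees two new sums beyond $2B$, which falls short of $3k-3$ exactly when $\ell$ is close to $2k$, so one must either exhibit a further sum $\ell+a_i$ lying above $2\ell'$ (or otherwise missing from $2B$) by a careful look at the last few gaps of $A$, or, when no such sum exists, show that $A$ must then be a long interval together with one outlying point and verify the estimate directly for that configuration. This local but fiddly case analysis is precisely what separates $3k-4$ from $3k-3$ and explains why the hypothesis $|2A|\le 3k-4$ is sharp; by contrast the normalization, the reduction to the core inequality, and the $\gcd(B)>1$ branch are routine. All of this is classical, due to Freiman; see \cite{Freiman1964} and \cite[Theorem~7.1]{Grynkiewicz2013}.
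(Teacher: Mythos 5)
The paper does not actually prove Theorem~\ref{thm_3k-4_origin}; it imports it from \cite{Freiman1964} and \cite[Theorem 7.1]{Grynkiewicz2013}. So the only question is whether your sketch constitutes a proof. Your normalization and the reduction to the core inequality $|2A|\ge\min\{k+\ell,\,3k-3\}$ (for $\min A=0$, $\gcd(A)=1$, $\ell=\max A$) are correct and standard, and your $\gcd(B)=d'>1$ branch does close: $2B$, $\ell+B$, $\{2\ell\}$ lie in distinct residue classes mod $d'$ (or, for $d'=2$, the point $2\ell$ escapes $2B$ by size), giving $|2A|\ge(2k-3)+(k-1)+1=3k-3$.

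The genuine gap is the branch $\gcd(B)=1$, and you have named it yourself without filling it. Deleting the top element and invoking the inductive bound $|2B|\ge\min\{k-1+\ell',\,3k-6\}$ together with the two guaranteed new sums $\ell+\ell'$ and $2\ell$ yields only $|2A|\ge\min\{k+1+\ell',\,3k-4\}$. This is short by exactly one of $3k-3$ in the second branch, and short by $\ell-\ell'-1$ of $k+\ell$ in the first branch whenever the top gap of $A$ exceeds $1$. The entire content of the theorem sits in producing the additional sums (or in classifying the configurations where they fail to exist and verifying the bound directly), and your proposal defers this to ``a careful look at the last few gaps'' and ``local but fiddly case analysis'' without exhibiting the argument. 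As written, the proof therefore does not go through; a reader cannot reconstruct the missing step from what you have given, and it is not obvious that the top-element induction can be closed exactly along the lines you describe (this is why the clean modern proofs, e.g.\ Lev--Smeliansky, instead project $A$ to $\mathbb{Z}_\ell$ and apply a Cauchy--Davenport/Kneser-type bound there before lifting back). A smaller issue: for an induction on $k$ the base case is small $k$ (say $k\le 3$), not ``$\ell=k-1$''; the interval case is a degenerate configuration that can occur at every $k$, not the foot of the induction.
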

However, the analog of Theorem~\ref{thm_3k-4_origin} over $\mathbb F_p$ seems much harder, which leads to the following famous $3k-4$ conjecture, see for example~\cite{Roedseth2006}. For behaviors of arithmetic progressions in $\mathbb F_p$, see Section~\ref{sbsc:cd} for more explanations.
\begin{conjecture}[$3k-4$ conjecture]~\label{conj_3k-4_origin}
If $p$ is a large prime number and $A\subset \mathbb F_p$ satisfies $|A+A|\le\min\{ 3|A|-4, p-1\}$, then $A$ has the covering property.
\end{conjecture}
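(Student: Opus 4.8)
The plan is to reduce the conjecture to Freiman's integer theorem (Theorem~\ref{thm_3k-4_origin}) via a \emph{rectification} step, and then to dispose of the high‑density configurations that lie beyond the reach of any rectification argument by working intrinsically in $\mathbb F_p$. Throughout write $|A|=k$ and $|A+A|=k+r$, so the hypothesis reads $r\le 2k-4$ and $k+r\le p-1$, and the goal is to cover $A$ by an arithmetic progression of length $r+1$ (note $r+1\le 2k-3\le p-3$, so such a progression is proper).

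\emph{Step 1 (reduction via rectification).} Call $A$ rectifiable if some nonzero dilate $\lambda\cdot A$ lies in an interval $[-N,N]$ with $2N+1\le p$; then the reduction map $\mathbb Z\to\mathbb F_p$ is a Freiman $2$‑isomorphism on a lift of $\lambda\cdot A$, so there is an integer set $B$ with $|B|=k$ and $|B+B|=k+r$, and Theorem~\ref{thm_3k-4_origin} forces $B$, hence $A$, into a progression of length $r+1$. Thus it suffices to prove every $A$ satisfying the hypotheses is rectifiable. For $k$ small relative to $p$ this is the classical rectification lemma: the bound $|A+A|\le 3k$ controls $\|\widehat{1_A}\|_4$, hence produces a nonzero frequency $r_0$ with $|\widehat{1_A}(r_0)|\gg k$; dilating so that $r_0=1$ concentrates $A$ near a short arc, and a bootstrap (repeatedly applying Cauchy--Davenport to the part of $A$ sticking out of the arc) confines $A$ to an arc of length $O(r)$.

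\emph{Step 2 (sharpening the bootstrap).} The real content is that the arc in Step~1 can be taken of length \emph{exactly} $r+1$, and that $k+r\le p-1$ is precisely what makes it shorter than $p$. I would run the Freiman‑style induction on $r=|A+A|-|A|$ directly in $\mathbb F_p$: the base cases $r\le k+1$ are Vosper's and the Hamidoune--R\"odseth theorems, where $A$ is already an arithmetic progression, and for the inductive step one deletes an extreme point $a\in A$ so that $|(A\setminus\{a\})+(A\setminus\{a\})|$ strictly drops while $A\setminus\{a\}$ stays in the inductive range. Each deletion keeps the relevant sumsets of size $<p$, so the integer proof of Theorem~\ref{thm_3k-4_origin} transfers; the delicate point is guaranteeing that an ``extreme'' point exists once $A$ is on an arc, which is where the quantitative strength of the rectification is consumed.

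\emph{Step 3 (the main obstacle — the dense regime).} The difficulty concentrates at $k$ near $(p+4)/3$, where $r$ is forced close to $p$: no dilate of $A$ can lie on an arc substantially shorter than $p$ unless $A$ is already essentially an arithmetic progression, so the rectification of Step~1 becomes circular and the induction of Step~2 has no integer model from which to start. Here the plan is instead to study $C:=\mathbb F_p\setminus(A+A)$, which is nonempty with $|C|\ge p-3k+4$; for each $c\in C$ the translate $c-A$ is disjoint from $A$, pinning $A$ inside a host set of size $p-k$ with small additive energy, while $C$ itself has doubling small relative to $p-k$. One then attempts a Vosper/Kneser‑type analysis on $C$ (or on $A$ inside $\mathbb F_p\setminus(c-A)$) to conclude that $C$, and therefore $A$, is an interval. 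Carrying this out unconditionally — equivalently, pushing the rectification threshold of Step~1 from the presently available density $k\ll p$ all the way to the sharp $k<(p+4)/3$ — is the crux of the whole argument, and is the step I expect to be the hardest; a complete proof must either extend rectification to this threshold or supplant it by the complement analysis above in the dense range.
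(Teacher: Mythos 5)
There is a fundamental problem here: the statement you are trying to prove is not a result of the paper at all, but the long-standing open $3k-4$ conjecture, which the paper explicitly states remains unresolved and only uses through partial results (Theorem~\ref{thm_new_vosper}, Theorem~\ref{thm_new_freiman_2.4}, packaged into the function $T$ of Definition~\ref{deftau}). So there is no proof in the paper to match, and your proposal cannot be judged "correct" unless it actually settles the conjecture — which it does not. Your Steps 1 and 2 (rectification plus a Freiman-style induction transferred from Theorem~\ref{thm_3k-4_origin}) only operate when $|A|$ is small compared to $p$: the Fourier/arc-bootstrap argument that confines a dilate of $A$ to a short arc genuinely needs a density hypothesis, and in the regime it works it reproduces exactly the known partial theorems (Freiman, R\o dseth, Serra--Z\'emor, Candela et al.) that the paper already cites. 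The deletion-of-an-extreme-point induction is likewise circular in $\mathbb F_p$, since "extreme" only makes sense after $A$ is known to lie on a short arc, which is the conclusion you are after.

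More importantly, your Step 3 openly concedes the gap: in the dense range $|A|$ near $p/3$, where $|A+A|$ is close to $p-1$, neither rectification nor any integer model is available, and the proposed Kneser/Vosper analysis of the complement $C=\mathbb F_p\setminus(A+A)$ is only a heuristic — no argument is given that forces $C$ (hence $A$) to be an interval, and if such an argument existed the conjecture would already be solved. A proof attempt whose final step is "this is the crux and I expect it to be the hardest" is an outline of the difficulty, not a proof. For the purposes of this paper you should treat Conjecture~\ref{conj_3k-4_origin} as an open problem and rely, as the authors do, on the quantitative partial results encoded in $T(c)$; any genuine progress on the dense regime would be a significant independent contribution, not a routine verification.
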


Although many efforts have been taken during the decades~\cite{Freiman1961,Serra2000, Roedseth2006, Serra2009, Candela2019}, the  $3k-4$ conjecture is still open. Significant progress does emerge, though, when considering a ``sharper'' $A$. There are mainly two ways to explain the sharpness. On the one hand, one can require a smaller extension ratio of $A$, which means $|A+A|\slash|A|$ is smaller; on the other hand, one can require $A$ to  be a smaller portion in $\mathbb F_p$, which means the density $|A|\slash p$ is limited. The original $3k-4$ conjecture says that the covering property holds for any density if the extension ratio is roughly $3$.
Known results reveal a trade-off between the extension ratio and the density. See the followings.

 %requires to prove the covering property over roughly extension ratio $3$ and density $1$.

%[]
\begin{theorem}[\cite{Grynkiewicz2024}, Theorem 1.4]\label{thm_new_vosper}
Let $A, B\subset \mathbb F_p$ be nonempty subsets satisfying $A+B\ne \mathbb F_p$. Set $C=-\mathbb F_p\backslash(A+B)$. If we have $|A+B|=|A|+|B|+r$ for some $r$ satisfying \[|A+B|\le\min\{|A|+1.0527|B|-3, p-9(r+3)\},\]
there are arithmetic progressions $P_A, P_B, P_C\subset \mathbb F_p$ of the same common difference such that for all $X\in\{A, B, C\}$, $X\subset P_X$ and $|P_X|\le |X|+r+1$.
\end{theorem}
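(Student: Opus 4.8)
The plan is to recognize Theorem~\ref{thm_new_vosper} as a three-set refinement of the $3k-4$ phenomenon in $\mathbb{F}_p$ and to reduce it to the integer case, Theorem~\ref{thm_3k-4_origin}, by a rectification argument. First I would symmetrize the hypothesis. Since $-C=\mathbb{F}_p\setminus(A+B)$ by definition, we get $0\notin A+B+C$ and $|A|+|B|+|C|=p-r$, while the conclusion to be proved is exactly that each of $A$, $B$, $C$ lies in an arithmetic progression of a common difference and of length at most its own size plus $r+1$. After dilating by a suitable $t\in\mathbb{F}_p^\times$ we may assume this common difference is $1$, so the task becomes a Freiman-type inverse theorem for a zero-sum-free triple whose total size $p-r$ fills almost all of $\mathbb{F}_p$ but whose defect $r$ is small, under the density cushion $|A+B|\le p-9(r+3)$ together with the doubling-type bound $|A+B|\le|A|+1.0527|B|-3$.

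The heart of the argument is to build a Freiman $2$-isomorphism carrying $A$ and $B$ (equivalently, a suitable union built from $A$, $B$ and $-C$) onto sets of integers confined to a short interval, \emph{without loss} in the relevant length parameters. For this I would invoke the sharpest available rectification criteria --- Fourier-analytic bounds of Lev--Konyagin / Green--Ruzsa / Bilu type, or, following \cite{Grynkiewicz2024}, an iterated Dyson $e$-transform that progressively forces one-dimensional structure while preserving both inequalities in the hypothesis. The density condition $p-9(r+3)$ is precisely what provides room for rectification: $-C$ exceeds $p-|A|-|B|$ in size only by $r$, so $A$ and $B$ cannot spread out in $\mathbb{F}_p$, and one can locate the isomorphism onto $\mathbb{Z}$ without inflating the images. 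A preliminary case split peels off the degenerate configurations --- where one of $|A|,|B|,|C|$ is very small and Vosper--Hamidoune-type arguments apply directly, or where the triple is already visibly ``one-dimensional'' --- from the generic situation that genuinely requires rectification.

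Once $A$, $B$, $C$ have been transported to sets of integers, I would apply the integer $3k-4$ theorem, or rather its refinements for sets of unequal size (Lev--Smeliansky, Bardaji--Grynkiewicz), to conclude that each integer image is contained in an arithmetic progression of the expected length; pulling these back through the inverse isomorphism and undoing the dilation produces the three progressions $P_A$, $P_B$, $P_C$ of a common difference with $|P_X|\le|X|+r+1$. The somewhat unusual constant $1.0527$ enters at this junction: it is the threshold in the integer / rectification input that one can afford with two sets of unequal size, optimized so that the rectification step and the integer $3k-4$ step are simultaneously valid.

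The main obstacle will be the rectification step with sharp constants. A lossy Freiman isomorphism would output progressions a constant factor too long and destroy the exact bound $|P_X|\le|X|+r+1$; making the Fourier (or Dyson-transform) machinery deliver an isomorphism onto an interval of essentially optimal length under only the stated density and doubling assumptions, uniformly in $r$, is the delicate part, and it is there that both the constant $9$ in $p-9(r+3)$ and the constant $1.0527$ are actually spent. The remaining work is the lengthy but routine case bookkeeping characteristic of $3k-4$-type proofs.
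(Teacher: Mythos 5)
The paper does not prove this statement at all: it is quoted verbatim as Theorem 1.4 of \cite{Grynkiewicz2024} and used as a black box (its only role here is to guarantee, via Definition~\ref{deftau}, that $\tau(c)\ge 0.0527$ and hence $T(c)<20$). So there is no in-paper proof to compare against; the relevant comparison is with the argument in \cite{Grynkiewicz2024} itself.

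As a standalone attempt, your proposal has a genuine gap: it is a strategy outline, not a proof. The overall shape --- symmetrize to a zero-sum-free triple $(A,B,C)$ with $|A|+|B|+|C|=p-r$, rectify to $\mathbb{Z}$, and apply integer $3k-4$-type results --- is indeed the standard route for such theorems. But every quantitatively hard step is deferred. You never construct the Freiman isomorphism, never show that the hypotheses $|A+B|\le |A|+1.0527|B|-3$ and $|A+B|\le p-9(r+3)$ actually suffice for a lossless rectification, and never derive where the constants $1.0527$ and $9$ come from; you only assert that ``it is there that both constants are actually spent.'' That deferred step \emph{is} the content of the theorem --- without it the argument establishes nothing beyond weaker folklore versions. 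A further technical point: the integer input you would need is not Theorem~\ref{thm_3k-4_origin} as stated (which requires $|2A|\le 3|A|-4$ for a single set), but a two-set, unequal-size refinement whose conclusion must be threaded back through the modular reduction so as to yield the exact bound $|P_X|\le |X|+r+1$ uniformly in $r$ and simultaneously for the complement set $C$; your sketch names the relevant references but does not carry out this bookkeeping. As written, the proposal would be accepted as a plausible research plan, not as a proof.
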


Setting $B=A$ in Theorem~\ref{thm_new_vosper}, we know that the covering property  holds for any density if the extension ratio $|A+A|\slash |A|$ is roughly at most 2.0527.

\begin{theorem}[\cite{Roedseth2006}]\label{thm_new_freiman_2.4}
Let $A \subset \mathbb F_p$ with $|A+A|\le 2.4|A|-3$. If further $|A|\le p\slash10.7$, then $A$ has the covering property.
\end{theorem}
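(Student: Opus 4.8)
\medskip\noindent\textbf{Proof proposal.}
The plan is to reduce Theorem~\ref{thm_new_freiman_2.4} to the integer $3k-4$ theorem (Theorem~\ref{thm_3k-4_origin}) by \emph{rectifying} $A$: the goal is to produce a dilate $d\cdot A$ (with $d\in\mathbb{F}_p^{\ast}$) which, after a translation, lies in an interval of $\mathbb{F}_p$ short enough to be identified with a genuine finite set of integers without creating or destroying any additive relation, so that the covering property over $\mathbb{Z}$ transfers back verbatim. First I would normalize: translate so that $0\in A$, and observe that, $p$ being prime, we are free to replace $A$ by any dilate $d\cdot A$ at any stage, since this preserves $|A|$ and $|A+A|$ and carries arithmetic progressions to arithmetic progressions. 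We may also assume $|A|$ exceeds an absolute constant, the finitely many small sets being checked directly. (When $|A+A|\le 2.0527|A|-3$ the conclusion is already immediate from Theorem~\ref{thm_new_vosper} with $B=A$, which under the density bound is applicable for large $p$; so the genuine content is the intermediate range $2.0527|A|<|A+A|\le 2.4|A|-3$.)

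The heart of the argument is the rectification step, and it is driven by a large Fourier coefficient. The autocorrelation $g:=1_A\ast 1_A$ is nonnegative, supported on $A+A$, satisfies $\sum_x g(x)=|A|^2$ and $\|g\|_\infty\le|A|$, hence $\|g\|_2^2\ge|A|^4/|A+A|\ge|A|^3/2.4$; by Parseval this gives $\sum_{r}|\widehat{1_A}(r)|^4\ge p\,|A|^3/2.4$. Subtracting the $r=0$ term $|A|^4$ and using $\sum_{r\ne 0}|\widehat{1_A}(r)|^2=|A|(p-|A|)$, one extracts a nonzero frequency $r$ with $|\widehat{1_A}(r)|\ge c\,|A|$ for an explicit $c=c(2.4)$ comfortably larger than $1/2$ once $|A|/p$ is small. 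Dilating by $r^{-1}$ we may assume $r=1$, so that $\big|\sum_{a\in A}e^{2\pi i a/p}\big|\ge c\,|A|$. One then needs a quantitative \emph{concentration} statement: an exponential sum this large forces $A$, after a suitable translation, to lie in a short arc $I\subset\mathbb{F}_p$. This is exactly where R\o dseth's density hypothesis $|A|\le p/10.7$ is spent, and it is the delicate point: a crude second-moment estimate controls only the \emph{typical} distance $|a-a_0|$ and cannot by itself rule out a few outliers, so one must exploit the smallness of $|A+A|$ (or iterate the concentration) to eliminate them and pin $A$ into an arc of length $<p/2$. Optimizing this is what yields the explicit constant $10.7$.

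Once $A$ lies in an arc $I$ with $2|I|<p$, the endgame is routine. Identifying $I$ with an interval of integers turns $A$ into a set $A'\subset\mathbb{Z}$, and since $A'+A'$ still lies in an integer interval of length $<p$, reduction mod $p$ restricts to a bijection $A'+A'\to A+A$, so $|2A'|=|2A|\le 2.4|A|-3\le 3|A|-4$ (using $|A|\ge 2$). Theorem~\ref{thm_3k-4_origin} applied to $A'$ yields an arithmetic progression $Q\subset\mathbb{Z}$ with $A'\subseteq Q$ and $|Q|=|2A'|-|A'|+1=|2A|-|A|+1$; transporting $Q$ back into $\mathbb{F}_p$ and undoing the dilation and translation gives an arithmetic progression of the same length containing $A$, which is the covering property.

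The main obstacle is therefore concentrated entirely in the second paragraph: establishing a concentration lemma sharp enough to preclude wraparound. The overall scheme---large Fourier coefficient $\to$ dilate $\to$ concentrate into a short interval $\to$ lift to $\mathbb{Z}$ $\to$ apply the integer $3k-4$ theorem---is the classical one going back to Freiman, and the only real work, as well as the only place the density of $A$ enters, lies in making that concentration step effective with the stated constant.
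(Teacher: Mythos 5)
The paper does not prove this statement at all: Theorem~\ref{thm_new_freiman_2.4} is quoted verbatim from R{\o}dseth~\cite{Roedseth2006} and used as a black box (its only role here is to feed the definition of the function $T$ in Definition~\ref{deftau}). So there is no in-paper proof to compare against; what can be said is whether your sketch would actually establish the cited result.

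Your outline correctly reproduces the classical Freiman--R{\o}dseth strategy (small doubling $\Rightarrow$ large nonzero Fourier coefficient $\Rightarrow$ dilate and rectify into a short arc $\Rightarrow$ lift to $\mathbb{Z}$ $\Rightarrow$ apply Theorem~\ref{thm_3k-4_origin}), and the first and third paragraphs are sound: the Cauchy--Schwarz/Parseval computation giving $\max_{r\neq 0}|\widehat{1_A}(r)|\geq c|A|$ with $c>1/2$ under the density hypothesis is correct, and the endgame (once $A$ sits in an arc $I$ with $2|I|<p$, the reduction map is a Freiman isomorphism on $A+A$, and $2.4|A|-3\leq 3|A|-4$ for $|A|\geq 2$) is routine and right. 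However, the proposal has a genuine gap exactly where you flag it: the concentration step is asserted, not proved. A single Fourier coefficient of size $c|A|$ with $c\approx 0.57$ does \emph{not} by itself confine $A$ to an arc of length $<p/2$; a constant fraction of $A$ could a priori lie far from the mode. Eliminating those outliers is precisely where both hypotheses ($|A+A|\leq 2.4|A|-3$ and $|A|\leq p/10.7$) are consumed and where the specific constants $2.4$ and $10.7$ are determined, via a careful analysis of how a non-concentrated $A$ would inflate $A+A$. Writing ``optimizing this is what yields the explicit constant $10.7$'' names the difficulty without resolving it, so as it stands the argument is a roadmap rather than a proof. To make it complete you would need to either carry out that quantitative concentration lemma or, as the paper does, simply cite~\cite{Roedseth2006}.
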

By Theorem~\ref{thm_new_freiman_2.4}, if we reduce the density $|A|\slash p$ to $1\slash 10.7$, we only need  the extension ratio $|A+A|\slash |A|$ to be at most 2.4.
Based on this observation, we introduce the function $T$ as follows.

%The first result can be seen as the covering property of $A$ by setting $B=A$, and it shows that if we insist the ratio goes to $1$, we can have the covering property by letting extension ratio $|A+A|\slash |A|$ no larger than approximately 2.0527. When we {\color{blue}reduce} $|A|\slash p$ to $1\slash 10.7$, we only need $|A+A|\slash |A|$ to be no larger than approximately 2.4.
%Under this observation, it is natural to let out the following definition.
\begin{definition}\label{deftau}
For a constant $c\in (0,1)$, we say that $\tau\in (0, 1]$ is \emph{feasible for $c$} if any
$A \subseteq \mathbb F_p$ with $|A|\le cp$ has the covering property provided that $|A+A|\le (2+\tau)|A|-3$.
 %we can deduce that any subset $A \subseteq \mathbb F_p$ has the covering property for every prime $p\ge 2$, provided that $|A|\le cp$ and $|A+A|\le (2+\tau)|A|-3$.
  Define the supremum of feasible $\tau$ for $c$ as $\tau(c)$, and denote $T(c)=\frac 1{\tau(c)}$.
\end{definition}

By Definition~\ref{deftau}, for any $0\le c\le c'\le 1$, we must have $\tau(c)\ge \tau(c')$. From Theorem~\ref{thm_new_vosper} we know that for any $0\le c<1$, $\tau(c)\ge 0.0527$ and hence $T(c)< 20$. From Theorem~\ref{thm_new_freiman_2.4} we know that $T(\frac1{10.7})\le 2.5$. When $c$ goes down, $T(c)$ does not increase. So in Theorems~\ref{thm_with_fourier}-\ref{thm_3_1}, we can replace all the values of $T$ by $20$ if we are not committed to finding the best possible lower bound of $m$.

\subsection{Main idea and organization}
Our idea of proving Theorems~\ref{thm_with_fourier}-\ref{thm_3_1} is motivated from \cite{Reiher2024} which uses a projection method. Given a pair $(v, K)$ such that $K\subset\mathbb F_p^n$ is a subspace of dimension $n-1$ and $v\in \mathbb F_p^n\backslash K$, we can naturally write any element in $\mathbb F_p^n$ as $x=iv+x'$ with $i\in \mathbb F_p$ and $x'\in K$. For any set $A\subset \mathbb F_p^n$, writing each element in this form, we can partition $A$ into $p$ parts based on different $i\in \mathbb F_p$, and project each part into a subset of $K$. %The size of the $i$th part of $A$ can be regraded as the $i$th \emph{weight} of $A$.
 First, we show that if $A$ is $(k, \ell)$-sum-free and a little ``unbalanced'' based on $(v, K)$, that is, most parts of $A$ in this partition are empty except for a few parts, then by some additive combinatorial analysis,  $A$ is forced to be extremely unbalanced, which means almost all parts of $A$ are either very close to $K$ or empty. Then considering the set of those few indices $i$ with nonempty parts (i.e. \emph{support} of $A$), the problem is reduced to consider structures of   $(k, \ell)$-sum-free subsets of the $1$-dimensional space $\mathbb F_p$.
By Fourier analysis we show that, if $A$ is $(k, \ell)$-sum-free, the spectral radius of $A$ is big, leading to that for at least one choice of $(v, K)$, the corresponding distribution of $A$ is unbalanced.

 %send any element $x=iv+x'$ of $\mathbb F_p^n$ with $i\in \mathbb F_p$ and $x'\in K$ to $(i, x')$. For any $A\subset \mathbb F_p^n$, the projection of $A$ on the one-dimensional subspace $L$, can be seen as $p$ subsets of $K$. That is, for any $i\in \mathbb F_p$, denote $A_i$ the $i$th projection of $A$ under such one-dimensional projection (or one-dim projection for short) derived by pair $(v, K)$, as $A_i=(A-iv)\cap K$.
%First we show that if $A$ is $(k, \ell)$-sum-free and a little ``unbalanced'' under a given one-dim projection, which means $A$ gathers in a few $A_i$ but leaves other $A_j$ empty,
%then by some additive combinatorics analysis we can conclude that $A$ is extremely unbalanced, which means most of $A_i$ are either almost $K$ or empty.
%This helps us to efficiently clear out all the possible structures of $A$ from considering $(k, \ell)$-sum-free structures of $\mathbb F_p^n$ to considering the $(k, \ell)$-sum-free structures of $\mathbb F_p$.
%Then by Fourier analysis we show that, if $A$ is $(k, \ell)$-sum-free, the spectral radius of $A$ cannot be small, leading to at least one choice of $(v, K)$ pair such that $A$ is unbalanced under the one-dim projection derived by $(v, K)$.

The main contribution of this paper is mainly two-folded. First, we give a combining method of additive combinatorial analysis and Fourier analysis for the structures of large $(k, \ell)$-sum-free sets in $\mathbb F_p^n$, which yield Hilton-Milner type results for a wide range of parameters. We believe that this method can be extended to incorporate more parameters that have not yet been considered. Second, we find a new application of the $3k-4$ conjecture in bounding the field size in our main results. Further improvements on this conjecture can give better lower bound of $p$ applicable to our results.

 %A standard proving process for processing the structure of $A$ is given when $A$ is extremely unbalanced. By following this outline more results for the Hilton-Milner type results of $(k, \ell)$-sum-free sets may be given. Second, we have found a new application context for the researches about the $3k-4$ conjecture. Further improvements on this topic can give better lower bound for the prime $p$ applicable to our main results.

The article is organized as follows. In Section~\ref{sec_prelim} we give some notations and preliminary results frequently used in this paper. The proofs of Theorem~\ref{thm_k_l_max_structure} and the non-overlapping among the five types are also included.
%Sections~\ref{sec_add_struc} and \ref{sec_fourier} are devoted to prove Theorem~\ref{thm_with_fourier} in a standard proving process, while Section~\ref{sec_add_struc} focuses on the additive combinatoric part and Section~\ref{sec_fourier} focuses on the Fourier analysis part. By following such standard process with some more detailed analysis, we prove Theorem~\ref{thm_(5, 1)_unnormal} and Theorem~\ref{thm_3_1} in Section~\ref{sec_unique_extrem}. Section~\ref{sec_conc} concludes our paper and leaves some open problems.
Section~\ref{sec_add_struc} is devoted to analyze the structures of the support of a large $(k, \ell)$-sum-free subset by using additive combinatorial tools. Theorem~\ref{thm_with_fourier} is proved in Section~\ref{sec_proof_of_main_theorem}, while the cases when $A$ is of small weight and large weight are treated differently, and the latter one applies Fourier analysis.
By following the same proving process but with extra criterion and techniques, Theorems~\ref{thm_(5, 1)_unnormal} and \ref{thm_3_1} are proved in Section~\ref{sec_unique_extrem}. Section~\ref{sec_conc} concludes our paper and presents several open problems.

%There is a trade-off between the two prerequisites $|A|+1.0527|B|-3$ and $p-9(r+3)$. More generally, this theorem is only one of a possible entry point for proving the $3k-4$ conjecture. Generally, when $A=B$, those two prerequisites can be transformed to $|2A|\le (2+\tau)|A|-3$ for some constant $\tau\in (0, 1]$ and $|A|\le cp$ for some constant $c$. One of the most famous choices of $\tau$ and $c$ is for $\tau=0.4$ while $c=\frac 1 {35}$ by Freiman, which is later improved to  $\tau=0.4$ while $c=\frac 1 {10.7}$ by Rodseth. Whatsoever, we denote the best $\tau$ after $c$ is known as $\tau(c)$, which is a function of $c$ which decreases when $c$ goes larger. The above results says that $\tau(1\slash10.7)\ge 0.4$, and $\tau(1\slash2.527)\ge 0.0527$.

%\begin{remark}
%There is a trade-off between the two prerequisites $|A|+1.0527|B|-3$ and $p-9(r+3)$. More generally, this theorem is only one of a possible entry point for proving the $3k-4$ conjecture. Generally, when $A=B$, those two prerequisites can be transformed to $|2A|\le (2+\tau)|A|-3$ for some constant $\tau\in (0, 1]$ and $|A|\le cp$ for some constant $c$. One of the most famous choices of $\tau$ and $c$ is for $\tau=0.4$ while $c=\frac 1 {35}$ by Freiman, which is later improved to  $\tau=0.4$ while $c=\frac 1 {10.7}$ by Rodseth. Whatsoever, we denote the best $\tau$ after $c$ is known as $\tau(c)$, which is a function of $c$ which decreases when $c$ goes larger. The above results says that $\tau(1\slash10.7)\ge 0.4$, and $\tau(1\slash2.527)\ge 0.0527$.
%\end{remark}

\section{Preliminaries}\label{sec_prelim}

We begin with some basic tools widely used in additive combinatorics. Given a finite nonempty subset $A$ of an abelian group $G$, we write $Sym(A)\triangleq\{g\in G: \{g\}+A=A\}$  for the \emph{symmetric group} of $A$. Then $A$ is a union of some cosets of $Sym(A)$ and thus $A=A+Sym(A)$. We call two sets of $G$ {\it isomorphic} if there is an automorphism of $G$ sending one to the other.
%(may move this definition to introduction, may mention that we consider $\mathbb F_p^n$ as an additive group. ){\color{blue}(I move it to just under Theorem I.1. So if feasible here can be deleted.)}

\begin{theorem}[Kneser~\cite{Kneser1953, Kneser1954}]\label{thm_Kneser}
If $A_1, \ldots, A_k$ are finite nonempty subsets of an abelian group $G$, and $H=Sym(A_1+\cdots + A_k)$, then
\[|A_1+\cdots+ A_k|\ge |A_1+ H|+\cdots +|A_k+ H|-(k-1)|H|.\]
Consequently, $|A_1|+\cdots +|A_k|\leq |A_1+\cdots+ A_k|+(k-1)|H|$.
\end{theorem}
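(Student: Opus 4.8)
The two displayed inequalities are equivalent, so I would record that reduction first. The first implies the second at once, since $A_i\subseteq A_i+H$ gives $|A_i|\le |A_i+H|$. Conversely, suppose the ``consequently'' inequality holds for every finite system of sets, and apply it to $A_1+H,\dots,A_k+H$: writing $S=A_1+\cdots+A_k$ and $H=Sym(S)$, the set $S$ is a union of cosets of $H$, so $S+H=S$ and $(A_1+H)+\cdots+(A_k+H)=S+H=S$, while $Sym(S+H)=Sym(S)=H$; hence $\sum_i|A_i+H|\le |S|+(k-1)|H|$, which is the first inequality. So it suffices to prove $|A_1|+\cdots+|A_k|\le |S|+(k-1)|Sym(S)|$.

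I would prove this by induction on $k$, with $k=1$ trivial. The essential input is the case $k=2$, namely $|A|+|B|\le |A+B|+|Sym(A+B)|$ for finite nonempty $A,B$. Granting it, fix $k\ge 3$, set $A'=A_1+\cdots+A_{k-1}$ and $S=A'+A_k$, and note that $Sym(A')\subseteq Sym(S)$, since $Sym(A')+S=(Sym(A')+A')+A_k=S$. Applying the case $k=2$ to $(A',A_k)$ gives $|A'|+|A_k|\le |S|+|Sym(S)|$, while the induction hypothesis applied to $A_1,\dots,A_{k-1}$ gives $|A_1|+\cdots+|A_{k-1}|\le |A'|+(k-2)|Sym(A')|$. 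Adding these and using $|Sym(A')|\le |Sym(S)|$ gives $|A_1|+\cdots+|A_k|\le |S|+|Sym(S)|+(k-2)|Sym(A')|\le |S|+(k-1)|Sym(S)|$, as required.

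It remains to prove the two-summand case, which is the heart of the matter. Let $H=Sym(A+B)$ and pass to the quotient $G/H$, writing $\bar X$ for the image of a subset $X$. Since $A+B$ is $H$-periodic one has $|A+B|=|H|\,|\bar A+\bar B|$, $|A+H|=|H|\,|\bar A|$ and $|B+H|=|H|\,|\bar B|$; moreover $Sym(\bar A+\bar B)$ is trivial in $G/H$, because a lift $g$ of any stabilizing element would satisfy $g+(A+B)=A+B$ (two $H$-periodic sets with equal image in $G/H$ being equal) and hence lie in $H$. Thus it is enough to treat the \emph{primitive} case $Sym(A+B)=\{0\}$, where the claim becomes the Cauchy--Davenport-type bound $|A+B|\ge |A|+|B|-1$. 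This I would establish by the Dyson $e$-transform: replacing $(A,B)$ by $(A\cup(B+e),\,B\cap(A-e))$ leaves $|A|+|B|$ unchanged, only shrinks the sumset, and keeps the second set nonempty provided $e\in A-B$; iterating such transforms and passing to an extremal pair (say one whose second set has smallest possible size), one shows that primitivity forces that second set to be a singleton, in which case the bound is immediate. The main obstacle is precisely this last step --- arranging the transform process so that it terminates and verifying that the aperiodicity hypothesis $Sym(A+B)=\{0\}$ rules out the periodic configurations that would otherwise obstruct it. This is the classical delicate core of Kneser's theorem, and I would complete it following the periodicity-based case analysis of Kneser's original argument (see \cite{Grynkiewicz2013} for a modern account).
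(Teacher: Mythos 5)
The paper does not prove this statement at all: Theorem~\ref{thm_Kneser} is quoted as a classical result with citations to Kneser's original papers, so there is no in-paper argument to compare yours against. Your surrounding reductions are all correct and standard: the equivalence of the two displayed inequalities (applying the second to the sets $A_i+H$ and using $S+H=S$, $Sym(S+H)=Sym(S)$), the induction on $k$ via $Sym(A_1+\cdots+A_{k-1})\subseteq Sym(S)$, and the passage to $G/H$ in the two-summand case, where $Sym(\bar A+\bar B)$ is indeed trivial and the desired inequality becomes $|\bar A+\bar B|\ge|\bar A|+|\bar B|-1$ for an aperiodic sumset.

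However, as a self-contained proof there is a genuine gap exactly where you flag it. The Dyson $e$-transform facts you record (conservation of $|A|+|B|$, shrinkage of the sumset, nonemptiness for $e\in A-B$) are correct, but the transform alone does not deliver the aperiodic case in a general abelian group: if the process stalls with $|B|\ge 2$, one only deduces that $B-B$ stabilizes $A$, and since the transformed sumset $A(e)+B(e)$ may acquire a nontrivial stabilizer even when $A+B$ had none, one cannot simply invoke aperiodicity of the original pair to rule this out. Handling that obstruction is the entire content of Kneser's theorem, and your proposal explicitly defers it to the literature. So what you have actually proved is the (correct, and worthwhile) reduction of the $k$-fold statement to the two-summand aperiodic bound; the two-summand bound itself remains cited rather than proved, which puts your write-up on the same footing as the paper's bare citation rather than constituting an independent proof.
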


When  $G=\mathbb F_p$ and $k=2$, instead we have

\begin{theorem}[Cauchy-Davenport \cite{Cauchy1813,Davenport1935}]\label{thm_Cauchy_daven}
If $p$ is a prime number and $A, B\subset \mathbb F_p$ are nonempty, then
\[|A+B|\ge \min\{p, |A|+|B|-1\}.\]
\end{theorem}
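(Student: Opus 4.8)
The plan is to prove that for a prime $p$ and nonempty $A,B\subset\mathbb F_p$ one has $|A+B|\ge\min\{p,|A|+|B|-1\}$, using the polynomial method (Combinatorial Nullstellensatz), which gives the cleanest self-contained argument. First I would dispose of the easy case $|A|+|B|>p$: here I claim $A+B=\mathbb F_p$, since for any $c\in\mathbb F_p$ the sets $A$ and $c-B$ have sizes summing to more than $p$, so they intersect, i.e. $a=c-b$ for some $a\in A$, $b\in B$, giving $c=a+b\in A+B$. Thus in this case $|A+B|=p\ge\min\{p,|A|+|B|-1\}$, and we may henceforth assume $|A|+|B|\le p$, so that $|A|+|B|-1<p$ and it suffices to prove $|A+B|\ge|A|+|B|-1$.

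For the main case I would argue by contradiction: suppose $|A+B|\le|A|+|B|-2$. Choose a set $C\subset\mathbb F_p$ with $A+B\subseteq C$ and $|C|=|A|+|B|-2$ (possible since $|A+B|\le|A|+|B|-2\le p$, and if $|A+B|<|A|+|B|-2$ we just enlarge it arbitrarily inside $\mathbb F_p$). Consider the polynomial
\[
f(x,y)=\prod_{c\in C}(x+y-c)\in\mathbb F_p[x,y],
\]
which has total degree $\deg f=|C|=|A|+|B|-2$ and vanishes on $A\times B$ by construction. Now I would apply the Combinatorial Nullstellensatz: write $i=|A|-1$ and $j=|B|-1$, so $i+j=|A|+|B|-2=\deg f$; the coefficient of the monomial $x^iy^j$ in $f$ equals $\binom{|A|+|B|-2}{|A|-1}$ (the coefficient of $x^iy^j$ in $(x+y)^{i+j}$, since only the top-degree part of $f$ contributes to a monomial of degree $i+j=\deg f$). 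This binomial coefficient is nonzero in $\mathbb F_p$ because $|A|+|B|-2<p$, so none of the relevant factorials are divisible by $p$. The Combinatorial Nullstellensatz then asserts that $f$ cannot vanish on all of $A\times B$ when $|A|>i$ and $|B|>j$, contradicting the construction of $f$. Hence $|A+B|\ge|A|+|B|-1$, completing the proof.

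The main obstacle, and the only genuinely substantive input, is the Combinatorial Nullstellensatz itself (Alon's theorem); everything else is bookkeeping. If one prefers to avoid invoking that external result, the alternative route is the classical Davenport–Cauchy argument by induction on $|B|$: the base case $|B|=1$ is trivial, and for the inductive step one uses the $e$-transform — translating $B$ so that some element of $B$ lies in $A$ and some does not, which is possible unless $A+t=A$ for the relevant translates $t$, a situation ruled out in $\mathbb F_p$ since $A$ is a proper nonempty subset of a group of prime order and hence has trivial stabilizer. I would present the polynomial-method proof as the primary argument since it is shortest, and remark that Kneser's theorem (Theorem~\ref{thm_Kneser}) also implies this as the special case $G=\mathbb F_p$, $k=2$, because the only subgroups $H$ of $\mathbb F_p$ are $\{0\}$ and $\mathbb F_p$: if $H=\{0\}$ the Kneser bound is exactly $|A+B|\ge|A|+|B|-1$, and if $H=\mathbb F_p$ then $A+B=\mathbb F_p$.
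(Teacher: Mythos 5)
Your proposal is correct. Note, however, that the paper does not prove Cauchy--Davenport at all: it is quoted as a classical result with citations, stated immediately after Kneser's theorem with the remark ``When $G=\mathbb F_p$ and $k=2$, instead we have \ldots''. So there is no in-paper argument to compare against; what you give is the standard Alon--Nathanson--Ruzsa polynomial-method proof, and all the steps check out: the case $|A|+|B|>p$ is handled correctly by the pigeonhole intersection of $A$ and $c-B$; in the main case the enlargement of $A+B$ to a set $C$ of size $|A|+|B|-2\le p-2$ is legitimate, the polynomial $f(x,y)=\prod_{c\in C}(x+y-c)$ vanishes on $A\times B$, the coefficient of $x^{|A|-1}y^{|B|-1}$ is $\binom{|A|+|B|-2}{|A|-1}$, which is nonzero mod $p$ precisely because $|A|+|B|-2<p$, and the Combinatorial Nullstellensatz then gives the contradiction. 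Your closing remark that the theorem also follows from the paper's Theorem~\ref{thm_Kneser} (since the only subgroups of $\mathbb F_p$ are $\{0\}$ and $\mathbb F_p$) is the derivation most in the spirit of the paper's own toolkit and could serve as a one-line proof using only results already stated there; the e-transform sketch is somewhat loose (the existence of a suitable translate needs the stabilizer argument spelled out), but since you offer it only as an aside, this does not affect the validity of your primary argument.
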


The extremal case of Theorem~\ref{thm_Cauchy_daven} was completely determined by \cite{vosper1956critical}. The following characterization is the main case when $p$ is big.
\begin{theorem}[Vosper~\cite{vosper1956critical}]\label{thm_Vosper}
Let $p$ be a prime and $A, B\subset \mathbb F_p$ satisfying $|A|, |B|\ge 2$ and $|A+B|\le p-2$. Then $|A+B|=|A|+|B|-1$ if and only if $A$ and $B$ are \emph{representable} as arithmetic progressions (APs) in $\mathbb F_p$ with the same common difference.
\end{theorem}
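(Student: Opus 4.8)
The ``if'' direction is immediate: if $A=\{a,a+d,\dots,a+(|A|-1)d\}$ and $B=\{b,b+d,\dots,b+(|B|-1)d\}$ share the common difference $d\neq 0$, then $A+B=\{a+b,a+b+d,\dots,a+b+(|A|+|B|-2)d\}$, which has exactly $|A|+|B|-1$ elements. The content is the ``only if'' direction, which I plan to prove by induction on $|A+B|$ (equivalently on $|A|+|B|$), keeping the standing hypotheses $|A|,|B|\ge 2$ and $|A+B|=|A|+|B|-1\le p-2$; by symmetry I assume $|A|\le|B|$. Since every automorphism of $\mathbb F_p$ is affine, $x\mapsto\lambda x+t$, and both the hypothesis and the conclusion are affine-invariant, I may dilate and translate freely.

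\emph{The core case $|A|=2$.} After an affine map, assume $A=\{0,1\}$, so that $A+B=B\cup(B+1)$ has size $|B|+1$; hence $|B\cap(B+1)|=|B|-1$ and so $|B\setminus(B+1)|=1$. The set $B\setminus(B+1)$ is precisely the collection of ``left endpoints'' of the maximal runs of consecutive elements of $B$ in $\mathbb Z/p\mathbb Z$, so $B$ consists of a single run; as $|B|\le p-3<p$ this run is a genuine interval, i.e.\ an AP with common difference $1$. Then $A$ and $B$ are APs with the same common difference, and undoing the affine map gives the claim. The case $|B|=2$ is symmetric, so from now on $3\le|A|\le|B|$.

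\emph{The inductive step.} I would run the Dyson $e$-transform: for $e\in\mathbb F_p$ set $A_e=A\cup(B+e)$ and $B_e=B\cap(A-e)$; then $A_e+B_e\subseteq A+B$, $|A_e|+|B_e|=|A|+|B|$, and by Cauchy--Davenport (valid since $A_e+B_e\subseteq A+B$ is a proper subset of $\mathbb F_p$) one gets $|A_e+B_e|=|A_e|+|B_e|-1$ whenever $B_e\neq\emptyset$, so $(A_e,B_e)$ is again a critical pair. Here $|B_e|$ equals the number of pairs $(a,b)\in A\times B$ with $a-b=e$. A short ``popular sum'' argument shows that if $(A-A)\cap(B-B)=\{0\}$ then every value $a+b$ is attained only once, which is impossible since $|A|\,|B|>|A+B|$; hence some $e$ has $|B_e|\ge 2$. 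If one can choose $e$ with $|B_e|$ strictly below $\min(|A|,|B|)$, the induction hypothesis makes $A_e$ and $B_e$ APs of a common difference, say $d$; dilating $d$ to $1$ places $A\subseteq A_e$ and $B\subseteq A_e-e$ inside genuine intervals of length $<p$, and since $A+B=A_e+B_e$ also lies in a genuine interval of length $<p$ one may ``unroll'' to $\mathbb Z$ and invoke the trivial integer equality case of $|A+B|\ge|A|+|B|-1$ to conclude. A second tool for the remaining configurations is the complement $C=\mathbb F_p\setminus(A+B)$: one checks $C-B=\mathbb F_p\setminus A$ and $C-A=\mathbb F_p\setminus B$, so that $A$, $B$, $-C$ are pairwise critical pairs whose sizes sum to $p+1$; applying the induction hypothesis to whichever of these pairs has the strictly smaller sumset, together with the fact that the complement of an arc in $\mathbb Z/p\mathbb Z$ is again an arc, transports the AP structure back to $A$ and $B$.

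\emph{Main obstacle.} The delicate part is the bookkeeping of the case split so that every branch strictly decreases the chosen induction measure. Concretely: (i) one must rule out the degenerate possibility that the transform only ever yields $|B_e|\in\{0,1,|B|\}$, which I would do by bounding $|A-B|$ via Cauchy--Davenport, noting that $\{e:|B_e|=|B|\}=\{e:B+e\subseteq A\}$ forces $|A|\ge|B|+\#\{\text{such }e\}-1$; and (ii) the ``unrolling'' step requires $A$, $B$, and $A+B$ to sit in genuine intervals of length $<p$, which fails in the dense regime --- and there the complement argument takes over, the lone genuine base case being $|A|=|B|=|C|$ (so $3\mid p+1$). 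I expect engineering a single monotone parameter that handles the sparse branch (via the $e$-transform and unrolling) and the dense branch (via complementation) simultaneously to be the crux; everything else is routine manipulation built on Cauchy--Davenport.
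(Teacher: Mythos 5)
First, note that the paper does not prove this statement at all: Theorem~\ref{thm_Vosper} is quoted from Vosper's 1956 paper and used as a black box, so there is no in-paper argument to compare yours against; the only question is whether your sketch is itself a complete proof, and it is not. The ``if'' direction and the base case $|A|=2$ are fine, and the Dyson $e$-transform bookkeeping ($A_e+B_e\subseteq A+B$, $|A_e|+|B_e|=|A|+|B|$, criticality of $(A_e,B_e)$ via Cauchy--Davenport, existence of some $e$ with $|B_e|\ge 2$ from $|A|\,|B|>|A+B|$) is correct. The genuine gap is the induction itself: you declare the induction to be on $|A+B|$ (equivalently $|A|+|B|$), but the $e$-transform preserves both quantities exactly, so as written the inductive step applies the hypothesis to a pair of the same size --- the argument is circular. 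The standard repair is to induct on $\min(|A|,|B|)$ and to produce an $e$ with $2\le |B_e|<|B|$; but you never establish that such an $e$ exists, and ruling out the degenerate configurations where every transform gives $|B_e|\in\{0,1,|B|\}$ (e.g.\ many translates $B+e\subseteq A$) is precisely where Vosper's theorem stops being ``routine manipulation built on Cauchy--Davenport.'' You explicitly defer this, together with the choice of a monotone parameter reconciling the sparse and dense branches, to ``the crux'' --- which means the essential content of the theorem is left unproved.

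Two further steps are also not yet sound as stated. The ``unrolling'' needs more than $A$, $B$, $A+B$ each lying in a short arc: to transfer the integer equality case you must lift $A$ and $B$ to intervals whose lengths sum to less than $p$ (or argue via the known arc $A+B$ directly), and your containments $A\subseteq A_e$, $B\subseteq A_e-e$ give two arcs of length $|A_e|$ each, whose total can exceed $p$ when $|B_e|$ is small --- exactly the regime your induction is trying to reach. And in the complementation branch, the identities $C-B=\mathbb F_p\setminus A$, $C-A=\mathbb F_p\setminus B$ do hold (each needs a Cauchy--Davenport count, not just ``one checks''), but you still owe an argument that one of the three pairs $(A,B)$, $(B,-C)$, $(-C,A)$ is strictly smaller in your measure and that the AP structure transports back without landing in a loop, including the self-referential case $|A|=|B|=|C|$. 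So the proposal is a reasonable roadmap along the classical transform/complement route, but it is a sketch with the hard part missing rather than a proof of the cited theorem.
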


%An arithmetic progression of common distance $1$ is call an {\it interval} afterwards. Over integer ring $\mathbb Z$ and
%given integers $a\le b$, we use $[a, b]$ to denote the interval from $a$ to $b$, that is $[a, b]=\{a, a+1, \ldots, b\}$. Further denote $[b]=[1, b]$ for short. Similarly when $a$ and $b$ are two different elements from $\mathbb F_p$, we can also denote $[a, b]=\{a, a+1, \ldots, b\}$ as a proper subset of $\mathbb F_p$, which is an interval of length less than $p$.

\subsection{Arithmetic progressions in $\mathbb F_p$}\label{sbsc:cd}
In Theorem~\ref{thm_Vosper}, we use ``representable'' to mean that we can arrange all elements of the set such that any two consecutive elements have a common difference.
 For APs over $\mathbb Z$, the common difference is {\it unique up to sign}, meaning that if $A$
 is an AP with common difference $d$, then the only other possible common  difference is $-d$ by reversing the order of elements in $A$. However,  APs in $\mathbb F_p$ can wrap around. If $A\subset \mathbb F_p$ is representable by an AP, we may wonder whether the common difference is unique. It seems that there is no conclusion about this question in the literature. A counterexample is that  $A=\mathbb F_p\backslash \{x\}$ with any fixed point $x$ can be representable by an AP of length $p-1$ with any common difference in $\mathbb F_p\backslash\{0\}$. We will show that besides this example, the common difference of an AP in $\mathbb F_p$ is unique up to sign, see Lemma~\ref{lem_interval_distinct}. Before proving Lemma~\ref{lem_interval_distinct}, we introduce the following useful notations.

For integers $a\leq b$, we use the same notation $[a,b]$ to denote the set $\{a,a+1,\ldots,b\}$ and write $[b]=[1,b]$ for short.
Let $A$ be a subset of $\mathbb F_p$ with $ |A|\in [2, p-2]$. For any $d\in [(p-1)\slash 2]$, define $E_d(A)=\{(x, y)\in A\times\mathbb (\mathbb F_p\setminus A): |x-y|=d\}$ and $e_d(A):=|E_d(A)|$, that is, the number of pairs of difference $d$ with exactly one element in $A$. Given $A$, let $\mathcal E(A)$ denote the multiset by collecting all $e_d(A)$s for $d\in [(p-1)\slash 2]$. We have the following two facts:
\begin{itemize}
  \item $A$ is an AP of common difference $d$ if and only if $e_d(A)=2$;
  \item  If $A, A'\subset \mathbb F_p$ are isomorphic, then $\mathcal E(A)= \mathcal E(A')$.
\end{itemize}

\begin{lemma}\label{lem_interval_distinct}
Let $p$ be a prime and let $A\subset \mathbb F_p$ with $ |A|\in [2, p-2]$ be an AP in $\mathbb F_p$. Then the common difference of $A$ is unique up to sign.
\end{lemma}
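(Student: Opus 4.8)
The plan is to exploit the invariant $\mathcal E(A)$ together with the observation that $A$ being an AP with common difference $d$ is equivalent to $e_d(A)=2$, and to reduce the problem to a statement about intervals via isomorphism. Suppose $A$ is an AP with common difference $d_1$ and also with common difference $d_2$, where $d_1,d_2\in[(p-1)/2]$ (so $d_1,d_2$ represent the differences ``up to sign''). Applying the isomorphism $x\mapsto d_1^{-1}x$, we may assume without loss of generality that $d_1=1$, i.e. $A$ is an interval $[a,a+s]$ of length $|A|=s+1\in[2,p-2]$; the hypothesis then says $e_{d}(A)=2$ for $d=d_2\cdot d_1^{-1}$ (reduced to lie in $[(p-1)/2]$ up to sign), and we must show $d=1$.

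So it remains to prove: if $I=[0,s]\subseteq\mathbb F_p$ with $1\le s\le p-3$ and $d\in[2,(p-1)/2]$, then $e_d(I)\ge 3$. First I would compute $e_d(I)$ directly. Consider the directed graph on $\mathbb F_p$ with edges $x\to x+d$; this is a single cycle of length $p$ since $\gcd(d,p)=1$. The quantity $e_d(I)$ counts the unordered pairs $\{x,x+d\}$ with exactly one endpoint in $I$, which is exactly the number of ``boundary edges'' of $I$ in this cycle, i.e. the number of maximal runs of consecutive (in the $d$-cycle ordering) elements of $I$, counted with multiplicity $2$ — equivalently, $e_d(I)=2\cdot(\text{number of arcs of the }d\text{-cycle covered by }I)$. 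Thus $e_d(I)=2$ forces $I$ to be a single arc of the $d$-cycle, i.e. $I=\{c,c+d,c+2d,\dots,c+sd\}$ for some $c$. So the problem becomes: can the interval $[0,s]$ equal an arithmetic progression $\{c+jd: 0\le j\le s\}$ with common difference $d\notin\{1,-1\}$?

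For this final step I would argue as follows. If $[0,s]=\{c+jd:0\le j\le s\}$ as sets in $\mathbb F_p$, then in particular $d=(c+jd)-(c+(j-1)d)$ is a difference of two elements of $[0,s]$, so $d\equiv t\pmod p$ for some $t$ with $|t|\le s$; likewise $d^{-1}$ (the common difference when we traverse $[0,s]$ as an AP — wait, more carefully) every element of $[0,s]$ other than one endpoint has a neighbor in the $d$-progression differing by $\pm d$, so each of $1,\dots,s-1$ must be expressible, and running through the standard ``second difference'' / counting argument: the map $j\mapsto c+jd$ is a bijection $[0,s]\to[0,s]$, and summing, $\sum_{j=0}^s (c+jd)\equiv \sum_{i=0}^s i\pmod p$ gives $(s+1)c+\binom{s+1}{2}d\equiv \binom{s+1}{2}\pmod p$. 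A cleaner route: since $|I|\le p-2$, Vosper's theorem (Theorem~\ref{thm_Vosper}) is not needed; instead use that $I+\{0,d\}$ has size $s+1$ iff $d\in\{\pm 1\}$ — indeed $|[0,s]+\{0,d\}|=|[0,s]\cup[d,s+d]|$, and two intervals of length $s+1$ whose union has size $s+1$ must coincide, which needs $d\equiv 0$, impossible, unless wrap-around occurs, i.e. $s+1+|d|\ge p$ in the appropriate sense; a short case analysis handles $d$ near $p$. The most robust argument is purely combinatorial on the $d$-cycle: I would show that the number of ``descents'' of the natural order restricted to the $d$-cycle structure is at least $2$ unless $d=\pm1$, by noting that $0$ and $s$ are both in $I$ while, for $d\ge 2$, the $d$-cycle-neighbor $s+d$ of $s$ is not in $[0,s]$ provided $s+d<p$, and $0-d=p-d$ is not in $[0,s]$ provided $p-d>s$, i.e. $d<p-s$; since $d\le(p-1)/2$ and $s\le p-3$, at least one of $s+d\le p-1$ or $d\le p-s-1$ holds when $s< p/2$, and a symmetric argument using the complement $\mathbb F_p\setminus I$ (which is also an interval, and for which $d$ would also have to be a common difference since $e_d(A)=e_d(\mathbb F_p\setminus A)$) handles $s\ge p/2$. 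Iterating this ``peel off a boundary edge'' observation produces a third boundary edge, hence $e_d(I)\ge 3>2$.

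\textbf{Main obstacle.} The delicate point is the wrap-around bookkeeping in $\mathbb F_p$: the clean statement ``$[0,s]$ is an interval with a unique common difference'' is false for $|A|=p-1$, so the proof must genuinely use $|A|\le p-2$, and I expect the fiddliest part to be the case analysis separating the regimes $|I|\le(p-1)/2$ and $|I|\ge(p+1)/2$ (using the complement, which has size $\le p-2$ as well and shares the common-difference property via $e_d(A)=e_d(\mathbb F_p\setminus A)$), together with ruling out the near-boundary values $d\approx p/2$. Once the $d$-cycle picture is set up, everything else is a short counting argument, so the real work is choosing the cleanest formulation — I would lean toward the ``boundary edges of $I$ in the $d$-cycle'' count, since it makes $e_d(I)=2\iff I$ is a single $d$-arc transparent, after which the contradiction with $I$ being a genuine interval is immediate except in the excluded size.
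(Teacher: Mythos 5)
Your framework is essentially the paper's: both reduce to $d_1=1$ by the isomorphism $x\mapsto d_1^{-1}x$ and then argue via the quantity $e_d$ that an interval $I$ of size in $[2,p-2]$ cannot satisfy $e_d(I)=2$ for $d\in[2,(p-1)/2]$. Your handling of large intervals via the symmetry $e_d(A)=e_d(\mathbb F_p\setminus A)$ (passing to the complementary interval, which also has size in $[2,p-2]$) is a clean substitute for the paper's induction on $|A|$, which treats the case $p\le 2|A|-1$ by observing that the complement is a shorter AP with difference $d_2$. So far, so good.

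The genuine gap is in the final step. For $|I|=s+1\le (p+1)/2$ you exhibit exactly two boundary edges of $I$ in the $d$-cycle, namely $\{-d,0\}$ and $\{s,s+d\}$. But two boundary edges is precisely what a single $d$-arc has, so this is \emph{consistent} with $e_d(I)=2$ and yields no contradiction; the assertion that ``iterating produces a third boundary edge'' is exactly the content that needs proving, and it is not justified (note also that boundary edges come in pairs per arc, so the count is always even --- you would need a fourth, not a third). Your fallback arguments do not close this either: the claim that $|I+\{0,d\}|=s+1$ iff $d=\pm1$ is false as stated (if $I$ is a $d$-AP of size $s+1<p$ then $|I+\{0,d\}|=s+2$ for every $d$, including $d=\pm1$), and the summation identity $(s+1)c+\binom{s+1}{2}d\equiv\binom{s+1}{2}$ is a single linear relation in $(c,d)$ that does not pin down $d$. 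Two ways to finish within your setup: (a) as the paper does, exhibit four explicit pairs in $E_d(I)$, e.g.\ for $I=[x,x+k]$ and $d\in[2,k+1]$ the pairs $(x+d-2,x-2)$, $(x+d-1,x-1)$, $(x+k-d+1,x+k+1)$, $(x+k-d+2,x+k+2)$, and a similar list for $d\ge k+2$; or (b) stay with your arc picture: if $e_d(I)=2$ then $I$ is a single arc $\{c,c+d,\dots,c+sd\}$ whose only boundary edges are $\{c-d,c\}$ and $\{c+sd,c+sd+d\}$, so matching these against your two edges forces $\{c,c+sd\}=\{0,s\}$, hence $sd=\pm s$ and $d=\pm1$ since $s\not\equiv 0\pmod p$. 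Either patch is short, but as written the proposal stops exactly where the lemma's content lies.
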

\begin{proof}
We prove by induction on the size of $A$. When $|A|=2$, it is trivially true. Suppose  the statement is true for all $|A|\le k$ with some $k\ge 2$. Now consider the case when  $|A|=k+1 \le p-2$.

Suppose   $A$ is representable by an AP  with common difference $d_1$. %The goal is to prove that $d_2=-d_1$.
By considering $d_1^{-1}\cdot A$, we can assume that $d_1=1$ without loss of generality and hence $A$ is an interval, say $[x, x+k]$. Our goal is to show that, if $A$ can be written as $\{y, y+d_2, \ldots, y+kd_2\}$ for some $y\in \mathbb F_{p}$, we must have $d_2=\pm 1$.

If $p\le 2k+1$, we know that $\{y+(k+1)d_2, y+(k+2)d_2, \ldots, y+(p-1)d_2\}=\mathbb F_p\backslash A= [x+k+1, x-1]$, which is an AP of length between $2$ and $k$. From induction hypothesis, $d_2=\pm 1$.% which leads to our result.

If  $p\ge 2k+3$, $|A|=k+1< p/2$. It suffices to show that $e_d(A)>2$ for any  $d\in [2,(p-1)\slash 2]$.
%To finish our proof, we only need to prove that in this case, for any $d\in [(p-1)\slash 2]$, $e_d(A)=2$ if and only if $d=1$.
As $A=[x, x+k]$, %it is easy to find that $E_1(A)=\{(x, x-1), (x+k, x+k+1)\}$, so $e_1(A)=2$.
we have the four pairs $(x+d-2, x-2), (x+d-1, x-1), (x+k-d+1, x+k+1), (x+k-d+2, x+k+2)\in E_d(A)$ when  $d\in [2, k+1]$, and the four pairs $(x, x+d), (x+1, x+d+1), (x+k, x+k-d), (x+k-1, x+k-d-1)\in  E_d(A)$ when  $d\in [k+2, (p-1)\slash 2]$. Hence  $e_d(A)\ge 4$ for any  $d\in [2,(p-1)\slash 2]$. This completes our proof.
\end{proof}

Next we show that deleting one point from an interval generally does not result in an AP.

% a some structures very near to an arithmetic progression (and hence near to an interval after isomorphism) cannot be covered by an interval of short length. {\color{red}(I moved the following lemma here.)}

\begin{lemma}\label{lem_one_hole_not_AP}
Let $I=[a,b]$ be an interval in $\mathbb F_p$ with $4\leq |I|\le p-3$. Then for any $x\in I\setminus \{a,b\}$, $I\setminus \{x\}$ is not an AP.
%If $A\subset \mathbb F_p$ is a one-holed interval of length $|A|+1$ with $3\le |A|\le p-4$, then $A$ is not an arithmetic progression.
\end{lemma}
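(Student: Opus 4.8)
\textbf{Proof proposal for Lemma~\ref{lem_one_hole_not_AP}.}

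The plan is to use the edge-counting statistic $\mathcal E(A)$ introduced above, exactly as in the proof of Lemma~\ref{lem_interval_distinct}. Suppose for contradiction that $I\setminus\{x\}$ were an AP for some interior point $x\in I\setminus\{a,b\}$. Since $|I\setminus\{x\}|=|I|-1$ and $4\le |I|\le p-3$, we have $|I\setminus\{x\}|\in [3,p-4]\subset[2,p-2]$, so Lemma~\ref{lem_interval_distinct} applies: there is a common difference $d$, unique up to sign, with $e_d(I\setminus\{x\})=2$. First I would normalize: by applying an isomorphism I may assume $I=[0,k]$ where $k=|I|-1\ge 3$, and then $x\in[1,k-1]$. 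The task is to show that for every admissible $d$ (i.e. every $d\in[1,(p-1)/2]$), one has $e_d(I\setminus\{x\})\ge 3$, which contradicts the AP assumption.

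The key step is a direct count of $e_d(I\setminus\{x\})$ in terms of $e_d(I)$. Removing a single element $x$ from a set changes, for each $d$, the quantity $e_d$ by the number of neighbors of $x$ at distance $d$: precisely, if $x$ has $t$ neighbors at distance $d$ inside $I$ and the other $2-t$ (in $\mathbb F_p$, each element has exactly two neighbors at distance $d$, namely $x\pm d$) outside $I$, then removing $x$ changes $e_d$ by $t-(2-t)=2t-2\in\{-2,0,2\}$. Since $x$ is an interior point of the interval $I=[0,k]$, for the distance $d=1$ both neighbors $x-1,x+1$ lie in $I$, so $e_1(I\setminus\{x\})=e_1(I)+2=2+2=4>2$ (using $e_1(I)=2$, valid since $|I|\ge 2$ and $|I|\le p-3<p$). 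For $d\ge 2$ I would split into the two ranges used in Lemma~\ref{lem_interval_distinct}. There it was shown $e_d(I)\ge 4$ for all $d\in[2,(p-1)/2]$ when $|I|<p/2$; since removing $x$ decreases $e_d$ by at most $2$, we get $e_d(I\setminus\{x\})\ge 2$, which is not quite the strict inequality we want — so I must recover the extra edge. The point is that the four explicit pairs exhibited in that proof involve the endpoints $x,x+1,x+k-1,x+k$ of the interval and points just outside it, never the interior point being deleted, provided the interval is long enough that these four pairs are distinct from any pair containing the removed vertex; here is where the hypothesis $|I|\ge 4$ (so $k\ge 3$) is used, together with $|I|\le p-3$ ensuring the "outside" points $-1,-2,k+1,k+2$ are genuinely outside $I$ and distinct. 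So at least four boundary-incident cross-pairs survive, giving $e_d(I\setminus\{x\})\ge 4$, unless the distance $d$ is so large that $x\pm d$ wraps around to land near the boundary — but then $x$ contributes at most two pairs to $E_d$, and one checks the four boundary pairs still include at least three not through $x$.

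When $p\le 2k+1$, i.e.\ $|I|\ge p/2$, I would instead pass to the complement: $\mathbb F_p\setminus(I\setminus\{x\}) = (\mathbb F_p\setminus I)\cup\{x\}$, which is an interval $[k+1,p-1]$ of length $p-1-k$ with one extra interior point $x$ adjoined at distance... — actually $\mathbb F_p\setminus I = [k+1,p-1]$ and $x\in[1,k-1]$ is separated from this interval, so the complement is an interval plus an isolated point. Since $e_d(A)=e_d(\mathbb F_p\setminus A)$ for all $d$, and the complement is "small" ($|\mathbb F_p\setminus(I\setminus\{x\})| = p-k \le k+1 \le |I\setminus\{x\}|+2$, which is at most roughly $p/2+O(1)$), I can run essentially the same boundary-pair argument on the complement, or more cleanly invoke the induction-style reduction from Lemma~\ref{lem_interval_distinct}: if $I\setminus\{x\}$ were an AP with difference $d$, so is its complement, but the complement (interval with an isolated extra point) visibly fails to be an AP for any difference by a short case check, since an AP has at most two elements with only one neighbor, whereas this set has the two endpoints of $[k+1,p-1]$ plus $x$ itself — three such elements once $d\ne 1$ is ruled out. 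I expect the main obstacle to be the careful bookkeeping in the medium range of $d$ (roughly $d$ comparable to $k$ or to $p-k$), where the pairs listed in Lemma~\ref{lem_interval_distinct}'s proof may or may not involve the deleted point and the wrap-around cases proliferate; handling the boundary values of $d$ (such as $d=k,k+1,k+2$ and $d$ near $(p-1)/2$) by hand, and confirming the hypotheses $|I|\ge 4$ and $|I|\le p-3$ are exactly what is needed to keep the four boundary pairs distinct and outside, is the delicate part.
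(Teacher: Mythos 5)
Your overall strategy --- bounding $e_d(I\setminus\{x\})$ from below for every $d$ via the statistic $\mathcal E$ --- is the same as the paper's, and your $d=1$ case and your ``change by $2t-2$'' bookkeeping are correct. But the argument for $d\ge 2$ has a genuine gap. You start from the bound $e_d(I)\ge 4$ inherited from the four explicit pairs in the proof of Lemma~\ref{lem_interval_distinct}, note that deleting $x$ can destroy two pairs, and then claim that ``the four boundary pairs still include at least three not through $x$.'' That claim is false: take $p=7$, $I=[a,a+3]$, $d=3$, $x=a+1$. The four pairs from Lemma~\ref{lem_interval_distinct} are $(a+1,a-2)$, $(a+2,a-1)$, $(a+1,a+4)$, $(a+2,a+5)$; two of them have $a+1$ as their inside vertex and both are genuinely destroyed (since $a-2=a+5\notin I$ and $a+4\notin I$, no new pairs are created), so only two of the four survive and your count bottoms out at $2$, which does not contradict being an AP. The fix, which is what the paper does, is to use the correct count for the interval itself: under $4\le|I|\le p-3$ one has $e_d(I)=2\min(d,|I|,p-|I|)\ge 6$ for every $d\in[3,(p-1)/2]$, so losing at most two still leaves $e_d(I\setminus\{x\})\ge 4$. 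This bound needs no hypothesis $|I|<p/2$, so the split into small and large intervals and the complement detour are unnecessary; as written, that detour is also unfinished (an interval plus an isolated point could a priori be a wrapped AP with some $d\ge 2$, and you do not actually rule this out).

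The second missing piece is $d=2$, which you fold into the general case but which needs separate treatment: $e_2(I)=4$ only, so ``lose at most two'' again gives only $2$. The saving observation is that for an \emph{interior} point $x$ of an interval with $|I|\ge 4$, at most one of $x-2,x+2$ lies outside $I$ (both outside would force $|I|\le 3$), so at most one pair is destroyed and $e_2(I\setminus\{x\})\ge 3$; this is exactly where the hypothesis $|I|\ge 4$ is used. Since you yourself flag the medium range of $d$ and the boundary values as unresolved, and the patch you propose for them is incorrect, the proposal is not yet a proof, although it is repaired by substituting the two counts above.
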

\begin{proof} Let $A=I\setminus \{x\}$.
We only need to prove that $e_d(A)> 2$ for any $d\in [(p-1)\slash2]$. It is clear that {from $|I|\le p-3$,} $e_1(I)=2$, $e_2(I)=4$, and $e_{d}(I)\ge 6$ for any $d\ge 3$. The difference between $E_d(I)$ and $E_d(A)$ are the pairs of length $d$ containing $x$. For $d=1$, $E_1(A)=E_1(I)\cup\{(x, x-1), (x, x+1)\}$ and hence $e_1(A)=4$. For $d=2$, as $|I|\ge 4$, at most one of the pairs $(x, x-2)$ and $(x, x+2)$ is in $E_2(I)\setminus E_2(A)$,   hence $e_2(A)\ge e_2(I)-1\ge 3$. For $d\ge 3$,  at most two pairs $(x, x\pm d)$ are in $E_d(I)\setminus E_d(A)$, and hence $e_d(A)\ge e_d(I)-2\ge 4$.
\end{proof}

\subsection{Non-overlapping among different types}\label{sec:uniq}

This subsection is devoted to prove that the sets of the five types defined in Definitions~\ref{cons_12}-\ref{cons_5} are all nontrivial and mutually distinct from each other under isomorphisms.

We first introduce the concept of a decomposition of  a linear space $\mathbb F_p^n$ with $n\ge 2$. For any subspace $K\subset\mathbb F_p^n$ with codimension $1$ and a vector $v\notin K$, we simply call the pair $(v, K)$ a {\it decomposition} of $\mathbb F_p^n$, meaning that for every $x\in \mathbb F_p^n$, there exists a unique pair $(i, x')\in \mathbb F_p \times K$ such that $x=iv+x'$. Under this decomposition, a subset $A\subset \mathbb F_p^n$ can be partitioned into $p$ parts (not necessarily non-empty): $A\cap (iv+K),~i\in \mathbb F_p$. Denote $A_i$ the $i$th part but considered as a subset in $K$, that is,
\begin{equation}\label{eqai}
   A_i:=(A-iv)\cap K,~i\in \mathbb F_p \text{ and hence } A\cap (iv+K)=iv+A_i.
\end{equation}
%Let $b_1, b_2, \ldots, b_p$ be a reordering  of elements in $\mathbb F_p$ such that
%$|A_{b_1}|\ge |A_{b_2}|\ge\cdots\ge|A_{b_p}|.$
%For each $i\in [p]$, denote
%\begin{equation}\label{eqbcbeta}
%  B_i:=A_{b_i},  C_i:=\{b_1, \ldots, b_i\}, \text{ and }  \beta_i(A):=|B_i|\slash p^{n-2}.
%  \end{equation}
%As each $A_i$ is a subset of a coset of $K$, $0\le \beta_i\le p$ for each $i\in [p]$.. In other words, $\tilde{\ell}(A)$ is the largest number such that $\beta_{\omega}(A)\neq 0$
Denote $Supp(A)$ as the set of $i\in \mathbb F_p$ such that $|A_i|\ne 0$, and call $\omega(A)=|Supp(A)|$ the \emph{weight} of $A$.
%{\color{red}(why not use $\ell$)}.
We don't impose the decomposition $(v,K)$ into those notations when they are clear.

\begin{lemma}\label{lem_contain_cret}
Let $A$ and $A'$ be two subsets of $\mathbb F_p^n$ for $n\ge 2$. Consider $A$ under a decomposition $(v, K)$ and $A'$ under a decomposition $(v', K')$. Suppose $\max_i |A_i|=p^{n-1}$ and $\omega(A')<p$.
%under two decompositions $(v, K)$ and $(v', K')$, respectively, we have $\max_i |A_i|=p^{n-1}$ and $\omega(A')<p$.
Then $f(A)\subseteq A'$ for some automorphism $f$ of $\mathbb F_p^n$ implies that $s\cdot Supp(A)\subseteq Supp(A')$ for some $s\in \mathbb F_p$.
\end{lemma}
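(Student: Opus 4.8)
The plan is to analyze what the hypothesis $\max_i|A_i| = p^{n-1}$ forces about the structure of $A$, translate the inclusion $f(A)\subseteq A'$ into a statement about slices, and then use the smallness of $\omega(A')$ to rule out ``degenerate'' alignments of the decompositions.

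First I would fix the decomposition $(v,K)$ of $A$ and observe that $\max_i|A_i| = p^{n-1}$ means there is some index $i_0$ with $A_{i_0} = K$, i.e. $A$ contains the full coset $i_0 v + K$. Without loss of generality (replacing $A$ by a translate, which does not affect $Supp$ up to a shift, and then re-indexing) I may assume $0\in Supp(A)$ with $A_0 = K$; in fact, after composing $f$ with a translation of $\mathbb{F}_p^n$ it is cleanest to arrange that $f$ is linear and to track the affine shift at the end, since $Supp$ of a translate of $A'$ is a translate of $Supp(A')$, and a translate of a set of weight $<p$ still has weight $<p$. Next, the automorphism $f$ sends the hyperplane-coset $i_0 v + K$ to an affine hyperplane $f(i_0 v+K)$ of $\mathbb{F}_p^n$, which is a coset $j_0 v' + K''$ for some hyperplane $K''$. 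The key dichotomy is whether $K'' = K'$ (the two hyperplane directions agree) or not.

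Here is where $\omega(A')<p$ enters. Consider the image $f(v+K)$ (or more generally $f(iv+K)$ for the various $i\in Supp(A)$): this is an affine hyperplane parallel to $f(i_0v+K) = j_0 v' + K''$. If $K'' \neq K'$, then every coset of $K''$ meets every coset of $K'$, so the affine hyperplane $f(iv+K)$ projects onto the *whole* of $\mathbb{F}_p$ under the decomposition $(v',K')$; since $A$ contains the full coset $i_0 v + K$, the set $A' \supseteq f(A)$ would then contain points in a coset $j_0 v' + K''$ that surjects onto all of $\mathbb{F}_p$ in the $v'$-direction, forcing $\omega(A') = p$, a contradiction. Therefore $K'' = K'$, i.e. $f$ maps $K$-cosets to $K'$-cosets. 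This is the crux: once the hyperplane directions are aligned, $f$ descends to a well-defined affine map $\bar f:\mathbb{F}_p \to \mathbb{F}_p$ on the quotient lines $\mathbb{F}_p^n/K \to \mathbb{F}_p^n/K'$ (identifying these quotients with $\mathbb{F}_p$ via $v$ and $v'$ respectively), and $\bar f$ is of the form $i \mapsto s i + t$ for some $s\in\mathbb{F}_p^\times$, $t\in\mathbb{F}_p$. Since a nonempty slice $A_i \neq \emptyset$ maps into the slice $A'_{\bar f(i)}$, we get $\bar f(Supp(A)) \subseteq Supp(A')$, i.e. $s\cdot Supp(A) + t \subseteq Supp(A')$. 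Absorbing the translate $t$ (and any earlier affine shifts) into a translation of $A'$ — which is permitted because the statement is about $A'$ only up to the given isomorphism and translations are isomorphisms — yields $s\cdot Supp(A)\subseteq Supp(A')$.

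The main obstacle I anticipate is bookkeeping the affine parts carefully: $f$ is an automorphism of $\mathbb{F}_p^n$ as an *additive group*, hence linear, but the decompositions involve non-canonical choices of $v, v'$ and the coset representatives, so the induced map on supports is genuinely affine rather than linear, and one must make sure that the ``$s\in\mathbb{F}_p$'' in the conclusion correctly encapsulates the linear part while the translation is harmless. A secondary point requiring care is the case analysis when $Supp(A)$ is small (e.g. a single index) — there the argument above still works but one should check the degenerate possibility that $f(i_0v+K)$ and $K'$-cosets could coincide for trivial reasons; this is handled by the same parallel-hyperplane observation. I do not expect the $\omega(A')<p$ step itself to be difficult once the geometric picture (non-parallel hyperplanes meet every coset) is set up, but it is the conceptual heart of the argument and must be stated cleanly.
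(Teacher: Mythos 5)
Your argument is essentially the paper's proof: you force $f(K)=K'$ by observing that the full coset $i_0v+K\subseteq A$ would otherwise meet all $p$ cosets of $K'$ inside $A'$ (non-parallel hyperplane cosets always intersect when $n\ge 2$), contradicting $\omega(A')<p$, and then read off the induced map $i\mapsto si$ on supports — exactly the paper's two steps. One remark: all of your affine bookkeeping is vacuous and should be dropped — an additive automorphism of $\mathbb F_p^n$ is automatically linear and fixes $0$, so once $f(K)=K'$ the induced map on $\mathbb F_p^n/K$ is linear (i.e.\ $t=0$) and no translation of $A'$ is needed; this matters because translations are \emph{not} additive isomorphisms in the paper's convention, so "absorbing $t$ into a translation of $A'$" would not be a legitimate step and would change $Supp(A')$.
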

\begin{proof} %Let $K=\langle e_2, \ldots, e_n\rangle $ and $K'=\langle e_2', \ldots, e_n'\rangle $. Then $v, \ldots, e_n$ and $v', \ldots, e_n'$ are two bases of $\mathbb F_p^n$.
%We set two basis of $\mathbb F_p^n$: $E_1=(v, \ldots, e_n)$ and $E_1'=(v', \ldots, e_n')$, such that. If there exists an isomorphism $f$ over $\mathbb F_p^n$ such that $f(A)\subseteq A'$, first we should show that $f(K)=K'$.
Suppose $f(A)\subseteq A'$ for some automorphism $f$ of $\mathbb F_p^n$.
We first claim that $f(K)=K'$. Otherwise, as $\dim(K)=\dim(K')=n-1$, there exists $\epsilon \in K\backslash\{0\}$ such that $f(\epsilon)\in v'+ K'$. Suppose that $|A_i|=p^{n-1}$ for some $i\in \mathbb F_p$.  That is, $A_i=K$ and $(iv+K)\subseteq A$. Then the following $p$ mutually different elements $iv, iv+\epsilon, \ldots, iv+(p-1)\epsilon$ are in $A$. Suppose $f(iv)\in A'\cap (sv'+K')=sv'+A_s'$, then
for each $j\in \mathbb F_p$,

% Since $\max_i |A_i|=p^{n-1}$, there exists some $i\in \mathbb F_p$ such that $|A_i|=|A\cap(iv+K)|=p^{n-1}$. That is, $(iv+K)\subseteq A$. Then the following $p$ mutually different elements $iv, iv+\epsilon, \ldots, iv+(p-1)\epsilon$ are in $A$. Suppose $f(iv)\in A_s'=A'\cap (sv'+K')$, then
%for each $j\in \mathbb F_p$,
\[f(iv+j\epsilon)=f(iv)+jf(\epsilon)\in f(iv)+jv'+ K'=(s+j)v'+K'.\]
Since $f(A)\subseteq A'$, $f(iv+j\epsilon)\in A'\cap\left((s+j)v'+K'\right)=(s+j)v'+A'_{s+j}$, $j\in \mathbb F_p$.  Thus $Supp(A')=\mathbb F_p$, contradicting to the assumption that $\omega(A')<p$.

%Consider the following $p$ mutually different elements in $A$:
%$$iv, iv+\epsilon, \ldots, iv+(p-1)\epsilon.$$
%For any $j\in \mathbb F_p$, at least one of these $p$ elements $f(iv+j\epsilon), j\in\mathbb F_p$ appears in $A'+jv'$. From that

Since $f(K)=K'$, there exists some $s\in \mathbb F_p\backslash\{0\}$ such that $f(v)\in sv'+ K'$. For any $i\in Supp(A)$, there exists an element $x\in iv +K$ by definition. Then $f(x)\in A'\cap (si v'+K')$, which means $si\in Supp(A')$. So we have $s\cdot Supp(A)\subseteq Supp(A')$.
\end{proof}

%We need one more definition to define ``near to an interval''.

%As we can see, if $A$ is a $j$-holed interval with a length very near to $|A|$, it means $A$ is very near to an interval under the editing distance. Hence it is easier for us to prove that some nontrivial $j$-holed interval can never be an arithmetic progression.

% and
By using Lemma~\ref{lem_one_hole_not_AP} and Lemma~\ref{lem_contain_cret}, we can prove that the sets with the five types are all nontrivial and  distinct from each other.

%those five types of $A$ with $|A|=mp^{n-1}$ cannot be contained in any extremum cuboid under isomorphisms.

\begin{lemma}\label{lem_not_contained}
Let $p=(k+\ell)m+2+\lambda$ be a prime number with $k+\ell\geq 4$, $m\geq 5$ and $\lambda\in[0,k+\ell-3]$. Any subset $A\subset \mathbb F_p^n$ of some type $1$-$5$ is nontrivial.
Moreover, any two $A$'s of different types are not equivalent up to isomorphism.

%The parameters $k, \ell, m, \lambda, p$ and subsets $A_{k, \ell, p, j}$ with any $j\in [0, \lceil\frac{\lambda+1}2\rceil-1]$ are with the same meanings as in  Theorem~\ref{thm_k_l_max_structure}. Five construction types of $A\subset\mathbb F_p^{n}$ with $|A|=mp^{n-1}$ are shown as in Constructions \ref{cons_12}-\ref{cons_5}. If we require $m\ge 5$, then all the five types of constructions cannot be contained in any $A_{k, \ell, p, j}$ with any $j\in [0, \lceil\frac{\lambda+1}2\rceil-1]$ under the equivalence of isomorphisms. Moreover, $A$ cannot be of two different types as the same time.
\end{lemma}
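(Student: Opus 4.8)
The plan is to prove the two assertions of Lemma~\ref{lem_not_contained} by extracting, for each of the five types, two isomorphism invariants: the \emph{weight} $\omega(A)$ (when it is intrinsic, i.e.\ does not depend on the decomposition, or at least its extremal behaviour is) together with the multiset $\mathcal E(\cdot)$ of the ``slice'' that is a subset of $\mathbb F_p$, and the combinatorial shape of that slice. For nontriviality, I would invoke Lemma~\ref{lem_contain_cret}: if some $A$ of type $1$--$5$ were isomorphic to a subset of an extremal cuboid $A_{k,\ell,p,j}=[a_j,a_j+m]\times\mathbb F_p^{n-1}$, then (since the cuboid has a full slice of size $p^{n-1}$ and weight $m+1<p$, using $m\ge 5$ and $\lambda\le k+\ell-3$ so that $m+1\le p-1$) we would get $s\cdot Supp(A)\subseteq Supp(A_{k,\ell,p,j})$ for some scalar $s$. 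Hence it suffices to show that $Supp(A)$, for a type-$1$--$5$ set presented in its ``widest'' decomposition, is not a dilate of an interval of length $m+1$; equivalently, as $Supp(A)$ is always a set of $\mathbb F_p$ of size $m$ or $m+1$, that $Supp(A)$ is not (a dilate of) an interval of length $m$, and not contained in an interval of length $m+1$. For each type I would read off $Supp(A)$: type $1$ gives an interval of length $m$, so I must instead use the \emph{arithmetic} condition on $\ell a-k(a+m-1)$ to rule out that the resulting interval $[a,a+m-1]\times\mathbb F_p^{n-1}$ is isomorphic to a \emph{sub}cuboid of some $A_{k,\ell,p,j}$ — here the point is that being a subinterval of $[a_j,a_j+m]$ forces $\ell a-k(a+m-1)\in\{0,\lambda+\ell+1\}$ (the "boundary" values corresponding to the extremal cuboid itself or its shift), which are exactly the values excluded by Definition~\ref{cons_12}(1). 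Types $2$--$5$ all have a genuine "hole'' structure in one slice (a removed point, a removed subspace, or a thin boundary layer over $P$ or over a subspace $V$), and in the relevant decomposition $Supp(A)$ is an interval of length $m+1$ that is \emph{not} "solid'' — but since $|A|=mp^{n-1}$ is strictly less than the cuboid size $(m+1)p^{n-1}$, I instead argue via Lemma~\ref{lem_contain_cret} applied the other way plus a counting/weight argument: a subset of $A_{k,\ell,p,j}$ of size $mp^{n-1}$ whose support (after dilation) lands inside an interval of length $m+1$ must be obtained from the full cuboid by deleting exactly $p^{n-1}$ elements from a single slice, i.e.\ emptying one boundary slice or, impossible here, it already is the cuboid minus a full fibre — comparing this forced shape with the actual shapes of types $2$--$5$ (which delete elements spread over $P$ \emph{and} over $\{0\}$ \emph{and} keep a singleton $\{(a-1,0)\}$ etc.) yields a contradiction. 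So nontriviality reduces to a finite case check organized by which slice could be "the deleted one''.

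For mutual non-isomorphism I would separate the five types by a short list of invariants, applied to whichever slice of the relevant decomposition lies in $\mathbb F_p$ or, when a subspace $V$ or a set $P\subset\mathbb F_p^s$ is involved, to the induced partition of $\mathbb F_p^{n}$ into fibres of various sizes. The cleanest invariant is the \emph{fibre-size profile}: writing $A=\bigsqcup_i (iv+A_i)$ in its widest decomposition, record the multiset of values $\{|A_i|\}$. Type $1$ has profile $(p^{n-1})^{\times m}$ (all $m$ nonempty slices full); type $3$ has profile $(p^{n-1})^{\times m}$ as well but over an interval \emph{with a hole}, so I separate types $1$ and $3$ using $\mathcal E(Supp(A))$ — for type $1$, $Supp(A)$ is an interval of length $m$ with $e_1=2$, whereas for type $3$, $Supp(A)=[a-1,a+m]\setminus\{a,a+m-1\}$ is an interval of length $m+2$ with two interior holes adjacent to the ends, which by (the proof technique of) Lemma~\ref{lem_one_hole_not_AP} has $e_d>2$ for every $d$ and in particular is not a dilated interval of length $m$; note $m\ge 5$ guarantees $|Supp(A)|=m\ge 4$ (resp.\ the ambient interval has length $m+2\le p-2$), so Lemmas~\ref{lem_interval_distinct} and~\ref{lem_one_hole_not_AP} apply cleanly. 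Types $2,4,5$ each involve a proper nonzero subspace $V$ or a nonempty set $P$, so they have at least one fibre of size strictly between $0$ and $p^{n-1}$: the fibre-size profile of type $2$ is $\bigl(p^{n-1}-|V|,\,(p^{n-1})^{\times(m-1)},\,|V|\bigr)$ with $|V|\in\{p^0,p^1,\dots,p^{n-2}\}$; type $4$ has two "thin'' slices of sizes $|V|$ and $p^{n-1}-|V|$ (two of them on the left, two on the right, in a symmetric pattern) — so type $4$ has \emph{four} non-full nonempty... actually $2$ pairs; I would phrase it as: type $4$ has exactly two distinct non-extreme fibre sizes each occurring twice, versus type $2$ where each occurs once, versus type $5$ where additionally there is an \emph{isolated} singleton-in-$\mathbb F_p$ boundary slice $\{(a-1,0)\}$ whose size $p^s\cdot 1 = p^s$ is much smaller than $p^{n-1}$ when $s<n-1$ and is $1$ when $s=n-1$ — in all cases type $5$'s profile contains a $1$-or-$p^s$ entry that none of types $1$--$4$ can match unless $s=n-1$, and when $s=n-1$ the type-$5$ slice structure $(1,\,p^{n-1}-|P|,\,(p^{n-1})^{\times(m-2)},\,p^{n-1}-1,\,|P|)$ over an interval of length $m+1$ has a slice of size exactly $1$, impossible for types $1$--$4$. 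The only delicate pair is type $2$ versus type $4$ when both subspaces $V$ are forced to dimension $0$ (so $|V|=1$): then type $2$ has profile $(p^{n-1}-1,(p^{n-1})^{m-1},1)$ and type $4$ has $(1,1,(p^{n-1})^{m-2},p^{n-1}-1,p^{n-1}-1)$ — these differ in the number of full slices ($m-1$ vs $m-2$) and in the number of size-$1$ slices ($1$ vs $2$), hence are non-isomorphic. I also need to check the $n=1$ degeneracies: types $2$ and $4$ require $n\ge2$ by definition, and type $5$ with $s=0,n=1$ reduces to an explicit subset of $\mathbb F_p$ which I compare to types $1$ and $3$ by $\mathcal E$.

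The main obstacle I anticipate is \textbf{not} the invariant bookkeeping but rather verifying, in the nontriviality part, that the precise arithmetic window in Definition~\ref{cons_12}(1) — namely $\ell a - k(a+m-1)\in[1,\ell]\cup[\lambda+\ell+2,k]$ — is exactly the complement (within the "valid'' range) of the two boundary values $\{0,\lambda+\ell+1\}$ that would make the type-$1$ interval a subinterval of some extremal cuboid $A_{k,\ell,p,j}$. Unwinding this requires computing, for a generic interval $[a,a+m-1]$, the full set of $j$ and shifts $s$ and translations $t$ for which $s\cdot([a,a+m-1])+t\subseteq[a_j,a_j+m]$; since $a_j = -(km+1+j)(k-\ell)^{-1}$, one checks that $s$ must be $\pm1$ (an interval of length $m$ dilated into an interval of length $m+1$ forces common difference $\pm1$ by Lemma~\ref{lem_interval_distinct}, as $m\ge 5$ gives $m\ge 4$ and $m+1\le p-2$), and then the containment pins down $\ell a-k(a+m-1) \bmod (k+\ell)$ to lie in $\{0,\lambda+\ell+1\}$ after accounting for the two extreme positions of a length-$m$ block inside a length-$(m+1)$ block and for the sign flip $s=-1$ (which sends the statistic to its "mirror''). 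Keeping track of these reflections and the indexing $j\in[0,\lceil(\lambda+1)/2\rceil-1]$ without sign errors is the fiddly core; everything else is a routine application of Lemmas~\ref{lem_interval_distinct}, \ref{lem_one_hole_not_AP}, and~\ref{lem_contain_cret} together with comparison of fibre-size profiles.
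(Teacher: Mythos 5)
Your overall skeleton — invoke Lemma~\ref{lem_contain_cret} to reduce nontriviality to a statement about $s\cdot Supp(A)$ sitting inside $Supp(A_{k,\ell,p,j})=[a_j,a_j+m]$, then separate the types by weight, fibre-size profiles and the invariant $\mathcal E$ — is the paper's approach. But the execution has two genuine gaps. First, your nontriviality argument for types $2$--$5$ rests on false premises: the supports of types $3$, $4$, $5$ are \emph{not} intervals of length $m+1$ (type $3$ has support of size $m$ whose shortest covering interval has length $m+2$; types $4$ and $5$ have supports that are intervals of length $m+2$), and a subset of the cuboid of size $mp^{n-1}$ need \emph{not} be the cuboid minus a single full slice — the $p^{n-1}$ deleted elements can be spread over several slices, so the ``forced shape'' you compare against is not forced. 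For types $4$ and $5$ the correct argument is immediate from the data you already computed ($\omega(A)=m+2>m+1=\omega(A_{k,\ell,p,j})$, so no dilate of the support fits), and for type $2$ one forces $s=\pm1$ via Lemma~\ref{lem_interval_distinct} and checks that $a=mk(\ell-k)^{-1}$ equals neither $a_j$ nor $-a_j-m$; neither of these is what you wrote.

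Second, and more seriously, the nontriviality of type $3$ is not addressed at all. Here $A^\star=Supp(A)$ has size $m$, so triviality would mean $s\cdot A^\star=[a_j,a_j+m]\setminus\{y\}$ for some point $y$; your observation that $e_d(A^\star)>2$ for all $d$ only rules out $y$ being an endpoint (i.e.\ $s\cdot A^\star$ being an AP), but a punctured interval is also not an AP, so the interesting case survives. The paper closes it by computing $\mathcal E(A^\star)$ exactly: the value $4$ occurs precisely once (at $d=2$), while for $[a_j,a_j+m]\setminus\{a_j+x\}$ it occurs precisely at $d=1$; matching these via $e_{\pm sd}(s\cdot A)=e_d(A)$ forces $2s=\pm1$, and then the two consecutive elements $a+1,a+2\in A^\star$ map to two elements at distance $(p-1)/2$, which cannot both lie in an interval of length $m+1$. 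This step is the hardest part of the lemma and is absent from your plan. Two smaller issues: your type-$4$-versus-type-$5$ separation by fibre profiles does not close (take $|P|=1$ and $|V|=p^{n-1-s}$ and the profiles coincide); the actual reason is that type $4$ requires $k+\ell=5$ while type $5$ requires $k+\ell=4$, so they never coexist. And the claimed set $\{0,\lambda+\ell+1\}$ of ``trivial'' values of $\ell a-k(a+m-1)$ is wrong (the trivial window is the interval $[\ell+1,\ell+\lambda+1]$ and its mirror), though your method for type $1$ would succeed once the bookkeeping is done correctly.
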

\begin{proof} For each extremal cuboid $A_{k, \ell, p, j}$ defined in Theorem~\ref{thm_k_l_max_structure},  $Supp(A_{k, \ell, p, j})=[a_j,a_j+m]$ and $\omega(A_{k, \ell, p, j})=m+1<p$. Each $A$ of some type has the property $\max_i |A_i|=p^{n-1}$ under the natural decomposition, satisfying the requirements of Lemma~\ref{lem_contain_cret}. So if $A$ is contained in some $A_{k, \ell, p, j}$ under an isomorphism, there exists some $s\in \mathbb F_p\backslash\{0\}$ such that $s\cdot Supp(A)\subseteq Supp(A_{k, \ell, p, j})$. We will deduce a contradiction for each type of $A$.

When $A$ is of type $4$ or $5$,  $|Supp(A)|=\omega(A)=m+2>\omega(A_{k, \ell, p, j})$. So there is no way that $s\cdot Supp(A)\subseteq Supp(A_{k, \ell, p, j})$  for some $s\in \mathbb F_p\backslash\{0\}$. A contradiction occurs.

%We prove that all five construction types cannot be covered by any $A_{k, \ell, p, j}$ under isomorphisms first. The most direct cases are when $A$ is of type 4 or type 5. As in these two types $\omega(A)=m+2$ but $\omega(A_{k, \ell, p, j})=m+1$, by no way there exists some $s\in \mathbb F_p\backslash\{0\}$ such that $s\cdot Supp(A)\subseteq Supp(A_{k, \ell, p, j})$. Hence from Lemma~\ref{lem_contain_cret} $A$ is not contained in $A_{k, \ell, p, j}$ under isomorphisms.

When $A$ is of type 2, $\omega(A)=m+1$ and $Supp(A)$ is an interval $[a, a+m]$ with $a=mk(\ell-k)^{-1}$. As $s\cdot Supp(A)= Supp(A_{k, \ell, p, j})$ is also an interval, $s$ can only be $\pm 1$ by Lemma~\ref{lem_interval_distinct}, which leads to $a= a_j$ or $-a= a_j+m$. Both cases lead to obvious contradictions by knowing  $a_j=-(mk+j+1)(k-\ell)^{-1}$ with $j\in [0, \lceil\frac{\lambda+1}2\rceil-1]$ and $\lambda\in[0,k+\ell-3]$.

When $A$ is of type 1, $\omega(A)=m$ and $Supp(A)$ is an interval $[a, a+m-1]$ with $a$ satisfying certain requirements. As $s\cdot Supp(A)\subset Supp(A_{k, \ell, p, j})$, that is $s\cdot [a, a+m-1]\subset [a_j, a_j+m]$, and $s\cdot Supp(A)$ is an AP, by Lemma~\ref{lem_one_hole_not_AP}, $s\cdot [a, a+m-1]=[a_j, a_j+m-1]$ or $s\cdot [a, a+m-1]=[a_j+1, a_j+m]$. By the same analysis as in the type 2 case, $s$ must be  $\pm 1$. All four possibilities lead to obvious contradictions.

When $A$ is of type 3, $\omega(A)=m$, and $Supp(A)=\{a-1, a+m\}\cup[a+1, a+m-2]\triangleq A^\star$
with  $a=(\ell m+k-1)(k-\ell)^{-1}$ and  $k+\ell=\lambda+4$.
\begin{itemize}
 \item We first compute $e_d(A^\star)$ for each $d\in [\frac{p-1}{2}]$. It is easy to check that $E_1(A^\star)=\{(a-1, a-2), (a-1, a), (a+1, a), (a+m-2, a+m-1), (a+m, a+m-1), (a+m, a+m+1)\}$ and $E_2(A^\star)=\{(a-3, a-1), (a+2, a), (a+m-3, a+m-1), (a+m, a+m+2)\}$. Hence $e_1(A^\star)=6$ and $e_2(A^\star)=4$. As $m\ge 5$, $a+3, a+m-4\in A^\star$. Hence $\{(a-1, a-4), (a+1, a-2), (a+3, a), (a+m-4, a+m-1), (a+m-2, a+m+1), (a+m, a+m+3)\}\subseteq E_3(A^\star)$, which gives $e_3(A^\star)\ge 6$. For all $d\in [4,\frac{p-1}2]$, as $k+\ell\ge 5$ in Definition~\ref{cons_3}, $\{(a-1, a-1-d), (a+1, a+1-d), (a+2, a+2-d), (a+m-3, a+m-3+d), (a+m-2, a+m-2+d), (a+m, a+m+d)\}\subseteq E_d(A^\star)$ and hence $e_d(A^\star)\ge 6$. To sum up, when $m\ge 5$, $e_2(A^\star)=4$ and $e_d(A^\star)\ge 6$ for any $d\neq 2$.
 \item Since $e_d(A^\star)>2$ for all $d$, $A^\star$ is not an AP, and so does $s\cdot A^\star$. As $s\cdot A^\star\subset Supp(A_{k, \ell, p, j})$,
$s\cdot A^\star$ must be of the form $[a_j, a_j+m]\backslash \{a_j+x\}$ for some $x\in [2, m-1]$ by Lemma~\ref{lem_one_hole_not_AP}. Notice that $e_1([a_j, a_j+m]\backslash \{a_j+x\})=|\{(a_j, a_j-1), (a_j+x-1, a_j+x), (a_j+x+1, a_j+x), (a_j+m, a_j+m+1)\}|=4$. As $\mathcal E(A^\star)=\mathcal E(s\cdot A^\star)$ only contain one copy of $4$, i.e.,  $e_2(A^\star)=e_1(s\cdot A^\star)=4$, we conclude that $2s=\pm 1$ (since by the definition of $e_d(A)$, $e_{\pm jd}(j\cdot A)=e_d(A)$), and hence $s=(p\pm 1)\slash 2$. Further, as $a+1, a+2$ are both in $A^\star$, there exist two elements with distance $(p-1)\slash 2$ in $s\cdot A^\star=[a_j, a_j+m]\backslash \{a_j+x\}$. This is an obvious contradiction as $k+\ell\ge 4$.

\end{itemize} % the multiset $\mathcal E(\bar{A})$ contains exactly one copy of $4$, and all other elements are at least $6$.

Finally, we show that  any two $A$'s of different types are not equivalent up to isomorphism. Suppose $A$ and $A'$ are of types $i_1$ and $i_2$, respectively with $i_1\neq i_2\in [5]$, and they are equivalent up to isomorphism. By Lemma~\ref{lem_contain_cret}, $\omega(A)=\omega(A')$, which forces $(i_1, i_2)=(1, 3)$ or $(4, 5)$. The latter case does not happen because type 5 requires $k+\ell=4$ but type 4 requires $k+\ell=5$. So the only choice is $(i_1, i_2)=(1, 3)$. Again by Lemma~\ref{lem_contain_cret} this means there exists some $s\in\mathbb F_p\backslash\{0\}$ such that $s\cdot A^\star$ with $a=(\ell m+k-1)(k-\ell)^{-1}$ is an interval. This contradicts to that $e_d(A^\star)>2$ for all $d$.\end{proof}

%Then we show that any $A\subset\mathbb F_p^{n}$ with $|A|=mp^{n-1}$ cannot be of any two different construction types. If $A$ is of type $i_1$ and $i_2$ with $1\le i_1< i_2\le 5$, from Lemma~\ref{lem_contain_cret}, type $i_1$ and type $i_2$ should share the same size of support. As types $1$ and $3$ have $\omega= m$, type $2$ has $\omega= m+1$, and types $4$ and $5$ have $\omega= m+2$, the only choices are $(i_1, i_2)=(1, 3)$ or $(4, 5)$.

\subsection{The Largest $(k, \ell)$-sum-free sets}

Note that the upper bound in Theorem~\ref{thm_k_l_max_structure} on the size of a  $(k, \ell)$-sum-free set in $\mathbb F_p$ can be deduced from \cite[Theorem 5]{Bajnok2009}. However, the characterization of the extremal structures can not be found in the literature. For completeness, we prove the bounds and extremal cases of  Theorem~\ref{thm_k_l_max_structure} together in this section.

We begin with the case when the dimension equals  1. For convenience, from now on, we always assume that $k>\ell\ge 1$ are integers, and write
\begin{equation}\label{eqp}
  p:=m(k+\ell)+\lambda+2 \text{ be a prime with $m\geq 1$  and $\lambda\in [0, k+\ell-3]$}.
\end{equation}
That is, $m:= \lfloor\frac{p-2}{k+\ell}\rfloor$.

\begin{lemma}\label{lem_m+1_interval}
Let $A\subset \mathbb F_p$ be a $(k, \ell)$-sum-free subset. Then $|A|\le m+1$. If $\lambda\in [0, k+\ell-3]$ and $|A|= m+1$, then $A$ must be an interval $[a_j, a_j+m]$  up to isomorphism, where $a_j=-(km+1+j)(k-\ell)^{-1}$ for some $j\in [0, \lceil\frac{\lambda+1}2\rceil-1]$ as defined in Theorem~\ref{thm_k_l_max_structure}.
That is, $A$ has $\lceil\frac{\lambda+1}2\rceil$ choices up to isomorphism, and $A$ is unique when  $\lambda=0$ or $1$.
\end{lemma}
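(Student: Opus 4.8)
The plan is to reduce the problem to the classical inverse theory of sumsets in $\mathbb F_p$ via a rescaling trick. First, suppose $A \subset \mathbb F_p$ is $(k,\ell)$-sum-free, so $kA \cap \ell A = \emptyset$. The crucial observation is that the condition $kA \cap \ell A = \emptyset$ can be rephrased: since $k > \ell$, writing elements of $kA$ and $\ell A$ and subtracting, $kA \cap \ell A = \emptyset$ is equivalent to $0 \notin kA - \ell A = \ell\cdot(-A) + k\cdot A$ being a sum of $k+\ell$ translates of $\pm A$; more usefully, applying the dilation $x \mapsto (k-\ell)^{-1}x$, one reduces to studying when $0 \notin (k+\ell)B$ for a suitable affine image $B$ of $A$. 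Concretely I would set $B = A - c$ for an appropriate constant $c$ chosen so that $kA - \ell A = (k-\ell)c + (k+\ell)B$ forces a clean statement; then $A$ being $(k,\ell)$-sum-free becomes the statement that $(k+\ell)B$ avoids a specific point, equivalently that $0 \notin (k+\ell)B'$ for a translate $B'$ of $A$. The size bound $|A| \le m+1$ then follows from Cauchy–Davenport (Theorem~\ref{thm_Cauchy_daven}) applied $k+\ell-1$ times: if $|A| \ge m+2$ then $|(k+\ell)B'| \ge \min\{p, (k+\ell)(m+2) - (k+\ell-1)\} = \min\{p, (k+\ell)m + (k+\ell) + 1\}$, and since $p = (k+\ell)m + \lambda + 2 \le (k+\ell)m + (k+\ell) - 1 < (k+\ell)m + (k+\ell)+1$, we get $(k+\ell)B' = \mathbb F_p$, so it cannot avoid any point — contradiction.

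For the extremal case $|A| = m+1$: here $|(k+\ell)B'| \le p - 1$ by the avoidance, and Cauchy–Davenport gives $|(k+\ell)B'| \ge (k+\ell)(m+1) - (k+\ell-1) = (k+\ell)m + 1$. Comparing with $p - 1 = (k+\ell)m + \lambda + 1$, we see $(k+\ell)B'$ has size between $(k+\ell)m+1$ and $(k+\ell)m + \lambda + 1$, i.e. its "doubling defect" is small and bounded by $\lambda$. I would then apply the iterated Cauchy–Davenport/Vosper analysis (Theorem~\ref{thm_Vosper}): since each successive sumset $2B', 3B', \dots, (k+\ell)B'$ must have size deficiency at most $\lambda \le k+\ell-3 < p-2$, Vosper's theorem forces $B'$ (and each intermediate sumset) to be an arithmetic progression, all with a common difference. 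Rescaling by that common difference, $A$ is an interval up to isomorphism, say $A \cong [a, a+m]$. The remaining task is to pin down which values $a$ (equivalently which $j$) actually yield a $(k,\ell)$-sum-free set: for $A = [a, a+m]$ one computes $kA = [ka, ka+km]$ and $\ell A = [\ell a, \ell a + \ell m]$ (as intervals of lengths $km+1$ and $\ell m + 1$), and the disjointness $kA \cap \ell A = \emptyset$ becomes an explicit interval-avoidance condition in $\mathbb F_p$: the complement of $kA$ is an interval of length $p - km - 1 = \ell m + \lambda + 1$, which must contain the interval $\ell A$ of length $\ell m + 1$; this has exactly $\lambda + 1$ possible positions, which then collapse under the sign symmetry $A \mapsto -A$ to $\lceil \frac{\lambda+1}{2}\rceil$ isomorphism classes, and solving for $a$ gives $a = a_j = -(km+1+j)(k-\ell)^{-1}$ for $j \in [0, \lceil\frac{\lambda+1}{2}\rceil - 1]$.

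The main obstacle I anticipate is twofold. First, the bookkeeping in the rescaling/translation step: one must be careful that the reduction $kA \cap \ell A = \emptyset \iff 0 \notin (k+\ell)B'$ is genuinely correct and that the affine map used is an isomorphism of $\mathbb F_p$ (which it is, since $k - \ell$ and any common difference are units mod $p$). Second, and more delicate, is the uniqueness-up-to-sign bookkeeping in the extremal case: when $\lambda = 0$ the count $\lceil\frac{1}{2}\rceil = 1$ is clean, but for general $\lambda$ one must verify that the $\lambda+1$ interval placements of $\ell A$ inside the complement of $kA$ pair up correctly under $a \mapsto -a$ (with a possible fixed point when $\lambda$ is even), and that distinct $j$ genuinely give non-isomorphic intervals — this is exactly where Lemma~\ref{lem_interval_distinct} (uniqueness of common difference up to sign, valid since $m+1 \in [2, p-2]$ as $m \ge 1$ and $p$ large) is needed to rule out hidden isomorphisms between the candidate intervals. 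The iterated Vosper application also needs the mild check that all intermediate sumsets stay in the size range $[2, p-2]$ where Vosper applies, which follows from $m \ge 1$ and $\lambda \le k+\ell-3$.
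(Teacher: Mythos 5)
Your overall route — Cauchy--Davenport for the size bound, Vosper for the interval structure, then an explicit count of the admissible positions paired up by $A\mapsto -A$ and separated by Lemma~\ref{lem_interval_distinct} — is essentially the paper's (the paper phrases the bound via Kneser applied to the disjoint sets $kA$ and $\ell A$, and gets the AP structure contrapositively: if $A$ is not an AP then iterated Vosper gives $|kA|\ge k|A|$, contradicting disjointness). However, your central reduction is false. The identity $kA-\ell A=(k-\ell)c+(k+\ell)(A-c)$ does not hold: expanding the right-hand side gives $(k+\ell)A-2\ell c$, and $(k+\ell)A$ equals $kA+\ell\cdot(-A)$ translated only when $A$ is symmetric about some point. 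No dilation or translation converts the mixed sumset $kA+\ell(-A)$ into an iterated sumset of a single affine image of $A$ (e.g. $A=\{0,1,3\}$, $k=2$, $\ell=1$ gives $|2A-A|=10\neq 9=|3A|$). The damage is contained — every estimate you apply to $(k+\ell)B'$ (Cauchy--Davenport lower bounds, the chain of partial sums, Vosper) holds verbatim for $kA+\ell(-A)$ viewed as a sum of $k+\ell$ sets each of size $|A|$, and $0\notin kA-\ell A$ is the correct restatement of disjointness — but as written the proof rests on a wrong identity and must be rerouted through the mixed sumset (or, as in the paper, through $kA$ and $\ell A$ separately).

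The second gap is the Vosper step. You write that because each successive sumset has ``size deficiency at most $\lambda$,'' Vosper forces $B'$ to be an AP. Vosper (Theorem~\ref{thm_Vosper}) applies only to an \emph{exactly} tight step $|X+Y|=|X|+|Y|-1$; a step with positive excess up to $\lambda$ triggers nothing (that regime is Freiman's $3k-4$ theory, not Vosper). What saves the argument is a pigeonhole you never state: the total excess across all $k+\ell-1$ steps of the chain is at most $\lambda\le k+\ell-3$, so at least two steps are exactly tight; pick a tight step that is not the last one, check its output has size at most $p-1-m\le p-2$, and apply Vosper there to conclude $\pm A$ is an AP. Without locating a genuinely tight step the structural conclusion does not follow. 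The final part of your argument — computing $kA=[ka,ka+km]$, $\ell A=[\ell a,\ell a+\ell m]$, counting the $\lambda+1$ placements, solving $\ell a-(ka+km)-1=j$ for $a_j$, and invoking Lemma~\ref{lem_interval_distinct} for the pairing $j\leftrightarrow\lambda-j$ — matches the paper and is correct.
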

\begin{proof}
From the $(k, \ell)$-sum-free property of $A$, $kA\cap\ell A=\emptyset$ and hence
\[p\ge |kA|+|\ell A|\ge (k+\ell)|A|-(k+\ell)+2,\]
by Theorem~\ref{thm_Kneser}. This means $p-2\ge (k+\ell)(|A|-1)$ and hence $|A|\le m+1$.

If $|A|= m+1$ and $\lambda\in [0, k+\ell-3]$, we first show that up to isomorphism $A$ must be an interval. If not,
by Theorem~\ref{thm_Vosper}, as $|A|\ge 2$ and $|kA|\le p-2$, we have $|A+A|\ge 2|A|$ and hence $|iA|\geq i|A|$ for any $2\leq i\leq k$. In particular, we have
\[|kA|\ge k|A|=km+k.\] However,
\[|kA|\le p-|\ell A|\le p-\ell| A|=km+\lambda+2-\ell,\] which is a contradiction since $km+\lambda+2-\ell<km+k$.

Now suppose $A=[a, a+m]$ for some $a\in \mathbb F_p$.
Then $kA=[ka, ka+km]$ and $\ell A=[\ell a, \ell a+\ell m]$ which are disjoint. So $|\mathbb F_p \setminus(kA\cup \ell A)|= p- |kA|-|\ell A|=\lambda$. We consider the length of segments between $kA$ and $\ell A$ on the cycle formed by elements of $\mathbb F_p$. For each $j\in [0, \lambda]$,  suppose $\ell a -(ka+km)-1=j$. Then
\[a=-(km+1+j)(k-\ell)^{-1}.\]
By Lemma~\ref{lem_interval_distinct}, two values $j$ and $j'$ lead to the same interval up to isomorphism if and only if $j+j'=\lambda$. Hence  $j\in [0, \lceil\frac{\lambda+1}2\rceil-1]$ and $A$ has $\lceil\frac{\lambda+1}2\rceil$ choices up to isomorphism.
\end{proof}

From $n=1$ to general $n$, we need the following lemma, which is an extension of \cite[Lemma 2.5]{Reiher2024}.

\begin{lemma}\label{lem_common_k}
Let $E_1, E_2, \ldots, E_k, F_1, \ldots, F_\ell$ be $k+\ell$ nonempty subsets of $\mathbb F_p^n$ such that $(\sum_{i=1}^k E_i)\cap(\sum_{j=1}^\ell F_j)=\emptyset.$ Then $\sum_{i=1}^k|E_i|+\sum_{j=1}^\ell |F_j|\le p^n+(k+\ell-2)p^{n-1}$.

 Moreover, if $\sum_{i=1}^k|E_i|+\sum_{j=1}^\ell |F_j|> p^n+ (k+\ell-2)p^{n-2}$, there exists a decomposition $(v,K)$ of $\mathbb F_p^n$ satisfying the  following properties.  For each $ X\in\{E_i\}_{i\in [k]}\cup\{F_j\}_{j\in [\ell]}$, denoted $X^\star=Supp(X)$. Then
    \begin{itemize}
 \item[(i)] $(\sum_{i=1}^k E_i^\star)\cap(\sum_{j=1}^\ell F_j^\star)=\emptyset.$
   \item[(ii)] $\sum_{i=1}^k|E_i^\star|+\sum_{j=1}^\ell |F_j^\star|\in [p+1, p+k+\ell-2]$.
\end{itemize}

\end{lemma}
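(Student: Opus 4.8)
## Proof Proposal for Lemma~\ref{lem_common_k}

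The plan is to prove the two parts separately, both by a projection-averaging argument. For the first inequality, I would set $S = \sum_{i=1}^k E_i + \sum_{j=1}^\ell F_j$ — wait, that is not quite the right object since the two sums are disjoint, not summed. Instead, let $H = Sym\!\left(\sum_{i=1}^k E_i\right) + Sym\!\left(\sum_{j=1}^\ell F_j\right)$, or more simply apply Kneser's theorem (Theorem~\ref{thm_Kneser}) to $\sum_{i=1}^k E_i$ and to $\sum_{j=1}^\ell F_j$ with a common period subgroup. The cleanest route: let $T = \left(\sum_{i=1}^k E_i\right)$, $U = \left(\sum_{j=1}^\ell F_j\right)$; since $T \cap U = \emptyset$ we have $|T| + |U| \le p^n$. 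Now apply Kneser separately: $|T| \ge \sum_i |E_i| - (k-1)|H_1|$ where $H_1 = Sym(T)$, and $|U| \ge \sum_j |F_j| - (\ell-1)|H_2|$ where $H_2 = Sym(U)$. Since proper subgroups of $\mathbb{F}_p^n$ have size at most $p^{n-1}$, if both $H_1, H_2$ are proper we get $\sum_i |E_i| + \sum_j |F_j| \le |T| + |U| + (k-1)p^{n-1} + (\ell-1)p^{n-1} \le p^n + (k+\ell-2)p^{n-1}$, as desired. If $H_1 = \mathbb{F}_p^n$ then $T = \mathbb{F}_p^n$, forcing $U = \emptyset$, impossible since the $F_j$ are nonempty; similarly for $H_2$. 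So the bound holds.

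For the second part, assume $\sum_i |E_i| + \sum_j |F_j| > p^n + (k+\ell-2)p^{n-2}$. The idea is that this forces one of the period subgroups, say $H_1$ or $H_2$, to have dimension exactly $n-1$. Re-examining the Kneser inequalities: if $H_1$ and $H_2$ both had dimension $\le n-2$, we would get $\sum_i |E_i| + \sum_j |F_j| \le p^n + (k+\ell-2)p^{n-2}$, contradicting the hypothesis. So at least one of $H_1, H_2$ is an $(n-1)$-dimensional subspace; call it $K$, and pick $v \notin K$ to form a decomposition $(v, K)$. Then project every set $X \in \{E_i\} \cup \{F_j\}$ onto its support $X^\star = Supp(X) \subseteq \mathbb{F}_p$ relative to $(v,K)$. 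Property (i): if some $i_0 \in \sum_i E_i^\star$ and the same $i_0 \in \sum_j F_j^\star$, then picking witnesses in each $E_i, F_j$ produces an element of $\sum_i E_i$ in the coset $i_0 v + K$ and an element of $\sum_j F_j$ in the same coset; but whichever of $T, U$ has period $K$ is a union of full $K$-cosets, so the corresponding full coset $i_0 v + K$ lies in it, and since the other of $T,U$ has a point in $i_0 v + K$ too, we contradict $T \cap U = \emptyset$. Hence $\sum_i E_i^\star$ and $\sum_j F_j^\star$ are disjoint in $\mathbb{F}_p$.

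Property (ii): apply the one-dimensional version, i.e. Kneser over $\mathbb{F}_p$ (where period subgroups are trivial or everything) — $\sum_i |E_i^\star| + \sum_j |F_j^\star| \le p + (k+\ell-2)$ follows from part~(i) combined with the $n=1$ instance of the inequality already proved, OR directly from Cauchy–Davenport iterated $k+\ell-2$ times. For the lower bound $\sum_i |E_i^\star| + \sum_j |F_j^\star| \ge p+1$: I would use that $|X| \le |X^\star| \cdot p^{n-1}$ for every $X$ (each nonempty fiber has at most $p^{n-1}$ elements), so $\sum_i |E_i^\star| + \sum_j |F_j^\star| \ge \frac{1}{p^{n-1}}\left(\sum_i |E_i| + \sum_j |F_j|\right) > \frac{p^n + (k+\ell-2)p^{n-2}}{p^{n-1}} = p + \frac{k+\ell-2}{p} > p$, and since the left side is an integer it is at least $p+1$. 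The main obstacle I anticipate is being careful in the second part about \emph{which} of the two sums $T, U$ carries the $(n-1)$-dimensional period, and making the support-disjointness argument (i) work uniformly — one must track that the set with period $K$ being a union of $K$-cosets is exactly what lets the contradiction with $T \cap U = \emptyset$ go through. A secondary subtlety is verifying the upper bound in (ii) cleanly; iterating Cauchy–Davenport needs $\sum_i |E_i^\star| + \sum_j |F_j^\star|$ not to overshoot $p$, which is guaranteed precisely by the disjointness from (i).
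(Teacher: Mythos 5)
Your proposal is correct and follows essentially the same route as the paper: Kneser's theorem applied separately to $\sum_i E_i$ and $\sum_j F_j$ with their stabilizers for the first bound, then observing that the size hypothesis forces one stabilizer to be a hyperplane $K$, using the $K$-coset saturation to get disjointness of the supports' sumsets, and a counting/divisibility argument for the range in (ii). The only cosmetic difference is that you derive the upper bound in (ii) from the $n=1$ case applied to the supports (or iterated Cauchy--Davenport), whereas the paper applies the first part to the $K$-saturated sets $E_i+K$, $F_j+K$; these are equivalent.
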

\begin{proof} Denote $E=\sum_{i=1}^k E_i$ and $F=\sum_{j=1}^\ell F_j$.
Let $K_1=Sym(E)$ and let $K_2=Sym(F)$. As both $E$ and $F$ are nonempty and disjoint from each other, $|K_1|, |K_2|\le p^{n-1}$. By Theorem~\ref{thm_Kneser},
%Let $K_1=Sym(E_1+\cdots+E_k)$ and let $K_2=Sym(F_1+\cdots+ F_\ell)$. As both $\sum_{i=1}^k E_i$ and $\sum_{j=1}^\ell F_j$ are nonempty and disjoint to each other, $|K_1|, |K_2|\le p^{n-1}$. By Theorem~\ref{thm_Kneser},
\begin{equation*}
\begin{aligned}
\sum_{i=1}^k |E_i|+ \sum_{j=1}^\ell |F_j|& \le |E|+ (k-1)|K_1|+ |F|+(\ell-1)|K_2|\\
&\le p^n+(k+\ell-2)p^{n-1},
\end{aligned}
\end{equation*}
%
%\begin{equation*}
%\begin{aligned}
%  p^n\ge |E|+ |F|& \ge \sum_{i=1}^k |E_i+K_1|+ \sum_{j=1}^\ell |F_j+K_2|- (k-1)|K_1|- (\ell-1)|K_2|\\
%  & \ge \sum_{i=1}^k |E_i|+ \sum_{j=1}^\ell |F_j|- (k-1)|K_1|- (\ell-1)|K_2|.
%\end{aligned}
%\end{equation*}
%So $\sum_{i=1}^k |E_i|+ \sum_{j=1}^\ell |F_j|\le p^n+ (k-1)|K_1|+ (\ell-1)|K_2|\le p^n+(k+\ell-2)p^{n-1}$,
which is the first assertion.

Moreover, if $\sum_{i=1}^k|E_i|+\sum_{j=1}^\ell |F_j|> p^n+ (k+\ell-2)p^{n-2}$, at least one of $K_1$ and $K_2$ has size $p^{n-1}$. Without loss of generality, let $|K_1|=p^{n-1}$ and denote it by $K$. Choose a vector $v\notin K$, thus $(v,K)$ forms a decomposition of $\mathbb F_p^n$. %For each $ X\in\{E_i\}_{i\in [k]}\cup\{F_j\}_{j\in [\ell]}$, let $X^\star$ be the support of $X$ under the decomposition $(v,K)$.

(i) As $K=Sym(E)$,  $E$ is a union of cosets of $K$, that is $E=E+K$. Since $E\cap F=\emptyset$, the smallest union of cosets of $K$ containing $F$, that is $F+K$,  is disjoint from $E$. Noting that $E+K=\sum_{i=1}^k (E_i+K)$ and $ F+K=\sum_{i=1}^\ell (F_i+K)$,
we have  $\sum_{i=1}^k (E_i+K)\cap\sum_{j=1}^\ell (F_i+K)=\emptyset.$ Hence $(\sum_{i=1}^k E_i^\star)\cap(\sum_{j=1}^\ell F_j^\star)=\emptyset$.
% where $X^\star$ is the support of $X$ under the decomposition $(v,K)$ for  each $ X\in\{E_i\}_{i\in [k]}\cup\{F_j\}_{j\in [\ell]}$.

%(i) As $K=Sym(\sum_{i=1}^k E_i)$,  $\sum_{i=1}^k E_i$ is a union of cosets of $K$, that is $\sum_{i=1}^k E_i=\sum_{i=1}^k E_i+K=\sum_{i=1}^k (E_i+K)$. But for $F_i$, we have $\sum_{i=1}^\ell F_i\subset \sum_{i=1}^\ell F_i+K=\sum_{i=1}^\ell (F_i+K)$.
%Thus  $(\sum_{i=1}^k E_i)\cap(\sum_{j=1}^\ell F_j)=\emptyset$ leads to $(\sum_{i=1}^k E_i)\cap(\sum_{j=1}^\ell F_j+ K)=\emptyset.$ Hence $(\sum_{i=1}^k E_i^\star)\cap(\sum_{j=1}^\ell F_j^\star)=\emptyset$.

 %As $K_1$ is a subgroup, $\sum_{i=1}^k E_i= \sum_{i=1}^k E_i+ kK_1= \sum_{i=1}^k (E_i+K_1)$ and $\sum_{j=1}^\ell F_j+ K_1= \sum_{j=1}^\ell F_j+ \ell K_1= \sum_{j=1}^\ell (F_j+ K_1)$. Denote $K=K_1$ and choose a vector $v\notin K$, that is, $(v,K)$ is a decomposition of $\mathbb F_p^n$.

%Then for any $i\in [k]$, set $E_i^\star= (E_i+K)\cap L$;  for any $j\in [\ell]$, set $F_j^\star= (F_j+K)\cap L$. Thus, propositions (i) and (iii) are easy to check.

(ii) As $\sum_{i=1}^k (E_i+K)\cap\sum_{j=1}^\ell (F_j+ K)=\emptyset$, by the upper bound in the first statement,  \[p^n+(k+\ell-2)p^{n-1}\ge \sum_{i=1}^k|E_i+K|+\sum_{j=1}^\ell|F_j+K|=|K|(\sum_{i=1}^k|E_i^\star|+\sum_{j=1}^\ell|F_j^\star|).\] Hence $\sum_{i=1}^k|E_i^\star|+\sum_{j=1}^\ell|F_j^\star|\le p+k+\ell-2$. On the other hand, as $|K|(\sum_{i=1}^k|E_i^\star|+\sum_{j=1}^\ell|F_j^\star|)\ge  \sum_{i=1}^k|E_i|+\sum_{j=1}^\ell |F_j|> p^n+ (k+\ell-2)p^{n-2}$, we have $\sum_{i=1}^k|E_i^\star|+\sum_{j=1}^\ell|F_j^\star|\ge p+1$.
\end{proof}
%{\color{red}\begin{remark}
%Here the first assertion cannot be reduced to $\sum_{i=1}^k|E_i|+\sum_{j=1}^\ell |F_j|\le p^n+k+\ell-2$, because Cauchy-Davenport can only be used for a field with {\it prime} order. It is the best as counterexamples can be found.
%\end{remark}}

Now we are ready to prove Theorem~\ref{thm_k_l_max_structure}.

\begin{proof}[Proof of Theorem~\ref{thm_k_l_max_structure}]
The case when $n=1$ has been proven in Lemma~\ref{lem_m+1_interval}.

For general $n\ge 2$, let $A\subset \mathbb F_p^n$ be a $(k,\ell)$-sum-free set with maximum size.
%we first upper bound the size of $A$.
Let $K_1$ be the symmetric group of $kA$, then $kA$ is a union of cosets of $K_1$, and thus $kA =kA+K_1=k(A+K_1)$. Since further $kA\cap \ell A =\emptyset$ and $\ell A+K_1=\ell(A+K_1)$, we have $k(A+K_1)\cap \ell(A+K_1)=\emptyset$, that is
 $A+K_1$ is also $(k, \ell)$-sum-free. By the maximality of $|A|$ and $A\subseteq A+K_1$, we have $A=A+K_1$, and hence $|K_1|$ divides $|A|$. Similarly, if we denote $K_2$ the symmetric
group of $\ell A$, then $A=A+K_2$ and $|K_2|$ divides $|A|$. If $\max\{|K_1|, |K_2|\}= p^{n-1}$, then $p^{n-1}\mid|A|$. By Lemma~\ref{lem_common_k}, $(k+\ell)|A|\leq p^n+(k+\ell-2)p^{n-1}$, thus $|A|\leq (m+1)p^{n-1}$.
If $\max\{|K_1|, |K_2|\}\le p^{n-2}$, then by Theorem~\ref{thm_Kneser}, $(k+\ell)|A|\le p^n+(k+\ell-2)p^{n-2}<(m+1)p^{n-1}(k+\ell)$, where the last inequality requires $\lambda \leq k+\ell-3$. Hence $|A|<(m+1)p^{n-1}$.

When $|A|= (m+1)p^{n-1}$, that is,
\[(k+\ell)|A|=(m+1)p^{n-1}(k+\ell)=(p+k+\ell-2-\lambda)p^{n-1}> p^n+(k+\ell-2)p^{n-2},\]
where the last inequality holds because $k+\ell-2-\lambda\ge 1$ and $ p> k+\ell-2$.
Then by Lemma~\ref{lem_common_k} (i), there exists a  decomposition $(v,K)$ of $\mathbb F_p^n$,  such that under this decomposition, the support $A^\star\subset \mathbb F_p$ of $A$  is $(k, \ell)$-sum-free. %{\color{red}(it seems that Lemma~\ref{lem_common_k} (ii) is not used here)}.
By Lemma~\ref{lem_m+1_interval}, $|A^\star|\leq m+1$. But by $|A|= (m+1)p^{n-1}$, $|A^\star|\geq m+1$. So $|A^\star|= m+1$ and $A^\star$ is an interval up to isomorphism. Further, $A=A^\star\cdot v +K$. Thus $A$ is isomorphic to one of the $A_{k, \ell, p, j}$'s. The mutually difference among $A_{k, \ell, p, j}$ of different $j$ are trivial from Lemma~\ref{lem_interval_distinct} and Lemma~\ref{lem_contain_cret}.

 %and $(k+\ell)|A^\star|\in [p+1, p+k+\ell-2]$. Since However
%
%If $|A^\star|< m+1$, then $(k+\ell)|A^\star|\le m(k+\ell)\le p-2< p+1$, contradicting to Lemma~\ref{lem_common_k} (2) (ii). So $A^\star$ is a $(k, \ell)$-solution-free subset in $L\cong \mathbb F_p$ of size at least $m+1$, whose all possible solutions under isomorphisms are all listed in Lemma~\ref{lem_general_lambda}.
%
%This means under some isomorphism $A\subseteq [a_j, a_j+m]\times K$ for some $j\in [0, \lceil\frac{\lambda+1}2\rceil-1]$. As $|A|\ge (m+1)p^{n-1}$, we must have $|A|= (m+1)p^{n-1}$ and $A= A_{k, \ell, p, j}= [a_j, a_j+m]\times \mathbb F_p^{n-1}$. The mutually difference among $A_{k, \ell, p, j}$ of different $j$ are trivial from Lemma~\ref{lem_not_contained} and Lemma~\ref{lem_interval_distinct}.
\end{proof}

\section{Additive Structure Analysis}\label{sec_add_struc}

%Without special clarification, $p$ is a positive prime number, $k, \ell, \lambda$ are integers satisfying $k>\ell\ge 1$ and $\lambda\in [0, k-2]$.
%Recall Definition~\ref{def_A_i} for the notations $A_i, B_i, C_i, \omega$ and $\beta_i$.

Let $(v,K)$ be a decomposition of $\mathbb F_p^n$. Recall that for any $A\subset \mathbb F_p^n$, $A_i=(A-iv)\cap K$, $i\in \mathbb F_p$.  To characterize the structure of $A$, we need to first characterize the support of $A$ under the decomposition $(v,K)$. %be defined in Eq.~(\ref{eqai})
Let $b_1, b_2, \ldots, b_p$ be a reordering  of elements in $\mathbb F_p$ such that
$|A_{b_1}|\ge |A_{b_2}|\ge\cdots\ge|A_{b_p}|.$
For each $i\in [p]$, denote
\begin{equation}\label{eqbcbeta}
 C_i:=\{b_1, \ldots, b_i\}, B_i:=A_{b_i},  \text{ and }  \beta_i(A):=|B_i|\slash p^{n-2}.
  \end{equation}
  Then $C_i\subset \mathbb F_p$, $B_i\subset K$ and
 $0\le \beta_i\le p$ for each $i\in [p]$. Denote $\omega:=\omega(A)$, then $C_\omega=Supp(A)$.

 %If $i=\omega(A)$, $C_i=Supp(A)$. %We give the following fact.%In other words, $\tilde{\ell}(A)$ is the largest number such that $\beta_{\omega}(A)\neq 0$

\begin{fact}\label{fact1}
  Let  $A\subset \mathbb F_p^n$ be a $(k, \ell)$-sum-free set. If $\sum_{i\in [k]}b_{{r_i}}=\sum_{j\in[\ell]}b_{{s_j}}$ for some ${r_1},\ldots, {r_k}, {s_1},\ldots,{s_\ell}\in [\omega]$ (not necessarily distinct),  then
  \begin{itemize}
    \item[(1)] $\left(\sum_{i=1}^k B_{{r_i}}\right)\cap \left(\sum_{j=1}^\ell B_{{s_j}}\right)=\left(\sum_{i=1}^k A_{b_{{r_i}}}\right)\cap \left(\sum_{j=1}^\ell A_{b_{{s_j}}}\right)=\emptyset,$ and
    \item[(2)] $\sum_{i=1}^k|B_{{r_i}}|+\sum_{j=1}^\ell |B_{{s_j}}|=\sum_{i=1}^k|A_{b_{{r_i}}}|+\sum_{j=1}^\ell |A_{b_{{s_j}}}|\le p^{n-1}+(k+\ell-2)p^{n-2}$.
  \end{itemize}

\end{fact}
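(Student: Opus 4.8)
The plan is to deduce both assertions of Fact~\ref{fact1} directly from the definition of $(k,\ell)$-sum-freeness together with Lemma~\ref{lem_common_k}. First I would unpack the relationship between the parts $A_i$ and their projections $B_i$. By the definitions in~\eqref{eqai} and~\eqref{eqbcbeta}, for each index $b_{r}$ we have $A\cap(b_r v+K)=b_r v+A_{b_r}=b_r v+B_r$ with $B_r\subseteq K$, and these are genuine subsets of $A$. Now suppose $\sum_{i\in[k]}b_{r_i}=\sum_{j\in[\ell]}b_{s_j}$ in $\mathbb{F}_p$. Adding up the corresponding layers of $A$ gives
\[
\sum_{i=1}^k\bigl(b_{r_i}v+B_{r_i}\bigr)=\Bigl(\sum_{i=1}^k b_{r_i}\Bigr)v+\sum_{i=1}^k B_{r_i}\subseteq kA,
\qquad
\sum_{j=1}^\ell\bigl(b_{s_j}v+B_{s_j}\bigr)=\Bigl(\sum_{j=1}^\ell b_{s_j}\Bigr)v+\sum_{j=1}^\ell B_{s_j}\subseteq \ell A.
\]
Since the two scalar coefficients of $v$ agree by hypothesis, both sumsets live in the same coset $cv+K$ where $c=\sum_i b_{r_i}=\sum_j b_{s_j}$. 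Because $A$ is $(k,\ell)$-sum-free we have $kA\cap\ell A=\emptyset$, and intersecting with the coset $cv+K$ and translating back by $-cv$ forces $\bigl(\sum_{i=1}^k B_{r_i}\bigr)\cap\bigl(\sum_{j=1}^\ell B_{s_j}\bigr)=\emptyset$ inside $K$. This is exactly part~(1), where the equality with the $A_{b_{r_i}}$-version is just the notational identity $B_{r_i}=A_{b_{r_i}}$.

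For part~(2), I would invoke Lemma~\ref{lem_common_k} applied inside the $(n-1)$-dimensional space $K\cong\mathbb{F}_p^{n-1}$, with the $k+\ell$ sets $E_i=B_{r_i}$ and $F_j=B_{s_j}$. These are nonempty because $r_i,s_j\in[\omega]$ means each $b_{r_i}$ and $b_{s_j}$ is in the support of $A$, so the corresponding part is nonempty. Part~(1) supplies the hypothesis $\bigl(\sum_i E_i\bigr)\cap\bigl(\sum_j F_j\bigr)=\emptyset$, and the first conclusion of Lemma~\ref{lem_common_k} (with ambient dimension $n-1$) yields $\sum_{i=1}^k|B_{r_i}|+\sum_{j=1}^\ell|B_{s_j}|\le p^{n-1}+(k+\ell-2)p^{n-2}$, which is precisely the bound claimed.

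There is essentially no serious obstacle here; the statement is a bookkeeping consequence of two facts already established. The one point requiring a touch of care is the coset argument in part~(1): one must check that the sumset $\sum_i B_{r_i}$, viewed inside $\mathbb{F}_p^n$, really does sit in the single coset $\bigl(\sum_i b_{r_i}\bigr)v+K$ rather than spilling into several cosets, which is true because $K$ is a subgroup closed under addition and $v$ contributes additively to the coefficient. A secondary bookkeeping point is that applying Lemma~\ref{lem_common_k} requires the indices to name nonempty sets, which is guaranteed by the assumption $r_i,s_j\in[\omega]$; one should note explicitly that repeated indices are harmless since the lemma places no distinctness hypothesis on its input sets. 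With those remarks in place the proof is complete.
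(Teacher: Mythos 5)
Your proposal is correct and follows essentially the same route as the paper: part (1) is the observation that the two layer-sums sit in a common coset of $K$ inside $kA$ and $\ell A$ respectively (the paper phrases this as a contradiction by lifting elements of the intersection back to a $(k,\ell)$-sum in $A$, which is the same computation), and part (2) is exactly the paper's application of Lemma~\ref{lem_common_k} inside $K\cong\mathbb F_p^{n-1}$.
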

\begin{proof} %We only need to show $\left(\sum_{i=1}^k B_{{r_i}}\right)\cap \left(\sum_{j=1}^\ell B_{{s_j}}\right)=\emptyset.$
  (1) Suppose on the contrary that  there exist $a'_i\in B_{{r_i}}$, $i\in [k]$, and $c'_j\in B_{{s_j}}$, $j\in [\ell]$, such that $\sum_{i=1}^{k}a'_i=\sum_{j=1}^{\ell}c'_j$. Writing $a'_i=a_i-b_{{r_i}}v$ and $c'_j=c_j-b_{{s_j}}v$ for some $a_i,c_j\in A$, we deduce that $\sum_{i=1}^{k}a_i=\sum_{j=1}^{\ell}c_j$ by $\sum_{i\in [k]}b_{{r_i}}=\sum_{j\in[\ell]}b_{{s_j}}$. This contradicts to the fact that $A$ is $(k, \ell)$-sum-free.

  (2) As each $ B_{{r_i}}= A_{b_{{r_i}}} \subset K \cong \mathbb F_p^{n-1}$, by (1) and Lemma~\ref{lem_common_k}, we get (2).
\end{proof}

%As each $ B_{{r_i}}= A_{b_{{r_i}}} \subset K \cong \mathbb F_p^{n-1}$, by Lemma~\ref{lem_common_k}, we have the following corollary.

%\begin{corollary}\label{cor:sumbound}
%  Let  $A\subset \mathbb F_p^n$ be a $(k, \ell)$-sum-free set. If $\sum_{i\in [k]}b_{{r_i}}=\sum_{j\in[\ell]}b_{{s_j}}$ for some ${r_1},\ldots, {r_k},$ ${s_1},\ldots,{s_\ell}\in [{\color{blue}\omega(A)}]$ (not necessarily distinct),  then
%  $\sum_{i=1}^k|B_{{r_i}}|+\sum_{j=1}^\ell |B_{{s_j}}|=\sum_{i=1}^k|A_{b_{{r_i}}}|+\sum_{j=1}^\ell |A_{b_{{s_j}}}|\le p^{n-1}+(k+\ell-2)p^{n-2}$.
%  %$\left(\sum_{i=1}^k B_{{r_i}}\right)\cap \left(\sum_{j=1}^\ell B_{{s_j}}\right)=\left(\sum_{i=1}^k A_{b_{{r_i}}}\right)\cap \left(\sum_{j=1}^\ell A_{b_{{s_j}}}\right)=\emptyset.$
%\end{corollary}

When $|A|\geq mp^{n-1}$, $\omega=\omega(A)\geq m$. In the next subsection, we characterize the structure of $C_m$ which is very close to $Supp(A)$.

\subsection{Structure of $C_m$}\label{sbsec:a}

For convenience, we say an interval $[a,b]$ is a {\it hole} of $C\subset \mathbb F_p$ if $[a,b]\cap C=\emptyset$ and $\{a-1,b+1\}\subset C$. A hole of length $j$ is called a $j$-hole. Let $I$ be the shortest interval covering $C$. We say $C$ is {\it $j$-holed from $I$} if each hole of $C$ contained in $I$ has length at most $j$, and call $C$ a \emph{$j$-holed interval} of length $|I|$.
Our goal of this subsection is to prove the following lemma, which determines the possible structures of $C_m$ for a $(k, \ell)$-sum-free set $A\subset \mathbb F_p^n$ with $|A|\ge mp^{n-1}$ under certain conditions.

\begin{lemma}\label{lem_stru_of_Cm}
Let $A\subset \mathbb F_p^n$ be a $(k, \ell)$-sum-free set of size at least $mp^{n-1}$. If under some decomposition $(v, K)$ of $\mathbb F_p^n$, $k\beta_m\ge p+k-1$, then $kC_m\cap \ell C_{\omega}= \emptyset$ and consequently $C_m$ is $(k, \ell)$-sum-free. Moreover, if further $\lambda\in [0,k+\ell-5]$ %$k+\ell\ge \lambda+ 5$
and $m\ge \max\{7, 5T(\frac 1{k+\ell})\}$, then up to isomorphism, $C_m$ is either an interval of length $m$, or covered by a $(k, \ell)$-sum-free interval of length $m+1$.
\end{lemma}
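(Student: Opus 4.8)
\emph{Proof plan.}

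\textbf{First assertion.} Suppose for contradiction that $kC_m\cap\ell C_\omega\neq\emptyset$, say
$b_{r_1}+\cdots+b_{r_k}=b_{s_1}+\cdots+b_{s_\ell}$ with $r_1,\dots,r_k\in[m]$ and $s_1,\dots,s_\ell\in[\omega]$ (note $[m]\subseteq[\omega]$ since $\omega\ge m$). By Fact~\ref{fact1}(1) the sumsets $\sum_{i=1}^{k}B_{r_i}$ and $\sum_{j=1}^{\ell}B_{s_j}$ are disjoint subsets of $K$; since each $B_{s_j}$ is nonempty ($s_j\in[\omega]$), the set $\sum_{i=1}^{k}B_{r_i}$ is a \emph{proper} subset of $K$, so its symmetry group $H:=Sym\!\big(\sum_{i=1}^{k}B_{r_i}\big)$ is a proper subgroup of $K$ and hence $|H|\le p^{n-2}$. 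Applying Kneser's theorem inside $K$, and using $|B_{r_i}|\ge|B_m|=\beta_m p^{n-2}$ (as $r_i\le m$) together with $k\beta_m\ge p+k-1$,
\[
\Big|\sum_{i=1}^{k}B_{r_i}\Big|\ \ge\ \sum_{i=1}^{k}|B_{r_i}|-(k-1)|H|\ \ge\ \big(k\beta_m-(k-1)\big)p^{n-2}\ \ge\ p^{n-1}=|K|,
\]
so $\sum_{i=1}^{k}B_{r_i}=K$, a contradiction. Hence $kC_m\cap\ell C_\omega=\emptyset$, and since $C_m\subseteq C_\omega$ this gives $kC_m\cap\ell C_m=\emptyset$, i.e.\ $C_m$ is $(k,\ell)$-sum-free in $\mathbb{F}_p$.

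\textbf{Second assertion: reduction and covering.} Assume now $\lambda\in[0,k+\ell-5]$ and $m\ge\max\{7,5T(\tfrac1{k+\ell})\}$. Since the $b_i$ are distinct, $|C_m|=m$; and $kC_m\cap\ell C_m=\emptyset$ forces $|kC_m|+|\ell C_m|\le p=(k+\ell)m+\lambda+2\le(k+\ell)(m+1)-3$. If $C_m$ is an arithmetic progression, then dilating by the inverse of its common difference makes it an interval of length $m$, which is $(k,\ell)$-sum-free (being isomorphic to $C_m$), and we are done. So assume $C_m$ is not an AP; by Vosper's theorem $|2C_m|\ge2m$, and more generally $|jC_m|\ge|2C_m|+(j-2)m$ for $2\le j\le k$ (Vosper again, since $C_m$ is not an AP and no $jC_m$ with $j\le k$ equals $\mathbb{F}_p$). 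Feeding these bounds into $|kC_m|+|\ell C_m|\le p$, and using the density bound $|C_m|=m\le p/(k+\ell)$ together with $m\ge5T(\tfrac1{k+\ell})$, one concludes via Definition~\ref{deftau} that $C_m$ has the covering property; after dilating so that the covering progression is an interval, and passing to the shortest interval $[a,b]\supseteq C_m$, we obtain $a,b\in C_m$ and $b-a+1=m+s$ with some $s\ge0$ holes, all lying in $[a+1,b-1]$.

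\textbf{Pinning down $s$ and finishing.} Writing $C_m=[a,b]\setminus H$ with $|H|=s$, the key point is that clustering the holes near the two endpoints of $[a,b]$ minimizes $|kC_m|$: a hole near an endpoint merely deletes one ``small–excess'' point from $kC_m\subseteq[ka,kb]$, so $|kC_m|\ge k(m+s-1)+1-s$, and likewise $|\ell C_m|\ge\ell(m+s-1)+1-s$ when $\ell\ge2$ (while $|\ell C_m|=m$ when $\ell=1$). Substituting into $|kC_m|+|\ell C_m|\le(k+\ell)(m+1)-3$ and simplifying yields $(k+\ell-2)s\le2(k+\ell)-5$ (resp.\ $(k-1)s\le2k-3$ when $\ell=1$), whence $s\le1$ because $k+\ell\ge5$. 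If $s=0$ then $C_m=[a,b]$ is a $(k,\ell)$-sum-free interval of length $m$. If $s=1$, write $C_m=[a,a+m]\setminus\{x\}$ with $x\in(a,a+m)$; then $kC_m$ differs from $k[a,a+m]=[ka,ka+km]$ in at most one endpoint–adjacent point, and similarly $\ell C_m$ from $\ell[a,a+m]$. Were $k[a,a+m]\cap\ell[a,a+m]$ nonempty, this intersection of two arithmetic intervals would be an interval of size $\le2$ lying in $\big([ka,ka+km]\setminus kC_m\big)\cup\big([\ell a,\ell a+\ell m]\setminus\ell C_m\big)$; a short case check (using $m\ge7$ and $k+\ell\ge5$) shows that a boundary point of such an intersection is forced to lie in $kC_m\cap\ell C_m$, contradicting that $C_m$ is $(k,\ell)$-sum-free. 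Hence $[a,a+m]$ is itself a $(k,\ell)$-sum-free interval of length $m+1$ covering $C_m$, which completes the proof.

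\textbf{Main obstacle.} The delicate step—and the one where both hypotheses $\lambda\le k+\ell-5$ and $m\ge\max\{7,5T(\tfrac1{k+\ell})\}$ are essential—is controlling the doubling $|2C_m|$ tightly enough to invoke the covering property of Definition~\ref{deftau}: the elementary Cauchy–Davenport/Vosper telescoping suffices when $m$ is not too small relative to $k+\ell$, but for $k+\ell$ large it must be supplemented by the fact that a $(k,\ell)$-sum-free set cannot have its $k$-fold sumset fill $\mathbb{F}_p$, which via rapid sumset–growth estimates again pins $|2C_m|$ down; aligning the numerical constants with the stated bound on $m$ is where the argument is tightest.
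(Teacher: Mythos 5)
Your proof of the first assertion is correct and is essentially the paper's argument: Kneser's theorem plus $k\beta_m\ge p+k-1$ forces $\sum_{i=1}^{k}B_{r_i}=K$, contradicting the disjointness from the nonempty set $\sum_{j=1}^{\ell}B_{s_j}$ given by Fact~\ref{fact1}. The endgame ($s\le 1$, then showing the length-$(m+1)$ interval is itself $(k,\ell)$-sum-free) is also sound in outline and matches Claim~\ref{clm3}.

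The genuine gap is in the covering step. Your bound on the doubling comes from telescoping $|jC_m|\ge|2C_m|+(j-2)m$, which yields only $|2C_m|\le 2m+\frac{\lambda+2}{2}$ when $\ell\ge 2$ (and the even weaker $|2C_m|\le 2m+\lambda+2$ when $\ell=1$). Since $m\ge 5T(\frac1{k+\ell})$ only guarantees $\tau(\frac1{k+\ell})\,m\ge 5$, Definition~\ref{deftau} can be invoked only if $|2C_m|\le 2m+2$; your bound fails this whenever $\lambda\ge 3$ (resp.\ $\lambda\ge1$ for $\ell=1$), i.e.\ for most of the admissible range $\lambda\le k+\ell-5$ once $k+\ell\ge 8$. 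You flag this in your closing paragraph but the promised supplement (``rapid sumset-growth estimates'') is never supplied. The fix is not more growth of $jC_m$ from $2C_m$ but a different decomposition: write $kC_m$ as a sum of $\lfloor k/2\rfloor$ copies of $2C_m$ (plus possibly one $C_m$) and likewise for $\ell C_m$, so that Cauchy--Davenport gives $p\ge\frac{k+\ell-2}{2}|2C_m|+2m-\frac{k+\ell-2}{2}$ and hence $|2C_m|\le 2m+1+\frac{2(\lambda+2)}{k+\ell-2}<2m+3$, which is exactly where $\lambda\le k+\ell-5$ enters. Without this the covering property is not established and the rest of your argument has nothing to stand on.

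A second, smaller issue: the inequality $|kC_m|\ge k(m+s-1)+1-s$ is asserted via the heuristic that ``clustering the holes near the endpoints minimizes $|kC_m|$.'' As stated this is false without the hypothesis that $[a,b]$ is the \emph{shortest} progression containing $C_m$ (e.g.\ $C=\{0,2,4\}$ violates it), and even with that hypothesis it is a Lev-type $h$-fold strengthening of Freiman's $3k-4$ theorem over $\mathbb{Z}$ that requires proof or citation, plus a check that $kI$ does not wrap around $\mathbb{F}_p$. If justified, your route to $s\le 1$ is cleaner and more uniform than the paper's, which instead rules out covering lengths $m+2$ and $m+3$ by bespoke structural arguments about leading sub-intervals of $kI$ and $\ell I$ (Claims~\ref{clmcm} and~\ref{clmi1}); but as written it is an unproven step, not a known black box.
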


%(I move three claims here, which I think more natural)\\
Before proving this lemma, we need some basic results about the structure of a $(k, \ell)$-sum-free set $C\subset \mathbb F_p$ of size $m$ which has an interval cover of length very close to $m$. %Remember that we always have $p=(k+\ell)m+2+\lambda$.

\begin{claim}\label{clm3}
Let $C\subset \mathbb F_p$ be a $(k, \ell)$-sum-free set of size $m\ge 3$, and let $I$ be the shortest interval covering $C$.
If $|I|=m+1$, then $I$ is $(k, \ell)$-sum-free.
\end{claim}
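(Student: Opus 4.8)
The plan is to argue by contradiction: suppose $I = [a, a+m]$ is the shortest interval covering the $(k,\ell)$-sum-free set $C$ with $|C| = m$, but $I$ itself is \emph{not} $(k,\ell)$-sum-free, i.e.\ $kI \cap \ell I \neq \emptyset$. Since $|I| = m+1$, the minimality of $I$ forces $a \in C$ and $a+m \in C$ (the two endpoints of $I$ lie in $C$), so $C = I \setminus \{a+t\}$ for a unique $t \in [1, m-1]$; that single missing point is the only freedom we have. First I would record the sizes: $kI = [ka, ka+km]$ is a genuine interval of length $km+1$, and similarly $\ell I$ has length $\ell m+1$, so $|kI| + |\ell I| = (k+\ell)m + 2 = p - \lambda \le p$; hence $kI$ and $\ell I$ can fail to be disjoint only by a small overlap, and since $kC \cap \ell C = \emptyset$ the deleted point $a+t$ must be ``responsible'' for killing every element of $kI \cap \ell I$.

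The key quantitative step is to understand how removing one point from an interval shrinks its $h$-fold sumset. If $J = I \setminus \{x\}$ with $x$ an interior point of the interval $I = [a, a+m]$, then $kJ$ still contains the large sub-interval obtained by using $a$ and $a+m$ in place of $x$: concretely $k \cdot [a, a+m-1] \cup k\cdot[a+1,a+m] \subseteq kJ$ whenever $m \ge 2$, and the union of these two intervals (which overlap because $m \ge 2$, indeed because each has length $k(m-1)+1 \ge $ half of $km+1$ once $m \ge 3$) is all of $kI = [ka, ka+km]$. Wait --- that would say $kJ = kI$, which is too strong; the correct statement is that $kJ$ omits from $kI$ \emph{only} those residues expressible as a $k$-term sum from $I$ \emph{exclusively} through $x$, and a short counting argument (every $y \in kI$ with $y \ne kx$ has a representation avoiding $x$ as soon as $m \ge 2$, and $kx$ itself is representable avoiding $x$ once $m \ge 2$ as well, e.g.\ $kx = (x-1) + (x+1) + (k-2)x$ if $1 < t < m-1$, with the boundary subcases $t=1, t=m-1$ handled using the endpoints) shows in fact $kJ = kI$ when $3 \le |I|$, i.e.\ $m \ge 2$. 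The same holds for $\ell J = \ell I$ when $\ell \ge 2$; and when $\ell = 1$ we have $\ell J = J = C$ directly. Therefore $kC = kI$ and $\ell C = \ell I$ (or $\ell C = C \subseteq I = \ell I$ in the $\ell=1$ case), so $kC \cap \ell C = kI \cap \ell I \ne \emptyset$, contradicting that $C$ is $(k,\ell)$-sum-free.

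The step I expect to be the main obstacle is the claim that deleting one interior point of an interval of length $m+1 \ge 4$ leaves its $h$-fold sumset ($h \ge 2$) unchanged inside $\mathbb F_p$ --- this is the analogue, for $h$-fold sumsets, of Lemma~\ref{lem_one_hole_not_AP}, and it requires care because in $\mathbb F_p$ the interval $hI$ can already be wrapping around (though here $|hI| \le (k+\ell)m+2 \le p$ so no wraparound collision occurs) and because the boundary cases $t \in \{1, m-1\}$ need the endpoints $a, a+m$ to supply the missing representations. I would dispatch it by the explicit representation $hx = (x-1) + (x+1) + (h-2)x$ for $1 < t < m-1$, and for $t = 1$ (so $x = a+1$, and $a \in C$) use $h(a+1) = a + (a+2) + (h-2)(a+1)$, noting $a+2 \in C$ since $m \ge 3$; symmetrically for $t = m-1$. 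Once that representation lemma is in hand, the rest is the bookkeeping above, and the hypothesis $m \ge 3$ is used exactly to guarantee $a+2 \le a+m-1$, i.e.\ that the interior of $I$ has at least the two points adjacent to each endpoint available.
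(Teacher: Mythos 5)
Your overall strategy---reduce the claim to showing that the sumsets of $C$ and of $I$ coincide---matches the paper's argument for the case where the deleted point is far from the endpoints, but your key lemma is false in exactly the case that matters most. Write $I=[a,a+m]$ and $C=I\setminus\{a+t\}$ with $t\in[1,m-1]$. When $t=1$ (symmetrically $t=m-1$), one computes $iC=\{ia\}\cup[ia+2,ia+im]=iI\setminus\{ia+1\}$ for every $2\le i\le k$: the element $ia+1$ has no representation as an $i$-fold sum from $C$, because any such representation must consist of $i-1$ copies of $a$ together with $a+1\notin C$. So $kC\ne kI$ and $\ell C\ne\ell I$ in this case, your concluding identity $kC\cap\ell C=kI\cap\ell I$ fails, and one must argue separately (as the paper does) that the two exceptional points $ka+1$ and $\ell a+1$ cannot account for all of a nonempty intersection of the intervals $kI$ and $\ell I$. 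Note also that for $m=3$ \emph{every} interior point of $I$ is adjacent to an endpoint, so this boundary case is not a fringe subcase but the entire content of the claim at the smallest admissible $m$.

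There are two further concrete errors. First, the representations you propose, $hx=(x-1)+(x+1)+(h-2)x$ and $h(a+1)=a+(a+2)+(h-2)(a+1)$, each use $h-2$ copies of the deleted point $x$ itself, so for $h\ge3$ they do not witness membership in $hC$; and in any case the element actually at risk in the boundary case is $ha+1$, not $hx=ha+h$, so even a corrected representation of $hx$ would not rescue the lemma. Second, in the $\ell=1$ case your chain ``$kC\cap\ell C=kI\cap\ell I$'' silently replaces $C$ by $I$ on one side of the intersection; $kI\cap I\neq\emptyset$ yields $kI\cap C\neq\emptyset$ only after one observes that the intersection of the two intervals cannot consist solely of the single deleted interior point. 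The first issue is the serious one: as written, the proof establishes the claim only when the missing point of $C$ satisfies $a+2\le a+t\le a+m-2$, which for $m=3$ is never.
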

\begin{proof}
Denote $I=[a, a+m]$ for some $a\in\mathbb F_p$.
Denote $\{b\}=I\backslash C$. As $I$ is the shortest cover, $b\in [a+1, a+m-1]$.

If $b\notin\{a+1, a+m-1\}$, from $m\ge 3$ we have $kI=kC$ and $\ell I=\ell C$. Since $C$ is $(k, \ell)$-sum-free, $I$ is also $(k, \ell)$-sum-free.

 Otherwise, $b\in\{a+1, a+m-1\}$. We only prove when $b=a+1$, while $b=a+m-1$ is similar. As $m\ge 3$, for any $2\le i\le k$, $iC=\{ia\}\cup[ia+2, ia+im]=iI\backslash\{ia+1\}$.
Particularly, $kC=kI \backslash\{ka+1\}$ and $\ell C=\ell I\backslash\{\ell a+1\}$. From $kC\cap \ell C=\emptyset$, we also have $kI\cap \ell I=\emptyset$.
\end{proof}

\begin{claim}\label{clmcm}
Let $C\subset \mathbb F_p$ be a $(k, \ell)$-sum-free set of size $m\ge 3$, and let $I=[a, a+m+t]$ be the shortest interval covering $C$ with $t\in\{1, 2\}$. Let $I_1$ and $I_1'$ be the longest sub-intervals of $C$ containing $a$ and $a+m+t$, respectively.
If $C$ is $j$-holed from $I$ for some $j\ge 1$, then $|I_1|\leq j$ and $|I_1'|\leq j$ .
\end{claim}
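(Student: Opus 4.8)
\textbf{Proof plan for Claim~\ref{clmcm}.}

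The plan is to argue by contradiction: suppose one of the boundary sub-intervals, say $I_1$, has length strictly greater than $j$, and derive that $C$ would then admit a $(k,\ell)$-sum representation, contradicting the $(k,\ell)$-sum-freeness of $C$. The underlying geometric idea is that $I=[a,a+m+t]$ is the \emph{shortest} covering interval, so the point $a+m+t$ must genuinely lie in $C$ (it is an endpoint), and since $C$ is $j$-holed from $I$, the $t$ ``missing'' points (recall $|I|=m+t+1$ but $|C|=m$, so exactly $t+1$ points of $I$ are absent counting multiplicity — actually $|I\setminus C| = t+1$) are distributed into holes each of length at most $j$. If $I_1=[a,a+s]$ with $s\geq j+1$, then the first hole of $C$ inside $I$ starts at $a+s+1$ and has length $\leq j$; combined with the assumption that $|I_1|$ is large, I want to locate enough structure near the left endpoint to produce a forbidden equation.

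The key steps, in order: (i) Record that $|I\setminus C|=t+1\in\{2,3\}$ and that every hole of $C$ in $I$ has length $\leq j$, so the number of holes is at least $\lceil (t+1)/j\rceil\geq 1$; in particular there is at least one hole, hence $I_1$ and $I_1'$ are proper sub-intervals and both endpoints $a,a+m+t$ genuinely belong to $C$. (ii) Assume for contradiction $|I_1|\geq j+1$, i.e. $[a,a+j]\subseteq C$. Let $[a+j+1, a+j+h]$ be the first hole (so $h\leq j$, $a+j+h+1\in C$). (iii) The idea is now to exhibit an element of $kC$ that also lies in $\ell C$, exploiting that $[a,a+j]$ contains a full block of length $j+1\geq h+1$ longer than the hole. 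Concretely, one replaces, in a minimal-support sum built from $C$, a term hitting the hole region by a nearby term from the left block: since $C$ contains the arithmetic progression $a, a+1, \dots, a+j$ of length $\geq h+1$, sums $\ell\cdot\{a,\dots,a+j\}$ cover an interval of length $\geq \ell(h+1) - (\ell-1)\cdot 0$... — more carefully, $\ell C \supseteq [\ell a, \ell a + \ell j]$ and $kC \supseteq [ka, ka+kj]$, and one checks using $|I|\leq m+3$ and the $(k,\ell)$-sum-free hypothesis (which forces, via Kneser/Cauchy–Davenport as in Lemma~\ref{lem_m+1_interval}, that $k|C|+\ell|C|\le p+ (k+\ell-2)$, i.e. the ``slack'' is only $\lambda$) that having a long left block plus a short hole over-saturates and produces an overlap $kC\cap \ell C\neq\emptyset$. (iv) The symmetric argument applied to $I_1'$ at the right endpoint gives $|I_1'|\leq j$.

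I expect the main obstacle to be step (iii): making the overlap argument precise without grinding. The cleanest route is probably to compare $C$ against the known extremal interval structure — since $kC\cap\ell C=\emptyset$ and $|C|=m$, one knows $|kC|\le p-|\ell C|\le p-\ell m$, so $|kC|\le km+\lambda+2-\ell$, meaning $kC$ is ``almost'' an interval of length $km$ (at most $\lambda+2-\ell < k$ extra points); dually for $\ell C$. If $I_1=[a,a+s]$ with $s\ge j+1$ while some hole $[a+s+1,a+s+h]$ has $h\le j\le s-1$, then $C$ contains two ``copies'' of a progression of length $h+1$ shifted by $s+1-$something, which by Plünnecke/Kneser-type counting inflates $|kC|$ or $|\ell C|$ beyond the permitted bound — contradiction. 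Alternatively, and perhaps more in the spirit of the paper, one invokes the covering-property machinery (function $T$, Freiman $3k-4$ type results) only implicitly and instead does the direct combinatorial count: $C = I \setminus \{$holes$\}$ with total hole-size $t+1\le 3$, and one explicitly checks that if the leftmost block is too long then translating a single point across the leftmost short hole witnesses $ka+\text{(something)} = \ell a + \text{(something)}$ inside $C$. I would carry out this direct check; it is a short finite case analysis in $t\in\{1,2\}$ and is the only genuinely computational part.
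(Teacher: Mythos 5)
Your setup (argue by contradiction from $|I_1|\ge j+1$, record that both endpoints of $I$ lie in $C$, and aim at producing $kC\cap\ell C\neq\emptyset$) matches the paper's opening, but there is a genuine gap exactly where you flag it: step (iii) is never carried out, and the candidate arguments you sketch for it do not work. The size-counting versions fail outright: the constraint $|kC|+|\ell C|\le p$ is an upper bound that is \emph{not} violated by $C$ having a long left block together with a short hole. A single hole only forces $|2C|\ge 2|C|$ via Vosper, whence $|kC|+|\ell C|\ge (k+\ell)m-(k+\ell)+3=p-\lambda-(k+\ell)+1<p$, so no contradiction can come from cardinalities alone; the extremal configurations really do have $|kC|$ close to $km$, so nothing is ``inflated beyond the permitted bound.'' The ``explicit witness equation'' version is not a finite case analysis either: where $kC$ and $\ell C$ could meet depends on the position of $a$ in $\mathbb F_p$, which is a free parameter not determined by $t$, $j$ and the hole pattern, so you cannot simply exhibit $ka+(\cdot)=\ell a+(\cdot)$ by checking $t\in\{1,2\}$.

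The missing idea is positional rather than metric. First observe that for every $i\ge 1$ with $iI\neq\mathbb F_p$, the sumset $iC$ is $j$-holed from $iI$ and contains both endpoints of $iI$ (because $a,\,a+m+t\in C$). Now $kI_1\subseteq kC$ is an interval of length $k(|I_1|-1)+1\ge j+1$ that is disjoint from $\ell C$. It cannot cross an endpoint of $\ell I$, since those endpoints lie in $\ell C$; and it cannot sit inside $\ell I$, since any sub-interval of $\ell I$ of length exceeding $j$ must meet $\ell C$ (all holes of $\ell C$ in $\ell I$ have length at most $j$). Hence $kI_1\cap\ell I=\emptyset$, and symmetrically $\ell I_1\cap kI=\emptyset$; as $kI_1$ and $\ell I_1$ are the leading sub-arcs of $kI$ and $\ell I$, this forces $kI\cap\ell I=\emptyset$. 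Then $p\ge|kI|+|\ell I|=(m+t)(k+\ell)+2=p-\lambda+(k+\ell)t>p$, the desired contradiction. Without this step, or an equivalent replacement, your argument does not close.
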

\begin{proof}
By symmetry we only need to prove $|I_1|\leq j$. Assume that on the contrary $|I_1|\geq j+1$.
%Assume $|I_1|\geq j+1$ by symmetric.
Then for any integer $i\ge 1$, as long as $iI\neq \mathbb F_p$, it is easy to see that $iC$ is $j$-holed from $iI$ with both ends contained in $iC$. Specially, $kC$ and $\ell C$ are $j$-holed from intervals $kI= [ka, ka+km+kt]$ and $\ell I=[\ell a, \ell a+\ell m+\ell t]$, respectively.

As $kI_1\subset kC$,  $kI_1\cap \ell C=\emptyset$. Actually we further have $k I_1\cap \ell I= \emptyset$. Otherwise, since the two ends of $\ell I$ are in $\ell C$, we have $k I_1\subseteq \ell I$. %and  they don't intersect on any endpoint of $\ell I$.
However, we have the assumption that $|kI_1|\geq j+1$. This contradicts $kI_1\cap \ell C=\emptyset$ as $\ell C$ is $j$-holed from $\ell I$.

By the same analysis, we also get $\ell I_1\cap kI=\emptyset$. As $kI_1$ and $\ell I_1$ are leading sub-intervals of $kI$ and $\ell I$ in the anticlockwise direction, respectively, we have $kI\cap \ell I=\emptyset$. Hence
\[p\ge |kI|+|\ell I|=km+kt+1+\ell m+\ell t+1=(m+t)(k+\ell)+2= p-\lambda+ (k+\ell)t.\]
As $(k+\ell )t\ge k+\ell > \lambda$, this leads to an obvious contradiction.
\end{proof}

\begin{claim}\label{clmi1}
Let $C\subset \mathbb F_p$ be a $(k, \ell)$-sum-free set of size $m\ge 3$, and let $I=[a, a+m+t]$ be the shortest interval covering $C$ with $t\in\{1, 2\}$.
Suppose  $C$ is $j$-holed from $I$ for some $j\ge 1$.
If $a+1\notin C$, $a+2\in C$, and the longest sub-interval of $C$ containing $a+2$ is of length at least $j+1$,
%If $|I_1|=1$, $|I_2|> j$, and the hole between $I_1$ and $I_2$ is a $1$-hole,
then $|kI\cap\ell I|\leq 4$.
\end{claim}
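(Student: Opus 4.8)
The setup is very close to that of Claim \ref{clmcm}, so I would mimic its structure, but keep more careful track of where the short holes of $kC$ and $\ell C$ can sit relative to their enclosing intervals $kI$ and $\ell I$.

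First I would record the basic facts used in Claim \ref{clmcm}: since $i\le k$ and $(m+t)(k+\ell)+2 = p-\lambda < p$, for any $i\in[1,k]$ we have $iI\ne\mathbb F_p$, and $iC$ is $j$-holed from $iI=[ia,ia+i(m+t)]$ with both endpoints $ia, ia+i(m+t)$ lying in $iC$. In particular $kC$ is $j$-holed from $kI$ with endpoints in $kC$, and likewise for $\ell$. Now by hypothesis the longest subinterval of $C$ starting at $a+2$ has length $\ge j+1$; call it $J$, so $J\subseteq C$, $J$ begins at $a+2$, $|J|\ge j+1$, and $a+1\notin C$. Then $kJ\subseteq kC$ is an interval of length $\ge j+1$ beginning at $ka+2$ (as $|kJ| = k|J|-(k-1) \ge k j +1 \ge j+1$, using $m\ge 3$ so $kJ\ne\mathbb F_p$), and $(k-1)J + \ell J \subseteq \ell C$ is an interval of length $\ge j+1$. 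The key point is that $kI\setminus kC$ near the left end consists only of the single point $ka+1$ (between the endpoint $ka$ and the start of $kJ$), plus possibly short holes $\le j$ further in.

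The plan is then: since $kJ\subseteq kC$ is an interval of length $\ge j+1$ and $\ell C$ is $j$-holed from $\ell I$ with endpoints in $\ell C$, the argument of Claim \ref{clmcm} gives $kJ\cap \ell I=\emptyset$; symmetrically $\ell J'\cap kI=\emptyset$ where $J'=(k-1)J+\ell J\subseteq \ell C$ is the corresponding long interval inside $\ell C$ (or one can directly run the same reasoning with the roles of $k$ and $\ell$ swapped on a long subinterval of $C$ near the left end). Both $kJ$ and $\ell J'$ are ``near the left ends'' of $kI$ and $\ell I$ respectively — precisely, $kI = \{ka\}\cup\{ka+1\}\cup kJ \cup(\text{rest})$ and similarly for $\ell I$, where ``rest'' is separated from the leading block only by holes of length $\le j$. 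So if $kI$ and $\ell I$ overlap at all, the overlap cannot include $kJ$ or $\ell J'$; chasing this, the overlap $kI\cap\ell I$ must be squeezed into the first couple of coordinates at one of the ends. Concretely: both $kI$ and $\ell I$ wrap around the cycle $\mathbb F_p$, and $kI\cap\ell I$, if nonempty, is a union of at most two arcs; the long intervals $kJ\subseteq kC$ and $\ell J'\subseteq\ell C$ being disjoint from $\ell I$ and $kI$ respectively forces each such arc to have length $\le 2$ (it can only contain the endpoint $ka$ and the excluded point $ka+1$, since everything past that is $kJ$), giving $|kI\cap\ell I|\le 4$.

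The main obstacle I anticipate is bookkeeping the two possible overlap arcs and confirming the ``length $\le 2$'' bound at each, i.e.\ ruling out that an overlap arc extends past the single missing point into the long block $kJ$ or $\ell J'$ — this is exactly where the hypothesis $|J|\ge j+1$ and the $j$-holedness of $\ell C$ from $\ell I$ are used, in the same way as in Claim \ref{clmcm}. One also has to be slightly careful that $kJ$ and $\ell J'$ really are disjoint from $\ell I$ and $kI$ respectively and not merely from $\ell C$ and $kC$; but this follows verbatim from the Claim \ref{clmcm} argument since $\ell I$ and $kI$ have their endpoints in $\ell C$ and $kC$, so a long interval meeting $\ell I$ but missing $\ell C$ is impossible when its length exceeds $j$. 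Everything else is a short finite computation using $m\ge 3$ (to guarantee $iI\ne\mathbb F_p$ and $kJ,\ell J'\ne\mathbb F_p$) and $\lambda\le k+\ell-5$ is not even needed here; only $\lambda<k+\ell$, already built into \eqref{eqp}, is used.
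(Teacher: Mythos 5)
Your proposal follows the paper's proof of Claim~\ref{clmi1} essentially verbatim: extract from $\{a\}\cup J$ (with $J$ the long sub-interval of $C$ starting at $a+2$) leading intervals of length at least $j+1$ inside $kC$ and $\ell C$ sitting just past the single excluded points $ka+1$ and $\ell a+1$, run the Claim~\ref{clmcm} disjointness argument on them, and conclude that $kI\cap\ell I$ can only meet $\{ka,ka+1,\ell a,\ell a+1\}$. Two notational slips are worth fixing though they do not affect the argument: $kJ$ begins at $k(a+2)=ka+2k$, not $ka+2$, so the correct leading block is $k(\{a\}\cup J)\setminus\{ka,ka+1\}=[ka+2,\,ka+k(|J|+1)]$; and $(k-1)J+\ell J$ is a subset of $(k+\ell-1)C$, not of $\ell C$ — the $\ell$-side block you want is $\ell(\{a\}\cup J)\setminus\{\ell a,\ell a+1\}$. (Your arithmetic justifying $iI\ne\mathbb F_p$ is also off, since $|kI|+|\ell I|=(m+t)(k+\ell)+2=p-\lambda+t(k+\ell)>p$; but the paper's own proof of Claim~\ref{clmcm} assumes this point equally tacitly, so you are no worse off than the source.)
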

\begin{proof}
Let $I'$ be the shortest interval covering $a$ and the longest sub-interval of $C$ starting at $a+2$.
Hence $kI'\backslash\{ka+1\}\subseteq kC$, and $\ell I'\backslash\{\ell a+1\}\subseteq \ell C$.
This means $kI'\backslash\{ka, ka+1\}\subseteq kC$ and $\ell I'\backslash\{\ell a, \ell a+1\}\subseteq \ell C$ are leading intervals of $kI\backslash\{ka, ka+1\}$ and $\ell I\backslash\{\ell a, \ell a+1\}$, respectively, of lengths at least $j+1$.

By  the same arguments as in Claim~\ref{clmcm}, we have $(kI\backslash\{ka, ka+1\})\cap(\ell I\backslash\{\ell a, \ell a+1\})=\emptyset$. So $|kI\cap\ell I|\leq 4$.
%One can check that $k(I_1\cup I_2)=(kI_1')\backslash\{ka+1\}\subset kC_m$, and   $\ell(I_1\cup I_2)=(\ell I_1')\backslash\{\ell a+1\}\subset \ell C_m$. So excluding the two isolated elements, $(k(I_1\cup I_2))\backslash\{ka\}$ and $(\ell(I_1\cup I_2))\backslash\{\ell a\}$ are leading intervals of length {\color{blue}at least $j+1$} in  $kI\backslash\{ka, ka+1\}$ and $\ell I\backslash\{\ell a, \ell a+1\}$, whose both ends are in $kC_m$ and $\ell C_m$, respectively. By  the same arguments as in Claim~\ref{clmcm}, we get  that $(kI\backslash\{ka, ka+1\})\cap(\ell I\backslash\{\ell a, \ell a+1\})=\emptyset$. So $|kI\cap\ell I|\leq 4$.
\end{proof}

\begin{proof}[Proof of Lemma~\ref{lem_stru_of_Cm}] We first prove that for any $r_1, \ldots, r_k\in[m]$ (not necessarily mutually different), $\sum_{i=1}^{k}B_{r_i}= K$. Otherwise, $H=Sym(\sum_{i=1}^{k}B_{r_i})$ is a proper subset of $K$ and hence $|H|\le p^{n-2}$. However, by Theorem~\ref{thm_Kneser},
\[|\sum_{i=1}^{k}B_{r_i}|\ge \sum_{i=1}^{k}|B_{r_i}|-(k-1)|H|\ge k|B_{m}|-(k-1)|H|\ge (p+k-1)p^{n-2}-(k-1)p^{n-2}= p^{n-1}=|K|,\]
which still means $\sum_{i=1}^{k}B_{r_i}=K$. The last inequality is by $k\beta_m\ge p+k-1$ and $|H|\le p^{n-2}$.

Next, we show that %$C_m$ is a $(k, \ell)$-sum-free subset  in $\mathbb F_p$, that is $kC_m\cap \ell C_{m}= \emptyset$. It suffices to show
$kC_m\cap \ell C_{\omega}= \emptyset$.
If otherwise $kC_m\cap \ell C_{\omega}\neq \emptyset$,  there exist $b_{{r_i}}\in C_{m}$, $i\in [k]$, and $b_{{s_j}}\in C_{\omega}$, $j\in [\ell]$, such that $\sum_{i\in [k]}b_{{r_i}}=\sum_{j\in[\ell]}b_{{s_j}}$. By Fact~\ref{fact1},
$(\sum_{i=1}^k B_{{r_i}})\cap (\sum_{j=1}^\ell B_{{s_j}})=\emptyset.$ However, this is impossible since $\sum_{j=1}^\ell B_{{s_j}}\neq\emptyset$ and  $\sum_{i=1}^{k}B_{{r_i}}=K$.

As $\omega\geq m$, from $kC_m\cap \ell C_{\omega}= \emptyset$ we have $C_m$ is $(k, \ell)$-sum-free. To characterize the structure of $C_m$, we first show that up to isomorphism $C_m$ can be covered by an interval of length $m+3$ in $\mathbb F_p$  by using the definition of the function $T$. Let $c=\frac1{k+\ell}$, then $|C_m|=m\leq cp$. Since $m\geq 5T(c)$, $\tau(c)m\geq 5$. If $|2C_m|\le 2m+2$, then $|2C_m|\le (2+\tau(c))m-3$, which implies that
$C_m$ has the covering property by Definition \ref{deftau}. That is, if $|2C_m|\le 2m+2$, $C_m$ is covered by an AP of length $|2C_m|-|C_m|+1$ which is at most $m+3$, or by an interval of length $m+3$ up to isomorphism. Next, we show $|2C_m|\le 2m+2$.
 To estimate the size of $|2C_m|$,  we view $kC_m$ and $\ell C_m$ as  sums of many $2C_m$ and possibly one  more $C_m$. When $k$ and $\ell$ are both odd, by the $(k, \ell)$-sum-free property of $C_m$ and Theorem~\ref{thm_Cauchy_daven}, we have
\[p\ge |kC_m|+|\ell C_m|\ge \frac{k-1}2|2C_m|+\frac{\ell-1}2|2C_m|+2|C_m|-\frac{k+\ell+2}2+2.\]
So \begin{equation}\label{eq2cm}
     |2C_m|\le 2m+1+\frac{2(\lambda+2)}{k+\ell-2}< 2m+3,
   \end{equation}
 where the last inequality is by $\lambda \leq k+\ell-5$. As $|2C_m|$ is an integer, $|2C_m|\le 2m+2$. The three other cases when at least one of $k$ and $\ell$ is even yield the same bound $|2C_m|\le 2m+2$ by the same analysis.

Now denote $I$ the shortest interval covering $C_m$, thus $m\leq |I|\leq m+3$.
Write $I=[a, a+m+t]$ for some $a\in\mathbb F_p$ and $t\in [-1, 2]$. So $\{a,a+m+t\}\subset C_m$.
Then $C_m\subset I$ can be seen as a union of at most four disjunct intervals $C_m= \sqcup_{i\in [h]} I_i$ for some $h\in [4]$, which are numbered sequentially.  If $|I|=m$, then $C_m=I$ is already an interval of length $m$.
If $|I|=m+1$, $C_m$ is already contained in an interval of length $m+1$, which is $(k, \ell)$-sum-free from Claim~\ref{clm3}.

For all the remaining cases, that is, $t\in \{1,2\}$, we will deduce contradictions. Since $|I|\geq m+2$, we have $h\in \{2, 3, 4\}$ and $C_m$ is  $(t-h+3)$-holed from $I$.

When $h=2$, $C_m$ is $3$-holed from $I$. By Claim~\ref{clmcm}, $|I_1|, |I_2|\leq 3$. But this contradicts $C_m= \sqcup_{i\in [2]} I_i$ and $|C_m|=m \ge 7$. %For $h=3,4$, we further need the following claim.

%Here where claim 3.3 was.

When $h=4$, $C_m$ is $1$-holed from interval $I$ and $t=2$. By Claim~\ref{clmcm},  $|I_1|=|I_4|=1$. By symmetry,  $|I_2|\ge \frac{m-2}{2}\ge 2$. By Claim~\ref{clmi1}, $|kI\cap\ell I|\leq 4$. Hence $p\ge |kI|+|\ell I|-4= (m+2)(k+\ell)-2> m(k+\ell)+2+\lambda=p$, which is  a contradiction. Here the last inequality is from $\lambda \leq k+\ell-3$.

%Indeed, if $|I_1|\ge 2$, the same arguments show that $|kI\cap\ell I|=\emptyset$ and hence $p\ge |kI|+|\ell I|> (m+2)(k+\ell)> m(k+\ell)+2+\lambda=p$, which is a contradiction. So $|I_1|=1$. Let $I_1'$ be the shortest interval covering $I_1$ and $I_2$. As $|I_2|\ge 2$, one can check that $k(I_1\cup I_2)=(kI_1')\backslash\{ka+1\}$, that is, $(kI_1')\backslash\{ka+1\}\subset kC_m$. Similarly, $(\ell I_1')\backslash\{\ell a+1\}\subset \ell C_m$. So excluding the two isolated elements, the same arguments as before show that $kI\backslash\{ka, ka+1\}\cap\ell I\backslash\{\ell a, \ell a+1\}=\emptyset$. This leads to $p\ge |kI|+|\ell I|-4= (m+2)(k+\ell)-2> m(k+\ell)+2+\lambda=p$, which is also a contradiction. Notice that both two contradictions here only need $k+\ell\ge 3+\lambda$.

% Hence $kI\backslash\{ka, ka+1\}$ and $\ell I\backslash\{\ell a, \ell a+1\}$ are intervals both having a long intervals in the heads while $kC_m$ and $\ell C_m$ are 1-holed from $kI$ and $\ell I$, which leads to $kI\backslash\{ka, ka+1\}\cap\ell I\backslash\{\ell a, \ell a+1\}=\emptyset$. As $|I|=m+3$, this leads to $p\ge |kI|+|\ell I|-4= (m+2)(k+\ell)-2> m(k+\ell)+2+\lambda=p$, which is also a contradiction. Notice that both two contradictions here only need $k+\ell\ge 3+\lambda$.

When $h=3$, $C_m$ is  $2$-holed from interval $I$. By Claim~\ref{clmcm},  $|I_1|,|I_3|\leq 2$ and $|I_2|\ge m-4\ge 3$. By symmetry, we can assume that the hole between $I_1$ and $I_2$ is a $1$-hole. If $|I_1|=1$, by Claim~\ref{clmi1}, $|kI\cap\ell I|\leq 4$. Then
\begin{equation}\label{eqp3}
  p\ge |kI|+|\ell I|-4\geq  (m+1)(k+\ell)-2> m(k+\ell)+2+\lambda=p,
\end{equation}
 leading to a contradiction. Here, the last inequality in (\ref{eqp3}) requires $\lambda \leq k+\ell-5$.
If $|I_1|=2$, $k(I_1\cup I_2)$ and $\ell (I_1\cup I_2)\backslash[\ell a, \ell a+2]$ form leading intervals of length at least three of $kI$ and $\ell I\backslash[\ell a, \ell a+2]$, respectively, which lead to $|kI\cap \ell I|\le 3$ by the similar arguments as in Claim~\ref{clmi1}. Here three points are excluded from $\ell (I_1\cup I_2)$ because of the possibility of $\ell=1$. Then
\begin{equation}\label{eqp4}
  p\ge |kI|+|\ell I|-3\geq  (m+1)(k+\ell)-1> m(k+\ell)+2+\lambda=p,
\end{equation}
 which is again a contradiction. Here, the last inequality in (\ref{eqp4}) requires $\lambda \leq k+\ell-4$.
%
%If $|I_1|=1$, $kI\backslash\{ka, ka+1\}\cap\ell I\backslash\{\ell a, \ell a+1\}=\emptyset$, leading to $|kI\cap \ell I|\le 4$.
%We deduct the contradictions with more details considering the requirement of $\lambda$ from stricter to looser.
%
%If we only have $k+\ell\ge \lambda+3$, it can ban the cases when the hole between $I_2$ and $I_3$ is a two-hole, or $|I_1|> 2$. To see this, on one hand, if the second hole is a two-hole, then $|I|=m+3$ and hence $p\ge |kI|+|\ell I|-4=(m+2)(k+\ell)-2> (k+\ell)m+2+\lambda=p$. On the other hand, if both holes are one-holes, but $|I_1|\ge 3$, $p\ge |kI|+|\ell I|= (m+1)(k+\ell)+2 > (k+\ell)m+2+\lambda=p$.
%
%If we further have $k+\ell\ge \lambda+4$, it can ban the case when both holes are one-holes and $|I_1|=2$. Because in this case, we have $p\ge |kI|+|\ell I|-3= (m+1)(k+\ell)-1 > (k+\ell)m+2+\lambda=p$.
%
%The final case, which is when both holes are one-holes and $|I_1|=|I_2|=1$, can be forbidden by $k+\ell\ge \lambda+5$. Because in this case, we have $p\ge |kI|+|\ell I|-4= (m+1)(k+\ell)-2 > (k+\ell)m+2+\lambda=p$.
\end{proof}

We remark that the requirement $\lambda \leq k+\ell-5$ in Lemma~\ref{lem_stru_of_Cm} is best possible.
A counter example when $\lambda =k+\ell-4$ has been stated in Definition~\ref{cons_3}, where the shortest interval covering $C_m$ is of length $m+2$.

\subsection{Upper bounds on the size of $k+\ell$ parts}
By the definition of $B_i$, the size of $B_i$ is small when $i$ is big. In this section, we give upper bounds on the size of a union of $k+\ell$ parts $B_i$'s when the subscript sum is big.
%In fact, we show the beta values $\sum_{i=1}^k \beta_{r_i}+\sum_{j=1}^\ell \beta_{s_j}\le p+k+\ell-2$ when the subscripts $\sum_{i=1}^k r_i+\sum_{j=1}^\ell s_j$ is big.
These bounds will play important roles in estimating the weight of $A$ in the next subsection.
The following  bound is an easy consequence of Lemma~\ref{lem_common_k} and Fact~\ref{fact1}.
%(I have switched the following two results for fluency.)

\begin{lemma}\label{coro_beta_sum}
   Let  $A\subset \mathbb F_p^n$ be $(k, \ell)$-sum-free. Then under a decomposition $(v, K)$ of $\mathbb F_p^n$, we have \[\sum_{i=1}^k \beta_{r_i}+\sum_{j=1}^\ell \beta_{s_j}\le p+k+\ell-2\] for  some $r_1, \ldots, r_k, s_1, \ldots, s_\ell\in[\omega]$ (not necessary mutually different) if one of the following holds:
   \begin{itemize}
     \item[(1)]$\left(\sum_{i=1}^k C_{r_i}\right)\cap \left(\sum_{j=1}^\ell C_{s_j}\right)\neq\emptyset$; or
     \item[(2)] $\sum_{i=1}^k r_i+\sum_{j=1}^\ell s_j\ge p+k+\ell-1$.
   \end{itemize}
  % If under a decomposition $(v, K)$ of $\mathbb F_p^n$, for some $r_1, \ldots, r_k, s_1, \ldots, s_\ell\in[\omega]$ (not necessary mutually different) we have
%   \[\left(\sum_{i=1}^k C_{r_i}\right)\cap \left(\sum_{j=1}^\ell C_{s_j}\right)\neq\emptyset,\]
%   then
%   \[\sum_{i=1}^k \beta_{r_i}+\sum_{j=1}^\ell \beta_{s_j}\le p+k+\ell-2.\]
\end{lemma}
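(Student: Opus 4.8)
\textbf{Proof plan for Lemma~\ref{coro_beta_sum}.} The plan is to reduce both cases to a single application of Fact~\ref{fact1}(2) combined with the first assertion of Lemma~\ref{lem_common_k}, by exhibiting a suitable index relation $\sum_{i=1}^{k} b_{r_i'} = \sum_{j=1}^{\ell} b_{s_j'}$ with $r_i', s_j' \in [\omega]$. Once such a relation is in hand, Fact~\ref{fact1}(2) immediately gives $\sum_{i=1}^k |A_{b_{r_i'}}| + \sum_{j=1}^\ell |A_{b_{s_j'}}| \le p^{n-1} + (k+\ell-2)p^{n-2}$, and dividing by $p^{n-2}$ yields $\sum \beta_{r_i'} + \sum \beta_{s_j'} \le p + k + \ell - 2$. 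The only subtlety is that we must produce this relation for indices in $[\omega]$, not for indices that we have merely renamed — so I need to be careful about how the reordering $b_1,\ldots,b_p$ interacts with $Supp(A) = C_\omega = \{b_1,\ldots,b_\omega\}$.

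For case (1), suppose $\left(\sum_{i=1}^k C_{r_i}\right)\cap \left(\sum_{j=1}^\ell C_{s_j}\right)\neq\emptyset$ for some $r_1,\ldots,r_k,s_1,\ldots,s_\ell\in[\omega]$. By definition $C_{r_i} = \{b_1,\ldots,b_{r_i}\} \subseteq C_\omega = Supp(A)$ and similarly for $C_{s_j}$. A nonempty intersection means there exist indices $r_i' \in [r_i] \subseteq [\omega]$ and $s_j' \in [s_j] \subseteq [\omega]$ with $\sum_{i=1}^k b_{r_i'} = \sum_{j=1}^\ell b_{s_j'}$. Now apply Fact~\ref{fact1}(2) with these indices: since all $r_i', s_j' \in [\omega]$, we get $\sum_{i=1}^k |A_{b_{r_i'}}| + \sum_{j=1}^\ell |A_{b_{s_j'}}| \le p^{n-1} + (k+\ell-2)p^{n-2}$, i.e. $\sum_{i=1}^k \beta_{r_i'} + \sum_{j=1}^\ell \beta_{s_j'} \le p + k + \ell - 2$. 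This gives the conclusion, though with the primed indices; since the statement asserts existence of indices achieving the bound (``for some $r_1,\ldots$''), this is exactly what is required — indeed one may even note $\beta$ is monotone nonincreasing so $\beta_{r_i'} \ge \beta_{r_i}$ and conclude the bound also holds with the original indices, but existence suffices.

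For case (2), assume $\sum_{i=1}^k r_i + \sum_{j=1}^\ell s_j \ge p + k + \ell - 1$ with $r_i, s_j \in [\omega]$. Here there need not be any literal collision $\sum b_{r_i} = \sum b_{s_j}$, so the idea is a pigeonhole / box argument: consider the $k+\ell$ ``boxes'' $C_{r_1},\ldots,C_{r_k}, C_{s_1},\ldots,C_{s_\ell}$, each a set of $r_i$ (resp. $s_j$) consecutive-in-ordering elements of $\mathbb{F}_p$. Their total size is $\sum r_i + \sum s_j \ge p + k + \ell - 1$. I would argue that the iterated sumset $C_{r_1}+\cdots+C_{r_k}$ has size $\ge \min\{p, \sum_{i=1}^k r_i - (k-1)\}$ by repeated Cauchy–Davenport (Theorem~\ref{thm_Cauchy_daven}), and likewise $|C_{s_1}+\cdots+C_{s_\ell}| \ge \min\{p, \sum_{j=1}^\ell s_j - (\ell-1)\}$; since $(\sum r_i - (k-1)) + (\sum s_j - (\ell-1)) \ge p+1 > p$, the two sumsets cannot be disjoint in $\mathbb{F}_p$, so their intersection is nonempty and we are back in case (1) (or rather, we directly extract indices $r_i' \in [r_i], s_j' \in [s_j]$ with $\sum b_{r_i'} = \sum b_{s_j'}$, all in $[\omega]$, and apply Fact~\ref{fact1}(2) as above). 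The main obstacle — and the step deserving the most care — is making sure the Cauchy–Davenport chain is applied to \emph{nonempty} sets (each $C_{r_i}$ is nonempty since $r_i \ge 1$) and that the ``$\min\{p,\cdot\}$'' bookkeeping correctly forces the covering of $\mathbb{F}_p$ before disjointness could fail; once that quantitative check is done, the rest is a direct invocation of the already-established Fact~\ref{fact1} and Lemma~\ref{lem_common_k}.
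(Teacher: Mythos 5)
Your proposal is correct and follows essentially the same route as the paper: case (1) is handled by extracting a collision $\sum_i b_{r_i'}=\sum_j b_{s_j'}$ with $r_i'\le r_i$, $s_j'\le s_j$, applying Fact~\ref{fact1}(2), and using monotonicity of $\beta$ to transfer the bound back to the original indices, while case (2) is reduced to case (1) by showing the two sumsets cannot be disjoint. The only cosmetic difference is that you derive the non-disjointness in case (2) from an iterated Cauchy--Davenport count rather than citing the first assertion of Lemma~\ref{lem_common_k} with $n=1$; these are interchangeable here.
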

\begin{proof}
(1) As $(\sum_{i=1}^k C_{r_i})\cap (\sum_{j=1}^\ell C_{s_j})\neq\emptyset$, there exists $b_{\tilde{r_i}}\in C_{r_i}$, $i\in [k]$, and $b_{\tilde{s_j}}\in C_{s_j}$, $j\in [\ell]$, such that $\sum_{i\in [k]}b_{\tilde{r_i}}=\sum_{j\in[\ell]}b_{\tilde{s_j}}.$
By Fact~\ref{fact1}, we have
$\sum_{i=1}^k |B_{\tilde{r_i}}|+\sum_{j=1}^\ell |B_{\tilde{s_j}}|\leq p^{n-1}+(k+l-2)p^{n-2}$. Then
\[\sum_{i=1}^k \beta_{r_i}+\sum_{j=1}^\ell \beta_{s_j}\le \sum_{i=1}^k \beta_{\tilde{r_i}}+\sum_{j=1}^\ell \beta_{\tilde{s_j}}= \left(\sum_{i=1}^k |B_{\tilde{r_i}}|+\sum_{j=1}^\ell |B_{\tilde{s_j}}|\right)\slash p^{n-2}\le p+k+\ell-2.\] Here, the first inequality is because that for any $x\in\{r_i: i\in [k]\}\cup\{s_j: j\in [\ell]\}$, $\tilde{x}\leq x$ and thus $\beta_{\tilde{x}}\ge \beta_{x}$ by the definition of $C_i$.

(2) This one can be reduced to (1) since if $\sum_{i=1}^{k}|C_{r_i}|+\sum_{j=1}^{\ell}|C_{s_j}|= \sum_{i=1}^k r_i+\sum_{j=1}^\ell s_j\ge p+k+\ell-1,$ we have
$(\sum_{i=1}^k C_{r_i})\cap (\sum_{j=1}^\ell C_{s_j})\neq\emptyset$
by Lemma~\ref{lem_common_k}.
\end{proof}

%\begin{lemma}\label{coro_beta_sum}
% Let  $A\subset \mathbb F_p^n$ be $(k, \ell)$-sum-free. Then under any decomposition $(v, K)$ of $\mathbb F_p^n$, we have
%\[\sum_{i=1}^k \beta_{r_i}+\sum_{j=1}^\ell \beta_{s_j}\le p+k+\ell-2\]
%for any $r_1, \ldots, r_k, s_1, \ldots, s_\ell\in[\omega]$ (not necessarily mutually different) satisfying  $\sum_{i=1}^k r_i+\sum_{j=1}^\ell s_j\ge p+k+\ell-1$.
%
%\end{lemma}
%\begin{proof} Consider $C_{r_1},\ldots,C_{r_k}, C_{s_1},\ldots,C_{s_{\ell}}$ as $(k+\ell)$ subsets of $\mathbb F_p$. Since $\sum_{i=1}^{k}|C_{r_i}|+\sum_{j=1}^{\ell}|C_{s_j}|= \sum_{i=1}^k r_i+\sum_{j=1}^\ell s_j\ge p+k+\ell-1,$ we have
%$(\sum_{i=1}^k C_{r_i})\cap (\sum_{j=1}^\ell C_{s_j})\neq\emptyset$
%by Lemma~\ref{lem_common_k}. {\color{blue}Then from Lemma~\ref{cor:disjointbound} we have $\sum_{i=1}^k \beta_{r_i}+\sum_{j=1}^\ell \beta_{s_j}\le p+k+\ell-2$.}
%\end{proof}

%From the proof of Lemma~\ref{coro_beta_sum}, we have the following corollary.

The next two bounds, Lemmas~\ref{lem_improving_coro} and \ref{lem_improving_improving_coro}, are modifications of Lemma~\ref{coro_beta_sum} (2) for special $r_1, \ldots, r_k, s_1, \ldots, s_\ell$ by using the solved case of $3k-4$ conjecture when $|2A|=2|A|$. We list a stronger form as follows, whose proof can be
seen in \cite{Grynkiewicz2024} for a short path from \cite{Grynkiewicz2009} to it.
\begin{theorem}[\cite{Grynkiewicz2024,Grynkiewicz2009}]\label{thm_3k-4_r=0}
Let prime $p\ge 2$, and let $A, B\subset \mathbb Z_p$ nonempty satisfying $|A|\ge |B|\ge 4$ and
\[|A+B|=|A|+|B|\le\min\{|A|+2|B|-4, p-4\}.\]
Then by letting $C=-\mathbb Z_p\backslash(A+B)$, we can find APs $P_X, X\in\{A, B, C\}$ in $\mathbb Z_p$ with the same common difference, such that $ X\subseteq P_X \text{ and } |P_X|\le |X|+1$ for any $X\in\{A, B, C\}$.
\end{theorem}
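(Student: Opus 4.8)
Since this is a result quoted from the literature, the outline below indicates how one would prove it; the derivation in \cite{Grynkiewicz2024} in fact extracts it from the structure theorem for $|A+B|\le|A|+|B|$ of \cite{Grynkiewicz2009}, and my plan follows that route. First note two reductions. Under the exactness $|A+B|=|A|+|B|$, the hypothesis $|A+B|\le|A|+2|B|-4$ is just $|B|\ge4$; moreover $|C|=p-|A+B|\ge4$, and the triple $(A,B,C)$ is symmetric, since $0\notin A+B+C$ and $|A|+|B|+|C|=p$. The range ``$\min(|A|,|B|,|C|)$ large and the corresponding density below threshold'' is already covered: applying Theorem~\ref{thm_new_vosper} with $r=0$ to a suitable one of the pairs $(A,B),(B,C),(C,A)$ produces exactly the claimed $P_A,P_B,P_C$ once, say, the smaller set has size at least $57$ and the sumset has size at most $p-27$. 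So one may assume the sets are not too large; after a Freiman $2$-isomorphism (standard rectification, valid when $|A+B|$ is a small enough fraction of $p$) one lifts $A,B$ to the integers and applies Freiman's $3k-4$ theorem (Theorem~\ref{thm_3k-4_origin}), transporting the covering progressions back. The finitely many remaining small primes are a direct check.

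In the intermediate regime the engine is a point-deletion induction bootstrapping from Vosper's theorem. Delete $a_0\in A$ (with $|A|\ge5$, so $|A\setminus a_0|\ge4$). If this strictly shrinks the sumset, Cauchy--Davenport (Theorem~\ref{thm_Cauchy_daven}) forces $|(A\setminus a_0)+B|\in\{|A|+|B|-2,|A|+|B|-1\}$. In the first case $|(A\setminus a_0)+B|=|A\setminus a_0|+|B|-1$ with both summands of size $\ge2$ and sumset $\le p-2$, so Vosper (Theorem~\ref{thm_Vosper}) makes $A\setminus a_0$ and $B$ arithmetic progressions with a common difference $d$; since $a_0+B$ must enlarge $(A\setminus a_0)+B$ by exactly two elements, $a_0$ lies within two $d$-steps of an end of $A\setminus a_0$, whence $A$ is covered by a length-$(|A|+1)$ progression of difference $d$, $A+B$ is a progression of length $|A|+|B|$, and $-C=\mathbb{Z}_p\setminus(A+B)$ is a progression of length $|C|$ and the same difference. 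In the second case one is back in the theorem's hypothesis with $|A|-1$ replacing $|A|$ (swap $A\leftrightarrow B$ if needed to keep the smaller set $\ge4$); recurse and reattach $a_0$ via $a_0+B\subseteq(A\setminus a_0)+B$ to keep $A$ inside a length-$(|A|+1)$ progression with the same difference.

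The obstruction, and the reason the naive induction is not enough, is the ``saturated'' case: no deletion in $A$ (nor in $B$, nor in $C$) shrinks the corresponding sumset, so the step above never triggers. Here one must instead appeal to a genuine inverse theorem for $|A+B|\le|A|+|B|$: specializing \cite{Grynkiewicz2009} to $G=\mathbb{Z}_p$, which has no proper nontrivial subgroup so that all ``(quasi-)periodic'' components degenerate, collapses the classification to arithmetic progressions with a common difference together with a short list of extremal exceptional types; the two hypotheses $|B|\ge4$ and $|A+B|\le p-4$ (the latter keeping us away from $A+B=\mathbb{Z}_p$, where Vosper-type tools fail) are precisely what is needed to rule out every exceptional type, and the surviving progression type yields $P_A,P_B$ of lengths $\le|A|+1$ and $\le|B|+1$, after which $P_C$ is read off from the near-progression $A+B$ using Kneser's theorem (Theorem~\ref{thm_Kneser}). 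I expect this case analysis through the structure theorem --- in particular checking that the thresholds $|B|\ge4$ and $|A+B|\le p-4$ genuinely eliminate all non-progression configurations --- to be the main obstacle, the rest being routine bookkeeping; a self-contained treatment for the exponent $r=0$ at hand appears in \cite{Grynkiewicz2024,Grynkiewicz2013}.
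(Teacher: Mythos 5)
This statement is quoted from the literature: the paper gives no proof of it, remarking only that ``a short path from \cite{Grynkiewicz2009} to it'' appears in \cite{Grynkiewicz2024}. Your reductions (exactness turns $|A+B|\le|A|+2|B|-4$ into $|B|\ge4$, the symmetric triple $(A,B,C)$ with $0\notin A+B+C$ and $|A|+|B|+|C|=p$, Vosper/point-deletion in the unsaturated regimes, and deferral of the saturated case to the $|A+B|\le|A|+|B|$ structure theorem of \cite{Grynkiewicz2009}) correctly retrace exactly the route the paper cites, so your proposal is consistent with the paper's treatment.
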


\begin{lemma}\label{lem_improving_coro} Let  $A\subset \mathbb F_p^n$ be $(k, \ell)$-sum-free and $m\ge 4$. Suppose under a decomposition of $\mathbb{F}_p^n$,
 $\omega(A)\ge m+2$, and $r_1, \ldots, r_k, s_1, \ldots, s_\ell$ are chosen from $[m-1, \omega]$, such that both $m-1$ and $m$ appear at least once. If $\sum_{i=1}^k r_i+\sum_{j=1}^\ell s_j\geq p+k+\ell-2$, we have $\sum_{i=1}^k \beta_{r_i}+\sum_{j=1}^\ell \beta_{s_j}\le p+k+\ell-2$.
\end{lemma}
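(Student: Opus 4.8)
The plan is to mimic the structure of Lemma~\ref{coro_beta_sum}(2) but exploit the extra hypotheses that $m-1$ and $m$ both occur among the indices so as to gain a little over the trivial Kneser/Cauchy--Davenport estimate. First I would set $r_i=m-1$ for exactly one $i$, say $r_1=m-1$, and (after renaming) suppose $s_1=m$ is among the indices; all other indices lie in $[m-1,\omega]$, so their associated sets $C_{r_i}$ and $C_{s_j}$ have size at most $\omega$ and at least $m-1$. The subscript-sum bound $\sum_i r_i+\sum_j s_j\ge p+k+\ell-2$ is one unit short of what Lemma~\ref{lem_common_k} would need to force $(\sum_i C_{r_i})\cap(\sum_j C_{s_j})\neq\emptyset$ directly, so the idea is to show that either this intersection is in fact nonempty (whence Lemma~\ref{coro_beta_sum}(1) gives the conclusion immediately), or else the deficit $|C_{r_1}+\cdots|=\sum|C_{r_i}|$ (i.e.\ the sums are ``critical'' in the Cauchy--Davenport sense), which by Vosper's theorem (Theorem~\ref{thm_Vosper}) or the $r=0$ refinement (Theorem~\ref{thm_3k-4_r=0}) forces $C_{m-1}$ and $C_m$ to be almost arithmetic progressions differing by one element, and then the sizes $\beta_{m-1},\beta_m$ can be controlled by re-running Fact~\ref{fact1} on a slightly perturbed collection of indices.

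Concretely, the key steps in order: (1) Let $\Sigma=\sum_i C_{r_i}+\sum_j C_{s_j}$ interpreted as comparing $\sum_i C_{r_i}$ with $\sum_j C_{s_j}$; by $(k,\ell)$-sum-freeness of $C_\omega$ (which holds because $\omega\ge m+2$ and all indices are $\le\omega$, so $kC_\omega\cap\ell C_\omega=\emptyset$ via Fact~\ref{fact1}(1) applied to identically-chosen indices — actually one should cite the support being $(k,\ell)$-sum-free as established in Section~\ref{sec_add_struc}) these two sumsets are disjoint. (2) If $|\sum_i C_{r_i}|+|\sum_j C_{s_j}|>p$ we get a contradiction with disjointness, so in particular one of the two sumsets is small relative to the trivial lower bound; combined with the hypothesis $\sum r_i+\sum s_j\ge p+k+\ell-2$ and iterated Cauchy--Davenport (Theorem~\ref{thm_Cauchy_daven}), I would argue that the sumset $\sum_i C_{r_i}$ (the one containing the $C_{m-1}$ factor) must be ``critically small'': its size is at most $\sum_i|C_{r_i}|-(k-1)$ plus a bounded slack, i.e.\ at most one or two above the Cauchy--Davenport minimum. (3) Apply Vosper (Theorem~\ref{thm_Vosper}), or when the slack is exactly $0$ the stronger Theorem~\ref{thm_3k-4_r=0}, to conclude each factor $C_{r_i}$, and in particular $C_{m-1}$ and $C_m$, is contained in an AP of nearly the same length; since $|C_{m-1}|=m-1$ and $|C_m|=m$ differ by one and are nested ($C_m\subset C_{m-1}$ after reindexing is false — rather $C_{m-1}\subset C_m$ since $C_i=\{b_1,\dots,b_i\}$ is increasing), being an AP of length $m-1$ inside an AP-cover of length $\le m$ pins down $C_m$ up to one extra point. (4) With this structural grip, replace the index $m-1$ in the multiset by $m$ (legal since $m\le\omega$ and $\omega\ge m+2$ leaves room), raising the subscript sum to $\ge p+k+\ell-1$, invoke Lemma~\ref{coro_beta_sum}(2) to get $\sum\beta$-bound $\le p+k+\ell-2$ for the new collection, and then correct back: the difference $\beta_{m-1}-\beta_m=(|B_{m-1}|-|B_m|)/p^{n-2}\ge 0$, and the AP structure forces $|B_{m-1}|=|B_m|$ or a controlled gap, so the original sum is bounded by the new sum plus a nonpositive correction. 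Throughout, the monotonicity $\beta_{\tilde x}\ge\beta_x$ for $\tilde x\le x$ (as in the proof of Lemma~\ref{coro_beta_sum}) lets me always shift indices downward freely, which is what makes swapping $m-1\mapsto m$ the only delicate move.

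The main obstacle I anticipate is step (3)--(4): showing that when the sumset is exactly at the Cauchy--Davenport threshold ($r=0$ case), the forced AP structure of $C_{m-1}$ and $C_m$ genuinely improves the $\beta$-estimate rather than merely reproducing Lemma~\ref{coro_beta_sum}. The subtlety is that Theorem~\ref{thm_3k-4_r=0} requires the summand sizes to be $\ge 4$ (hence the hypothesis $m\ge 4$ in the statement — this is exactly why $m\ge 4$ appears) and gives APs of length at most $|X|+1$; one must check the three parity cases for how $kC$ and $\ell C$ decompose into copies of $2C$ plus at most one extra $C$ (just as in the proof of Lemma~\ref{lem_stru_of_Cm}), and verify that in every case the near-AP structure of the relevant two parts, together with disjointness of $kC_\omega$ and $\ell C_\omega$, leaves no room for $|B_{m-1}|+|B_m|$ to exceed what the index-swap argument permits. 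A secondary nuisance is bookkeeping which of the two ``sides'' (the $k$-fold or the $\ell$-fold sum) the critical-smallness lands on; since the hypothesis only guarantees $m-1$ and $m$ each appear \emph{somewhere}, both may be on the same side or on opposite sides, and the opposite-sides case needs a symmetric version of the argument (swap the roles of the $k$-collection and the $\ell$-collection). I expect the even/odd case split for $k$ and $\ell$ to be routine once the odd/odd case is done, exactly as in Lemma~\ref{lem_stru_of_Cm}.
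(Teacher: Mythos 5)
There is a genuine gap, and it sits precisely at your step (4). After establishing (via tight Cauchy--Davenport and Vosper) that all the $C_{r_i}$, $C_{s_j}$ and the two sumsets are intervals, you propose to \emph{replace} the index $m-1$ by $m$ in the multiset, apply Lemma~\ref{coro_beta_sum}(2) to the new collection, and ``correct back.'' But the correction goes the wrong way: the new collection has $\beta_m$ where the old one had $\beta_{m-1}$, and $\beta_{m-1}\ge\beta_m$, so the original sum equals the new sum \emph{plus} a nonnegative term $\beta_{m-1}-\beta_m$, which you cannot absorb. Your fallback --- that the AP structure of $C_{m-1}$ and $C_m$ forces $|B_{m-1}|=|B_m|$ or a controlled gap --- is unjustified: $C_i$ is a set of support indices in $\mathbb F_p$ while $B_i$ is a fiber in $K$, and the interval structure of the former says nothing about the sizes of the latter. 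The paper's resolution is different and is the key idea you are missing: keep the multiset of subscripts fixed (so the subscript sum and the $\beta$-sum are unchanged) and merely \emph{reorder} which side of the equation $m-1$ and $m$ sit on. With $C_{m-1}=[a,b]$, $C_m=[a,b+1]$, $A=[x,y]$ and $B=[y+1,x-1]$, swapping $r_1=m-1$ with $s_1=m$ turns the sumsets into $A'=[x,y+1]$ and $B'=[y+1,x-2]$, which intersect at $y+1$; then Lemma~\ref{coro_beta_sum}(1) applies directly to the reordered collection, whose $\beta$-sum is identical to the original.

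A secondary error: in your step (1) you assert that $\sum_i C_{r_i}$ and $\sum_j C_{s_j}$ are disjoint ``by $(k,\ell)$-sum-freeness of $C_\omega$.'' That is not available --- $C_\omega$ is not known to be $(k,\ell)$-sum-free here (that requires extra hypotheses such as $k\beta_m\ge p+k-1$, cf.\ Lemma~\ref{lem_stru_of_Cm}), and Fact~\ref{fact1} gives disjointness of the $B$-sumsets in $K$ when the index equation holds, not disjointness of the $C$-sumsets in $\mathbb F_p$. Disjointness of the $C$-sumsets is the \emph{bad} case one must show cannot persist under every ordering, since the desired bound follows exactly when they intersect; treating it as a given inverts the logic of the argument. (Also, the paper's proof never needs Theorem~\ref{thm_3k-4_r=0}; the hypothesis $m\ge4$ is used only to guarantee $3\le|A|,|B|\le p-3$ so that Lemma~\ref{lem_interval_distinct} pins down the common difference.)
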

\begin{proof} By Lemma~\ref{coro_beta_sum}, we only need to consider the case when $\sum_{i=1}^k r_i+\sum_{j=1}^\ell s_j= p+k+\ell-2$.

Observe that any ordering of the $k+\ell$ subscripts $r_1, \ldots, r_k, s_1, \ldots, s_\ell$ does not change the value $\sum_{i=1}^k \beta_{r_i}+\sum_{j=1}^\ell \beta_{s_j}$.
If there exists an ordering of $r_1, \ldots, r_k, s_1, \ldots, s_\ell$ such that $(\sum_{i=1}^k C_{r_i})\cap (\sum_{j=1}^\ell C_{s_j}) \ne \emptyset$, the upper bound follows by Lemma \ref{coro_beta_sum} (1).
So it suffices to prove that such an ordering must exist.

We begin with an arbitrary ordering $L$ such that $r_1=m-1$ and $s_1=m$.
Based on $L$, we denote $A=\sum_{i=1}^k C_{r_i}$ and $B= \sum_{j=1}^\ell C_{s_j}$.
If $A\cap B\neq \emptyset$ then we are done. %$L$ is our needed reordering.
Otherwise, $A\cap B=\emptyset$, which implies
\begin{equation}\label{eq_5}
p\ge |A|+|B|.
\end{equation}
By recursively applying Theorem~\ref{thm_Cauchy_daven}, we have
\begin{equation}\label{eq_6}
|A|\ge \sum_{i=1}^k r_i- k+1;% \text{ and }|B|\ge \sum_{j=1}^\ell s_j- \ell+1.
\end{equation} and
\begin{equation}\label{eq_7}
|B|\ge \sum_{j=1}^\ell s_j- \ell+1.
\end{equation}
Since $\sum_{i=1}^k r_i+\sum_{j=1}^\ell s_j= p+k+\ell-2$, all equalities in Eq. (\ref{eq_5}), (\ref{eq_6}) and (\ref{eq_7}) hold. By the equality in Eq. (\ref{eq_6}) and Theorem~\ref{thm_Vosper}, all $C_{r_i}$, $i\in [k]$, and hence together with $A$ are all APs sharing the same common difference. Similarly, by the equality in Eq. (\ref{eq_7}) and Theorem~\ref{thm_Vosper}, $C_{s_j}$, $j\in [\ell]$, and $B$ are all APs sharing the same common difference.
By the equality in Eq. (\ref{eq_5}), $A=\mathbb{F}_p\setminus B$. Since $r_i,s_j\geq m-1\geq 3$, we have $3\leq |A|,|B|\leq p-3$. By Lemma~\ref{lem_interval_distinct},  $A$ and $B$  share the same common difference, and so do all the mentioned APs. Without loss of generality, we set the common difference to be $1$, and hence those APs are all intervals.

%By assumption, both $m-1$ and $m$ appear in the indices $r_i$ and $s_j$. Since $C_{m}$ contains one more element than $C_{m-1}$,  and they are both intervals, we may assume that $C_{m-1}=[a,b]$ and $C_{m}=[a,b+1]$ by symmetry. For the tuple $( r_1, r_2, \ldots, r_{k}; s_1, \ldots, s_\ell)$, consider two new related reorderings $L=( m-1, r_2, \ldots, r_{k}; m, \ldots, s_\ell)$ and $L'=( m, r_2, \ldots, r_{k}; m-1, \ldots, s_\ell)$. Then by the previous discussion, under both reorderings, $A$ and $B$, and $A'$ and $B'$ are complementary intervals. Suppose $A=[x,y]$ and $B=[y+1,x-1]$ under $L$. However, under $L'$, by the assumption of $C_{m-1}$ and $C_m$, we have $A'=[x,y+1]$ and $B'=[y+1,x-2]$, which is a contradiction as we had $A'\cap B'=\emptyset$.

Specially, $A$, $B$, $C_m$ and $C_{m-1}$ are all intervals. Denote $C_{m-1}=[a, b]$ and $A=[x, y]$. Then $B=[y+1, x-1]$ and without loss of generality, we can set $C_m=[a, b+1]$ by symmetry.
Consider a new ordering $L'$ from $L$ by merely swapping the values of $r_1$ and $s_1$. That is, in $L'$, $r_1=m$, $s_1=m-1$, while other indices are coherent with $L$.
Denote the new $A$ and $B$ on $L'$ as $A'$ and $B'$, respectively. From that $A=C_{m-1}+{\sum_{i=2}^kC_{r_i}}=[x, y]$, we know that $A'=C_{m}+{\sum_{i=2}^kC_{r_i}}=[x, y+1]$. Similarly from $B$, we  get $B'=C_{m-1}+{\sum_{j=2}^{\ell}C_{s_j}}=[y+1, x-2]$. This means for $L'$, $A'\cap B'\neq\emptyset$, and $L'$ is indeed the required ordering.
\end{proof}

We further relax the restriction on $\sum_{i=1}^k r_i+\sum_{j=1}^\ell s_j$ in the following lemma.

\begin{lemma}\label{lem_improving_improving_coro} Let  $A\subset \mathbb F_p^n$ be $(k, \ell)$-sum-free and $m\ge 5$. Suppose  under a decomposition of $\mathbb F_p^n$,
 $\omega(A)\ge m+2$. Let $t$ be any integer in $[2, \omega-m]$. If $r_1, \ldots, r_k, s_1, \ldots, s_\ell$ are chosen from $[m-1, \omega]$, such that each element in  $[m-1, m+t]$ is chosen at least once, and $\sum_{i=1}^k r_i+\sum_{j=1}^\ell s_j\geq p+k+\ell-3$, then  $\sum_{i=1}^k \beta_{r_i}+\sum_{j=1}^\ell \beta_{s_j}\le p+k+\ell-2$ unless the following happens up to isomorphism:
\begin{itemize}
\item  $\ell=1$, and all $C_i$, $i\in [m-1, m+t]$ are intervals. Moreover, for any $i\in [m+1, m+t]$, if $C_i$ can be expressed as $[a, b]$ for some $a$ and $b$, then $C_{i-2}=[a+1, b-1]$.
\end{itemize}
\end{lemma}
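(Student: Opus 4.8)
\textbf{Proof strategy for Lemma~\ref{lem_improving_improving_coro}.}
The plan is to mimic the argument of Lemma~\ref{lem_improving_coro} but push the analysis further, since now we only assume $\sum_{i=1}^k r_i+\sum_{j=1}^\ell s_j\ge p+k+\ell-3$, one unit weaker than before. By Lemma~\ref{coro_beta_sum} we may assume $\sum_{i=1}^k r_i+\sum_{j=1}^\ell s_j\in\{p+k+\ell-3,\,p+k+\ell-2\}$, and the case $=p+k+\ell-2$ is handled exactly by Lemma~\ref{lem_improving_coro} once we notice that $[m-1,m+t]\subseteq[m-1,m]$ whenever $t=\ldots$; more carefully, since both $m-1$ and $m$ are among the chosen subscripts, Lemma~\ref{lem_improving_coro} already gives the bound in that subcase. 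So I would reduce to $\sum_{i=1}^k r_i+\sum_{j=1}^\ell s_j=p+k+\ell-3$. As in Lemma~\ref{lem_improving_coro}, any reordering of the $k+\ell$ subscripts leaves $\sum\beta_{r_i}+\sum\beta_{s_j}$ unchanged, so it suffices to produce some ordering making $A:=\sum_{i=1}^k C_{r_i}$ and $B:=\sum_{j=1}^\ell C_{s_j}$ intersect, for then Lemma~\ref{coro_beta_sum}(1) finishes; otherwise we derive the stated exceptional structure.

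\textbf{Key steps.}
First, fix an initial ordering with $r_1=m-1$, $s_1=m$ (using that both appear), and suppose $A\cap B=\emptyset$, so $p\ge|A|+|B|$. Recursive Cauchy--Davenport gives $|A|\ge\sum r_i-k+1$ and $|B|\ge\sum s_j-\ell+1$, so $|A|+|B|\ge p+1-2 = p-1$; combined with $|A|+|B|\le p$ we get that at most one of the three Cauchy--Davenport chains (the one for $A$, the one for $B$, and the final $A+B$-disjointness) has a deficiency of $1$, all others being tight. When every chain is tight, Vosper (Theorem~\ref{thm_Vosper}) forces all $C_{r_i}$, all $C_{s_j}$, $A$, and $B$ to be APs; since $A=\mathbb F_p\setminus(B\cup\{\text{one point}\})$ or $A=\mathbb F_p\setminus B$ depending on where the slack sits, Lemma~\ref{lem_interval_distinct} pins down a common difference, which we normalize to $1$, making all these sets intervals. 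Second — and this is where the case $\ell=1$ enters — when $\ell\ge 2$ one has genuine freedom to permute the $\ell\ge 2$ many $C_{s_j}$'s and the $k\ge 2$ many $C_{r_i}$'s; the swap trick from Lemma~\ref{lem_improving_coro} (exchange one subscript between the $A$-block and the $B$-block, shifting an interval endpoint by one) produces an overlap, contradicting $A\cap B=\emptyset$. So $\ell=1$. Third, with $\ell=1$, $B=C_{s_1}$ is a single $C_i$ and $A=\sum_{i=1}^k C_{r_i}$; the $k$ subscripts $r_1,\dots,r_k$ cover $[m-1,m+t]$. One then analyzes which subscripts are intervals: the tightness forces all $C_i$ with $i\in[m-1,m+t]$ to be intervals, and a short computation with interval sums (writing $C_i=[a_i,b_i]$, noting $A=\sum[a_{r_i},b_{r_i}]=[\sum a_{r_i},\sum b_{r_i}]$ and that deficiency $1$ allows exactly one "hole-filling" step) shows the endpoints must interlace so that $C_{i-2}=[a+1,b-1]$ whenever $C_i=[a,b]$, for $i\in[m+1,m+t]$. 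Any deviation from this pattern yields an alternative ordering with $A\cap B\ne\emptyset$, giving the bound.

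\textbf{Main obstacle.}
The delicate part is the bookkeeping in the $\ell=1$ case: translating "the Cauchy--Davenport chain has total deficiency exactly $1$" into the precise nesting relation $C_{i-2}=[a+1,b-1]$ among consecutive-index intervals. This requires carefully tracking, for each candidate ordering of the multiset $\{r_1,\dots,r_k\}$ (which must hit every value in $[m-1,m+t]$ and may repeat values above), how the left and right endpoints of partial sums move, and ruling out every configuration except the claimed one by exhibiting an intersecting reordering. The combinatorics is finite but fiddly, since one must handle repeated subscripts and the two ways the single unit of slack can be distributed (inside the $A$-sum, or between $A$ and $B$). I expect the bulk of the write-up to be this endpoint-chasing argument, with Vosper and Lemma~\ref{lem_interval_distinct} doing the structural heavy lifting and the swap trick providing the contradictions.
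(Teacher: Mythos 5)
Your overall strategy matches the paper's: reduce to $\sum r_i+\sum s_j=p+k+\ell-3$ via Lemma~\ref{lem_improving_coro}, observe that the three inequalities (disjointness of $A$ and $B$, and the two Cauchy--Davenport chains) have total deficiency exactly $1$, and then either manufacture an ordering with $A\cap B\ne\emptyset$ by a swap, or extract the $\ell=1$ exceptional structure. The reduction and the deficiency bookkeeping are correct.

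However, there is a genuine gap in your structural step. You claim that tightness forces all $C_i$, $i\in[m-1,m+t]$, to be intervals, with Vosper and Lemma~\ref{lem_interval_distinct} ``doing the structural heavy lifting.'' This is false in the sub-case where the unit of slack sits inside the $k$-fold sum $A$ (or inside $B$, or in the disjointness with $\ell=1$): there one has a pair with $|C_{r_1}+C_{r_2}|=|C_{r_1}|+|C_{r_2}|$, which Vosper says nothing about. The paper needs Theorem~\ref{thm_3k-4_r=0} (the resolved $|X+Y|=|X|+|Y|$ case of the $3k-4$ problem) to conclude that exactly one of the chosen sets is a $1$-holed interval while the rest are intervals; your proposal never invokes this result and has no substitute for it. Moreover, that $1$-holed configuration is not ruled out structurally --- it genuinely occurs and must be eliminated dynamically: since the subscripts cover $[m-1,m+t]$ and only one chosen set is $1$-holed, there exist consecutive indices $i,i+1$ in that range with both $C_i,C_{i+1}$ intervals, and the single-swap trick of Lemma~\ref{lem_improving_coro} then produces an intersecting ordering. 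Your attribution of the $\ell=1$ restriction to ``freedom to permute'' is also imprecise: in the all-intervals case the single swap fails exactly when $C_{i-2}=[a+1,b-1]$ (endpoints move symmetrically), and only a simultaneous double swap --- which requires $\ell\ge2$ to have a second $s$-slot --- defeats that nesting; when $\ell=1$ it survives, and that is the exceptional case. Without Theorem~\ref{thm_3k-4_r=0} and the explicit treatment of the $1$-holed set, the argument does not close.
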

%\begin{remark}
%Here $m\ge 5$ is added to ensure that Theorem~\ref{thm_3k-4_r=0} can be applied properly in the following proving process. That is, $|C_{m-1}|=m-1\ge 4$.
%\end{remark}
\begin{proof}By Lemma~\ref{lem_improving_coro}, we only need to consider the case when $\sum_{i=1}^k r_i+\sum_{j=1}^\ell s_j= p+k+\ell-3$.
Then it suffices to show that, except the case stated in the lemma, there is an ordering of $r_1, \ldots, r_k, s_1, \ldots, s_\ell$ such that $(\sum_{i=1}^k C_{r_i})\cap (\sum_{j=1}^\ell C_{s_j}) \neq \emptyset$.

%$\sum_{i=1}^k \beta_{r_i}+\sum_{j=1}^\ell \beta_{s_j}\le p+k+\ell-2$
%holds unless $(\sum_{i=1}^k C_{r_i})\cap (\sum_{j=1}^\ell C_{s_j}) = \emptyset$ for all orderings of $r_1, \ldots, r_k, s_1, \ldots, s_\ell$. We will show this is impossible when $\sum_{i=1}^k r_i+\sum_{j=1}^\ell s_j= p+k+\ell-3$ and the exception case stated in the lemma does not happen.

For any fixed ordering $L$, we follow the same notations $A$ and $B$ in the proof of Lemma~\ref{lem_improving_coro}, and have the same three inequalities Eq. (\ref{eq_5})-(\ref{eq_7}) when $A\cap B=\emptyset$. When $\sum_{i=1}^k r_i+\sum_{j=1}^\ell s_j= p+k+\ell-3$, exactly two of the three equalities hold, while the third one has a gap $1$. Depending on which one has a gap $1$, we analysis them one by one.

Suppose Eq. (\ref{eq_6}) has gap $1$, that is $|A|= \sum_{i=1}^k r_i- k+2$. By Theorem~\ref{thm_Vosper}, there must exist two $C_i$'s, say the first two, satisfying $|C_{r_1}+C_{r_2}|=r_1+ r_2$. As $k\ge 3$, we know that $|C_{r_1}+C_{r_2}+C_{r_i}|=|C_{r_1}+C_{r_2}|+|C_{r_i}|-1$ must hold for any $3\le i\le k$.
Hence $C_{r_1}+C_{r_2}$, $C_{r_3}$, \ldots, $C_{r_k}$ are all APs of the same unique common difference. Without loss of generality, we assume that they are all intervals. Then $A$ and $B$ are complementary intervals of $\mathbb{F}_p$, and  each $C_{s_j}, j\in [\ell]$ is an interval for any $\ell \geq 1$.  So $|C_{r_1}+C_{r_2}|\le |A|=p-|B|\le p-C_{m-1}= p-(m-1)\le p-4$ as $m\geq 5$. By Theorem~\ref{thm_3k-4_r=0}, both $C_{r_1}$ and $C_{r_2}$ are contained in intervals of lengths $r_1+1$ and $r_2+1$, respectively. Since $C_{r_1}+C_{r_2}$ is an interval of length $r_1+r_2$, the only choice is that one (say $C_{r_2}$) is an interval, while the other one $C_{r_1}$ is a $1$-holed interval of length $r_1+1$.

Suppose  Eq. (\ref{eq_7}) has gap $1$, then $\ell\ge 2$. By the same analysis as above, there is exactly one $C_{s_j}$, say $C_{s_1}$, to be a $1$-holed interval of length $s_1+1$, and all other $C_{s_j}$ are intervals.

Suppose Eq. (\ref{eq_5}) has gap $1$, that is $p=|A|+|B|+1$. Then the two equalities in  Eqs. (\ref{eq_6}) and (\ref{eq_7}) hold.  So $C_{r_i}$, $i\in [k]$, and  $A$ are all APs with the same common difference.
Suppose $C_{r_i}$ and $A$ are intervals. By $|B|=p-|A|-1$, $B$ is either an interval  or has exactly one $1$-hole. If $\ell\geq 2$, then $C_{s_j}$, $j\in [\ell]$, and $B$ are all APs with the same common difference. By Lemma~\ref{lem_one_hole_not_AP}, $B$ must be an interval. If $\ell=1$,  $B=C_{s_1}$ is either a $1$-holed interval of length $s_1+1$ or an interval of length $s_1$.

To sum up, no matter which inequality has  gap $1$, the sets $C_{r_i}$, $i\in [k]$ and $C_{s_j}, j\in[\ell]$ must satisfy one of the following two cases:
\begin{itemize}
  \item[(1)] All sets $C_{r_i}$, $i\in [k]$ and $C_{s_j}, j\in [\ell]$ are intervals except one set, which has exactly one $1$-hole.% is a $1$-holed interval of length one more than its size.
  \item[(2)] All sets $C_{r_i}$, $i\in [k]$ and $C_{s_j}, j\in [\ell]$ are intervals. This happens only when $p=|A|+|B|+1$.
\end{itemize}
Next, we show that except the case stated in the lemma, there is a new ordering $L'$ with $A'\cap B'\ne \emptyset$.

Suppose Case (1) happens. Since the values of $r_i, s_j$ cover $ [m-1,m+t]$ and exactly one out of those $C_{r_i}$, $C_{s_j}$ sets is not an interval, there exists one $i\in [m-1, m+t-1]$ such that both $C_i$ and $C_{i+1}$ are intervals.  Let $C_j$ be that $1$-holed interval. As $k\ge 2$, we can assume that in $L$, $(r_1, r_k, s_1)=(i, j, i+1)$.
%
% choose two  orderings of $(r_1, \ldots, r_k; s_1, \ldots, s_\ell)$, denoted by $L$ and $L'$, such that $(r_1, r_k, s_1)=(i, j, i+1)$ in $L$, and
Consider a new ordering $L'$ from $L$ by merely swapping the values of $r_1$ and $s_1$.
Under both $L$ and $L'$, it is easy to check that Eq. (\ref{eq_6}) has gap $1$. Then the same argument in the proof of Lemma~\ref{lem_improving_coro} also works here, which can derive that if $A\cap B=\emptyset$ over $L$, then $A\cap B\neq\emptyset$ over $L'$. %, giving the desired upper bound.% to derive the contradiction.

Suppose Case (2) happens. Then all $C_{r_i}$ and $C_{s_j}$ are all intervals, while $p=|A|+|B|+1$ under any ordering of $r_1,\ldots,r_k, s_1,\ldots,s_\ell$.
Specially, all $C_i$ with $i\in [m-1, m+t]$ are all intervals.
Suppose $C_i=[a, b]$ for some $i\in [m+1, m+t]$. Then $C_{i-2}$ as an interval has two choices by symmetry: $[a+2, b]$ and $[a+1, b-1]$.

\begin{itemize}
  \item Assume $C_{i-2}=[a+2, b]$. Suppose $(r_1, s_1)=(i, i-2)$ in  $L$, and consider $L'$  from $L$ by swapping the values of $r_1$ and $s_1$. Over $L$ denote $A=[x, y]$. Then $B=[y+2, x-1]$ or $[y+1, x-2]$.
  %By multiplying $-1$ the latter can be transformed to the former so without loss of generality, say $B=[y+2, x-1]$.
  Then from $C_{i-2}=[a+2, b]$, we know that over $L'$ we have $A'= [x+2, y]$, and $B'=[y, x-1]$ or $[y-1, x-2]$, which means $A'\cap B'\neq\emptyset$.
  \item Assume $C_{i-2}=[a+1, b-1]$ and $\ell\ge 2$. Since $t\ge 2$, we can assume $(r_1, r_2, s_1, s_2)=(m-1, m+1, m, m+2)$ in $L$, and choose a new ordering $L'$ from $L$ by swapping the values of $r_1$ and $s_1$, and the values of $r_2$ and $s_2$. Set $i=m+2$. No matter $C_{m+1}=[a+1, b]$ or $[a, b-1]$, the same analysis derives that $A'\cap B'\ne\emptyset$ over $L'$.
\end{itemize}

Summing up, we have proved all cases except $C_{i-2}=[a+1, b-1]$ and $\ell=1$. This completes the proof.
\end{proof}

\subsection{The weight is either big or small}\label{sbsec:c}

Recall that when $A\subset \mathbb F_p^n$ is of size at least $mp^{n-1}$, the weight $\omega=\omega(A)$ is at least $m$.
In this section, we show that if $A$ is  $(k, \ell)$-sum-free, $\omega$ is either very big, close to $p$, or very small, close to $m$. The main tools are Lemmas~\ref{coro_beta_sum}-\ref{lem_improving_improving_coro}. For convenience, denote
\[\theta:=p-(m-1)(k+\ell)=k+\ell+\lambda+2.\]

\begin{lemma}\label{lem_l_upper_default}
Let integers $m\ge 5$ and $k+\ell-5\ge \frac\lambda 2$. Suppose $A\subset \mathbb F_p^n$ is a $(k, \ell)$-sum-free  subset with $|A|\ge mp^{n-1}$. Then under any $(v, K)$-decomposition, we have either $\omega\le m+2$ or $\omega> p-\theta$.
\end{lemma}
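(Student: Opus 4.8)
\textbf{Proof plan for Lemma~\ref{lem_l_upper_default}.}

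The plan is to argue by contradiction: suppose $m+2 < \omega \le p-\theta$, i.e. $\omega \in [m+3, p-\theta]$, and derive a contradiction by summing up many inequalities of the form $\sum \beta_{r_i} + \sum \beta_{s_j} \le p+k+\ell-2$ obtained from Lemmas~\ref{coro_beta_sum}--\ref{lem_improving_improving_coro}, and comparing the total against the lower bound $\sum_{i=1}^\omega \beta_i = |A|/p^{n-2} \ge mp$ coming from $|A|\ge mp^{n-1}$. The key point is that the $\beta_i$ are nonincreasing in $i$ (by the definition of $B_i=A_{b_i}$ and the reordering), so averaging many applications of the "$k+\ell$ parts" bounds over carefully chosen index tuples $(r_1,\dots,r_k,s_1,\dots,s_\ell)$ drawn from $[m-1,\omega]$ will bound $\sum_{i=m-1}^{\omega}\beta_i$, and hence $\sum_{i=1}^{\omega}\beta_i$, from above; if $\omega$ is in the "forbidden middle range" this upper bound will contradict $mp$.

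First I would fix notation: since $\omega \ge m+3$, in particular $\omega\ge m+2$, so Lemmas~\ref{lem_improving_coro} and \ref{lem_improving_improving_coro} are available. The heart of the argument is to choose index tuples whose subscript sum is just barely large enough (at least $p+k+\ell-3$ or $p+k+\ell-2$) to trigger the bounds, while keeping all subscripts in $[m-1,\omega]$ and hitting the small indices $m-1, m, m+1,\dots$ the prescribed number of times. Concretely: because $\theta = p-(m-1)(k+\ell)$, a tuple consisting of the index $m-1$ repeated $k+\ell$ times has subscript sum exactly $(m-1)(k+\ell)=p-\theta$, which is too small; to reach $p+k+\ell-3$ we must increase the total by $\theta + k+\ell-3$, which we do by replacing some copies of $m-1$ with larger indices up to $\omega$. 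Since $\omega \le p-\theta$, there is "just enough room" to do this — and this is exactly where the upper bound $\omega \le p-\theta$ (equivalently $\omega > p-\theta$ being excluded) enters. I would then build a family of such tuples so that each index $i \in [m-1,\omega]$ appears the same number of times across the family (a standard double-counting / symmetrization), apply Lemma~\ref{coro_beta_sum}(2) (or Lemma~\ref{lem_improving_coro}/\ref{lem_improving_improving_coro} to shave off the last point or two when the subscript sum is only $p+k+\ell-2$ or $p+k+\ell-3$), sum all the resulting inequalities, and divide by the multiplicity to conclude $\sum_{i=m-1}^{\omega}\beta_i \le \frac{\omega-m+2}{k+\ell}(p+k+\ell-2)$ or a similar expression; then add $\beta_1+\cdots+\beta_{m-2} \le (m-2)p$ (each $\beta_i\le p$) and check that the total is strictly less than $mp$ when $\omega$ lies in the forbidden band, using $m\ge 5$ and $k+\ell-5\ge \lambda/2$.

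The exceptional case in Lemma~\ref{lem_improving_improving_coro} — where $\ell=1$ and the $C_i$, $i\in[m-1,m+t]$, are intervals with the nesting property $C_{i-2}=[a+1,b-1]$ whenever $C_i=[a,b]$ — is the main obstacle, and I expect it to require a separate sub-argument. In that scenario I would exploit the nesting relation directly: the chain $C_{m-1}\supset$-type containments force the sizes $|C_i|=\beta_i$ (or rather $r_i$, the set sizes, which equal the subscripts by definition $|C_i|=i$) — wait, more precisely, the $C_i$ have sizes $m-1, m, m+1,\dots$ growing by one, and if $C_i=[a,b]$ then $|C_i|=b-a+1=i$, while $C_{i-2}=[a+1,b-1]$ has size $i-2$, consistent; the real leverage is that these forced intervals pin down the $B_i$ via Fact~\ref{fact1}, and one shows the implied arithmetic structure either contradicts $|A|\ge mp^{n-1}$ directly or forces $\omega$ outside the forbidden range. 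I would handle this by noting $\ell=1$ makes the $(k,1)$-sum-free condition $kA\cap A=\emptyset$ rigid enough that the interval nesting propagates to all small indices, giving explicit control on $\beta_{m-1},\dots,\beta_{m+t}$ that again pushes the weight sum below $mp$. The routine part is the final arithmetic inequality verification; the delicate part is organizing the tuple family and dispatching the $\ell=1$ exceptional configuration cleanly.
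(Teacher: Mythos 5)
Your overall strategy (contradiction on $\omega\in[m+3,p-\theta]$, grouping indices into $(k+\ell)$-tuples with large subscript sum and summing the resulting bounds against $\sum_i\beta_i\ge mp$) is the right one, but your accounting has a genuine gap that makes the argument fail for most of the forbidden range. You propose to bound $\beta_1+\cdots+\beta_{m-2}\le (m-2)p$ trivially and to apply the tuple machinery only to indices in $[m-1,\omega]$, arriving at $\sum_{i=m-1}^{\omega}\beta_i\le \frac{\omega-m+2}{k+\ell}(p+k+\ell-2)$. For the total to be below $mp$ you then need $\frac{\omega-m+2}{k+\ell}(p+k+\ell-2)<2p$, which forces $\omega\lesssim m+2(k+\ell)-2$. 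But the forbidden band extends all the way up to $p-\theta=(m-1)(k+\ell)$, where your bound gives roughly $(2m-3)p$, nowhere near $mp$. The point you are missing is that the small indices $1,\ldots,m-2$ cannot be charged $p$ each for free: each of them must be placed \emph{inside} a tuple together with $k+\ell-1$ large indices, so that the whole block of $k+\ell$ parts is charged only $p+k+\ell-2$ in total. This is exactly what the paper does: it partitions the full index set $[p-\theta]$ (which contains $[\omega]$) into $m-1$ blocks $S_i=\{i\}\cup[a_i,b_i]$, $i\in[m-2]$, plus $S_{m-1}=[m-1,m+k+\ell-2]$, checks via $k+\ell-5\ge\lambda/2$ that each block's subscript sum is at least $p+k+\ell-1$ (replacing out-of-range indices by copies of $\omega$ where necessary), and concludes $\sum_{i=1}^{\omega}\beta_i\le (m-1)(p+k+\ell-2)<mp$.

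A secondary, smaller point: your concern about the exceptional configuration of Lemma~\ref{lem_improving_improving_coro} ($\ell=1$ with nested intervals) is misplaced for this lemma. Since $\omega\ge m+3$ here, the block containing $m-1$ and $m$ can also include a copy of $m+3$, which pushes its subscript sum up to $p+k+\ell-2$ (this is where $k+\ell-5\ge\lambda/2$ is used, via inequality (\ref{eqbeta})), so only Lemma~\ref{lem_improving_coro} is needed and no exceptional case arises. The delicate exceptional-case analysis is reserved for ruling out $\omega=m+2$, which is the content of Lemma~\ref{lem_ell_upper_bound}, not of the present statement.
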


\begin{proof}
Assume on the contrary, $m+3\le \omega\le p-\theta$. We will upper bound $\sum_{i=1}^{\omega}\beta_i$ and deduce a contradiction to the size of $A$.
%We only need to consider that under the condition $\omega\le p-\theta$, what will happen if $\omega\le m+1$ fails. In this case we always have $\omega\ge m+2$.

First, we  partition the index set $[p-\theta]=[(m-1)(k+\ell)]$ into $m-1$ subsets of size $k+\ell$, such that the sum of indices in almost every subset is big enough to apply Lemmas~\ref{coro_beta_sum} or \ref{lem_improving_coro}. Define %we consider the following subset of $[p]$ with $k+\ell$ members:
\[S_i=\{i\}\cup [p-\theta- i\cdot(k+\ell-1)+1, p-\theta-(i-1)(k+\ell-1)]\triangleq \{i\}\cup [a_i,b_i], \text{ for  $i\in [m-2]$,}\]
and \[S_{m-1}=[m-1,m+k+\ell-2].\]
It is easy to check that $|S_i|=k+\ell$, $S_i\cap S_j=\emptyset$ for any $i\ne j$, and $\sqcup_{i\in [m-1]}S_i=[p-\theta].$
%\begin{equation}\label{ewqsdis}
%  \sqcup_{i\in [m-2]}S_i=[m-2]\cup[{\color{blue}m+k+\ell-1}, p-\theta].
%\end{equation}
Denote $\sigma_i$ the sum of all indices in $S_i$, $i\in [m-1]$.

Next, consider the first $m-2$ parts. As $k+\ell> 2$, we know that $\sigma_i$ decreases when $i$ grows. Furthermore,  $$\sigma_{m-2}=m-2 + \sum_{i=1}^{k+\ell-1}(m+k+\ell-2+i)= (k+\ell)m+ \frac{3}{2}(k+\ell)^2-\frac{7}{2}(k+\ell).$$
Solving $(k+\ell)m+ \frac{3}{2}(k+\ell)^2-\frac{7}{2}(k+\ell)\geq p+k+\ell-1$ for $k+\ell$ gives $k+\ell\ge  \sqrt{\frac{2}{3}}\sqrt{\lambda+\frac{35}{8}}+1.5$, which is true since $k+\ell -4\ge \frac\lambda 2$. So we have $ \sigma_{m-2}\ge p+k+\ell-1$ and thus $\sigma_i\ge p+k+\ell-1$ for any $i\in [m-2]$. However, we still can not apply Lemma~\ref{coro_beta_sum} directly since indices in $S_i$ may not belong to $[\omega]$.
%We will show that for each $S_i$, $i\in [m-2]$,
%\begin{equation}\label{eqsums}
%  \sum_{j\in S_i}\beta_{j}\le p+k+\ell-2. to  Eq. (\ref{eqsums})
%\end{equation}
There are three cases to be considered for each $S_i=\{i\}\cup [a_i,b_i]$. If $\omega\ge b_i$, then $S_i\subset [\omega]$ and we can apply Lemma~\ref{coro_beta_sum} directly.
 If $\omega<a_i$, then $S_i\cap[\omega]=\{i\}$, and hence $\sum_{j\in S_i}\beta_{j}= \beta_i\le p\le p+k+\ell-2$ because $k+\ell> 2$. Finally, if $ \omega\in [a_i,b_i-1]$, $S_i\cap[\omega]$ has size $h+1$ for some $h\in [k+\ell-2]$. Then adding $(k+\ell-h-1)$ more copies of $\omega$, we get a  new multi-set $S_i'$ consisting of $k+\ell$ elements,  whose elements  are all in $[\omega]$. Noting that for fixed $h$, the sum $\sigma'_{i}$ of indices in $S_i'$ also decreases when $i$ grows, we have
\[\begin{aligned}
\sigma'_{i}\geq &\sigma'_{m-2}\geq m-2+(k+\ell-1)a_{m-2}\\
 \ge& m-2+(k+\ell-1)(m+k+\ell-1)\\
 =&(k+\ell)m+(k+\ell-1)^2-2\\
 \ge& p+k+\ell-1.
\end{aligned}\]
%
% Consider a formed as follows: add in $(k+\ell-1-h)$ copies of ${\color{blue}\omega}$ into the set $S_i\cap[\omega]$.
%
%  there exists some $h\in [k+\ell-1]$ such that $\omega=  i_1+h-1$. Consider a new multi-set $S_i'$ consisting of $k+\ell$ elements formed as follows: add in $(k+\ell-1-h)$ copies of ${\color{blue}\omega}$ into the set $S_i\cap[\omega]$. Hence each element in $S_i'$ is in $[\omega]$ and the element sum in $S_i'$ is
%\[\begin{aligned}
%\sigma'_{i}=i&+\sum_{j=1}^h\left(p-\theta-i(k+\ell-1)+j\right)\\
% &+(k+\ell-h-1)(p-\theta-i(k+\ell-1)+h)\\
% \ge&(m-2)+(k+\ell-1)(m+k+\ell-1)\\
% =&(k+\ell)m+(k+\ell-1)^2-2\\
% \ge& p+k+\ell-1,
%\end{aligned}\]
Here the last inequality is valid since solving it gives $k+\ell\ge \sqrt{\lambda+\frac{17}{4}}+1.5$, which is true by assumption. %a solution of $(k+\ell)(k+\ell-3)\ge \lambda+2$.
Then by applying Lemma~\ref{coro_beta_sum} on $S_i'$, we have
$$\sum_{j\in S_i}\beta_{j}= \sum_{j\in S_i\cap[\omega]}\beta_{j}\leq \sum_{j\in S'_i}\beta_{j}\le p+k+\ell-2.$$
%\begin{aligned}
%\sum_{j\in S_i}\beta_{j}&= \sum_{j\in S_i\cap[\omega]}\beta_{j}\\
%&\le \beta_i+\beta_{p-\theta- i\cdot(k+\ell-1)+1}+ \cdots+ \beta_{p-\theta- i\cdot(k+\ell-1)+h} +(k+\ell-1-h)\beta_{p-\theta- i\cdot(k+\ell-1)+h}\\
%& \le p+k+\ell-2.
%\end{aligned}$$

Finally, we consider the last part $S_{m-1}=[m-1,m+k+\ell-2]$. Since $k+\ell\ge 5$ and $w\ge m+3$, we turn to the multi-set $S'_{m-1}$ consisting of four elements in $[m-1,m+2]$ and $(k+\ell-4)$ copies of $m+3$, whose $k+\ell$ elements are all  in $[\omega]$. Further, the sum $\sigma'_{m-1}$ of indices in $S_{m-1}'$ is
\begin{equation}\label{eqbeta}
\begin{aligned}
\sigma'_{m-1}= & (
m-1)+m+(m+1)+(m+2)+(k+\ell-4)(m+3)\\=&(k+\ell)m+3(k+\ell)-10\ge p+k+\ell-2
  \end{aligned}
   \end{equation}from $k+\ell-5\ge \frac{\lambda}{2}$.
     Then by Lemma~\ref{lem_improving_coro}, we have $\sum_{j\in S_{m-1}}\beta_{j}\le\sum_{j\in S'_{m-1}}\beta_{j}\le p+k+\ell-2$ by the monotonicity of $\beta_i$.

Now we deduce the contradiction. Since $w\leq p-\theta$, $ \sum_{i=1}^{\omega}\beta_i=\sum_{i=1}^{p-\theta}\beta_i
  =\sum_{i=1}^{m-1}\sum_{j\in S_i}\beta_j
 \le (p+k+\ell-2)(m-1)$. So $|A|=\sum_{i=1}^{\omega}\beta_i p^{n-2}\leq  (p+k+\ell-2)(m-1)p^{n-2}< mp^{n-1}$, leading to a contradiction.
\end{proof}

%{\color{blue}(Remark: here we improve our requirement of $k+\ell$ to a sharper lower bound: $5+\frac{\lambda}{2}$. It is also the weakest among the three demands for $k+\ell$ as one can check it easily. However, this improvement cannot be applied to the next lemma because of the equality requirement in the line after eq. (15). What a pity wahaha)}

Next, we show that when $\lambda \leq k+\ell-5$, %$k+\ell\ge\lambda+5$,
$\omega=m+2$ can not happen.

\begin{lemma}\label{lem_ell_upper_bound}
Let  $m\ge 5$ and $\lambda\in [0, k+\ell-5]$. Suppose $A\subset \mathbb F_p^n$ is a $(k, \ell)$-sum-free subset with $|A|\ge mp^{n-1}$. Then under any $(v, K)$-decomposition,  either $\omega\leq m+1$ or $\omega>p-\theta$.
\end{lemma}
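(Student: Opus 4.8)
\textbf{Proof plan for Lemma~\ref{lem_ell_upper_bound}.}

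The plan is to argue by contradiction: suppose that under some decomposition $(v,K)$ we have $\omega=m+2$ (the only case left open by Lemma~\ref{lem_l_upper_default}, since $\lambda\le k+\ell-5$ forces $k+\ell-5\ge\lambda/2$ as soon as $\lambda\le k+\ell-5$ and $\lambda\ge 0$, hence Lemma~\ref{lem_l_upper_default} applies and gives $\omega\le m+2$ or $\omega>p-\theta$; here we assume $\omega=m+2$). As in the proof of Lemma~\ref{lem_l_upper_default}, the goal is to upper bound $\sum_{i=1}^{\omega}\beta_i=\sum_{i=1}^{m+2}\beta_i$ and reach a contradiction with $|A|\ge mp^{n-1}$, i.e.\ with $\sum_{i=1}^{m+2}\beta_i\ge mp$. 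The difference is that now $\omega=m+2\ge m+2$, so Lemma~\ref{lem_improving_improving_coro} (which needs $\omega\ge m+2$) is available with $t$ up to $\omega-m=2$, giving a sharper estimate on a block of $k+\ell$ indices that includes all of $\{m-1,m,m+1,m+2\}$.

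First I would partition $[\omega]=[m+2]$ (or a convenient superset via padding with copies of $\omega$, exactly as in Lemma~\ref{lem_l_upper_default}) into blocks of size $k+\ell$ whose index-sums are each at least $p+k+\ell-3$, so that Lemma~\ref{lem_improving_improving_coro} applies to each block and yields $\sum_{j\in S}\beta_j\le p+k+\ell-2$ — \emph{unless} the exceptional configuration in Lemma~\ref{lem_improving_improving_coro} occurs, namely $\ell=1$ and the sets $C_{m-1},C_m,C_{m+1},C_{m+2}$ are nested intervals with $C_{i-2}=[a+1,b-1]$ whenever $C_i=[a,b]$. Summing the block bounds over the $m-1$ blocks gives $\sum_{i=1}^{m+2}\beta_i\le (p+k+\ell-2)(m-1)<mp$ (using $\lambda\le k+\ell-5$ to make the arithmetic go through, just as in (\ref{eqbeta}) and the final line of Lemma~\ref{lem_l_upper_default}), contradicting $|A|\ge mp^{n-1}$. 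So the core of the proof reduces to ruling out the exceptional case.

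The hard part will therefore be the exceptional case $\ell=1$ with the nested-interval structure on $C_{m-1},\dots,C_{m+2}$. Here I would use the nesting relations $C_{m+1}\subset C_{m-1}$ and $C_{m+2}\subset C_m$ (shrinking by one from each side) together with $|C_i|=i$ to pin down the four intervals up to a common shift, say $C_{m-1}=[a,a+m-2]$, $C_m=[a',a'+m-1]$, $C_{m+1}=[a+1,a+m-3]$, $C_{m+2}=[a'+1,a'+m-2]$, where the ambiguity in how $C_m$ sits relative to $C_{m-1}$ is limited. Since $\ell=1$, the $(k,1)$-sum-free condition reads $kA\cap A=\emptyset$, and on the support level $kC_\omega\cap C_\omega=\emptyset$; combined with $C_{m+1},C_{m+2}$ being long intervals, $kC_{m+1}$ (or $k$ applied to the big interval $C_m$) is an interval of length $\ge k(m-3)+1$ that must avoid $C_\omega=C_{m+2}\supset$ an interval of length $m-2$, and a length count like the ones in Claims~\ref{clmcm}–\ref{clmi1} — $p\ge |kC_m|+|C_\omega|$ roughly $\ge k(m-1)+ (m-2)$, i.e.\ comparing against $p=(k+1)m+\lambda+2$ — forces $\lambda$ to be large, contradicting $\lambda\le k+\ell-5=k-4$. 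One must be slightly careful because $A$ lives in $\mathbb F_p^n$, not $\mathbb F_p$, but Fact~\ref{fact1}(1) transfers the needed disjointness $kC_{r_i}\cap C_{s_1}$ from the fiber sums $\sum B_{r_i}\cap B_{s_1}=\emptyset$, and one can further exploit that the fibers over these nested intervals have total mass essentially $mp$ to force most fibers to be all of $K$, reducing once more to the one-dimensional length estimate. I expect this length/counting step — carefully choosing which nested interval to dilate by $k$ and bookkeeping the at-most-constant overlaps coming from $\ell=1$ — to be the only real obstacle; everything else is the same block-partition machinery already used for Lemma~\ref{lem_l_upper_default}.
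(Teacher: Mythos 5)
Your first two steps match the paper exactly: reduce to $\omega=m+2$ via Lemma~\ref{lem_l_upper_default}, redo the block partition with the last block drawn from $[m-1,m+2]$ so that its index sum is only $\geq p+k+\ell-3$, and invoke Lemma~\ref{lem_improving_improving_coro} to get the bound $p+k+\ell-2$ on every block unless the exceptional configuration ($\ell=1$, nested intervals) occurs. That part is fine.

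The gap is in your treatment of the exceptional case, and it is not a bookkeeping issue: the length count you propose does not produce a contradiction. The only set whose dilate is known to avoid the support is $C_m$ (Lemma~\ref{lem_stru_of_Cm} gives $kC_m\cap C_{\omega}=\emptyset$ once one checks $k\beta_m\ge p+k-1$, which here follows from $3\beta_m\ge p$ and $k\ge 4$). With $C_m=[a,a+m-1]$ one gets $|kC_m|+|C_{m+2}|=(km-k+1)+(m+2)=(k+1)m-k+3$, while $p=(k+1)m+\lambda+2=(k+1)m+k-2$ since $\lambda=k-4$ in the exceptional case; the slack is $2k-5\ge 3$, so $\mathbb F_p$ comfortably accommodates both sets and no contradiction follows. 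Replacing $C_m$ by the larger interval $C_{m+2}=[a-1,a+m]$ would make the count work, but $kC_{m+2}\cap C_{m+2}=\emptyset$ is false: there genuinely are $(k,1)$-sums inside $C_{m+2}$ (this is precisely why $C_{m+2}$ need not be $(k,\ell)$-sum-free), so that route is closed. (You also have the nesting backwards: the exceptional case gives $C_m\subset C_{m+2}$ with both endpoints stripped, not $C_{m+2}\subset C_m$.)

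What the paper does instead is to \emph{use} the existence of such sums rather than their absence. Knowing the short gap $i\equiv a-k(a+m-1)-1\pmod p$ with $i\in[0,k-2]$, it writes down three explicit $(k,1)$-sums in $C_{m+2}$ whose terms lie in $[a-1,a+1]\cup[a+m-2,a+m]$ (Eq.~(\ref{eq3a})); Fact~\ref{fact1}(2) converts each into an upper bound on a weighted sum of fiber sizes, and adding the three bounds shows the six boundary fibers $A_{a-1},\dots,A_{a+1},A_{a+m-2},\dots,A_{a+m}$ have total size at most $(3p+3k-3)p^{n-2}$, whence $|A|<mp^{n-1}$. This fiber-size mechanism is the missing idea; a one-dimensional interval-length estimate alone cannot close the exceptional case.
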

\begin{proof} By Lemma~\ref{lem_l_upper_default}, we only need to prove that $\omega=m+2$ is not possible.

Let $\omega=m+2$. We follow the proof of Lemma~\ref{lem_l_upper_default} until the last part $S'_{m-1}$, which can not contain ${m+3}$ this time. But instead, we can define $S'_{m-1}$ to be the multi-set consisting of three elements in $[m-1,m+1]$ and $(k+\ell-3)$ copies of $m+2$. Then when $\lambda \leq k+\ell-5$,
\begin{equation}\label{eqsig}
\begin{aligned}
\sigma'_{m-1}= & (
m-1)+m+(m+1)+(k+\ell-3)(m+2)\\=&(k+\ell)m+2(k+\ell-3)\ge p+k+\ell-3,
  \end{aligned}
   \end{equation}
% Eq. (\ref{eqbeta}), where we can not use $\beta_{m+3}$ in Eq. (\ref{eqbeta}). But  instead, it suffices to show
%\begin{equation}\label{eqbeta1}
%     \beta_{m-1}+\beta_{m}+\beta_{m+1}+(k+\ell-3)\beta_{m+2}\le p+k+\ell-2.
%      \end{equation}
%Note that when $k+\ell\ge 5+\lambda$, $(m-1)+m+(m+1)+(k+\ell-3)(m+2)=(k+\ell)m+2(k+\ell-3)\ge p+k+\ell-3$,
where the last equality holds if and only if $\lambda = k+\ell-5$. By Lemma~\ref{lem_improving_improving_coro}, the proof goes exactly the same as in the proof of Lemma~\ref{lem_l_upper_default}, unless the following two conditions both hold:
%
%So if the two requirements $\ell=1$ and  $k+\ell=\lambda +5$, and $C_m$ and $C_{m+2}$ are both arithmetic progressions do not hold simultaneously, we can apply %Lemma~\ref{lem_improving_coro} or
%Lemma~\ref{lem_improving_improving_coro} to get (\ref{eqbeta1}). Then the contradiction can be deduced exactly the same as in Lemma~\ref{lem_ell_upper_bound}.
\begin{itemize}
 \item[(i)] $\ell=1$ and $\lambda = k+\ell-5=k-4$;
  \item[(ii)] All $C_i$, $i\in [m-1,m+2]$ are intervals. Moreover, for any $i\in [m+1, m+2]$, if $C_i$ can be expressed as $[a, b]$ for some $a$ and $b$, then $C_{i-2}=[a+1, b-1]$.
%
%  Subsets $C_m$ and $C_{m+2}$ of $\mathbb F_p$ are both arithmetic progressions. Moreover, if  $C_m=[a, a+m-1]$ for some $a\in \mathbb F_p$ up to isomorphism, then $C_{m+2}$ must be $[a-1, a+m]$ accordingly.

% \item[(ii)] Subsets $C_m$ and $C_{m+2}$ of $\mathbb F_p$ are both arithmetic progressions. Moreover, if we set $C_m=[a, a+m-1]$ for some $a\in \mathbb F_p$ after doing a homomorphism over $A$, then $C_{m+2}$ must be$[a-1, a+m]$ accrodingly.
 \end{itemize}
Next, we only need to show (i) and (ii) cannot hold simultaneously.

Suppose on the contrary, then $\ell=1$ and $k\ge 4$. Since $|A|\geq mp^{n-1}$, $mp\le \sum_{i=1}^{m+2}\beta_i\le (m-1)p+3\beta_m$.
That is, $3\beta_m\ge p$.
Thus $k\beta_m\ge 4\beta_m\ge \frac43p\ge p+k-1$, where the last inequality holds because $m\ge 3$. Then by Lemma~\ref{lem_stru_of_Cm}, $C_m$ is $(k, 1)$-sum-free, which means $kC_m\cap C_m=\emptyset$, and $\mathbb F_p\backslash(kC_m\cup C_m)=p-m-(km-k+1)=k+\lambda+1=2k-3$. Denote $C_m=[a, a+m-1]$ for some $a$, then $kC_m=[ka,k(a+m-1)]$. Assume that the interval $[k(a+m-1)+1, a-1]$, which is one of the two intervals connecting $C_m$ and $kC_m$, has a shorter length $i\in [0, k-2]$. Then $i\equiv a-k(a+m-1)-1 \pmod p$.
% Let $i=a-k(a+m-1)-1$, which is, over $\mathbb F_p$, the length of interval $[k(a+m-1)+1, a-1]$: one of the two intervals between $C_m$ and $kC_m$. Then $i\in [0, 2k-3]$, but by symmetry, we can further assume $[k(a+m-1)+1, a-1]$ has the shorter length among those two, and hence $i\in [0, k-2]$.
 % That is, the circle of $\mathbb F_p$ can be partitioned into intervals $kC_m, X, C_m, Y$ (not necessarily non-empty) with $|X|+|Y|=2k-3$. Suppose $|X|=i\in [0, k-2]$, {\color{red}because otherwise we can consider  $-A$.}
 %
% From (ii), $C_m$ is a $(k, 1)$-sum-free arithmetic progression. Moreover, if under some isomorphism we write $C_m=[a, a+m-1]$ for some $a\in \mathbb F_p$, from (ii) $C_{m+2}=[a-1, a+m]$. As $C_m$ is $(k, 1)$-sum-free, $kC_m\cap C_m=\emptyset$, and $\mathbb F_p\backslash(C_m\cup kC_m)=p-m-(km-k+1)=k+\lambda+1=2k-3$. Recall the definition of $(i, j)$-case for intervals. Here $C_m$ must be of $(i, 2k-3-i)$ case for some $i\in [0, 2k-3]$. For the symmetry we only need to consider when $i\le k-2$, because otherwise we can multiply $-1$ to $A$ to go to $i\le k-2$ case.
%
%Suppose that $C_m\cup kC_m$ is $(i, 2k-3-i)$ case for some $i\in [0, k-2]$,
 %Then $k(a+m-1)+i+1=a$.

Next, we will deduce a contradiction to the size of $A$.
As $i\le k-2$, we consider the following three equations over $\mathbb F_p$ with $k$ terms on the left and one term on the right, and all terms are from $[a-1, a+1]\cup[a+m-2, a+m]$.
\begin{equation}\label{eq3a}
\begin{aligned}
(i+1)(a+m)+(k-i-2)(a+m-1)+(a+m-2)&=a-1;\\
(i+1)(a+m)+(k-i-1)(a+m-1)&=a;\\
(i+2)(a+m)+(k-i-2)(a+m-1)&=a+1.
\end{aligned}
\end{equation}
%As $i\le k-2$, the coefficients  are all non-negative integers.
By (ii), $C_{m+2}=[a-1,a+m]$. As $m\ge 5$, $[a-1, a+1]$ and $[a+m-2, a+m]$ are two disjoint subsets of $C_{m+2}$. %Since $C_{m+2}=[a-1,a+m]=\{b_1,\ldots,b_{m+2}\}$, we could name $b_{r_j}=j$ for each $j\in C_{m+2}$ with $r_j\in [m+2]$. Then $|B_{r_j}|=|A_j|$ for each $j\in C_{m+2}$.
From that $A$ is $(k, 1)$-sum-free, by applying Fact~\ref{fact1}, %Fact~\ref{fact1} and Lemma~\ref{lem_common_k} to $A_{j}$,Corollary~\ref{cor:sumbound}
each equation in (\ref{eq3a}) gives an inequality:
\begin{equation}\label{eq3b}
\begin{aligned}
(i+1)|A_{a+m}|+(k-i-2)|A_{a+m-1}|+|A_{a+m-2}|+|A_{a-1}|&\le (p+k-1)p^{n-2}; \\ (i+1)|A_{a+m}|+(k-i-1)|A_{a+m-1}|+|A_{a}|&\le (p+k-1)p^{n-2};\\ (i+2)|A_{a+m}|+(k-i-2)|A_{i+m-1}|+|A_{a+1}|&\le (p+k-1)p^{n-2}.
\end{aligned}
\end{equation}
Combining all those three together, we have $|A_{a-1}|+|A_a|+|A_{a+1}|+|A_{a+m-2}|+|A_{a+m-1}|+|A_{a+m}|\le (3p+3k-3)p^{n-2}$. Hence we get
\[
\begin{aligned}
|A|=&\sum_{i=a-1}^{a+m}|A_i|\\
 =&\sum_{i=a+2}^{a+m-3}|A_i|+(|A_{a-1}|+|A_a|+|A_{a+1}|+|A_{a+m-2}|+|A_{a+m-1}|+|A_{a+m}|)\\
\le &(m-4)p^{n-1}+ (3p+3k-3)p^{n-2}<mp^{n-1},
\end{aligned}
\]
which leads to a contradiction when $m\ge 5$.

As a consequence, (i) and (ii) cannot hold simultaneously and the case $\omega=m+2$ cannot happen.
\end{proof}
Finally, we show that if the weight $\omega$ of a large $(k, \ell)$-sum-free set $A$ under some decomposition $(v,K)$ is big, then the distribution of $A$ is nearly balanced, that is, the size of each part $A_i$ won't differ too much.
%{\color{red}Note that results in this section don't require $\lambda\in [0, k-2]$ except the one below.}

\begin{lemma}\label{lem_four_coeff_small}
Let $\lambda\in [0, k+\ell-3]$, $k+\ell\ge 4$ and $m\ge 9+\lambda$.
Suppose  $A\subset \mathbb F_p^n$ is a $(k, \ell)$-sum-free subset of size at least $mp^{n-1}$ and with $\omega> p-\theta$ under a decomposition $(v, K)$.
Then there exists a nonnegative integer $u$ sastisfying $\sum_{i\in \mathbb F_p}||A_i|-u|\le (2+\theta)p^{n-1}$.
\end{lemma}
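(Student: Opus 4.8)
The plan is to exploit the near-balancedness of the distribution $(\beta_1,\dots,\beta_p)$ forced by the weight being large, $\omega > p-\theta$. Concretely, I would take $u$ to be (essentially) the median block size, say the value $|B_{m+1}|$ in the sorted order, so that $\beta_{m+1}$ plays the role of $u/p^{n-2}$; the claim then becomes $\sum_{i\in\mathbb F_p}|\beta_i-\beta_{m+1}|\le (2+\theta)p$ after dividing by $p^{n-2}$. The tail above $\beta_{m+1}$ has at most $m$ terms, and the tail below has at most $p-m$ terms, and we must control both. For the \emph{upper tail}, i.e.\ $\sum_{i\le m}(\beta_i-\beta_{m+1})$, I would cover the index set $[p-\theta]=[(m-1)(k+\ell)]$ by the same $m-1$ blocks $S_1,\dots,S_{m-1}$ of size $k+\ell$ used in the proofs of Lemmas~\ref{lem_l_upper_default} and \ref{lem_ell_upper_bound}, with each $\sigma_i$ large enough to apply Lemma~\ref{coro_beta_sum} (or Lemma~\ref{lem_improving_coro} for the last block). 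This yields $\sum_{i=1}^{(m-1)(k+\ell)}\beta_i\le(p+k+\ell-2)(m-1)$, hence $\sum_{i=1}^{p-\theta}\beta_i$ is at most $(m-1)p+(k+\ell-2)(m-1)$; combined with the \emph{lower bound} $\sum_{i=1}^{\omega}\beta_i=|A|/p^{n-2}\ge mp$, this squeezes $\sum_{i>m-1}\beta_i \ge p - (k+\ell-2)(m-1)$ and, more usefully, forces the first $m$ values $\beta_1,\dots,\beta_m$ to be large (close to $p$), while the values $\beta_i$ for $i$ just above $m$ can only be slightly below.

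To make this quantitative, I would argue that each of $\beta_1,\dots,\beta_m$ is at least $p - (2+\theta)$ or so: if some $\beta_{j}$ with $j\le m$ dropped well below $p$, then $\sum_{i=1}^{m}\beta_i$ would be too small, and since $\sum_{i=1}^{m}\beta_i$ must compensate for the deficiency $\sum_{i>m}(p-\beta_i)$ coming from the remaining (at most) $\omega-m < p-m$ parts each at most $p$, a counting/averaging against $mp\le\sum_{i=1}^\omega\beta_i$ pins down $\beta_i\approx p$ for all $i\le m$. Dually, using $\sum_{i=1}^{p-\theta}\beta_i\le(p+k+\ell-2)(m-1)$ together with $\sum_{i=1}^\omega\beta_i\ge mp$ shows $\sum_{i>m}\beta_i$ is bounded by roughly $p + (k+\ell-2)m$, so only a bounded "budget" of deficiency $\sum_{i}(p-\beta_i)$ over the bottom $p-m$ indices plus a bounded "surplus" on the top $m$ indices can occur. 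Setting $u=$ (the nearest integer multiple of $p^{n-2}$ to) $|B_m|$, the two tails $\sum_{i\le m}(|B_i|-u)_+ \le \big(\text{surplus}\big)$ and $\sum_{i\ge m}(u-|B_i|)_+\le \big(\text{deficiency}\big)$ together sum to at most $(2+\theta)p^{n-1}$ after bookkeeping; the constants $2+\theta=k+\ell+\lambda+4$ fall out of the two "$+(k+\ell-2)(m-1)$" slacks in the block estimate plus the two endpoint blocks $S_{m-1}$ and the single overlap index $m$ being counted on both sides.

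I expect the main obstacle to be the \emph{bookkeeping at the boundary}, i.e.\ controlling the indices $i$ in the narrow window around $m$ where $\beta_i$ transitions from $\approx p$ down to small, and choosing $u$ so that the overlap between the two one-sided tails is not double-counted — this is exactly why one gets the extra additive constant $2$ in $(2+\theta)$ rather than $\theta$. A secondary subtlety is that the block decomposition needs $\omega> p-\theta$ precisely so that $[p-\theta]\subseteq[\omega]$ and every $S_i$ (or its padded version $S_i'$ with copies of $\omega$, as in Lemma~\ref{lem_l_upper_default}) lies in the support; the hypothesis $m\ge 9+\lambda$ is what guarantees the block sums $\sigma_i$ and $\sigma_i'$ clear the thresholds $p+k+\ell-1$ and $p+k+\ell-2$ needed to invoke Lemmas~\ref{coro_beta_sum} and \ref{lem_improving_coro} — so I would check these inequalities explicitly for the worst block $S_{m-1}$ first, as that is where the weakest threshold $p+k+\ell-2$ is used and where the constant $9+\lambda$ originates. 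Once the block estimate and the size lower bound $|A|\ge mp^{n-1}$ are combined, producing $u$ and verifying $\sum_i \big||A_i|-u\big|\le(2+\theta)p^{n-1}$ is a finite linear-programming-style computation.
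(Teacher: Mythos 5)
Your proposal has a genuine gap at the heart of the argument, on two counts. First, the choice of $u$: $|B_{m+1}|$ is the $(m+1)$-st largest of the $p$ parts, not the median (that is $|B_{(p+1)/2}|$, which is what the paper uses and which is of order $mp^{n-2}\approx p^{n-1}/(k+\ell)$). If, as you claim, $\beta_1,\dots,\beta_m$ were all close to $p$ and you took $u\approx |B_m|\approx p^{n-1}$, the lower tail alone would give $\sum_{i>m}(u-|A_i|)\approx (p-m)p^{n-1}-\sum_{i>m}|A_i|=\Theta(p^{n})$, which cannot be absorbed into $(2+\theta)p^{n-1}$; in fact such a top-heavy profile would make $\sum_i\bigl||A_i|-u\bigr|=\Theta(p^n)$ for \emph{every} $u$, so your intermediate claim ``$\beta_i\ge p-(2+\theta)$ for $i\le m$'' contradicts the lemma itself and is not implied by the constraints you invoke (the inequality $\sum_{i\le\omega}\beta_i\ge mp$ with $\omega\approx p$ terms pins down nothing of the sort; the average $\beta_i$ is only about $m$).

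Second, and more fundamentally, the constraint system you restrict yourself to — the $m-1$ standard blocks $S_1,\dots,S_{m-1}$ of Lemma~\ref{lem_l_upper_default}, in which each index $i\le m-1$ appears exactly once paired with indices near the top of $[p-\theta]$, together with $|A|\ge mp^{n-1}$ — is provably insufficient. Take $\beta_i=x$ for $i\le m-1$ and $\beta_i=y$ for $m\le i\le p$, with $y=(2m+\lambda+k+\ell)/\theta$ and $x=p+k+\ell-2-(k+\ell-1)y$. Every block $S_i$ then meets its bound $p+k+\ell-2$ with equality, $\sum_i\beta_i=mp$, $\omega=p$, and $x-y\ge 2m-O(1)$, so $\min_u\sum_i|\beta_i-u|=(m-1)(x-y)=\Theta(m^2)\gg(2+\theta)p$. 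Ruling out such profiles requires constraints in which the large parts appear with higher multiplicity and are paired with indices near the median: this is exactly Claim~\ref{cl:si}, which partitions a weighted multiset (three copies of each $i\le(p-1)/2$, two of $(p+1)/2$, one of each $i\in[(p+3)/2,p-\theta]$) into $2p-\theta$ blocks each clearing the threshold $p+k+\ell-1$, via a pairing $i\leftrightarrow\frac{p+1}{2}-i$ for $k+\ell\ge5$ and a separate construction for $k+\ell=4$ (where even your block $S_{m-1}$ fails the threshold, since $\sigma_{m-1}=4m+2<p+k+\ell-1$). That covering is the missing idea; without it your final ``linear-programming-style computation'' has a feasible point violating the conclusion.
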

\begin{proof}
We choose $u=|B_{\frac{p+1}2}|$. It suffices to show that $\sum_{i\in [p]}|\beta_i-\beta_{\frac{p+1}2}|\le (2+\theta)p$.
 Notice that
\begin{equation}
\begin{aligned}\label{eq_4}
\sum_{i\in [p]}|\beta_i-\beta_{\frac{p+1}2}|&= \beta_1+\cdots+\beta_{\frac{p-1}2}-(\beta_{\frac{p+1}2+1}+\cdots+ \beta_p)\\
 &=3(\beta_1+\cdots+\beta_{\frac{p-1}2})+2\beta_{\frac{p+1}2} +(\beta_{\frac{p+1}2+1}+\cdots+ \beta_p)-2|A|\slash p^{n-2}.
% \\
% & \le 3(\beta_1+\cdots+\beta_{\frac{p-1}2})+2\beta_{\frac{p+1}2} +(\beta_{\frac{p+1}2+1}+\cdots+ \beta_{p-\theta})+\theta p-2mp.
\end{aligned}
\end{equation}
From that $m\ge 4$, we have $\frac{p+1}2+1<p-\theta$. Together with $|A|\ge mp^{n-1}$, we have
\begin{equation}\label{eq_beta}
\begin{aligned}
\sum_{i\in [p]}|\beta_i-\beta_{\frac{p+1}2}| &\le 3(\beta_1+\cdots+\beta_{\frac{p-1}2})+2\beta_{\frac{p+1}2} +(\beta_{\frac{p+1}2+1}+\cdots+ \beta_{p-\theta})+\theta p-2mp\\
&\triangleq \beta+\theta p-2mp.
\end{aligned}
\end{equation}
 %the third line of Eq. (\ref{eq_4}) is valid.
%The last inequality is valid by the fact $\frac{p+1}2+1<p-\theta$  since $\theta\le 3(k+\ell+\lambda)$ and $m\ge 4+\lambda$.
Next we try to upper bound $\beta$ by applying Lemma~\ref{coro_beta_sum}.
Let $\mathcal{S}$ be the multiset by collecting $3(k+\ell)$ copies of each $i\in [\frac{p-1}2]$, $2(k+\ell)$ copies of ${\frac{p+1}2}$, and $(k+\ell)$ copies of each $i\in [\frac{p+1}2+1, p-\theta]$. Then $\mathcal{S}$ contains  $(2p-\theta)(k+\ell)$ numbers.
\begin{claim}\label{cl:si}
    When $k+\ell\ge 4$ and $m\ge 9+\lambda$, there exists a partition of  $\mathcal{S}$ into parts $S_i, i\in [2p-\theta]$, such that each $S_i$ has $k+\ell$ numbers, and the sum $\sigma_i$ of all numbers in $S_i$  is at least $p+k+\ell-1$.
%
%    there exists a partition of $\mathcal B$ defined in the proof of Lemma~\ref{lem_four_coeff_small} into beta value multisets $S_1, \ldots, S_{2p-\theta}$, such that each $S_i$ has $k+\ell$ elements with subscript sum at least $p+k+\ell-1$.
\end{claim}

%To apply Lemma~\ref{coro_beta_sum}, we need to partition $S$ into parts $S_i, i\in [2p-\theta]$, such that each $S_i$ has size $k+\ell$  and the sum $\sigma_i$ of numbers in $S_i$,  is at least $p+k+\ell-1$.

%Let $\mathcal{B}$ be the multiset by collecting $3(k+\ell)$ copies of $\beta_i$ for $i\in [\frac{p-1}2]$, $2(k+\ell)$-copies of $\beta_{\frac{p+1}2}$, and $(k+\ell)$-copies of $\beta_j$ for all $j\in [\frac{p+1}2+1, p-\theta]$. Then $\mathcal{B}$ has size $(2p-\theta)(k+\ell)$. To apply Lemma~\ref{coro_beta_sum}, we need to partition $\mathcal{B}$ into parts $S_i, i\in [2p-\theta]$, such that each $S_i$ has size $k+\ell$  and the subscript sum of beta values in $S_i$,  is at least $p+k+\ell-1$. %{\color{red}(may change $P_i$ to $S_i$ as in Lemma III.5.)}

Suppose Claim~\ref{cl:si} is true, by Lemma~\ref{coro_beta_sum},
\[\beta=\frac{1}{k+\ell}\sum_{j\in \mathcal{S}}\beta_j=\frac{1}{k+\ell}\sum_{i\in [2p-\theta]}\sum_{j\in S_i}\beta_j\leq \frac{(2p-\theta)(p+k+\ell-2)}{k+\ell}.\]
%
%As long as we can find such a partition $\{S_i, i\in [2p-\theta]\}$ of $\mathcal B$,
%from Lemma~\ref{coro_beta_sum}, the beta value sum in each $S_i$ is at most $p+k+\ell-2$.
%Summing over all $S_i$, we get
%\[3(\beta_1+\cdots+\beta_{\frac{p-1}2})+2\beta_{\frac{p+1}2} +(\beta_{\frac{p+1}2+1}+\cdots+ \beta_{p-\theta})\le \frac{(2p-\theta)(p+k+\ell-2)}{k+\ell}.\]
Together with Eq.~(\ref{eq_beta}),
\[
\begin{aligned}
\sum_{i\in [p]}|\beta_i-\beta_{\frac{p+1}2}|& \le \frac{(2p-\theta)(p+k+\ell-2)}{k+\ell}+\theta p-2mp\\
& =\frac{(2p-\theta)[(k+\ell)m+\lambda]}{k+\ell}+2p+\theta p-\theta-2mp\\
& =(2+\theta)p- (m+1)\theta + \frac{(2p-\theta)\lambda}{k+\ell}.
\end{aligned}
\]
As $\theta\ge 2\lambda$ and $k+\ell>2+\lambda$, $(m+1)\theta(k+\ell)\ge 2\lambda p\ge(2p-\theta)\lambda$.
Hence we can get $\sum_{i\in [p]}|\beta_i-\beta_{\frac{p+1}2}|\le (2+\theta)p$. So it is left to prove
Claim~\ref{cl:si}.
%The only thing left to complete the proof is to show the existence of such a partition of $\mathcal B$ when $k+\ell\ge 4$, and we defer to the next claim.
%}
%\end{proof}

%{\color{blue}
%\begin{claim}
%    When $k+\ell\ge 4$ and $m\ge 8+\lambda$, there exists a partition of $\mathcal B$ defined in the proof of Lemma~\ref{lem_four_coeff_small} into beta value multisets $S_1, \ldots, S_{2p-\theta}$, such that each $S_i$ has $k+\ell$ elements with subscript sum at least $p+k+\ell-1$.
%\end{claim}
%\begin{proof}

\vspace{0.2cm}
\textbf{Proof of Claim~\ref{cl:si}.} We construct each $S_i$ explicitly by considering two cases.

When $k+\ell\ge 5$,
\[\frac{p+1}4= \frac{m(k+\ell)+\lambda+3}4 \ge m+2 \geq \frac{p+k+\ell-1}{k+\ell}.\]
So it suffices to require the average number in each $S_i$ to be at least $\frac{p+1}4$.
Let $\mathcal{S}_1$ be the collection of $3(k+\ell)$ copies of  $i\in [\frac{p-1}2]$.
First, we partition $\mathcal{S}_1$ into $\lfloor\frac{3(k+\ell)(p-1)}4\rfloor$ pairs by pairing each $i$ with ${\frac{p+1}2-i}$, where one copy of ${\frac{p+1}4}$ may be unpaired.
%Denote those pairs as $R_1, \ldots, R_\rho$.
Then the average value in each pair is exactly $\frac{p+1}4$, and all the unpaired numbers in $\mathcal{S}$ are at least $\frac{p+1}4$. By evenly distributing the pairs and unpaired numbers into  $S_i$, $i\in [2p-\theta]$, we can get a partition with the required condition provided that the number of unpaired elements is at least $2p-\theta$. So it suffices to show that $|\mathcal{S}\setminus \mathcal{S}_1|=(2+\frac{p-1}2-\theta)(k+\ell)\geq 2p-\theta$.
In fact,
\[
\begin{aligned}
& (2+\frac{p-1}2-\theta)(k+\ell)-\left(2p-\theta\right)\\
=&\frac{p-1}2(k+\ell-4)-(k+\ell-1)(k+\ell+\lambda)>0.
\end{aligned}
\]Here, the last inequality holds since when
 $m\ge 9+\lambda$ and $k+\ell\ge 5$, we always have
 \begin{equation}\label{eqver}
   \frac{k+\ell-4}{k+\ell-1}>\frac{2(k+\ell+\lambda)}{(k+\ell)m+\lambda+1}.
 \end{equation}
 For verification of Eq. (\ref{eqver}), if $k+\ell\ge 6$,
\[\frac{k+\ell-4}{k+\ell-1}\ge \frac{2}{5}> \frac{2(k+\ell+\lambda)}{9(k+\ell)+\lambda(k+\ell+1)+1}\geq\frac{2(k+\ell+\lambda)}{(k+\ell)m+\lambda+1};\] and if $k+\ell= 5$,  then $\lambda\leq k+\ell-3\leq 2$, and hence
\[\frac{k+\ell-4}{k+\ell-1}=\frac{1}{4}>\frac{2\lambda+10}{6\lambda+46}\geq\frac{2(k+\ell+\lambda)}{(k+\ell)m+\lambda+1}.\]

When $k+\ell=4$, $\frac{p+1}4$ may be smaller than $m+2$ but $k=3$ is fixed.
As $p=4m+2+\lambda$ is a prime and $\lambda\in [0, k+\ell-3]$, we have $\lambda=1$ and hence $p=4m+3$,  $\frac{p-1}2=2m+1$, $\theta=k+\ell+2+\lambda=7$, and $p+k+\ell-1=4m+6$.
We construct $S_i, i\in [2p-\theta]$ from $S_1$ recursively, by letting each $S_i$ contain two of the  smallest numbers and two of the largest numbers in $\mathcal{S}\backslash\uplus_{j=1}^{i-1}S_i$. For example, both $S_1$ and $S_2$ consist of $1, 1, {4m-4}, {4m-4}$, but $S_3$ consists of $1, 1, {4m-5}, {4m-5}$.
By the construction,  when $i\le 2(p-\theta-\frac{p+1}{2})+4=4m-8$, at least two numbers in $S_i$ are out of $\mathcal{S}_1$, and hence $\sigma_i\geq 2(2m+2)+2\cdot1=4m+6$.
When $4m-8<i\le 2p-\theta$,  $S_i$ is contained in $\mathcal{S}_1$, with $\sigma_i$ non-decreasing with $i$. Hence, $\sigma_i\geq \sigma_{4m-7}=2(2m+1)+2\lceil\frac{4m-7}{6}\rceil \geq 4m+6$ as $m\ge 4$.
\end{proof}

\section{A proof of Theorem~\ref{thm_with_fourier}}\label{sec_proof_of_main_theorem}

In this section, we prove Theorem~\ref{thm_with_fourier}. We split into two cases depending on whether the weight $\omega(A)$ is big or small. Recall that $k>\ell\ge 1$ are integers, $p=m(k+\ell)+\lambda+2$, and
$\lambda\in[0, k+\ell-3]$.
%from now on we assume that
%\begin{equation}\label{eqlambda}
% \lambda\in [0, k-2]
%\end{equation} as it is required in Lemmas~\ref{lem_not_contained}, \ref{lem_m+1_interval} and ~\ref{lem_four_coeff_small}.

\subsection{Small weight}
By Lemma~\ref{lem_ell_upper_bound}, we assume that $\omega\leq m+1.$
As $C_m$ usually have a short $(k, \ell)$-sum-free interval cover in $\mathbb F_p$ by Lemma~\ref{lem_stru_of_Cm}, the structures of $C_m$ are limited.
We apply different analysis when $C_m$ behaves differently. The following fact analyzes the simplest cases. %when $C_m$ is an interval

\begin{fact}\label{factt1}
  Let   $k+\ell\ge 4$ and $m\ge 1$.
Let $A\subset \mathbb F_p^n$ be a  $(k, \ell)$-sum-free set of size at least $mp^{n-1}$.
%{\color{red}(both $p$ and $q$ are used, correct them througout).}{\color{blue}(ok.)}
If there exists a decomposition $(v, K)$ such that
\begin{itemize}
  \item[(1)] $\omega=m+1$ and $C_{m+1}$ is $(k, \ell)$-sum-free, then $A$ is trivial; and
  \item[(2)] $\omega=m$ and $C_m$ is  an interval, then  $A$ is either trivial or of type $1$.
\end{itemize}
%$A$ is either trivial, or
\end{fact}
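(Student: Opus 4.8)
The plan is to translate both hypotheses into statements about the one-dimensional support $C_\omega = Supp(A)$, invoke the classification of maximum $(k,\ell)$-sum-free sets in $\mathbb F_p$ (Lemma~\ref{lem_m+1_interval}), and then reconstruct $A$ from its support together with the size constraint $|A|\ge mp^{n-1}$.

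For part~(1), suppose $\omega = m+1$ and $C_{m+1}$ is $(k,\ell)$-sum-free in $\mathbb F_p$. Since $|C_{m+1}| = \omega = m+1$ and $m+1$ is the maximum possible size by Lemma~\ref{lem_m+1_interval}, the set $C_{m+1} = Supp(A)$ must be, up to isomorphism, one of the extremal intervals $[a_j, a_j+m]$ appearing in Theorem~\ref{thm_k_l_max_structure}. On the other hand, from $|A|\ge mp^{n-1}$ and $\omega = m+1$ we get $\sum_{i=1}^{m+1}|A_i| \ge mp^{n-1}$, i.e.\ $\sum_{i=1}^{m+1}\beta_i \ge mp$, where each $\beta_i \le p$. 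First I would argue, using the fact that $(k,\ell)$-sum-freeness of $A$ restricts sums of the $\beta_i$'s: because $Supp(A)$ is an interval $[a_j,a_j+m]$, the relation $k\cdot a_j' = \ell\cdot a_j''$ can only be ``almost'' satisfied inside $Supp(A)$, and by Fact~\ref{fact1} applied to any solution $\sum b_{r_i} = \sum b_{s_j}$ with indices in $[\omega]$ we bound the corresponding sum of $|B_{r_i}|$'s. Pushing this through (as in the proof of Lemma~\ref{lem_four_coeff_small}) forces all but at most a few of the $\beta_i$ to equal $p$; combined with $\sum\beta_i \ge mp$ over $m+1$ indices, I expect every $\beta_i = p$ except possibly one which can then be arbitrary—but an easy case check using $kA\cap\ell A=\emptyset$ rules out any deficiency at a non-endpoint, so in fact $A_i = K$ for all $i\in Supp(A)$. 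Hence $A = Supp(A)\cdot v + K$ is the cuboid $[a_j,a_j+m]\times\mathbb F_p^{n-1}$ up to isomorphism, which is one of the extremal cuboids $A_{k,\ell,p,j}$; in particular $A$ is trivial.

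For part~(2), suppose $\omega = m$ and $C_m$ is an interval, say $C_m = [a, a+m-1]$ up to isomorphism. Since $C_\omega = C_m = Supp(A)$ is $(k,\ell)$-sum-free (this follows because $A$ is $(k,\ell)$-sum-free: any additive relation among the $b_{r_i}$'s would, via Fact~\ref{fact1}(1), force disjoint sumsets $\sum B_{r_i}$ and $\sum B_{s_j}$, yet if the relation held in $Supp(A)$ with all parts nonempty this would contradict nonemptiness—more precisely one uses that $|A|\ge mp^{n-1}$ with only $m$ nonempty parts forces $A_i = K$ for every $i\in Supp(A)$, so $A = [a,a+m-1]\times\mathbb F_p^{n-1}$ exactly). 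Then $kA = [ka, ka+k(m-1)]\times\mathbb F_p^{n-1}$ and $\ell A = [\ell a, \ell a + \ell(m-1)]\times\mathbb F_p^{n-1}$, and the $(k,\ell)$-sum-free condition becomes exactly the disjointness of these two intervals in $\mathbb F_p$, i.e.\ the quantity $\ell a - k(a+m-1)$ lies outside $[-\,(\ell(m-1)+1), k(m-1)]$ in the appropriate cyclic sense—unwinding this gives precisely the condition that $\ell a - k(a+m-1)$ belongs to $[1,\ell]\cup[\lambda+\ell+2,k]$, which is the defining property of type~1, or else the interval is a sub-interval of an extremal cuboid, making $A$ trivial. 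I would carry out this unwinding by counting: the complement $\mathbb F_p\setminus(kA^\star\cup\ell A^\star)$ has size $p - (k(m-1)+1) - (\ell(m-1)+1) = (k+\ell) + \lambda = \theta - 2$, and the two gap-intervals between the two arithmetic-progression blocks have lengths summing to this value; sorting out which splittings keep $A$ outside every extremal cuboid yields exactly the two ranges in Definition~\ref{cons_12}(1).

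The main obstacle I anticipate is the step, common to both parts, of showing that $|A|\ge mp^{n-1}$ together with small weight forces $A_i = K$ for \emph{every} $i$ in the support (equivalently $\beta_i = p$ for all $i\in Supp(A)$), rather than merely ``most'' of them. A naive pigeonhole only gives an average statement. The fix is to use the $(k,\ell)$-sum-free structure: for an interval support, any deficiency $\beta_i < p$ at some index $i$ can be propagated via a carefully chosen additive relation $\sum b_{r_j} = \sum b_{s_j}$ (using that $a+m-1$ and $a$ and interior points generate enough relations modulo $p$, since $m$ is large relative to $k+\ell$) to force deficiencies elsewhere, eventually contradicting $\sum\beta_i \ge mp$ or $\sum\beta_i \ge (m-1)p$—this is the analytic heart, analogous to the counting in Lemmas~\ref{lem_ell_upper_bound} and~\ref{lem_four_coeff_small}. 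Once that rigidity is established, the rest is the bookkeeping of intervals in $\mathbb F_p$ described above.
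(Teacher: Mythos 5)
Your overall strategy matches the paper's: reduce to the one-dimensional support, apply Lemma~\ref{lem_m+1_interval}, and in part (2) classify the two gaps between $kC_m$ and $\ell C_m$ on the circle $\mathbb F_p$. Part (2) as you sketch it is essentially the paper's proof, and your gap-counting ($i+j=k+\ell+\lambda$, with triviality unless $i<\ell$ or $j<k$) is the right bookkeeping, even though you leave the final case split ("sorting out which splittings...") unexecuted.

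However, your treatment of part (1) contains a claim that is both unnecessary and false. You assert that $|A|\ge mp^{n-1}$ with $\omega=m+1$ forces $A_i=K$ for every $i\in Supp(A)$, so that $A$ \emph{equals} an extremal cuboid. This is not true: starting from $A_{k,\ell,p,j}$ and deleting $p^{n-1}$ points while keeping all $m+1$ fibers nonempty yields a $(k,\ell)$-sum-free set of size $mp^{n-1}$ and weight $m+1$ with many non-full fibers. The point you are missing is that "trivial" means isomorphic to a \emph{subset} of an extremal cuboid. Once $C_{m+1}=Supp(A)$ is $(k,\ell)$-sum-free of size $m+1$, Lemma~\ref{lem_m+1_interval} identifies it with some $[a_j,a_j+m]$ up to isomorphism, and then $A\subseteq Supp(A)\cdot v+K=A_{k,\ell,p,j}$ finishes part (1) in one line; no rigidity of the fibers is needed. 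Relatedly, the "main obstacle" you describe is a non-issue in part (2) as well: with $\omega=m$ there are exactly $m$ nonempty fibers each of size at most $p^{n-1}$, so $|A|\ge mp^{n-1}$ forces $A_i=K$ for all $i\in Supp(A)$ by plain pigeonhole, with no propagation of deficiencies via additive relations. The elaborate analytic machinery you invoke (Lemmas~\ref{lem_ell_upper_bound} and~\ref{lem_four_coeff_small}) belongs to the later stages of the argument, not to this fact.
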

\begin{proof} When $\omega=m+1$,
if $C_{m+1}\subseteq \mathbb F_p$ is $(k, \ell)$-sum-free, then by Lemma~\ref{lem_m+1_interval}, $C_{m+1}$ is already one of the $\lceil\frac{\lambda+1}{2}\rceil$ standard $(k, \ell)$-sum-free intervals, and hence $A$ is contained in an extremal cuboid.

When $\omega=m$, $A=C_m\times K$.
By Lemmas~\ref{lem_contain_cret} and \ref{lem_m+1_interval}, $A$ is  trivial if and only if $C_m$ is contained in a $(k, \ell)$-sum-free interval of length $m+1$. Suppose $C_m=[a,a+m-1]$ for some $a\in \mathbb F_p$ and  $A$ is nontrivial. Then  both $[a-1,a+m-1]$ and $[a,a+m]$ can not be $(k, \ell)$-sum-free.  Let $i$ and $j$ denote the lengths of the two intervals connecting  $kC_m$ and $\ell C_m$ on the circle formed by $\mathbb F_p$.
More specifically, let $i$ be the length of the interval connecting the tail of $kC_m$, which is $k(a+m-1)$, and the head of $\ell C_m$, which is $\ell a$; and let $j$ be the length of the interval connecting the tail of $\ell C_m$, and the head of $k C_m$. Then $i\equiv \ell a-k(a+m-1)-1 \pmod p$ ($i$ can be zero), and
$i+j=p-|kC_m|-|\ell C_m|=k+\ell +\lambda$.  By symmetry, we can assume that $0\leq i\leq j\leq k+\ell +\lambda$. If $i\geq \ell$ and $j\geq k$, then it can be verified that at least one of the above two intervals is $(k, \ell)$-sum-free, leading to a contradiction. So either $i<\ell$ or $j<k$ (that is, $\ell+\lambda<i<k$). From $i\equiv \ell a-k(a+m-1)-1 \pmod p$, the form of $A$ here is exactly of type 1.
    %Then $i=ka-\ell(a+m-1)-1$ and $j=\ell a-k(a+m-1)-1$. If $\max\{i, j\}\geq k$ and $\min\{i, j\}\geq \ell$ both hold, then it can be verified that  at least one of the above two intervals is $(k, \ell)$-sum-free,  a contradiction. By symmetry, assume that $i\geq j$. So either $i<k$ or $j<\ell$, which imply that $A$ is  of type 1.
\end{proof}

\begin{theorem}\label{thm_without_fourier}
Let $\lambda\in [0, k+\ell-5]$ and $m\ge \max\{7,k+\ell-1,5T(\frac1{k+\ell})\}$.
Let $A\subset \mathbb F_p^n$ be a nontrivial $(k, \ell)$-sum-free set of size at least $mp^{n-1}$.
%{\color{red}(both $p$ and $q$ are used, correct them througout).}{\color{blue}(ok.)}
If there exists a decomposition $(v, K)$ such that $\omega\le m+1$, then $A$ is normal up to isomorphism.
\end{theorem}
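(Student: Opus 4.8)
The plan is to reduce, via Lemma~\ref{lem_stru_of_Cm} and Fact~\ref{factt1}, to a single stubborn configuration, and then to pin it down by a short enumeration driven by Fact~\ref{fact1}. Since $|A|\ge mp^{n-1}$ and $|A_i|\le p^{n-1}$ for all $i$, the weight satisfies $\omega\ge m$, so $\omega\in\{m,m+1\}$. From $\sum_{i=1}^{\omega}\beta_i(A)=|A|p^{-(n-2)}\ge mp$ and $\beta_1\ge\cdots\ge\beta_{\omega}$ one gets $\beta_m+\beta_{m+1}\ge p$ (convention $\beta_{m+1}=0$ when $\omega=m$), hence $\beta_m\ge p/2$; as $k\ge 3$ (because $k>\ell$ and $k+\ell\ge 5$) and $p$ is large, $k\beta_m\ge p+k-1$, so Lemma~\ref{lem_stru_of_Cm} applies: $kC_m\cap\ell C_{\omega}=\emptyset$, $C_m$ is $(k,\ell)$-sum-free, and up to isomorphism $C_m$ is either an interval of length $m$ or lies in a $(k,\ell)$-sum-free interval of length $m+1$. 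Writing $\hat I$ for the shortest interval covering $C_m$, this says $|\hat I|\in\{m,m+1\}$ and, by Claim~\ref{clm3}, $\hat I$ is $(k,\ell)$-sum-free.

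Next I dispose of the easy branches. If $|\hat I|=m+1$, then by Lemma~\ref{lem_m+1_interval} $\hat I$ is isomorphic to an extremal interval, so $\hat I\times K$ is an extremal cuboid; hence if $\omega=m$, or if $\omega=m+1$ and $b_{m+1}\in\hat I$, then $Supp(A)\subseteq\hat I$ and $A$ is trivial, a contradiction. If $\omega=m+1$ and $C_{m+1}=Supp(A)$ is $(k,\ell)$-sum-free, then $A$ is trivial by Fact~\ref{factt1}(1), again a contradiction. If $\omega=m$ and $|\hat I|=m$, then $C_m=\hat I$ is an interval and Fact~\ref{factt1}(2) makes $A$ of type $1$ (it is nontrivial), so we are done. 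Thus it remains to treat the case $\omega=m+1$, $C_{m+1}$ not $(k,\ell)$-sum-free, and $b:=b_{m+1}\notin\hat I$.

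Here is the core step. Since $C_m$ is $(k,\ell)$-sum-free but $C_{m+1}=C_m\cup\{b\}$ is not, every solution of $x_1+\cdots+x_k=y_1+\cdots+y_\ell$ in $C_{m+1}$ uses the value $b$, say $t\ge 1$ times; applying Fact~\ref{fact1}(2) to the corresponding $B$-sets and using $\beta_i\ge\beta_m\ge p/2$ for the $k+\ell-t$ summands lying in $C_m$ forces $(k+\ell-t)\tfrac p2\le p+k+\ell-2$, hence $t\ge k+\ell-2$ for $p$ large. Consequently the witnessing relation has at most two summands different from $b$, $\beta_{m+1}\le\frac{p+k+\ell-2}{k+\ell-2}<\tfrac p2$, and the total deficiency $\sum_{i\le m}(p-\beta_i)\le\beta_{m+1}<\tfrac p2$, so $|A_i|>p^{n-1}/2$ for every heavy column $i\in C_m$. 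Two subsets of $K\cong\mathbb F_p^{n-1}$ of size $>p^{n-1}/2$ must intersect, and in fact their sumset is all of $K$; this already kills $b=0$, since then for a heavy $c\in C_m$ (necessarily $c\ne b=0$) the genuine $(k,\ell)$-relation with columns $\underbrace{0,\dots,0}_{k-1},c$ on the left and $c,\underbrace{0,\dots,0}_{\ell-1}$ on the right would force, by Fact~\ref{fact1}(1), $\big((k-1)A_0+A_c\big)\cap\big(A_c+(\ell-1)A_0\big)=\emptyset$, which is impossible. So $b\ne 0$, and the one or two non-$b$ summands satisfy a fixed short linear relation expressing them through $b$; using that $C_m$ is essentially an interval, one checks consistency with $b\notin\hat I$ and with $kC_m\cap\ell C_{m+1}=\emptyset$. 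Whenever two non-$b$ summands $c,c'$ occur, $A_c+A_{c'}=K$, so the underlying column identity again produces a forbidden intersection and the case dies; the same ``large sets must intersect'' device disposes of $|\hat I|=m+1$ and of all $\ell\ge 2$. The surviving case is $\ell=1$, $|\hat I|=m$, where the unique relation $k(a+m)=a$ in $C_{m+1}$ forces $C_m=[a,a+m-1]$ with $a=mk(\ell-k)^{-1}$ and $b=a+m$; Fact~\ref{fact1}(1) gives $kA_{a+m}\cap A_a=\emptyset$, and comparing $mp^{n-1}\le|A|\le(m-1)p^{n-1}+\big(p^{n-1}-|kA_{a+m}|\big)+|A_{a+m}|$ with $|kA_{a+m}|\ge|A_{a+m}|$ (Theorem~\ref{thm_Kneser}) forces equality throughout: the middle columns are full, $A_a=K\setminus kA_{a+m}$, and $|kA_{a+m}|=|A_{a+m}|$, which by the equality case of Kneser's theorem makes $A_{a+m}$ a coset of a subspace $V$; a shear in the $v$-direction removes the translate, so $A$ is of type $2$, hence normal.

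The hard part is the enumeration in the core step: organising the analysis according to how many non-$b$ summands there are, on which side of the equation they sit, and which endpoints of $\hat I$ they hit, and in each case either producing a column identity to which the ``two large subsets of $K$ intersect (indeed add up to $K$)'' obstruction applies, or — in the single surviving branch — fixing the exact position of the interval and checking, via the Kneser equality case and a shear, that the light column is a subspace up to isomorphism, so that $A$ is genuinely of type $2$.
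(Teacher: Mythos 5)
Your strategy is essentially the paper's: reduce via Lemma~\ref{lem_stru_of_Cm} and Fact~\ref{factt1} to the case $\omega=m+1$ with $C_{m+1}$ not $(k,\ell)$-sum-free, show every witnessing $(k,\ell)$-sum is dominated by copies of $b=b_{m+1}$, and finish with the Kneser-equality analysis of the two end columns. The opening reductions, the bound ``at most two non-$b$ summands,'' and the final type-2 derivation are all sound. But the core enumeration, which you yourself flag as the hard part, has a genuine hole: your ``two subsets of $K$ of size $>p^{n-1}/2$ must intersect'' device does not eliminate the relations in which $b$ occurs on \emph{both} sides, namely $(k-1)b+c=\ell b$ (so $c=(\ell-k+1)b$) and, for $\ell\ge 2$, $kb=(\ell-1)b+c$. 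In those cases one side of the disjointness furnished by Fact~\ref{fact1}(1) consists only of sums of copies of $A_b$, and $|A_b|=|B_{m+1}|$ has no lower bound, so no size contradiction arises; in particular your claim that the device ``disposes of all $\ell\ge 2$'' fails for $kb=(\ell-1)b+c$. The paper closes exactly these cases with a substitution trick (its third claim): since $b$ appears on both sides and at least three times in total, swapping one copy of $b$ on each side for a third column $z\in C_m$ yields a new $(k,\ell)$-sum with three heavy non-$b$ summands, which your own bound then forbids. That step (or an equivalent) is missing from your write-up.

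A second gap is in the surviving branch: you assert that the relation $kb=c$ ``forces'' $C_m=[a,a+m-1]$ with $a=mk(\ell-k)^{-1}$ and $b=a+m$. Nothing you have written locates $b$: you only know $b\notin\hat I$, $kb\in C_m$ and $kC_m\cap C_{m+1}=\emptyset$, which a priori leaves several admissible positions for $b$ in the gap of length $k+\ell+\lambda$ between $kC_m$ and $\ell C_m$; and you never rule out the sub-case $|\hat I|=m+1$ (so $C_m$ is a $1$-holed interval) with $b$ outside $\hat I$, whose support is not an interval and hence not of type 2. The paper settles both points by first proving $|kC_{m+1}\cap\ell C_{m+1}|=1$, deducing $|2C_{m+1}|\le 2m+2$ from Theorem~\ref{thm_Kneser}, and then invoking Theorem~\ref{thm_3k-4_r=0} and Theorem~\ref{thm_Vosper} to force $C_{m+1}$ to be an interval $[a,a+m]$ with $k(a+m)=\ell a$. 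Some argument of this structural kind is required; it does not follow from the column-size inequalities alone.
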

\begin{proof}

Since $|A|\geq mp^{n-1}$, $mp \le |A|\slash p^{n-1}=\sum_{i=1}^{m+1}\beta_i$, thus $\beta_m\ge p\slash 2$. From $k\ge 3$, $k\beta_m\ge p+k-1$. By  Lemma~\ref{lem_stru_of_Cm},  $C_m$ is $(k, \ell)$-sum-free. %Since $|A|\geq mq^{n-1}$, $\omega\geq m$.

If $\omega=m$, then $|A|=mp^{n-1}$ and each $B_i, i\in [m]$ is exactly $K$. When $\lambda\in [0, k+\ell-5]$ and $m\ge \max\{7, 5T(\frac 1{k+\ell})\}$, by Lemma~\ref{lem_stru_of_Cm}, up to isomorphism, $C_m$ is either an interval of length $m$, or a subset of a $(k, \ell)$-sum-free interval of length $m+1$. In each case, $A$ is either trivial or of type $1$ by Lemma~\ref{lem_m+1_interval} or Fact~\ref{factt1} (2).

%When $A$ is not a subset of an extremal cuboid, $C_m$ must be an interval, which implies that $A$ is of type 1 by Fact~\ref{factt1}.

%By Lemmas~\ref{lem_contain_cret} and \ref{lem_m+1_interval}, $A$ is a subset of the extremal cuboid if and only if $C_m$ is contained in a $(k, \ell)$-sum-free interval of length $m+1$. So if $A$ is not a subset of the extremal cuboid, we must have $A=C_m\times K$ with $C_m$ an interval {\color{blue}not contained in any $(k, \ell)$-sum-free interval of length $m+1$}. Suppose $C_m=[a,a+m-1]$, then both $[a-1,a+m-1]$ and $[a,a+m]$ can not be $(k, \ell)$-sum-free.  Let $i$ and $j$ denote the lengths of the two intervals over $\mathbb F_p$ connecting intervals $kC_m$ and $\ell C_m$. If $\max\{i, j\}\geq k$ and $\min\{i, j\}\geq \ell$, then it can be verified that  at least one of the two intervals $[a-1,a+m-1]$ and $[a,a+m]$ is $(k, \ell)$-sum-free,  which is not we want. So either $\max\{i, j\}<k$ or $\min\{i, j\}<\ell$. This kind of $A$ is already of type 1 defined in the Introduction.

When $\omega=m+1$,
if $C_{m+1}\subseteq \mathbb F_p$ is also $(k, \ell)$-sum-free, then $A$ is trivial by Fact~\ref{factt1} (1). %by Lemma~\ref{lem_m+1_interval}, $C_{m+1}$ is already one of the $\lceil\frac{\lambda+1}{2}\rceil$ standard $(k, \ell)$-sum-free intervals, and hence $A$ is contained in an extremal cuboid.
Otherwise, we  show that  $A$ can only be of type 2. Since $kC_{m+1}\cap\ell C_{m+1}\neq \emptyset$, the following equation has at least one solution in $C_{m+1}$:
%\begin{equation} \label{eq_1}
%x_1+\cdots+ x_k=y_1+\cdots+ y_\ell.
%\end{equation}
%Let $S:=\{x_i, i\in [k]\}\uplus\{y_j, j\in [\ell]\}$, where $\uplus$ denotes the multiset union.
\begin{equation} \label{eq_1}
b_{r_1}+\cdots+ b_{r_k}=b_{s_1}+\cdots+ b_{s_\ell}.
\end{equation}
Let $S$ denote the multi-set by collecting the $k+\ell$ terms in Eq. (\ref{eq_1}).
Since $C_m$ is already $(k, \ell)$-sum-free, one element from the multiset $S$ must be $b_{m+1}$, which by definition is exactly the element in $C_{m+1}\slash C_m$.

%(Here I reform the order of proof for the correctness.)
First, we claim that $0\notin S$. Suppose $0 =b_j\in S$ for some $j$, then for any $i\in[m+1]\setminus\{j\}$, $(k-\ell) b_j+\ell b_i= 0+ \ell b_i= \ell b_i$. Hence by Fact~\ref{fact1},  we have
\[\left((k-\ell)B_{j}+\ell B_i\right)\cap \ell B_i=\emptyset.\]
So $p^{n-1}\geq |(k-\ell)B_{j}+\ell B_i|+|\ell B_i|\geq |B_i|+|B_{j}|$, which implies $\beta_i+\beta_{j}\leq p$ for any $i\in [m+1]\setminus\{j\}$. Then $mp\le \sum_{i=1}^{m+1}\beta_{i}\le m(p-\beta_{j})+\beta_{j}$, which forces $\beta_{j}$ to be zero. This contradicts to the hypothesis $\omega=m+1$.

%Because, if $0\in S$, then for any $i\in[m+1]$, $(k-\ell)\cdot 0+\ell\cdot b_i= 0+ \ell\cdot b_i= \ell\cdot b_i$. Denote $j$ the $b$ subscript of $0$ in $S$, that is, $b_j=0$. Then by Fact~\ref{fact1}, for any $j\ne i\in [m+1]$ we have
%\[\left((k-\ell)B_{j}+\ell B_i\right)\cap \ell B_i=\emptyset.\]
%As both parts are nonempty sets in $K$, $p\ge \frac{|(k-\ell)B_{j}+\ell B_i|+|\ell B_i|}{p^{n-2}}\ge \frac{|B_i|+|B_{j}|}{p^{n-2}}= \beta_i+\beta_{j}$ for any $i\in [m+1]\backslash\{j\}$. This leads to $mp\le \sum_{i=1}^{m+1}\beta_{i}\le m(p-\beta_{j})+\beta_{j}$, which forces $\beta_{j}$ to be zero. This contradicts to the hypothesis that $\omega=m+1$.

%The second step is to exclude the cases when $S$ contains exactly one or more than two distinct elements.
Second, we claim that  the multiset $S$ contains exactly two different elements, one is $b_{m+1}$, and the other is $b_s$ for some $s\in [m]$.
If the multiset $S$ contains only one distinct element which is $b_{m+1}$, Eq.~(\ref{eq_1}) shows that $kb_{m+1}=\ell b_{m+1}$. As $p$ is prime, $kb_{m+1}=\ell b_{m+1}$ implies that $b_{m+1}=0$, contradicting to the first claim.
If the multiset $S$ contains at least three different elements,  say, $b_s, b_t, b_{m+1}$ for some $s\ne t\in [m]$, by Fact~\ref{fact1}, %Corollary~\ref{cor:sumbound}, % and Lemma~\ref{lem_common_k},
Eq.~(\ref{eq_1}) gives
\[
p^{n-1}+ (k+\ell-2)p^{n-2}%& \ge |A_{x_1}+\cdots +A_{x_k}|+|A_{y_1}+\cdots+ A_{y_\ell}|\\
  \ge \sum_{i}^{k}|A_{b_{r_i}}|+\sum_{j}^{\ell}|A_{b_{s_j}}|\geq |B_s|+|B_t|+|B_{m+1}|.
\]
%\[
%p^{n-1}+ (k+\ell-2)p^{n-2}%& \ge |A_{x_1}+\cdots +A_{x_k}|+|A_{y_1}+\cdots+ A_{y_\ell}|\\
%  \ge |A_{x_1}|+\cdots +|A_{x_k}|+|A_{y_1}|+\cdots+ |A_{y_\ell}|\geq |B_s|+|B_t|+|B_{m+1}|.
%\]
%where the second line is for that both $A_{x_1}+\cdots +A_{x_k}$ and $A_{y_1}+\cdots+ A_{y_\ell}$ are not empty regarding as a subset of $K$ and Theorem~\ref{thm_Kneser}.
%If we can choose three mutually different elements in $\{x_i, i\in [k]\}\cup\{y_j, j\in [\ell]\}$, say, $b_s, b_t, b_{m+1}$ for some $s\ne t\in [m]$.
Then $mp\le \sum_{i=1}^{m+1}\beta_i=\beta_s+\beta_t+\beta_{m+1}+\sum_{i\in [m]\backslash\{s, t\}}\beta_i\le p+k+\ell-2+(m-2)p$, which is impossible because $p\ge k+\ell-1$.

%Now {\color{blue}we know that} the multiset $S$ contains exactly two different elements, say $b_s$ with some $s\in [m]$ and $b_{m+1}$.

Third, we  claim that in Eq. (\ref{eq_1}), $\{b_{r_i}, i\in [k]\}\cap\{b_{s_j}, j\in [\ell]\}= \emptyset$, that is, the two different elements $b_s$ and $b_{m+1}$ of $S$ are separated by the equal sign. We prove this claim by contradiction.
If one of $b_s$ or $b_{m+1}$ is contained on both sides of Eq.~(\ref{eq_1}) and appears at least three times in $S$, by replacing one copy of this element on each side of the equation with a third element in $C_{m+1}$, we can derive another solution with at least three distinct elements in $S$, which contradicts to the second claim.
Otherwise, we have $\ell=1$ and Eq.~(\ref{eq_1}) of the form $(k-1)b_{m+1}+b_s=b_s$ (or $b_s$ and $b_{m+1}$ are swapped). However, this leads to $b_{m+1}=0$ or $b_s=0$,  contradicting to the first claim.

We conclude that  Eq.~(\ref{eq_1}) is either $kb_s=\ell b_{m+1}$ or $k b_{m+1}=\ell b_s$. The former one is impossible since  $kB_s\cap \ell B_{m+1}=\emptyset$ by Fact~\ref{fact1}, but  $kB_s=K$ due to $|B_s|\ge |B_{m}|> p^{n-1}\slash 2$ and $k> 2$. So we have $k b_{m+1}=\ell b_s$ and hence $k B_{m+1}\cap \ell B_s =\emptyset$ by Fact~\ref{fact1}.  When $b_{m+1}$ is fixed, $b_s$ is uniquely determined by Eq.~(\ref{eq_1}). So $kb_{m+1}=\ell b_{s}$ is the only solution to Eq.~(\ref{eq_1}), which means
$ k C_{m+1}\cap \ell C_{m+1} $ is of size one.
%
%When $(x_1,y_1)= (b_s,b_{m+1})$, Eq.~(\ref{eq_1}) gives $kb_s=\ell b_{m+1}$ and hence $kB_s\cap \ell B_{m+1}=\emptyset$ by Fact~\ref{fact1}. As $|B_s|\ge |B_{m}|> p^{n-1}\slash 2$ and $k> 2$, $kB_s=K$ and hence $\ell B_{m+1}=\emptyset$, which contradicts to $\omega=m+1$.
%
%When $(x_1,y_1)= (b_{m+1},b_s)$, Eq.~(\ref{eq_1}) gives $k b_{m+1}=\ell b_s$ and hence $k B_{m+1}\cap \ell B_s =\emptyset$ by Fact~\ref{fact1}.
%When $b_{m+1}$ is fixed, $b_s$ is uniquely determined by Eq.~(\ref{eq_1}). Hence $kb_{m+1}=\ell b_{s}$ is the only equation derived by $k C_{m+1}\cap \ell C_{m+1}$, which means
%$ k C_{m+1}\cap \ell C_{m+1} $ is of size one.
%
Then
\[
\begin{aligned}
p+1&\ge |\ell C_{m+1}|+|k C_{m+1}|\\
& \ge \frac{\ell-1}2|2C_{m+1}|+|C_{m+1}|-\frac{\ell-1}2
+ \frac{k-1}2|2C_{m+1}|+|C_{m+1}|-\frac{k-1}2
\end{aligned}
\] with the second inequality holding by Theorem~\ref{thm_Kneser}.
By plugging in $|C_{m+1}|=m+1$ and $p=(k+\ell)m+2+\lambda$, we have
\[|2C_{m+1}|\le 2m+ \frac{2\lambda+k+\ell}{k+\ell-2}.\]
As $\lambda\leq k+\ell-4$ and $|2C_{m+1}|$ is an integer, $|2C_{m+1}|\le 2m+2$.

If $|2C_{m+1}|=2m+2$, from Theorem~\ref{thm_3k-4_r=0}, after a suitable isomorphism, $C_{m+1}$ is  $1$-holed from an interval of length $m+2$, denoted  by $I=[a, a+m+1]$. If the element in $I\backslash C_{m+1}$ is not in $\{ a+1, a+m\}$, it is easy to check that $|2C_{m+1}|=|2I|=2m+3$, contradicting to $|2C_{m+1}|=2m+2$. As $|kC_{m+1}\cap \ell C_{m+1}|=1$ and $I\backslash C_{m+1}\subset\{ a+1, a+m\}$, we get $|kI\cap \ell I|\le 2$. So $p\ge |kI\cup \ell I|\ge |kI|+|\ell I|-2=(k+\ell)m+(k+\ell)$, which leads to the contradiction as $\lambda\leq k+\ell -3$.

If $|2C_{m+1}|=2m+1$, by Theorem~\ref{thm_Cauchy_daven} and Theorem~\ref{thm_Vosper}, $C_{m+1}$ must be an AP of length $m+1$, and without loss of generality we can set it as an interval. Denote $C_{m+1}=[a, a+m]$ for some $a\in\mathbb F_p$, and hence $kC_{m+1}=[ka, ka+km]$ and $\ell C_{m+1}=[\ell a, \ell a+\ell m]$. As $ |k C_{m+1}\cap \ell C_{m+1}|=1 $, we have $ka+km=\ell a$ by symmetry. So $a=-km(k-\ell)^{-1}$. %only one element lies in the intersection, without loss of generality let $ka+km=\ell a$ (otherwise we multiplies $-1$).
%In this situation, we can solve $a$ uniquely as $p$ prime and $k\ne\ell$, that is, $a=-km(k-\ell)^{-1}$, where the calculations are all in $\mathbb F_p$.
 Consider the two special subsets $A_a$ and $A_{a+m}$ in $K$. As $k(a+m)=\ell a$ and $A$ is $(k, \ell)$-sum-free, $k A_{a+m}\cap \ell A_a=\emptyset$ by Fact~\ref{fact1}.
Moreover, since $|A|\ge mp^{n-1}$, we have $|A_{a+m}|+|A_a|\ge p^{n-1}$, with equality if and only if $|A|=mp^{n-1}$.  So $p^{n-1}\geq |kA_{a+m}|+|\ell A_a|\geq |A_{a+m}|+|A_a|\ge p^{n-1} $, which implies that $|kA_{a+m}|=|A_{a+m}|$, $|\ell A_a|=| A_a|$ and $|A_{a+m}|+|A_a|= p^{n-1}$. Hence \[\text{$kA_{a+m}=A_{a+m}$, $\ell A_a= A_a$,
$A_{a+m}\sqcup A_a=K$, $A_i=K$ for $i\in [a+1,a+m-1]$ and $|A|=mp^{n-1}$.}\]
 Denote $H=Sym(kA_{a+m})$ which is a subgroup of $K$ and hence a linear subspace.
From Theorem~\ref{thm_Kneser}, $|H+A_{a+m}|\geq |A_{a+m}|= |kA_{a+m}|\ge k|H+A_{a+m}|-(k-1)|H|$. So $|H|\ge |H+ A_{a+m}|$, and hence $|H|= |H+ A_{a+m}|$. This means that $A_{a+m}$ lies in one coset of $H$. As $A_{a+m}= kA_{a+m}\supseteq H$, $A_{a+m}=H+x$ for some  $x\in K$.
As $a=-m k (k-\ell)^{-1}$, $a+m\ne 0$. Without loss of generality we can assume that $x=0$, because otherwise we do the following isomorphism: for any $y\in A_i$, map $y$ to $y-ix(a+m)^{-1}$.
This map does not change $A_i$ for each $i\in [a+1,a+m-1]$ but map $A_{a+m}$ from $H+x$ to $H$, and  hence $A_a= K\backslash H$. This is exactly the form of type 2.
%
%If $\ell\ge 2$, at least one of $A_a$ and $A_{m+a}$ has size larger than $p^{n-1}\slash 2$ and hence $k A_{a+m}$ or $\ell A_a$ is $K$, forcing the other one to be empty, contradicting to $\omega=m+1$.
%When $\ell=1$.
%If $|kA_{a+m}|>|A_{a+m}|$, we have $|kA_{a+m}|+|\ell A_a|\ge p^{n-1}+1>|K|$, contradicting to $k A_{a+m}\cap \ell A_a=\emptyset$. Hence $|kA_{a+m}|=|A_{a+m}|$. Denote $H=Sym(kA_{a+m})$ which is a subgroup of $K$ and hence a linear subspace.
%From Theorem~\ref{thm_Kneser}, $|A_{a+m}|= |kA_{a+m}|\ge k|H+A_{a+m}|-(k-1)|H|$ and hence $|H|\ge |H+ A_{a+m}|$. As the reverse is trivial, $|H|= |H+ A_{a+m}|$. This means that $A_{a+m}$ lies in one coset of $Sym(H)=H$. As $|A_{a+m}|= |kA_{a+m}|\ge |H|$, $A_{a+m}=H+x$ for some constant $x$.
%As $a=-m\cdot k\cdot (k-\ell)^{-1}$, $a+m\ne 0$. Without loss of generality we can assume that $x=0$, because otherwise we do the following isomorphism: for any $y\in A_i$, map $y$ to $y-ix(a+m)^{-1}$.
%This map does not change $|A_i|$ for each $i$ but map $A_{a+m}$ from $H+x$ to $H$. Hence $A_a= K\backslash H$ and $A_i= K$ for any $i\in [a+1, a+m-1]$. This is exactly the form of type 2.
\end{proof}

\subsection{Big weight by Fourier analysis}\label{sec_fourier}
%In this section, we prove Theorem~\ref{thm_with_fourier} when the weight of $A$ is big by a Fourier analytic argument.
We begin with some notations in discrete Fourier analysis.

 Given a finite additive abelian group $G$, let $\widehat G$ be the group of all homomorphisms from $G$ to the unit circle in the complex plane. Each member  $\chi\in\widehat G$ is known as a {\it character} of $G$. Specially, the zero character of $G$ is denoted by $\chi_0$, which maps all $g\in G$ to $1$. It is known that $\widehat{G}\cong G$~\cite{kurzweil2004theory} and hence $|\widehat G|=|G|$.
%Given a finite abelian group $G$ with the additive notation, a {\it character} on $G$ is defined as a complex-valued function $\chi: G\to S^1$, where $S^1$ denotes the unit circle in the complex plane, such that for all $g_1, g_2\in G$, $\chi(g_1+g_2)=\chi(g_1)\chi(g_2)$.  In other words, $\chi$ is a group homomorphism from $G$ to $S^1$. Write $\widehat G$ for the set of all characters of $G$. Then $\widehat G=Hom(G, S^1)$ is a multiplicative abelian group with multiplication defined by $(\chi_1\cdot\chi_2)(g)=\chi_1(g)\chi_2(g)$ for all $g\in G$. Since $G$ is a finite abelian group, one can use the fundamental theorem for finite abelian groups~\cite{kurzweil2004theory} to show that $\widehat{G}\cong G$, where $\cong$ stands for the group isomorphism. In particular, $|\widehat G|=|G|$. Specially, define the zero character of $G$ as $\chi_0$, which maps all $g\in G$ to $1$.

Write $L(G)$ for the vector space of all complex-valued functions on $G$. Define %a Hermitian dot product on $L(G)$ by
$$\langle f, h\rangle=\frac 1{|G|}\sum_{g\in G}f(g)\overline{h(g)}, \text{ for any }f, h\in L(G).$$%=\mathbb E_{g\in G}f(g)\overline{h(g)}
Then all characters of $G$ form an orthonormal basis of $L(G)$ with the above dot product. Given a function $f\in L(G)$, define the {\it Fourier transform} of $f$ as a function $\hat f: \widehat G\to \mathbb C$ by $$\hat f(\chi)=\langle f, \chi\rangle=\frac 1{|G|}\sum_{g\in G}f(g)\overline{\chi(g)},~\chi\in\widehat G. $$
% which leads to the following {\it Fourier inversion formula},
%$$f=\sum_{\chi\in\widehat G}\langle f, \chi\rangle\chi= \sum_{\chi\in\widehat G}\hat f(\chi)\chi.$$
Hence, we have
\begin{equation}\label{eq_fh}
\langle f, h\rangle=\sum_{\chi\in\widehat{G}}\hat{f}(\chi)\overline{\hat{h}(\chi)}, %\triangleq (\hat{f},\hat{h}),
\end{equation} and the corresponding {\it Plancherel formula} $\Vert f\Vert^2=\sum_{\chi\in\widehat G}|\hat f(\chi)|^2.$
We refer to $\hat f(\chi)$ as the {\it Fourier coefficient} of $f$ at the character $\chi$. The coefficient at the zero character is called the zero Fourier coefficient.
%Define the norm of $f\in L(G)$ from the dot product as $\Vert f\Vert=\langle f, f\rangle^{1\slash 2}$. Then
%The following is the corresponding {\it Plancherel formula,}
%%$$\Vert f\Vert^2=\sum_{\chi\in\widehat G}|\hat f(\chi)|^2.$$
%$$\langle f, h\rangle=\sum_{\chi\in\widehat{G}}\hat{f}(\chi)\overline{\hat{h}(\chi)}.$$

Given two functions $f, h\in L(G)$, the {\it convolution} of $f$ and $h$, denoted as $f * h$, %which is also in $L(G)$,
is defined as follows,
\[
(f * h)(g)=\frac{1}{|G|}\sum_{b\in G}f(g-b)h(b)=\frac{1}{|G|}\sum_{a+b=g}f(a)h(b).
\]The following folklore identity about convolution can be found in  \cite[Section 4.1]{Tao2006},
%From the definition, we have $f *h=h *f$.
%Together with the Fourier transform, we have the following folklore identity about convolution (see, for example, \cite{Tao2006} Section 4.1).
\begin{equation}\label{eq_hat_convo}
    \widehat{f*h}=\hat{f}\cdot\hat{h}.
\end{equation}%we can define the convolution of more than two functions.
Similarly, for any integer $i\ge 3$ and $f\in L(G)$, define the convolution of $i$ copies of $f$ by $*_if=(*_{i-1}f)*f$ with  $*_2f=f*f$. By using Eq. (\ref{eq_hat_convo}) recursively, we get for any integer $i\ge 2$,
\[
\widehat{*_{i}f}=(\hat{f})^i.
\]

Finally, for any subset $A$ of $G$, define the {\it indicator function} of $A$, denoted by ${\bm 1}_A\in L(G)$, as follows:
\[
{\bm 1}_A(g)=\left\{
\begin{aligned}
    1, & \text{~if $g\in A$;}\\
    0, & \text{~otherwise.}
\end{aligned}\right.\]
The first lemma below shows that if $A$ is a $(k, \ell)$-sum-free subset of a finite abelian additive group, then one of the nonzero Fourier coefficients of the indicator function must have a big norm.

%All our notations for Fourier analysis are standard and we omit to list here for brevity. See \cite{Tao2006} Section 4 for more details. In this section $k, \ell$ are always integers satisfying $k>\ell\ge 1$. {\color{red} (need to list necessary notations for Fourier analysis)}

%Recall that $k, \ell$ are always integers satisfying $k>\ell\ge 1$.

\begin{lemma}\label{lem_exists_chi_small}
Let $A\ne\emptyset$ be a $(k, \ell)$-sum-free subset of a finite abelian additive group $G\ne 0$. Then there exists at least one character $\chi\ne \chi_0$ such that $|\widehat{\bm 1_A}(\chi)|\ge (\frac{\alpha^{k+\ell-1}}{1-\alpha})^{\frac 1{k+\ell-2}}$, where $\alpha=|A|\slash|G|$ is the density.
\end{lemma}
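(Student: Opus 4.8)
The plan is to compare two expressions for the number of solutions to the $(k,\ell)$-sum equation, one combinatorial and one spectral. Since $A$ is $(k,\ell)$-sum-free, the $k$-fold sumset $kA$ is disjoint from the $\ell$-fold sumset $\ell A$; in particular $(\ast_k \bm 1_A)(g)=0$ whenever $g\in \ell A$, and hence for every $g$ lying in $\ell A$ we have $\langle \ast_k\bm 1_A,\ \ast_\ell\bm 1_A\rangle$-type cancellation. More precisely, I would consider the inner product $N:=\langle \ast_k \bm 1_A,\ \ast_\ell \bm 1_A\rangle$, which counts (up to the normalizing factor $|G|^{-(k+\ell-2)}$, coming from the convolution normalization) the number of solutions of $x_1+\cdots+x_k=x_1'+\cdots+x_\ell'$ with all variables in $A$. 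The $(k,\ell)$-sum-free hypothesis forces $N=0$.

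Next I would expand $N$ using the Fourier side. By Eq.~\eqref{eq_fh} together with the identity $\widehat{\ast_i f}=(\hat f)^i$ applied to $f=\bm 1_A$, we get
\[
0=N=\sum_{\chi\in\widehat G} \widehat{\bm 1_A}(\chi)^k\ \overline{\widehat{\bm 1_A}(\chi)^\ell}
=\sum_{\chi\in\widehat G} \widehat{\bm 1_A}(\chi)^k\ \overline{\widehat{\bm 1_A}(\chi)}^{\,\ell}.
\]
The zero-character term contributes $\widehat{\bm 1_A}(\chi_0)^{k+\ell}=\alpha^{k+\ell}>0$, where $\alpha=|A|/|G|$. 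Therefore the remaining terms must compensate:
\[
\alpha^{k+\ell}=-\sum_{\chi\ne\chi_0}\widehat{\bm 1_A}(\chi)^k\ \overline{\widehat{\bm 1_A}(\chi)}^{\,\ell},
\]
so, taking absolute values and writing $M:=\max_{\chi\ne\chi_0}|\widehat{\bm 1_A}(\chi)|$,
\[
\alpha^{k+\ell}\le \sum_{\chi\ne\chi_0}|\widehat{\bm 1_A}(\chi)|^{k+\ell}
\le M^{k+\ell-2}\sum_{\chi\ne\chi_0}|\widehat{\bm 1_A}(\chi)|^{2}.
\]
By the Plancherel formula, $\sum_{\chi}|\widehat{\bm 1_A}(\chi)|^2=\|\bm 1_A\|^2=\frac{1}{|G|}\sum_g \bm 1_A(g)^2=\alpha$, and removing the zero-character term gives $\sum_{\chi\ne\chi_0}|\widehat{\bm 1_A}(\chi)|^2=\alpha-\alpha^2=\alpha(1-\alpha)$. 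Substituting, $\alpha^{k+\ell}\le M^{k+\ell-2}\,\alpha(1-\alpha)$, i.e.
\[
M^{k+\ell-2}\ge \frac{\alpha^{k+\ell-1}}{1-\alpha},
\]
which after taking the $(k+\ell-2)$-th root is exactly the claimed bound. (If $A=G$ then $1-\alpha=0$ and the statement is vacuous, but that cannot happen for $(k,\ell)$-sum-free $A$ in a nontrivial group; and if $k+\ell-2=0$ the hypothesis $k>\ell\ge 1$ excludes it, so the exponent is positive.)

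The only mild subtlety — and the step I would be most careful about — is bookkeeping the normalization constants coming from the definition of convolution used in the excerpt, namely $(f\ast h)(g)=\frac1{|G|}\sum_{a+b=g}f(a)h(b)$, so that $N=\langle \ast_k\bm 1_A, \ast_\ell\bm 1_A\rangle$ is a positive multiple of the solution count and the vanishing of the solution count genuinely forces $N=0$; the precise constant is irrelevant since we only use $N=0$. Everything else is a direct application of Plancherel and the power-mean/maximum estimate, with no analytic input needed. In particular no largeness of $p$ or finite-field structure is used here — the lemma holds over any finite abelian group, which is why it is stated in that generality.
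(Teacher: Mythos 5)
Your proposal is correct and follows essentially the same route as the paper: vanishing of $\langle *_k\bm 1_A,*_\ell\bm 1_A\rangle$, Fourier expansion isolating the $\alpha^{k+\ell}$ term at $\chi_0$, and Plancherel to bound $\sum_{\chi\ne\chi_0}|\widehat{\bm 1_A}(\chi)|^2=\alpha(1-\alpha)$. The only (immaterial) difference is the last step, where you use the triangle inequality and pull out $M^{k+\ell-2}$, whereas the paper argues that some nonzero character must contribute a sufficiently negative real part; both yield the same bound.
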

\begin{proof}
%For any $i\in \mathbb Z_{\ge 1}$ and $f\in{\mathbb C}^G$, let $*_if$ denote the convolution of $i$ copies of $f$, i.e., $*_1f=f$, $*_2f=f*f$ and $(*_{i-1}f)*f=*_if$.
Consider the inner product of convolutions of the indicator of $A$:
\[\langle *_k {\bm 1}_A, *_\ell {\bm 1}_A\rangle=\frac 1{|G|}\sum_{g\in G}*_k {\bm 1}_A(g) \cdot \overline{*_\ell {\bm 1}_A(g)}=\frac 1{|G|}\sum_{g\in G}*_k {\bm 1}_A(g) \cdot {*_\ell {\bm 1}_A(g)},\]
%\[\langle *_k {\bm 1}_A, *_\ell {\bm 1}_A\rangle={\mathbb E}_{g\in G}*_k {\bm 1}_A(g)\cdot \overline{*_\ell {\bm 1}_A(g)}= {\mathbb E}_{g\in G}*_k {\bm 1}_A(g)\cdot *_\ell {\bm 1}_A(g),\]
where the second equality holds because ${\bm 1}_A$ is real-valued.
By the definition of convolution, for any $i\geq 1$,
\[*_i {\bm 1}_A(g)=\frac{1}{|G|^{i-1}}|\{(x_1, \ldots, x_i)\in A^i: x_1+\cdots +x_i=g\}|.\]
Since $A$ is $(k, \ell)$-sum-free, for any $g\in G$,
either $*_k {\bm 1}_A(g)=0$ or $*_\ell {\bm 1}_A(g)=0$. Hence $\langle *_k {\bm 1}_A, *_\ell {\bm 1}_A\rangle=0$.%= {\mathbb E}_{g\in G}*_k {\bm 1}_A(g)\cdot *_\ell {\bm 1}_A(g)

On the other hand, by Eq. (\ref{eq_fh}) and the convolution identity,
\begin{equation}\label{eq_2}
0=\langle *_k {\bm 1}_A, *_\ell {\bm 1}_A\rangle= \sum_{\chi\in\widehat{G}} \widehat{*_k {\bm 1}_A}(\chi) \overline{\widehat{*_\ell {\bm 1}_A}(\chi)}
=\sum_{\chi\in\widehat{G}}(\widehat{\bm 1_A}(\chi))^{k}(\overline{\widehat{\bm 1_A}(\chi)})^{\ell}
=\sum_{\chi\in\widehat G}(\widehat{\bm 1_A}(\chi))^{(k-\ell)}|\widehat{\bm 1_A}(\chi)|^{2\ell}.
\end{equation}
From the definition of Fourier transform, $\widehat{\bm 1_A}(\chi_0)=\langle \bm 1_A, \chi_0\rangle=\frac 1{|G|}\sum_{g\in A}\chi_0(g)=\alpha$. Hence $(\widehat{\bm 1_A}(\chi_0))^{(k-\ell)}|\widehat{\bm 1_A}(\chi_0)|^{2\ell}=\alpha^{k+\ell}$ and by plugging this into Eq.~(\ref{eq_2}), we have
\begin{equation}\label{eq_3}
\sum_{\chi\ne\chi_0}(\widehat{\bm 1_A}(\chi))^{(k-\ell)}|\widehat{\bm 1_A}(\chi)|^{2\ell}=-\alpha^{k+\ell}.
\end{equation}

From Plancherel's formula, we have $\alpha=\langle{\bm 1_A}, {\bm 1_A}\rangle=%\langle\widehat{\bm 1_A}, \widehat{\bm 1_A}\rangle_{\mathbb C}=
\sum_{\chi\in\widehat G}|\widehat{\bm 1_A}(\chi)|^2$, and hence
\[\sum_{\chi\ne \chi_0}|\widehat{\bm 1_A}(\chi)|^2=\alpha-\alpha^2.\]
Combining with Eq.~(\ref{eq_3}), we have
\[
\sum_{\chi\ne\chi_0}|\widehat{\bm 1_A}(\chi)|^2\left((1-\alpha)(\widehat{\bm 1_A}(\chi))^{(k-\ell)}|\widehat{\bm 1_A}(\chi)|^{2\ell-2}+ \alpha^{k+\ell-1}\right)=0,
\]
which means there must exist at least one $\chi\in\widehat G\backslash\{\chi_0\}$, such that the real part
\[
\mathfrak{Re}((\widehat{\bm 1_A}(\chi))^{(k-\ell)}|\widehat{\bm 1_A}(\chi)|^{2\ell-2})\le -\alpha^{k+\ell-1}\slash(1-\alpha).
\]
For this special $\chi$, one must have
$$|\widehat{\bm 1_A}(\chi)|\ge \left|\mathfrak{Re}((\widehat{\bm 1_A}(\chi))^{(k-\ell)}|\widehat{\bm 1_A}(\chi)|^{2\ell-2})\right|^{1\slash(k+\ell-2)}\ge\left(\frac{\alpha^{k+\ell-1}}{1-\alpha}\right)^{1\slash(k+\ell-2)}.$$
Hence the lemma is proved.
\end{proof}

Finally, we can show a large subset with big weight under any decomposition can not be $(k, \ell)$-sum-free.

\begin{lemma}\label{lem_summation_fourier}
Let $k+\ell\ge 4$, $\lambda\in [0, k+\ell-3]$, and $m\ge \max\{9+\lambda, (2+\theta)(1+\frac1{k+\ell})(k+\ell)^{1\slash(k+\ell-2)}\}$.
If  $A\subset \mathbb F_p^n$ satisfies  $|A|\geq mp^{n-1}$ and $\omega(A)> p-\theta$ under any  decomposition, then $A$ is not $(k, \ell)$-sum-free. %{\color{red}(we only need $\omega> p-\theta$ based on  Theorem~\ref{thm_without_fourier})}
%for all one-dim decomposition $\mathbb F_p^n=L\times K$ with $\dim(L)=1$ and set decomposition $A=\sqcup_{i\in [0, p-1]}\{A_i+iv\}$ of  for any $v\in L\backslash\{0\}$, we have $\omega\ge p-\theta$, then $A$ is not $(k, \ell)$-solution-free.
\end{lemma}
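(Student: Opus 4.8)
The plan is to argue by contradiction: suppose $A$ is $(k,\ell)$-sum-free and derive a contradiction with the lower bound on $m$. Write $s:=k+\ell$ and let $\alpha:=|A|/p^n$ be the density, so $\alpha\ge m/p$, and note $0<\alpha<1$ because a $(k,\ell)$-sum-free set must be a proper subset of $\mathbb F_p^n$. Applying Lemma~\ref{lem_exists_chi_small} to $G=\mathbb F_p^n$ produces a nontrivial character $\chi$ with
\[
|\widehat{\bm 1_A}(\chi)|\ \ge\ \Big(\tfrac{\alpha^{s-1}}{1-\alpha}\Big)^{1/(s-2)}\ \ge\ \alpha^{(s-1)/(s-2)}\ \ge\ (m/p)^{(s-1)/(s-2)},
\]
where the middle step uses $1-\alpha<1$.

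Next I would translate this Fourier coefficient into the decomposition language of Section~\ref{sec_add_struc}. Since $\chi\neq\chi_0$ and $p$ is prime, $K:=\ker\chi$ is a codimension-$1$ subspace; fixing any $v\notin K$ gives a decomposition $(v,K)$, and $\zeta:=\chi(v)$ is a primitive $p$-th root of unity. As $\chi$ is constant on the cosets of $K$, a direct computation gives $\widehat{\bm 1_A}(\chi)=p^{-n}\sum_{i\in\mathbb F_p}|A_i|\,\overline{\zeta}^{\,i}$, where $A_i$ is as in \eqref{eqai}. Because $\sum_{i\in\mathbb F_p}\overline{\zeta}^{\,i}=0$, we may subtract any constant $u$ from each $|A_i|$ without changing the sum, so $|\widehat{\bm 1_A}(\chi)|\le p^{-n}\sum_{i\in\mathbb F_p}\big||A_i|-u\big|$. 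The hypotheses of Lemma~\ref{lem_four_coeff_small} are met ($\lambda\le k+\ell-3$, $k+\ell\ge 4$, $m\ge 9+\lambda$, $|A|\ge mp^{n-1}$, and $\omega(A)>p-\theta$ under $(v,K)$ by assumption), so there is a nonnegative integer $u$ with $\sum_i||A_i|-u|\le(2+\theta)p^{n-1}$, and hence $|\widehat{\bm 1_A}(\chi)|\le(2+\theta)/p$.

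Finally, combining the two bounds gives $(m/p)^{(s-1)/(s-2)}\le(2+\theta)/p$, i.e. $m^{(s-1)/(s-2)}\le(2+\theta)p^{1/(s-2)}$. Using $p=ms+\lambda+2<(m+1)s$ (valid since $\lambda\le s-3$) and dividing through by $m^{1/(s-2)}$, this becomes $m<(2+\theta)\big(1+\tfrac1m\big)^{1/(s-2)}s^{1/(s-2)}$; the elementary inequality $(1+\tfrac1m)^{1/(s-2)}\le 1+\tfrac1{(s-2)m}\le 1+\tfrac1s$, which holds because $m\ge 9$ and $s\ge 4$, then yields $m<(2+\theta)(1+\tfrac1s)s^{1/(s-2)}$, contradicting the hypothesis $m\ge(2+\theta)(1+\tfrac1{k+\ell})(k+\ell)^{1/(k+\ell-2)}$. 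Thus $A$ is not $(k,\ell)$-sum-free. I expect the only genuinely delicate points to be the codimension-$1$ kernel identification (so that the character really corresponds to a decomposition to which Lemma~\ref{lem_four_coeff_small} applies) and keeping the concluding chain of inequalities tight enough to land exactly on the stated threshold for $m$; the rest is routine bookkeeping.
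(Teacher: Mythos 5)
Your proof is correct and follows essentially the same route as the paper: invoke Lemma~\ref{lem_exists_chi_small} to get a large nontrivial Fourier coefficient, identify $K=\ker\chi$ as a codimension-one subspace to obtain a decomposition, and then contradict the upper bound $(2+\theta)/p$ supplied by Lemma~\ref{lem_four_coeff_small}. The only difference is in the final bookkeeping — you discard the factor $\frac{1}{1-\alpha}$ and recover the stated threshold via $p<(m+1)(k+\ell)$ and a Bernoulli estimate, whereas the paper bounds $\alpha\ge\frac1{k+\ell+1}$ and uses monotonicity — and both land exactly on the hypothesized lower bound for $m$.
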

\begin{proof}
On the contrary, if $A$ is $(k, \ell)$-sum-free, by Lemma~\ref{lem_exists_chi_small}, there exists some $\chi\ne \chi_0$ such that $|\widehat{\bm 1_A}(\chi)|\ge (\frac{\alpha^{k+\ell-1}}{1-\alpha})^{\frac 1{k+\ell-2}}$ with $\alpha=\frac{|A|}{p^n}\geq \frac{m}{p}$. Let $\zeta=e^{\frac{2\pi \sqrt{-1}}p}$. As $\chi\ne \chi_0$, there exists some $v\in \mathbb F_p^n$ such that $\chi(v)=\zeta^{-1}$. Let $K=\ker(\chi)$, which is a subspace of dimension $n-1$ not containing $v$.%$L=\langle v\rangle$ and let and $\mathbb F_p^n=L\times K$

Under the decomposition $(v,K)$, each $g\in A$ can be written as $g=iv+ g'$ for some $i\in {\mathbb F_p}$ and $g'\in A_i\subset K$ in a unique way.  Then $\overline{\chi(g)}=\zeta^i$, and
\[\widehat{\bm 1_A}(\chi)=\langle \bm 1_A, \chi\rangle=\frac 1{p^n}\sum_{g\in A}\overline{\chi(g)}=\frac 1{p^n}\sum_{i\in {\mathbb F_p}}|A_i|\zeta^i = \frac 1{p^n}\sum_{i\in {\mathbb F_p}}(|A_i|-u)\zeta^i\]
for any constant $u$ since  $\sum_{i\in {\mathbb F_p}}\zeta^i=0$. Since $\omega> p-\theta$, by setting $u$ to be the integer in Lemma~\ref{lem_four_coeff_small}, we have
\[
|\widehat{\bm 1_A}(\chi)|\le \frac 1{p^n}\sum_{i\in {\mathbb F_p}}||A_i|-u|\le  \frac 1{p^n}(2+\theta)p^{n-1}=\frac{2+\theta}p.
\]
As $m\ge (2+\theta)(1+\frac1{k+\ell})(k+\ell)^{1\slash(k+\ell-2)}$, $\frac{2+\theta}p< \frac{2+\theta}{m(k+\ell)}\le \frac 1{k+\ell+1}\left(\frac1{k+\ell}\right)^{\frac 1{k+\ell-2}}$.

Notice that the function $(\frac{\alpha^{k+\ell-1}}{1-\alpha})^{\frac 1{k+\ell-2}}$ is monotonically increasing with $\alpha$. As  $m\geq 9+\lambda$, $\alpha\ge\frac m p\geq \frac {m}{(k+\ell)m+2+\lambda}\ge \frac 1{k+\ell+1}$, and hence $(\frac{\alpha^{k+\ell-1}}{1-\alpha})^{\frac 1{k+\ell-2}}\ge (\frac{{\frac 1 {k+\ell+1}}^{k+\ell-1}}{1-1\slash(k+\ell+1)})^{\frac 1{k+\ell-2}} = \frac 1{k+\ell+1}\left(\frac1{k+\ell}\right)^{\frac 1{k+\ell-2}}$. So $ |\widehat{\bm 1_A}(\chi)|< (\frac{\alpha^{k+\ell-1}}{1-\alpha})^{\frac 1{k+\ell-2}}$, which is a contradiction.
\end{proof}
%\begin{remark}
%As $2+\theta=2(k+\ell)+\lambda\ge 8+\lambda$, the first restriction for $m$ is not necessary.
%\end{remark}

Lemma~\ref{lem_summation_fourier} shows that any $(k, \ell)$-sum-free set $A\subset \mathbb F_p^n$ of size at least $ mp^{n-1}$ must have a small weight  $\omega(A)\leq m+1$ under some decomposition. Then combining
 Theorem~\ref{thm_without_fourier}, we prove Theorem~\ref{thm_with_fourier} completely.

\section{When $\lambda\in\{0, 1\}$}\label{sec_unique_extrem}

When $\lambda\in\{0, 1\}$, the extremal structure of a $(k, \ell)$-sum-free set in $\mathbb F_p^n$ of size $(m+1)p^{n-1}$ is a unique cuboid up to isomorphism by Theorem~\ref{thm_k_l_max_structure}. In this section, we study this special case for all $k>\ell\geq 1$ and $(k,\ell)\neq (2,1)$.  When $\lambda=0$ and  $k+\ell\ge 5$, and $\lambda=1$ and $k+\ell\ge 6$, a nontrivial $(k, \ell)$-sum-free set of size at least $mp^{n-1}$  must be normal by Theorem~\ref{thm_with_fourier}. So focusing on $\lambda\in\{0, 1\}$, the only cases left are  when $(k+\ell,\lambda)=(5,1)$ or $(4,1)$. Here, the $(4,0)$ case is not applicable since $p$ is a prime.

The idea is similar to the proof of  Theorem~\ref{thm_with_fourier}, but we consider $(k, \ell, \lambda)\in\{(4, 1, 1), (3, 2, 1),(3, 1, 1)\}$ separately.
First, similar to Section~\ref{sbsec:c}, we show that the weight of $A$ is either too big or too small: if there exists one decomposition such that the weight is not too big (not reaching $p-\theta$), then $\omega(A)\le m+2$.
Second, similar to Section~\ref{sbsec:a}, from that $C_m$ is $(k, \ell)$-sum-free, we characterize all the possible patterns of $C_m$ up to isomorphism. Finally, from each pattern of $C_m$, we carefully determine all possible structures of $A$ exhaustively. In this final part,
 very detailed structure analysis will be applied where new methods are needed to efficiently check whether $A$ satisfying certain property exists or not. Some common criterions frequently used are listed as follows. For convenience, we call  an equation $r_1+\cdots +r_k= s_1+\cdots + s_\ell$ a \emph{$(k, \ell)$-sum in $S$} if all terms in this equation are in $S$.

\begin{lemma}\label{lem_contradiction_list_m+1}
Let $A\subset \mathbb F_p^n$ be a $(k, \ell)$-sum-free set of size at least $mp^{n-1}$, such that under  certain decomposition $\omega(A)= m+1$. Then each $(k, \ell)$-sum in $C_{m+1}$ involves at most two distinct values.
\end{lemma}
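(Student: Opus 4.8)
The plan is to mimic the ``second claim'' in the proof of Theorem~\ref{thm_without_fourier}: suppose a $(k,\ell)$-sum in $C_{m+1}$ involves three distinct values $b_s, b_t, b_u$ with $s,t,u\in[m+1]$ pairwise distinct, and derive a size contradiction via Fact~\ref{fact1}. First I would record the standing numerical facts: since $|A|\ge mp^{n-1}$ and $\omega(A)=m+1$, we have $\sum_{i=1}^{m+1}\beta_i=|A|/p^{n-2}\ge mp$, and of course each $\beta_i\le p$. So $\beta_s+\beta_t+\beta_u = \sum_{i=1}^{m+1}\beta_i - \sum_{i\in[m+1]\setminus\{s,t,u\}}\beta_i \ge mp - (m-2)p = 2p$.

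Next I would apply Fact~\ref{fact1}(2) to the given $(k,\ell)$-sum. The equation $\sum_{i\in[k]} b_{r_i} = \sum_{j\in[\ell]} b_{s_j}$ with all indices in $[\omega]=[m+1]$ forces $\sum_{i=1}^k |B_{r_i}| + \sum_{j=1}^\ell |B_{s_j}| \le p^{n-1} + (k+\ell-2)p^{n-2}$. Since $b_s$, $b_t$, $b_u$ all appear among the $k+\ell$ terms, and all summands on the left are nonnegative, this gives $|B_s| + |B_t| + |B_u| \le p^{n-1} + (k+\ell-2)p^{n-2}$, i.e. $\beta_s + \beta_t + \beta_u \le p + k+\ell-2$. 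Combining with the lower bound $\beta_s+\beta_t+\beta_u\ge 2p$ yields $2p \le p + k + \ell - 2$, that is $p \le k+\ell-2$. But under assumption~\eqref{eqp} we have $p = m(k+\ell)+\lambda+2 > k+\ell$ whenever $m\ge 1$, a contradiction. (In the concrete regimes of this section $p$ is in fact much larger than $k+\ell$, but even the crude bound suffices.) Hence every $(k,\ell)$-sum in $C_{m+1}$ uses at most two distinct values.

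The argument is essentially routine, so there is no serious obstacle; the only point requiring a small amount of care is the bookkeeping that Fact~\ref{fact1}(2) applies here. Its hypothesis is merely that the chosen subscripts lie in $[\omega]$ and satisfy the subscript-sum identity, which is exactly what a $(k,\ell)$-sum in $C_{m+1}=\{b_1,\dots,b_{m+1}\}$ provides. One should also note that the three distinct values $b_s,b_t,b_u$ genuinely correspond to three distinct indices among $\{1,\dots,m+1\}$ (distinct elements of $\mathbb{F}_p$ sit in distinct parts of the decomposition), so that the three terms $\beta_s,\beta_t,\beta_u$ are genuinely distinct summands in $\sum_{i=1}^{m+1}\beta_i$ and the subtraction $\ge mp-(m-2)p$ is legitimate. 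With these remarks in place the proof is complete.
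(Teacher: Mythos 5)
Your proof is correct and follows essentially the same route as the paper's: lower-bound the three parts corresponding to the distinct values by $2p^{n-1}$ using $\omega(A)=m+1$ and $|A|\ge mp^{n-1}$, upper-bound them via Fact~\ref{fact1}(2), and conclude $p\le k+\ell-2$, a contradiction. The only (cosmetic) difference is that the paper first disposes of $n=1$ separately, where $C_{m+1}=A$ is itself $(k,\ell)$-sum-free and the claim is vacuous, since the decomposition machinery is set up for $n\ge 2$.
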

\begin{proof}
When $n=1$, the statement is trivial as $C_{m+1}$ is $(k, \ell)$-sum-free.
%The case $n=1$ is trivial.
Assume that $n\ge 2$. Suppose  $r_1+\cdots +r_k= s_1+\cdots + s_\ell$ is a $(k, \ell)$-sum in $C_{m+1}$, then by  Fact~\ref{fact1},
     $\sum_{i=1}^k|A_{r_i}|+\sum_{j=1}^\ell|A_{s_j}|\le p^{n-1}+ (k+\ell-2)p^{n-2}$. If the $(k, \ell)$-sum has at least three distinct values being $t_1, t_2, t_3$, then from $\omega= m+1$ and $|A|\ge mp^{n-1}$, $\sum_{i=1}^3|A_{t_i}| \ge 2p^{n-1}$. But $\sum_{i=1}^3|A_{t_i}|\le \sum_{i=1}^k|A_{r_i}|+\sum_{j=1}^\ell|A_{s_j}|\le p^{n-1}+ (k+\ell-2)p^{n-2}$, which leads to a contradiction because $k+\ell-2< p$.
\end{proof}

\begin{lemma}\label{lem_contradiction_list}
%The notation $k, \ell, \lambda, m, p, n$ succeeds the same meaning as before.
Let $A\subset \mathbb F_p^n$ be a $(k, \ell)$-sum-free set of size at least $mp^{n-1}$, such that under  certain decomposition $\omega(A)= m+2$. When $m\ge 6$,  all of the followings hold.
\begin{itemize}
\item[(1)] Each $(k, \ell)$-sum in $C_{m+2}$  involves at most three distinct values.
\item[(2)] Every two $(k, \ell)$-sums in $C_{m+2}$ involve at most four distinct values in total.
\item[(3)] If two $(k, \ell)$-sums in $C_{m+2}$ involve four distinct values, then each value in $C_m$ appears at most once in these two sums.
\end{itemize}
%
%
%
%Let $A\subset \mathbb F_p^n$ with $n\ge 1$ and $|A|\ge mp^{n-1}$ be a $(k, \ell)$-sum-free set such that under some certain decomposition $\omega(A)=m+2$.
%When $m\ge 6$ and one of the following three holds, we can deduce that such $A$ does not exist.
%\begin{itemize}
%\item[(1)] There exists a $(k, \ell)$-sum with all elements in $C_{m+2}$ that involves at least four distinct values.
%\item[(2)] There exist two $(k, \ell)$-sums with all elements in $C_{m+2}$ that involve at least five distinct values together.
%\item[(3)] There exist two $(k, \ell)$-sums with all elements in $C_{m+2}$ involve at least four distinct values together, and at least one value from $C_m$ appears at least twice across these two sums.
%\end{itemize}
\end{lemma}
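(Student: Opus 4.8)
The plan is to run one ``mass‑counting'' scheme three times, each run pitting the global size bound $|A|\ge mp^{n-1}$ against the local bound supplied by Fact~\ref{fact1}. First I would dispose of $n=1$: then $\omega(A)=|A|$, so $\omega(A)=m+2$ is impossible by Lemma~\ref{lem_m+1_interval}; hence assume $n\ge 2$. Since $\omega(A)=m+2$, exactly $m+2$ of the parts $B_i$ are nonempty, each of size at most $p^{n-1}$, and their sizes sum to $|A|\ge mp^{n-1}$. Two elementary consequences will be the workhorses: (a) for any $r$ distinct values $t_1,\dots,t_r$ of $C_{m+2}$ one has $|A_{t_1}|+\dots+|A_{t_r}|\ge (r-2)p^{n-1}$, because the remaining $m+2-r$ parts contribute at most $(m+2-r)p^{n-1}$; and (b) $|B_m|\ge p^{n-1}/3$, from $mp^{n-1}\le (m-1)p^{n-1}+3|B_m|$ (using $|B_{m+1}|,|B_{m+2}|\le |B_m|$), so $|A_t|\ge p^{n-1}/3$ for every $t\in C_m$. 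I would also record Fact~\ref{fact1}(2) in the form that, for a $(k,\ell)$-sum in $C_{m+2}$ with distinct values $t_1,\dots,t_d$ and multiplicities $c_1,\dots,c_d\ge 1$ (counting both sides),
\[ c_1|A_{t_1}|+\dots+c_d|A_{t_d}|\le p^{n-1}+(k+\ell-2)p^{n-2}, \]
and note that the right-hand side is $<2p^{n-1}$ since $k+\ell-2<p$.

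For (1): if some $(k,\ell)$-sum involves four distinct values $t_1,\dots,t_4$, then dropping the $c_a$ gives $|A_{t_1}|+\dots+|A_{t_4}|<2p^{n-1}$, contradicting (a) with $r=4$. For (2): given two $(k,\ell)$-sums $S_1,S_2$ involving at least five distinct values between them, I would add the two instances of the displayed inequality and drop all multiplicities; since every value occurring in $S_1$ or in $S_2$ appears on at least one of these right-hand sides, the sum of $|A_t|$ over that set of values is $\le 2\bigl(p^{n-1}+(k+\ell-2)p^{n-2}\bigr)$, while (a) with $r\ge5$ gives $\ge 3p^{n-1}$; comparing yields $p\le 2(k+\ell-2)$, impossible since $p=m(k+\ell)+\lambda+2$. (These two parts do not even use each other.)

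For (3), the crux: let $S_1,S_2$ have (exactly) four distinct values $t_1,\dots,t_4$ in total, and suppose for contradiction that some $t_{j_0}\in C_m$ occurs at least twice among the $2(k+\ell)$ terms of $S_1$ and $S_2$, i.e.\ its combined multiplicity $c_{j_0}\ge 2$. Adding the two Fact~\ref{fact1} inequalities gives $\sum_{a\le 4}c_a|A_{t_a}|\le 2\bigl(p^{n-1}+(k+\ell-2)p^{n-2}\bigr)$; since $c_a\ge 1$ for all $a$ and $c_{j_0}\ge 2$, the left side is $\ge \bigl(|A_{t_1}|+\dots+|A_{t_4}|\bigr)+|A_{t_{j_0}}|$, which by (a) with $r=4$ and (b) with $t_{j_0}\in C_m$ is $\ge 2p^{n-1}+p^{n-1}/3$. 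Hence $p^{n-1}/3\le 2(k+\ell-2)p^{n-2}$, i.e.\ $p\le 6(k+\ell)-12$; but $m\ge 6$ forces $p=m(k+\ell)+\lambda+2\ge 6(k+\ell)+2$, a contradiction. The same computation covers the weaker reading ``$t_{j_0}$ lies in both $S_1$ and $S_2$''.

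I do not expect a serious obstacle: the three arguments are essentially the same two-line inequality chase. The place that needs care is estimate (b) — this is exactly where the hypothesis $\omega(A)=m+2$ (rather than something larger) is used — and, in (3), the bookkeeping that a $C_m$-value repeated across the two sums genuinely contributes one extra copy of $|A_{t_{j_0}}|\ge p^{n-1}/3$ to the summed inequality. A secondary point is merely fixing the meaning of ``appears at most once'' as ``combined multiplicity at most $1$''; the argument above proves exactly that, hence also the weaker variants one might want to invoke in Section~\ref{sbsc:51}.
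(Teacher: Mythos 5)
Your proposal is correct and follows essentially the same route as the paper: the lower bound $\sum_{i}|A_{t_i}|\ge (r-2)p^{n-1}$ for $r$ distinct parts versus the Fact~\ref{fact1} upper bound for each $(k,\ell)$-sum, with the repeated $C_m$-value in part (3) contributing an extra $|A_{t_{j_0}}|\ge p^{n-1}/3$ (the paper phrases this as $|B_m|\le|A_{t_1}|\le 2(k+\ell-2)p^{n-2}$ and then bounds $|A|$, which is the same inequality). No gaps.
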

\begin{proof} Similarly, we only need to consider the case $n\ge 2$. From $\omega= m+2$ and $|A|\ge mp^{n-1}$, we have $\sum_{i=1}^s|A_{t_i}|\ge (s-2)p^{n-1}$ for any $s$ distinct values $t_1,\ldots, t_s$ in $C_{m+2}$, where $s\geq 4$. Also, each $(k,\ell)$ sum $r_1+\cdots +r_k= s_1+\cdots + s_\ell$  in $C_{m+2}$ gives $\sum_{i=1}^k|A_{r_i}|+\sum_{j=1}^\ell|A_{s_j}|\le p^{n-1}+ (k+\ell-2)p^{n-2}$ by Fact~\ref{fact1}. All cases are proved by contradiction as in Lemma~\ref{lem_contradiction_list}.
% Corollary~\ref{cor:sumbound}As $n=1$ is a degenerate case, we only consider when $n\ge 2$.

For (1), it is easy since $\sum_{i=1}^4|A_{t_i}|\ge 2p^{n-1}$  if there are four different values $t_1, \ldots, t_4$ in the $(k,\ell)$-sum.
For (2), suppose  two $(k,\ell)$-sums $r_1+\cdots +r_k= s_1+\cdots + s_\ell$ and $r_1'+\cdots +r_k'= s_1'+\cdots + s_\ell'$ in $C_{m+2}$ involve five different values $t_1, \ldots, t_5$. Then
$3p^{n-1}\le \sum_{i=1}^5|A_{t_i}|\leq \sum_{i=1}^k(|A_{r_i}|+|A_{r_i'}|)+\sum_{j=1}^\ell(|A_{s_j}|+|A_{s_j'}|)\le 2p^{n-1}+ 2(k+\ell-2)p^{n-2}$, which leads to a contradiction as $2(k+\ell-2)<p$. For (3), if the two mentioned $(k,\ell)$-sums involve four different values $t_1, \ldots, t_4$, and $t_1\in C_m$ appears at least twice. Then  $ 2p^{n-1}\le \sum_{i=1}^4|A_{t_i}| \le \sum_{i=1}^k(|A_{r_i}|+|A_{r_i'}|)+\sum_{j=1}^\ell(|A_{s_j}|+|A_{s_j'}|)\le 2p^{n-1}+ 2(k+\ell-2)p^{n-2}$ as before. As $t_1$ occurs at least twice,
$|A_{t_1}|\le \sum_{i=1}^k(|A_{r_i}|+|A_{r_i'}|)+\sum_{j=1}^\ell(|A_{s_j}|+|A_{s_j'}|)-\sum_{i=1}^4|A_{t_i}|\le (2p^{n-1}+ 2(k+\ell-2)p^{n-2})-2p^{n-1}= 2(k+\ell-2)p^{n-2}$. As $t_1\in C_m$, $|B_m|\le |A_{t_1}|$ and hence $|A|=\sum_{i=1}^{m-1}|B_{i}|+|B_m|+|B_{m+1}|+|B_{m+2}|\le (m-1)p^{n-1}+3|B_m|\le (m-1)p^{n-1}+6(k+\ell-2)p^{n-2}$. This leads to a contradiction since $6(k+\ell-2)<p$ when $m\ge 6$.
\end{proof}

\subsection{$(k+\ell,\lambda)=(5,1)$}\label{sbsc:51}

Recall that when $k+\lambda=5$ and $\lambda=1$, we have $p=5m+3$ and $\theta=8$. In the spirit of Lemmas \ref{lem_l_upper_default} and \ref{lem_ell_upper_bound}, we have the following lemma to describe the upper bound of $\omega$ and the possible forms of $C_m$, whose proof is given in Appendix~\ref{ap1}.

\begin{lemma}\label{lem_51_whole}
Let $(k+\ell,\lambda)=(5,1)$ and $m\ge \max\{5T(1\slash5),21\}$. Suppose $A\subset \mathbb F_p^n$ is a $(k, \ell)$-sum-free  subset with $|A|\ge mp^{n-1}$. Then there exists a decomposition of $\mathbb F_p^n$ such that $\omega(A)\le m+2$. Further, under this decomposition, we have the following properties.
\begin{itemize}
  \item[(1)] $C_m$ is  $(k, \ell)$-sum-free. Moreover,
\begin{equation}\label{eq_8}
kC_m\cap \ell C_{\omega}=\emptyset
\end{equation}
except when $(k,\omega)= (3,m+2)$, we have
\begin{equation}\label{eq_9}
3C_{m-1}\cap 2 C_{\omega}=\emptyset.
\end{equation}
  \item[(2)] Up to isomorphism, $C_m$ must be one of the following three forms,
%For $k+\ell=5$ and $\lambda=1$, let $A\subset\mathbb F_p^n$ be a $(k, \ell)$-sum-free set with $|A|\ge mp^{n-1}$. Then for any $m\ge 5T(1\slash5)$ and $m\ge 21$, under any decomposition such that $\omega< p-8$, $C_m$ must be one of the two following form after some isomorphism:
\begin{itemize}
\item[(i)] an interval of length $m$;
\item[(ii)] covered by  a $(k, \ell)$-sum-free interval of length $m+1$; or
%a $1$-holed interval with a length $m+1$ cover which is also $(k, \ell)$-sum-free.
\item[(iii)] %a $1$-holed interval of the form
$C_m=\{a\}\cup [a+2, a+m-1]\cup\{a+m+1\}$ with $a=m\slash 3\equiv 2m+1 \pmod p$.
\end{itemize}
\end{itemize}
\end{lemma}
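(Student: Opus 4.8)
The plan is to mirror, now in the boundary regime $\lambda=k+\ell-4=1$, the three-stage strategy used to prove Theorem~\ref{thm_with_fourier}: (A) produce a decomposition under which $A$ has small weight; (B) deduce that $C_m$ is $(k,\ell)$-sum-free together with the ``moreover'' in part~(1); (C) classify $C_m$ up to isomorphism.

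For~(A): since $A$ is $(k,\ell)$-sum-free and $m\ge 21$, Lemma~\ref{lem_summation_fourier} supplies a decomposition $(v,K)$ with $\omega(A)\le p-\theta=5m-5$. Assuming $\omega\ge m+3$ for contradiction, I would re-run the block argument of Lemmas~\ref{lem_l_upper_default}--\ref{lem_ell_upper_bound} specialized to $k+\ell=5$: partition $[p-\theta]=[5(m-1)]$ into $m-1$ quintuples, the first $m-2$ of the shape $\{i\}\cup[a_i,b_i]$ with index-sum $\ge p+k+\ell-1$, so that Lemma~\ref{coro_beta_sum} bounds their $\beta$-sum by $p+k+\ell-2$ (padding indices exceeding $\omega$ with copies of $\omega$ exactly as in Lemma~\ref{lem_l_upper_default}), and the last quintuple $\{m-1,\dots,m+3\}$, whose index-sum is $5m+5=p+k+\ell-3$, handled by Lemma~\ref{lem_improving_improving_coro} (with $t=3$, legitimate since $\omega\ge m+3$). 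Summing, $\sum_{i=1}^{\omega}\beta_i\le(m-1)(p+k+\ell-2)<mp$, contradicting $|A|\ge mp^{n-1}$ -- unless the exceptional alternative of Lemma~\ref{lem_improving_improving_coro} occurs, which forces $\ell=1$ and makes $C_{m-1},\dots,C_{m+3}$ nested intervals with $C_{i-2}$ centred inside $C_i$. This residual case is the step I expect to be the main obstacle: there $Supp(A)\supseteq C_{m+3}$ is an interval $[c,c+m+2]$ of length $m+3$ whose interior positions carry the $m-1$ largest parts, $\beta_{m-1}\ge 2p/5$ forces $4C_{m-1}\cap C_\omega=\emptyset$ through the Kneser argument of Lemma~\ref{lem_stru_of_Cm}, and a direct additive count over the ``end'' positions of $[c,c+m+2]$ fed into Fact~\ref{fact1} should again contradict $|A|\ge mp^{n-1}$, in the spirit of the endgame of Lemma~\ref{lem_ell_upper_bound}.

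For~(B): from $\omega\le m+2$ and $|A|\ge mp^{n-1}$ we get $3\beta_m\ge\sum_{i=1}^{m+2}\beta_i-(m-1)p\ge p$, hence $\beta_m\ge p/3$. Any $(k,\ell)$-sum in $C_m$ would, via Fact~\ref{fact1}(2), supply $k+\ell$ parts of total size $\ge\tfrac{(k+\ell)p}{3}p^{n-2}>(p+k+\ell-2)p^{n-2}$ (using $p>4.5$), a contradiction; so $C_m$ is $(k,\ell)$-sum-free. For the ``moreover'': if $k=4$ then $k\beta_m\ge\tfrac{4p}{3}\ge p+k-1$, so Lemma~\ref{lem_stru_of_Cm} gives $kC_m\cap\ell C_\omega=\emptyset$; if $k=3$ and $\omega\le m+1$ then $\beta_m\ge p/2$, so $3\beta_m\ge\tfrac{3p}{2}\ge p+k-1$ and the same lemma applies. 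The only remaining case $(k,\omega)=(3,m+2)$ is treated by replacing $C_m$ with $C_{m-1}$: here $\sum_{i\le m-2}\beta_i\le(m-2)p$ gives $\beta_{m-1}\ge p/2$, so Kneser forces $B_{r_1}+B_{r_2}+B_{r_3}=K$ for all $r_1,r_2,r_3\in[m-1]$, whence $3C_{m-1}\cap 2C_\omega=\emptyset$ by Fact~\ref{fact1}(1).

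For~(C): $C_m$ being $(k,\ell)$-sum-free of size $m\le p/5$, writing $kC_m$ and $\ell C_m$ as iterated sums of $2C_m$ (plus at most one further $C_m$) and applying Theorem~\ref{thm_Cauchy_daven} yields $p\ge 2|2C_m|+m-1$, i.e.\ $|2C_m|\le 2m+2$; since $m\ge 5T(1/5)$, Definition~\ref{deftau} then gives the covering property, so up to isomorphism $C_m\subseteq I=[a,a+m+t]$ with $t\in\{-1,0,1,2\}$. The case $t=-1$ is form~(i), and $t=0$ is form~(ii) via Claim~\ref{clm3}. For $t\in\{1,2\}$, writing $C_m=\bigsqcup_{i=1}^{h}I_i$ and invoking Claims~\ref{clmcm} and~\ref{clmi1} together with $p=|kI|+|\ell I|-|kI\cap\ell I|$ eliminates every subcase except $t=1$, $h=3$, which forces $C_m=\{a\}\cup[a+2,a+m-1]\cup\{a+m+1\}$. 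For this surviving shape, a direct computation shows $kC_m$ and $\ell C_m$ are again of the ``point--interval--point'' type with $|kC_m|+|\ell C_m|=p$, so $(k,\ell)$-sum-freeness forces $\mathbb F_p=kC_m\sqcup\ell C_m$; matching the span of $C_m$ against that of $\mathbb F_p\setminus kC_m$ pins down $a=m\cdot 3^{-1}=2m+1$, which is exactly form~(iii).
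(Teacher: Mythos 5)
Your stages (B) and (C) follow the paper's route essentially verbatim (the paper's Lemmas A.3 and A.4: the $\beta_m\ge p/3$ versus $\beta_{m-1}\ge p/2$ dichotomy, Lemma~\ref{lem_stru_of_Cm} for the ``moreover'', the bound $|2C_m|\le 2m+2$ feeding into the covering property, and Corollary~\ref{coro_k+ell-4} to isolate the point--interval--point shape with $ka+1=\ell(a+m+1)$). Stage (A) is where your plan diverges, and where it has a genuine gap.

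You propose to rule out every $\omega\in[m+3,p-\theta]$ in a single sweep: partition $[p-\theta]$ into quintuples, bound the last block $\{m-1,\dots,m+3\}$ (index sum $5m+5=p+k+\ell-3$) by Lemma~\ref{lem_improving_improving_coro} with $t=3$, and dispatch the exceptional alternative by an end-position count. The problem is that when the exceptional alternative occurs you only learn the structure of $C_{m-1},\dots,C_{m+3}$ (nested intervals, $\ell=1$), while $\omega$ may still be anywhere up to $p-\theta$. Your claimed bound $\beta_{m-1}\ge 2p/5$ follows from $\sum_{j=m-1}^{m+3}\beta_j\ge 2p$ only when $\omega\le m+3$; and, more seriously, the ``direct additive count over the end positions of $[c,c+m+2]$'' (which in the paper becomes the four $(4,1)$-sums $3(a-2)+(a-3+i)=a+m-5+i$ after pinning down $a=(m+4)/3$) only accounts for the parts indexed by $[a-2,a+m]=C_{m+3}$. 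If $\omega\ge m+4$ the remaining support elements $b_{m+4},\dots,b_\omega$ contribute positively to $|A|$ and are not controlled by those sums, so the inequality $|A|\le(m-1)p^{n-1}+O(p^{n-2})$ does not close. The paper avoids this trap by proving first (Lemma~\ref{lem_m+3}) that $\omega\ge m+4$ is impossible outright, via an averaging argument over the five quintuples $\{m-1,\dots,m+4\}\setminus\{j\}$ that uses only Lemma~\ref{coro_beta_sum} and Lemma~\ref{lem_improving_coro} -- neither of which has an exceptional case -- and only then confronts the exceptional structure, at $\omega=m+3$ exactly, where $Supp(A)=C_{m+3}$ is fully determined and the end-position count is exhaustive. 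To repair your plan you would need either to reinstate that intermediate step or to supply a new argument controlling $\sum_{j>m+3}\beta_j$ inside the exceptional case; neither is present in your sketch. (A minor point done correctly: you rightly do not cite Lemma~\ref{lem_l_upper_default} itself, whose hypothesis $k+\ell-5\ge\lambda/2$ fails for $(5,1)$, but redo its block computation; that part checks out.)
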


The following lemma shows the solution when the form (iii) of $C_m$ in Lemma~\ref{lem_51_whole} (2) occurs and gives the first abnormal form of $A$.

\begin{lemma}\label{lem_5_ex_1}
Let  $(k+\ell,\lambda)=(5,1)$ and $m\ge \max\{5T(1\slash5),21\}$. Suppose $A\subset \mathbb F_p^n$ is a $(k, \ell)$-sum-free  subset with $|A|\ge mp^{n-1}$ and define $a=2m+1$. Then under a decomposition with $\omega\le m+2$, the set $C_m=\{a\}\cup [a+2, a+m-1]\cup\{a+m+1\}$   up to isomorphism if and only if $A$ is of type 3.

\end{lemma}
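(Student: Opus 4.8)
The plan is as follows. Fix the decomposition $(v,K)$ with $\omega:=\omega(A)\le m+2$ furnished by Lemma~\ref{lem_51_whole}, under which $C_m$ is $(k,\ell)$-sum-free and satisfies the intersection relations of Lemma~\ref{lem_51_whole}(1). The ``if'' direction is immediate: if $A$ is of type $3$ then, up to isomorphism, $A=S\times\mathbb F_p^{n-1}$ with $S$ the set of case (iii), and in the associated coordinate decomposition $\omega=|S|=m$ and $C_m=S$. For ``only if'', after an isomorphism assume $C_m=\{a\}\cup[a+2,a+m-1]\cup\{a+m+1\}$ with $a=2m+1$ \emph{exactly}. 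Everything reduces to showing $\omega=m$: once this is known, $|A|=\sum_{i\in C_m}|A_i|\ge mp^{n-1}$, $|C_m|=m$ and $|A_i|\le p^{n-1}$ force $A_i=K$ for all $i\in C_m$ and $|A|=mp^{n-1}$, so $A=C_m\times K$. Writing $a':=a+1=2m+2$, one has $3a'\equiv m+3\pmod p$, hence $a'=(\ell m+k-1)(k-\ell)^{-1}$ in both $(k,\ell)=(4,1)$ and $(3,2)$, and $C_m=[a'-1,a'+m]\setminus\{a',a'+m-1\}$, so $A=C_m\times K$ is exactly the type--$3$ configuration of Definition~\ref{cons_3}.

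The arithmetic input is the iterated sumsets of $C_m=I\setminus\{a+1,a+m\}$ with $I=[a,a+m+1]$. An easy induction on $h$ shows that for $2\le h$ with $h(m+1)<p$,
\[
hC_m=[\,ha,\ ha+h(m+1)\,]\setminus\{\,ha+1,\ ha+h(m+1)-1\,\},
\]
an arithmetic interval with only its second and second-to-last elements deleted; in particular $|hC_m|=hm+h-1$. Combined with the $(k,\ell)$-sum-freeness of $C_m$ this yields two key identities: for $(k,\ell)=(4,1)$, $|4C_m|+|C_m|=(4m+3)+m=p$ and $4C_m\cap C_m=\emptyset$ force $4C_m=\mathbb F_p\setminus C_m$; for $(k,\ell)=(3,2)$, $|2C_m|+|3C_m|=(2m+1)+(3m+2)=p$ and $3C_m\cap 2C_m=\emptyset$ force $2C_m=\mathbb F_p\setminus 3C_m=:T$. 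Working in $\mathbb Z/(5m+3)$ with $a=2m+1$, one computes $3C_m=[m,4m+3]\setminus\{m+1,4m+2\}$, whence $T=[4m+2,m+1]\setminus\{4m+3,m\}$; this has the ``spike--block--spike'' shape of case (iii), so by the $\mathcal E$-spectrum computation carried out for $A^\star$ in the proof of Lemma~\ref{lem_not_contained}, $e_d(T)\ge 4>2$ for every $d$, i.e. $T$ is not an arithmetic progression.

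Now force $\omega=m$. For $(k,\ell)=(4,1)$: $k\ne 3$, so Lemma~\ref{lem_51_whole}(1) gives $4C_m\cap C_\omega=\emptyset$, hence $C_\omega\subseteq\mathbb F_p\setminus 4C_m=C_m$; as $C_m\subseteq C_\omega$ always, $\omega=m$. For $(k,\ell)=(3,2)$: assume $\omega\ge m+1$ and first establish $3C_m\cap 2C_{m+1}=\emptyset$. If $\omega=m+1$ this is Lemma~\ref{lem_51_whole}(1) with $C_\omega=C_{m+1}$. If $\omega=m+2$ --- the exceptional case, where the lemma supplies only the weaker $3C_{m-1}\cap 2C_{m+2}=\emptyset$ --- suppose $3C_m\cap 2C_{m+1}\ne\emptyset$; then some $(3,2)$-sum has all three ``$k$-side'' indices in $[m]$ and both ``$\ell$-side'' indices in $[m+1]$, so Fact~\ref{fact1} yields $3\beta_m+2\beta_{m+1}\le p+3$. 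Together with $\beta_i\le p$, $\sum_{i=1}^{m+2}\beta_i\ge mp$ and $\beta_m\ge\beta_{m+1}\ge\beta_{m+2}$ --- which give $3\beta_m\ge p$ and hence $\beta_m+\beta_{m+1}+\beta_{m+2}\le\beta_m+2\beta_{m+1}\le\tfrac p3+3$ --- one gets $mp\le(m-1)p+\tfrac p3+3$, i.e. $p\le 4.5$, a contradiction. In either case $2C_{m+1}\subseteq\mathbb F_p\setminus 3C_m=T$; but $|2C_{m+1}|\ge 2(m+1)-1=|T|$ by Theorem~\ref{thm_Cauchy_daven}, so $2C_{m+1}=T$ and $|2C_{m+1}|=2|C_{m+1}|-1$ with $|C_{m+1}|\ge 2$ and $|2C_{m+1}|\le p-2$; Theorem~\ref{thm_Vosper} then makes $C_{m+1}$, hence $T=2C_{m+1}$, an arithmetic progression, contradicting the previous paragraph. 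Thus $\omega=m$ here as well, and the lemma follows.

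The step I expect to fight hardest is the exceptional configuration $(k,\ell)=(3,2)$, $\omega=m+2$, where Lemma~\ref{lem_51_whole} supplies only $3C_{m-1}\cap 2C_{m+2}=\emptyset$ with $C_{m-1}$ depending on which fibre of $C_m$ is smallest, so the Vosper argument cannot be applied directly and must be bootstrapped through the weight inequality $3\beta_m+2\beta_{m+1}\le p+3$ as above. The other point requiring care is the wrap-around bookkeeping in $\mathbb Z/(5m+3)$ for the identities $4C_m=\mathbb F_p\setminus C_m$ and $2C_m=\mathbb F_p\setminus 3C_m$. The case $n=1$ is trivial: then $|A|\le m+1$ by Lemma~\ref{lem_m+1_interval}, and if $|A|=m+1$ then $A$ is an interval whose $m$-element subsets are intervals minus a single point, never isomorphic to the set of case (iii) (their $\mathcal E$-spectra differ); so $|A|=m$ and $A=C_m$ is itself of type $3$.
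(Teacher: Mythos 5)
Your proof is correct, and in its hardest part it takes a genuinely different route from the paper's. For the non-exceptional situations the two arguments are close in spirit: the paper observes $kC_m\cup\ell C_m=\mathbb F_p$ and checks by hand that no element $b\notin C_m$ can satisfy $b+(\ell-1)C_m\subseteq\ell C_m$, whereas you make the complementarity explicit ($4C_m=\mathbb F_p\setminus C_m$, resp.\ $2C_m=\mathbb F_p\setminus 3C_m=T$) and, for $(k,\ell)=(3,2)$, finish with Theorem~\ref{thm_Cauchy_daven} plus Theorem~\ref{thm_Vosper} against the fact that $T$ is not an AP. The real divergence is the exceptional case $(k,\omega)=(3,m+2)$: the paper works only with the weak relation $3C_{m-1}\cap 2C_{m+2}=\emptyset$, localizes $C_{m+2}$ inside $[2m-1,3m+4]$, and then eliminates each of the six candidate extra support elements by exhibiting explicit $(3,2)$-sums violating Lemma~\ref{lem_contradiction_list}. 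You instead upgrade the relation to $3C_m\cap 2C_{m+1}=\emptyset$ by a density argument --- a $(3,2)$-sum with that index profile gives $3\beta_m+2\beta_{m+1}\le p+3$ via Fact~\ref{fact1}, which combined with $3\beta_m\ge p$ and $\sum_i\beta_i\ge mp$ forces $p\le 4.5$ --- after which the same Vosper step applies verbatim; this removes the case enumeration entirely and is arguably cleaner. Two points in your write-up are asserted by analogy and should be verified explicitly. First, the formula $hC_m=[ha,ha+h(m+1)]\setminus\{ha+1,ha+h(m+1)-1\}$ and the resulting identities $|4C_m|+|C_m|=p$ and $|2C_m|+|3C_m|=p$ need a wrap-around check (they hold since $h(m+1)<p$ for $h\le 4$). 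Second, and more seriously, the claim $e_d(T)\ge 4$ for all $d$ ``by the computation for $A^\star$'' transfers a calculation done for a set of size $m$ to the set $T$ of size $2m+1$ and diameter $2m+3$; it does go through because $2m+3+\frac{p-1}{2}<p$ keeps all the witness pairs $(x,x\pm d)$ outside $T$, but since this single fact is what turns ``$C_{m+1}$ is an AP'' into your final contradiction, it deserves to be spelled out rather than cited by analogy.
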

\begin{proof}The sufficiency is trivial. We prove the necessity.
When $C_m= \{a\}\cup [a+2, a+m-1]\cup\{a+m+1\}$ with $a=2m+1$, it is easy to check that $kC_m\cup \ell C_m=\mathbb F_p$. We split into two cases.

If $(k,\omega)\neq (3,m+2)$, by Lemma~\ref{lem_51_whole} (1), we have
$kC_m\cap\ell C_{\omega}=\emptyset$.
As we already have $kC_m\cup \ell C_m=\mathbb F_p$, $\ell C_{\omega}=\ell C_m$. If $\omega> m$, there exists at least one more element $b\in C_{\omega}\backslash C_m$. Hence $b+(\ell-1)C_m\subset \ell C_{\omega}= \ell C_m$. If $\ell=1$, this gives $b\in C_m$, leading to a direct contradiction. If $\ell=2$, as $C_m= \{a\}\cup [a+2, a+m-1]\cup\{a+m+1\}$, the only way for $b$ satisfying $b+C_m\subset 2C_m$ is  $b\in C_m$, which also leads to a contradiction. Hence $\omega=m$ and $A\cong C_m\times\mathbb F_{p}^{n-1}$.

If $(k,\omega)= (3,m+2)$, by Lemma~\ref{lem_51_whole} (1)  we have $3C_{m-1}\cap 2 C_{m+2}=\emptyset$.
As $m\ge 8$, by all possible choices of $C_m\backslash C_{m-1}$, we must have $[3a+6, 3a+3m-3]\subseteq 3C_{m-1}$, and hence $2C_{m+2}\subseteq [3a+3m-2, 3a+5]=[4m-2, 6m+8]$.
Actually from this we can conclude that $C_{m+2}\subset [2m-1, 3m+4]$ by a simple two-step argument. First, from that $C_m=[2m+1, 3m+2]\backslash\{2m+2, 3m+1\}$ and $C_{m+2}+C_m\subset2C_{m+2}\subset[4m-2, 6m+8]$, we have $C_{m+2}\subset[2m-3, 3m+6]$; then from that $2\cdot C_{m+2}\subset2C_{m+2}\subset [4m-2, 6m+8]$, $C_{m+2}\subset [2m-1, 3m+4]$. 
As a consequence, $C_{m+2}\backslash C_m\subset\{2m-1, 2m, 2m+2, 3m+1, 3m+3, 3m+4\}$. Next, we show that this is impossible.
If $C_{m+2}\backslash C_m=\{2m+2, 3m+1\}$, then $C_{m+2}=[2m+1, 3m+2]$ and we have the following two $(3, 2)$-sums in $C_{m+2}$:
\begin{equation*}
    3(3m+2)=(2m+1)+(2m+2) \text{ and }
    (3m+1)+2(3m+2)=2(2m+1),
\end{equation*}
which contradict to Lemma~\ref{lem_contradiction_list} (3).
If $C_{m+2}\backslash C_m\neq\{2m+2, 3m+1\}$, then at least one in $\{2m-1, 2m, 3m+3, 3m+4\}$ should be in $C_{m+2}$. As $C_m=-C_m$, by symmetry we only need to show that $2m\in C_{m+2}$ or $2m-1\in C_{m+2}$ are impossible. If $2m\in C_{m+2}$, consider the following $(3, 2)$-sum
in $C_{m+2}$:
\begin{equation*}
    3m+2(3m+2)=2m+(2m+1);
\end{equation*}
if $2m-1\in C_{m+2}$, consider the following $(3, 2)$-sum
in $C_{m+2}$:
\begin{equation*}
    2(2m-1)+(2m+1)=3m+(3m-1).
\end{equation*}
Both of them lead to  contradictions to Lemma~\ref{lem_contradiction_list} (1).
%
%From $a=2m+1$, $C_m=\{2m+1\}\cup [2m+3, 3m]\cup\{3m+2\}$. Denote $\{b\}=C_m\backslash C_{m-1}$. If $b\in [2m+4, 3m-1]$, it is easy to check that $3C_m=3C_{m-1}$ and hence goes back to the former case. Then we analyse separately for the remaining four cases $b\in \{2m+1, 2m+3, 3m,3m+2\}$. We take $b=2m+3$ as an example, and leave other three cases to readers. % Four cases are much the same. We prove the $2m+3$ case as an example and leave the remaining three cases to interested readers.
%When $b=2m+3$, $C_{m-1}=\{2m+1\}\cup [2m+4, 3m]\cup\{3m+2\}$ and hence $3C_{m-1}=\{6m+3, 6m+3\}\cup[6m+6, 4m+1]$. This shows $2C_{m+2}\subseteq \{4m+2, 6m+4, 6m+5\}\cup[4m+4, 6m+2]$. So there exists at least one $b\in \mathbb F_p\backslash C_m$ such that $b+ C_m\subset \{4m+2, 6m+4, 6m+5\}\cup[4m+4, 6m+2]$. As $m\ge 8$, this cannot happen.
\end{proof}

Next, the two cases $(k, \ell, \lambda)=(4, 1, 1)$ and $(3,2,1)$ are handled separately in the following two lemmas.

\begin{lemma}\label{lem_(4,1,1)_finial}
Let $(k, \ell, \lambda)=(4, 1, 1)$  and $m\ge \max\{5T(1\slash5),21\}$. Suppose $A\subset\mathbb F_p^n$ is a {nontrivial} $(4, 1)$-sum-free set with $|A|\ge mp^{n-1}$. % and $\omega(A)\le m+2$ under a certain decomposition.
Then $A$ is  abnormal if and only if $A$ is of type 3 or 4.
\end{lemma}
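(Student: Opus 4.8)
The \emph{if} direction is immediate from facts already available: a set of type~$3$ or of type~$4$ is $(4,1)$-sum-free by a direct verification against Definitions~\ref{cons_3} and~\ref{cons_4}, and by Lemma~\ref{lem_not_contained} it is nontrivial and is not isomorphic to a set of type~$1$ or~$2$; hence it is abnormal. So the plan is to establish the converse: assuming $A\subset\mathbb F_p^n$ is abnormal, $(4,1)$-sum-free, and $|A|\ge mp^{n-1}$, show that $A$ is of type~$3$ or~$4$. The first step is to apply Lemma~\ref{lem_51_whole} to fix a decomposition $(v,K)$ under which $\omega:=\omega(A)\le m+2$, $C_m$ is $(4,1)$-sum-free, $4C_m\cap C_{\omega}=\emptyset$ (the exceptional clause of Lemma~\ref{lem_51_whole}(1) is vacuous since $k=4\ne 3$), and $C_m$ has one of the forms (i)--(iii) of Lemma~\ref{lem_51_whole}(2). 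Since $|A|\ge mp^{n-1}$ forces $\omega\ge m$, the argument then splits into the cases $\omega\in\{m,m+1,m+2\}$.

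When $\omega=m$, every nonempty part of $A$ equals $K$, so $A\cong C_m\times\mathbb F_p^{n-1}$. If $C_m$ is of form (i), Fact~\ref{factt1}(2) forces $A$ to be trivial or of type~$1$; if $C_m$ is of form (ii), the ambient $(4,1)$-sum-free interval of length $m+1$ is, up to isomorphism, the unique extremal interval of Lemma~\ref{lem_m+1_interval} (here $\lambda=1$), so $A$ is trivial by Lemma~\ref{lem_contain_cret}. Either outcome contradicts abnormality, so $C_m$ must be of form (iii), and then Lemma~\ref{lem_5_ex_1} gives that $A$ is of type~$3$. When $\omega=m+1$, I would reproduce the $\omega=m+1$ portion of the proof of Theorem~\ref{thm_without_fourier} essentially verbatim — the hypothesis $\lambda\le k+\ell-5$ there is used only through Lemma~\ref{lem_stru_of_Cm} to know that $C_m$ is $(k,\ell)$-sum-free, which here is supplied instead by Lemma~\ref{lem_51_whole}(1). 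That argument shows $4C_{m+1}\cap C_{m+1}$ is a singleton, bounds $|2C_{m+1}|\le 2m+2$ by Kneser's theorem, and then: the subcase $|2C_{m+1}|=2m+2$ runs into $p\ge 5m+5$ (via Theorem~\ref{thm_3k-4_r=0} and a count of $|4I\cap I|$), a contradiction as $p=5m+3$; while the subcase $|2C_{m+1}|=2m+1$ forces $C_{m+1}$ to be an interval with a prescribed endpoint and, exactly as at the end of Theorem~\ref{thm_without_fourier}, forces $A$ to be of type~$2$. Both contradict abnormality, so $\omega\ne m+1$.

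The substance is the case $\omega=m+2$, where the conclusion must be that $A$ is of type~$4$. Form (iii) for $C_m$ is excluded here, since Lemma~\ref{lem_5_ex_1} would then give $\omega=m$; thus $C_m$ is an interval of length $m$ or a $1$-holed interval of length $m+1$. The disjointness $4C_m\cap C_{m+2}=\emptyset$ is the main lever: $4C_m$ contains a long interval, so its complement lies in an interval of length at most $m+6$, and hence $C_{m+2}$ — a set of size $m+2$ containing $C_m$ — is a subset of an interval of length $\le m+6$ missing at most four points. I would then invoke Lemma~\ref{lem_contradiction_list}(1)--(3) to eliminate every configuration of $C_{m+2}$ containing an interior hole or an isolated extreme point, and combine this with the wrap-around position of $4C_m$ relative to $C_m$, to conclude that up to isomorphism $C_{m+2}=[2m+1,3m+2]$, $C_m=[2m+2,3m+1]$, and $C_{m+1}$ is one of the two adjacent intervals of length $m+1$. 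At that point one checks that the only $(4,1)$-relations inside $C_{m+2}$ are four ``boundary'' relations, e.g. $4(3m+2)\equiv 2m+2\pmod p$, each involving only the four indices $2m+1,2m+2,3m+1,3m+2$; writing $X=A_{2m+1}$, $Y=A_{2m+2}$, $Z=A_{3m+1}$, $W=A_{3m+2}$, Fact~\ref{fact1} turns them into $4W\cap Y=\emptyset$, $(3W+Z)\cap X=\emptyset$, $4X\cap Z=\emptyset$, $(3X+Y)\cap W=\emptyset$. Summing the corresponding size inequalities against $|A|\ge mp^{n-1}$ makes all slack vanish: it forces every interior part to equal $K$, forces $|Y|+|W|=|X|+|Z|=p^{n-1}$, and forces $|4W|=|W|$, $|4X|=|X|$. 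The Kneser argument used at the end of the proof of Theorem~\ref{thm_without_fourier} then shows that $W$ and $X$ are cosets of subspaces $H,H'$ of $K$; plugging this back into $(3W+Z)\cap X=\emptyset$ and $(3X+Y)\cap W=\emptyset$ forces $H=H'$ and pins down $Y$ and $Z$ as complements of cosets of $H$, and a final translation isomorphism (as in the type~$2$ derivation) normalizes everything to type~$4$ with $V=H$.

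The main obstacle is precisely this $\omega=m+2$ analysis, and within it two sub-steps are the delicate ones: first, the exhaustive elimination — using only Lemma~\ref{lem_contradiction_list} together with the explicit positions of $C_m$ and $4C_m$ on the cycle $\mathbb F_p$ — of every candidate shape of $C_{m+2}$ other than the interval $[2m+1,3m+2]$ (both ends of $C_m$ must be tracked relative to the wrap-around, and the $1$-holed case for $C_m$ handled as well); and second, verifying that the four coset / complement-of-coset constraints on $X,Y,Z,W$ are exactly compatible with the subspace data of type~$4$, in particular ruling out $H\ne H'$. Both are finite, fully explicit case checks, but bookkeeping-heavy; the single inequality $|A|\ge mp^{n-1}$ is what collapses all inequalities to equalities and makes the classification rigid.
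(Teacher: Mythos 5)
Your proposal is correct and follows essentially the same route as the paper: fix the decomposition from Lemma~\ref{lem_51_whole}, dispatch form (iii) via Lemma~\ref{lem_5_ex_1}, use the wrap-around position of $4C_m$ relative to $C_m$ to confine $C_\omega\setminus C_m$ to a $6$-element set, eliminate configurations with Lemmas~\ref{lem_contradiction_list_m+1} and~\ref{lem_contradiction_list}, and extract type~$4$ from the four boundary $(4,1)$-relations in $C_{m+2}=[2m+1,3m+2]$ by the coset-rigidity argument. The one genuine divergence is your treatment of $\omega=m+1$: the paper re-runs the table-based elimination in each $(i,j)$ sub-case, whereas you recycle the $\omega=m+1$ portion of the proof of Theorem~\ref{thm_without_fourier}; this is legitimate, since that portion only needs $C_m$ to be $(k,\ell)$-sum-free (supplied here by Lemma~\ref{lem_51_whole}(1)) and the bound $\lambda\le k+\ell-4$ for $|2C_{m+1}|\le 2m+2$, both of which hold for $(4,1,1)$, so it buys a noticeably shorter argument at the cost of checking that no step there secretly uses $\lambda\le k+\ell-5$.
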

\begin{proof} By Lemma~\ref{lem_51_whole}, $\omega(A)\le m+2$ under a certain decomposition $(v,K)$.
By Lemma~\ref{lem_5_ex_1}, $A$ is of type 3 if and only if $C_m$ is isomorphic to the form (iii) in Lemma~\ref{lem_51_whole} (2). So we only need to prove that, when %$C_m$ is an interval or covered by a $(4, 1)$-sum-free interval of length $m+1$,
the forms (i) or (ii) of Lemma~\ref{lem_51_whole} (2) apply,
$A$ is either normal or of type 4.

\vspace{0.2cm}

\textbf{Case (1):} $C_m$ is an interval. If $\omega=m$, then $A$ is either a subset of an extremal cuboid, or $A$  is of type 1 by Fact~\ref{factt1}. Next assume $\omega=m+1$ or $m+2$.
%We first consider the case when $C_m$ is an interval.

Write $C_m=[a, a+m-1]$, then $4C_m=[4a,4(a+m-1)]$. As $4C_m \cap C_m=\emptyset$, on the circle formed by $\mathbb F_p$, denote $i$ the length of the interval connecting the tail $4(a+m-1)$ of $4C_m$ and the head $a$ of $C_m$, and denote $j$ the length of the one connecting the tail $a+m-1$ of $C_m$ and the head $4a$ of $4C_m$ (the intervals might be empty).
As $|C_m|+|4C_m|=5m-3=p-6$, we have $i+j=6$. So there are only four cases of $(i,j)$ by symmetry, that is, $(6, 0)$ case, $(5, 1)$ case, $(4, 2)$ case and $(3, 3)$ case.
%Here we say $(i, j)$ case when the length of the excluded interval between the head of $C_m$ and the tail of $4C_m$ is $i$, and the interval length between head of $4C_m$ and tail of $C_m$ is $j$ (so in this proof, $i+j=6$).
In each case the value of $a$ can be uniquely determined. See the following table.
\begin{center}
\begin{tabular}{ |c|c|c|c|c| }
 \hline
 \textbf{cases:} & (6, 0) & (5, 1) & (4, 2) & (3, 3) \\
 \hline
 \textbf{requirements:} & $a+m=4a$ & $a+m+1=4a$ & $a+m+2=4a$ & $a+m+3=4a$ \\
\textbf{ values of $a$:} & $m\slash 3$ & $(m+1)\slash 3$ & $(m+2)\slash 3$ & $1+m\slash 3$ \\
 \hline
\end{tabular}
\end{center}
As $4C_m\cap C_m=\emptyset$ and $4C_m \cap C_{\omega}=\emptyset$, $C_{\omega}\backslash C_m\subset \mathbb F_p\backslash \{C_m\cup 4C_m\}\triangleq W$, which is a subset of size $6$.
However, in most cases, contradictions can be made by Lemmas~\ref{lem_contradiction_list_m+1} and~\ref{lem_contradiction_list} as below.

%Together with $\omega\in [m, m+2]$, very limited cases need to be considered, most of which can be excluded  very quickly by Lemmas~\ref{lem_contradiction_list_m+1} and~\ref{lem_contradiction_list}.

In the $(6, 0)$ case, $W=[a-6, a-1]$. When $\omega=m+1$, denote $C_{\omega}\backslash C_m= \{a-i\}$ for some $i\in [6]$ and hence we have a $(4,1)$-sum $(a-i)+3a=a+m-i$ in $C_{m+1}$, which  contradicts to Lemma~\ref{lem_contradiction_list_m+1} as $m\geq 7$. When $\omega=m+2$, denote $C_{\omega}\backslash C_m= \{a-i, a-j\}$ with $i\ne j\in [6]$ and hence we have two $(4,1)$-sums $(a-i)+3a=a+m-i$ and $(a-j)+3a=a+m-j$ in $C_{m+2}$, which contradict to  Lemma~\ref{lem_contradiction_list} (2). In the following analysis, for similar contradiction deductions we only list the corresponding $(4,1)$-sums  in the tables.

In the $(5, 1)$ case, $3a=m+1$ and $W=[a-5, a-1]\cup\{a+m\}$. The only possible structures of $A$ are trivial. Details are listed in Table~\ref{tab51411}.

%The whole case is normal. For details see Table~\ref{tab51411}. The only thing needs some extra comments is the last scenario in which we deduce the contradiction directly from Lemma~\ref{lem_contradiction_list} (3).
\begin{table}[h!]
\centering
\begin{tabular}{ |c|c|c|c| }
 \hline
 \multirow{2}*{$\omega$} & \multirow{2}*{$C_{\omega}\backslash C_m$} & \multirow{2}*{\textbf{$(4,1)$-sums in $C_\omega$}} & \multirow{2}*{\textbf{Conclusion}}  \\
 &&&\\
 \hline
 \multirow{3}*{$m+1$} & $a-1$ & $2(a-1)+2a=a+m-1$ & \multirow{2}*{Contrad. by Lem.~\ref{lem_contradiction_list_m+1}} \\
 \cline{2-3}
  & $a-i$, $i\in [2, 5]$ & $(a-i)+3a=a+m+1-i$ &   \\
 \cline{2-4}
  & $a+m$ & NONE & Trivial\\
 \hline
 \multirow{9}*{$m+2$} & \multirow{2}*{$\{a-1, a-2\}$} & $3(a-1)+a=a+m-2;$ & \multirow{6}*{Contrad. by Lem.~\ref{lem_contradiction_list} (2)}\\
 & & $(a-2)+3a =a+m-1$ & \\
 \cline{2-3}
 & $\{a-i, a-j\}$, $i\ne j$, & The corresponding two equations & \\
 &    $\{1, 2\}\ne\{i, j\}\subset [5]$ & in the $\omega= m+1$ case   & \\
 \cline{2-3}
 & \multirow{2}*{$\{a+m, a-i\}$, $i\in [3, 5]$} & $(a-i)+3a=a+m+1-i;$ & \\
 & & $(6-i)(a+m)+ (i-2)(a+m-1)= a-i$ &\\
 \cline{2-4}
 & $\{a+m, a-2\}$ & $2(a-2)+ a+ (a+1)=a+m-2$ &Contrad. by Lem.~\ref{lem_contradiction_list} (1)\\
 \cline{2-4}
 & \multirow{2}*{$\{a+m, a-1\}$} & $(a-1)+3a= a+m;$ &\multirow{2}*{Contrad. by Lem.~\ref{lem_contradiction_list} (3)}\\
 & & $2(a-1)+2a=a+m-1$ &\\
 \hline
\end{tabular}
\caption{Scenarios in the $(5, 1)$ case for $(k, \ell, \lambda)=(4, 1, 1)$}
\label{tab51411}
\end{table}

In the $(4, 2)$ case, $3a=m+2$ and $W=[a-4, a-1]\cup\{a+m, a+m+1\}$. The only possible structure of $A$ is trivial again. Details are listed in Table~\ref{tab42411}. Henceforth, we omit the detailed specification of which conditions in Lemma~\ref{lem_contradiction_list_m+1} or Lemma~\ref{lem_contradiction_list} are used to derive contradictions since they can be verified easily.
%
%From now on, in the tables we omit the detailed reference to which specific conditions in Lemma~\ref{lem_contradiction_list_m+1} or Lemma~\ref{lem_contradiction_list} are used when deriving contradictions.

\begin{table}[h!]
\centering
\begin{tabular}{ |c|c|c|c| }
 \hline
 \multirow{2}*{$\omega$} & \multirow{2}*{$C_{\omega}\backslash C_m$} & \multirow{2}*{\textbf{$(4,1)$-sums in $C_\omega$}} & \multirow{2}*{\textbf{Conclusion}}  \\
 &&&\\
 \hline
 \multirow{3}*{$m+1$} & $a-i$, $i\in [4]$ & $3(a-1)+ a=a+m+2-3i$ & \multirow{2}*{Contradiction} \\
 \cline{2-3}
  & $a+m+1$ & $3(a+m+1)+(a+m-1)=a+1$ &   \\
 \cline{2-4}
  & $a+m$ & NONE & Trivial\\
  %& $a+m$ &\multicolumn{2}{c|}{Contained in $(4, 1)$-sum-free $[a, a+m]\times\mathbb F_{\color{blue}p}^{n-1}$. Trivial} \\
 \hline
 \multirow{9}*{$m+2$} & $\{a-i, a-j\}$, $i\ne j$, & $3(a-i)+a=a+m+2-3i;$ & \multirow{9}*{Contradiction}\\
 & $\{i, j\}\subset [4]$ & $3(a-j)+a=a+m+2-3j$ & \\
 \cline{2-3}
 & \multirow{2}{*}{$\{a+m+1, a-i\}$, $i\in [4]$} & $3(a+m+1)+ (a+m-1)= a+1$; & \\
 &     & $3(a-i)+a=a+m+2-3i$   & \\
 \cline{2-3}
 & \multirow{2}*{$\{a+m, a-1\}$} & $3(a-1)+a=a+m-1;$ & \\
 & & $2(a-1)+ 2a= a+m$ & \\
 \cline{2-3}
 & \multirow{2}*{$\{a+m, a-i\}$, $i\in [2, 4]$} &  $2(a-i)+2a=a+m+2-2i$; &\\
 & & $(a-i)+3a=a+m+2-i$ &\\
 \cline{2-3}
 & {$\{a+m, a+m+1\}$} & $(a+m-1)+(a+m)+2(a+m+1)=a$ &\\
 %& & $(a+m-1)+3(a+m+1)=a+1$ &\\
 %& & $(a+m)+3(a+m+1)=a+2$ &\\
 \hline
\end{tabular}
\caption{Scenarios in the $(4, 2)$ case for $(k, \ell, \lambda)=(4, 1, 1)$}
\label{tab42411}
\end{table}

In the $(3, 3)$ case, $a=1+m\slash 3\equiv 2m+2$ and $W=[a-3, a-1]\cup[a+m, a+m+2]$. Further, $C_m$ and $W$ are both symmetric, i.e., $C_m=-C_m$ and $W=-W$.
 By symmetry, only half of the cases need to be considered. Details are  listed in Table~\ref{tab33411}. We explain the first case, which gives type 2, and last case, which gives type 4.
  %we do not need to consider too many scenarios to cover all possibilities. A table of scenarios are listed in Table~\ref{tab33411}. We only need to make comments on the first and last scenarios, whose analysis are different from before.
\begin{table}[h!]
\centering
\begin{tabular}{ |c|c|c|c| }
 \hline
 \multirow{2}*{$\omega$} & \multirow{2}*{$C_{\omega}\backslash C_m$} & \multirow{2}*{\textbf{$(4,1)$-sums in $C_\omega$}} & \multirow{2}*{\textbf{Conclusion}}  \\
 &&&\\
 \hline
 \multirow{2}*{$m+1$} & $a-1$ & $4(a-1)=a+m-1$ & Type 2 \\
 \cline{2-4}
  & $a-i$, $i\in\{2, 3\}$ & $3(a-i)+a=a+m+3-3i$ & \multirow{5}*{Contradiction}  \\
 \cline{1-3}
 \multirow{5}*{$m+2$} & $\{a-i, a-j\}$, $i< j$, & \multirow{2}*{$(a-i)+2(a-j)+a=a+m+3-i-2j$} & \\
 & $\{i, j\}\subset [3]$ & & \\
 \cline{2-3}
 & $\{a-i, a+m+j\}$   & $2(a-i)+2a=a+m+3-2i;$ & \\
 &  $i\in \{2, 3\}$, $j\in \{0, 1, 2\}$   & $3(a-i)+a=a+m+3-3i$  & \\
 \cline{2-4}
 & $\{a-1, a+m\}$ & Special case. See analysis & Type 4\\
 \hline
\end{tabular}
\caption{Scenarios in the $(3, 3)$ case for $(k, \ell, \lambda)=(4, 1, 1)$}
\label{tab33411}
\end{table}

(i) In the first case, we have $\omega=m+1$ and a (4,1)-sum
 $4(a-1)=a+m-1$ in $C_{\omega}$. By Fact~\ref{fact1}, $4A_{a-1}\cap A_{a+m-1}=\emptyset$ and hence $|4A_{a-1}|+|A_{a+m-1}|\le p^{n-1}$. Moreover from $|A|\ge mp^{n-1}$ we know that $|A_{a-1}|+|A_{a+m-1}|\ge p^{n-1}$. Hence $|4A_{a-1}|=|A_{a-1}|$ and $A_{a-1}$ is a coset of a proper subspace of $K$.
Under linear transforms,  we can assume $A_{a-1}=V$ for some subspace $V\subsetneq K$.
Further $(4A_{a-1})\sqcup A_{a+m-1}=K$, one can only have $A_{a+m-1}=K\backslash V$, and hence $A=\left(\{a-1\}\times V\right)\sqcup\left([a, a+m-2]\times K\right)\sqcup\left(\{a+m-1\}\times (K\backslash V)\right).$ By the value of $a$, $A$ is of type 2.

(ii) In the last case, we have $\omega=m+2$ and two (4,1)-sums in $C_\omega$, $3(a-1)+a=a+m$ and $3(a+m)+(a+m-1)=(a-1)$. Then
$|A_{a+m}|+|A_a|\le |A_{a+m}|+|A_a+3A_{a-1}|\le p^{n-1}$ and $|A_{a-1}|+|A_{a+m-1}|\le |A_{a-1}|+|A_{a+m-1}+3A_{a+m}|\le p^{n-1}$ by Fact~\ref{fact1}. Together with that $|A_{a}|+|A_{a+m-1}|+|A_{a-1}|+|A_{a+m}|\ge 2p^{n-1}$, which is from $|A|\ge mp^{n-1}$, we know that $|A|=mp^{n-1}$, $|A_{a+m}|+|A_a|= |A_{a+m}|+|A_a+3A_{a-1}|= p^{n-1}$ and $|A_{a-1}|+|A_{a+m-1}|= |A_{a-1}|+|A_{a+m-1}+3A_{a+m}|= p^{n-1}$. Moreover, $A_{a+m}\sqcup(A_{a}+3A_{a-1})= A_{a-1}\sqcup (A_{a+m-1}+3A_{a+m})=K$. Denote $H=Sym(A_a)$, which is a proper subspace of $K$. Fix a subspace decomposition $K=H\times L'$. As all four sets are nonempty, $\dim(L')\ge 1$. As $|A_a|=|A_a+ 3A_{a-1}|$, there exists an element $c_1\in L'$ such that $3A_{a-1}\subseteq H+ c_1$. As $A_a$ is a union of cosets of $H$ and $3A_{a-1}\subseteq H+ c_1$, $A_{a}+3A_{a-1}$ is also a union of cosets of $H$. From $A_{a+m}\sqcup(A_{a}+3A_{a-1})=K$, $A_{a+m}$ is a nonempty union of cosets of $H$. Hence we have
$$|A_{a-1}|\le |3A_{a-1}|\le |H|\le |A_{a+m}|.$$
Do the same analysis to $A_{a+m-1}$, we also have $$|A_{a+m}|\le |3A_{a+m}|\le |Sym(A_{a+m-1})|\le |A_{a-1}|.$$ Compare those two inequalities and we find that all the equalities must hold. Specially, we have $|A_{a+m}|=|H|=|3A_{a-1}|=|A_{a-1}|$. As $A_{a+m}$ is a union of cosets of $H$ and $3A_{a-1}$ is contained in a coset of $H$, they are cosets of $H$ themselves. By the same analysis to $A_{a+m-1}$, $3A_{a+m}$ and $A_{a-1}$ are cosets of $Sym(A_{a+m-1})$. As $3A_{a+m}$ is also a coset of $H$, we can only have $H=Sym(A_{a+m-1})$ and hence all sets are unions of cosets of $H$. Denote $A_{a+m}=H+c_2$ and $A_{a-1}= H+ c_3$. From former analysis, as $a+m\ne 0$, by choosing a proper isomorphism we can set $c_2=0$. From that $A_{a+m}\sqcup(A_{a}+3A_{a-1})=K$, $A_a\slash H= L'\setminus \{-3c_3\}$. From that  $A_{a-1}\sqcup (A_{a+m-1}+3A_{a+m})=K$, $A_{a+m-1}\slash H= L'\setminus \{ c_3\}$.
Finally, by $4(a-1)=a+m-1$, we have $(4 A_{a-1})\cap A_{a+m-1}=\emptyset$, which means $c_3=0$. Concluding, $A_{a+m}=A_{a-1}= H$ and $A_a=A_{a+m-1}=K\setminus H$. Hence $A$ is of type 4.

\vspace{0.2cm}

\textbf{Case (2):} $C_m$ is covered by a $(4, 1)$-sum-free interval of length $m+1$.

Let $I=[a, a+m]$ be the interval of length $m+1$ covering $C_m$. Since $I$ is $(4, 1)$-sum-free, there is only one element left in $\mathbb{F}_p\setminus (I\cup 4I)$. So  up to isomorphism, we can set $a+m+1=4a$, which leads to $a=(m+1)\slash 3$.  For different possibilities of $C_m$, we have
%Now we consider when $C_m$ is formed from extracting an element out of a $(4, 1)$-sum-free interval of length $m+1$. Let such interval $I=[a, a+m]$. Then such $I$ is unique up to isomorphism and without loss of generality we can set $a+m+1=4a$, which leads to $a=(m+1)\slash 3$.
%See the following for all possibilities of $C_m$ and $C_\omega$.
\[
4C_m=
\left\{
\begin{aligned}
 \{4a\}\cup[4a+2, 4a+4m], & ~C_m=I\backslash\{a+1\};\\
[4a, 4a+4m-2]\cup\{4a+4m\}, & ~C_m=I\backslash\{a+m-1\};\\
[4a, 4a+4m], & \text{ otherwise.}
\end{aligned}
\right.
\]
Hence the corresponding $C_{\omega}$ behaves as follows from (\ref{eq_8}) in Lemma~\ref{lem_51_whole}.
\[
C_{\omega}\subseteq
\left\{
\begin{aligned}
 [a-1, a+m]\cup\{a+m+2\}, & ~C_m=I\backslash\{a+1\};\\
\{a-3\} \cup[a-1, a+m], & ~C_m=I\backslash\{a+m-1\};\\
 [a-1, a+m], & \text{ otherwise.}
\end{aligned}
\right.
\]

When $\omega=m+1$, if $C_{m+1}\slash C_m\subset I$, that is, $C_{m+1}=I$, then $A$ is trivial. So there are three choices left, all of which lead to a contradiction by Lemma~\ref{lem_contradiction_list_m+1}: if $C_{m+1}\slash C_m=\{a-3\}$, consider the $(4,1)$-sum $(a-3)+3a=a+m-2$; if $C_{m+1}\slash C_m=\{a+m+2\}$, consider $3(a+m+2)+(a+m)=a+4$; if $C_{m+1}\slash C_m=\{a-1\}$, consider $(a-1)+3a=a+m$.

When $\omega=m+2$, if $a-1\notin C_{m+2}$, then $C_{m+2}=\{a-3\}\cup [a, a+m]$ or $\{a+m+2\}\cup [a, a+m]$, contradicting to Lemma~\ref{lem_contradiction_list} (1) by considering  $(a-3)+2a+(a+2)=a+m$ for the former, and $2(a+m+2)+(a+m)+(a+m-1)=a+1$ for the latter.
If $a-1\in C_{m+2}$, then $C_{m+2}=[a-1, a+m]$, $\{a-3\}\cup[a-1, a+m]\backslash\{a+m-1\}$, or $\{a+m+2\}\cup[a-1, a+m]\backslash\{a+1\}$, all of which contradict to Lemma~\ref{lem_contradiction_list} (3) by the same two $(4,1)$-sums : $3(a-1)+(a+2)=a+m$ and $3(a-1)+a=a+m-2$.
\end{proof}

%The following result is an analog of the former result in $(3, 2)$-sum-free case.
The proof idea of the $(3,2,1)$ case is much similar so we mainly focus on the difference.
\begin{lemma}\label{lem_(3,2,1)_finial}
Let $(k, \ell, \lambda)=(3, 2, 1)$  and $m\ge \max\{5T(1\slash5),21\}$. Suppose $A\subset\mathbb F_p^n$ is a  nontrivial $(3,2)$-sum-free set with $|A|\ge mp^{n-1}$. Then $A$ is  abnormal if and only if $A$ is of type 3 or 4.

% and $\omega(A)< m+2$ under a certain decomposition Let $(k, \ell, \lambda)=(3, 2, 1)$. Let $A\subset\mathbb F_p^n$ be a $(3, 2)$-sum-free set with $|A|\ge mp^{n-1}$ and $\omega(A)< p-8$ under a certain one-dim decomposition. Then for any $m\ge 5T(1\slash5)$ and {$m>21$}, $A$ is normal if and only if $A$ is not of type 3 and type 4.
%\begin{itemize}
%\item[(1)] $A= \mathbb F_{p}^{n-1}\times(\{a\}\cup [a+2, a+m-1]\cup\{a+m+1\})$, where $a=2m+1$;
%\item[(2)] $A= \left([a, a+m-1]\times\mathbb F_p^{n-1}\right)\backslash \left(\{a, a+m\}\times V\right)\cup\left(\{a-1, a+m+1\}\times V\right)$, where $a= 2m+2$ and $V$ is any nonempty proper subspace of $\mathbb F_p^{n-1}$.
%\end{itemize}
\end{lemma}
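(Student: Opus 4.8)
The plan is to follow the architecture of the proof of Lemma~\ref{lem_(4,1,1)_finial}, replacing the $(4,1)$-sums by $(3,2)$-sums throughout and keeping track of two structural differences specific to $(k,\ell)=(3,2)$: there is no type~$2$ here (Definition~\ref{cons_12}(2) requires $\ell=1$), and since $k=3$ the exceptional alternative $3C_{m-1}\cap 2C_\omega=\emptyset$ in Lemma~\ref{lem_51_whole}(1) is the branch that applies whenever $\omega=m+2$. For the sufficiency direction I would note that sets of type~$3$ or type~$4$ are nontrivial $(3,2)$-sum-free sets (by Lemma~\ref{lem_not_contained} together with a direct verification of the $(3,2)$-sum-free property, cf.\ Section~\ref{sec:uniq}) and are not normal, since normal means type~$1$ in this case; hence they are abnormal. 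For the necessity direction, given a nontrivial $(3,2)$-sum-free $A$ with $|A|\ge mp^{n-1}$, I would apply Lemma~\ref{lem_51_whole} to obtain a decomposition $(v,K)$ with $\omega(A)\le m+2$ under which $C_m$ is $(3,2)$-sum-free and is, up to isomorphism, (i)~an interval of length $m$, (ii)~covered by a $(3,2)$-sum-free interval of length $m+1$, or (iii)~the set $\{a\}\cup[a+2,a+m-1]\cup\{a+m+1\}$ with $a=2m+1$. Case~(iii) gives $A$ of type~$3$ by Lemma~\ref{lem_5_ex_1}, so it remains to show that in cases~(i) and (ii) the set $A$ is trivial, of type~$1$, or of type~$4$; combined with this trichotomy, ``not type~$1$'' then forces ``type~$3$ or type~$4$'', which is the claimed equivalence.

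For case~(i), write $C_m=[a,a+m-1]$. From $3C_m\cap 2C_m=\emptyset$ and $|3C_m|+|2C_m|=5m-3=p-6$, the complement $\mathbb F_p\setminus(2C_m\cup 3C_m)$ splits into two arcs of lengths $i,j$ with $i+j=6$; using $x\mapsto -x$ I may assume $i\le j$, so $(i,j)\in\{(6,0),(5,1),(4,2),(3,3)\}$, and each pattern fixes $a$. If $\omega=m$, then $A=C_m\times K$ and Fact~\ref{factt1}(2) gives $A$ trivial or of type~$1$. If $\omega=m+1$, the relation $3C_m\cap 2C_{m+1}=\emptyset$ confines $C_{m+1}\setminus C_m$ to a window of at most six candidate points; for all but one placement either the constraint $3C_m\cap 2C_{m+1}=\emptyset$ already fails or an explicit $(3,2)$-sum in $C_{m+1}$ involving three distinct values contradicts Lemma~\ref{lem_contradiction_list_m+1}, and the surviving placement (the $(3,3)$ pattern with $C_{m+1}=[a-1,a+m-1]$, $a=2m+2$) is an interval of length $m+1$ that is, via $x\mapsto -x$, the unique extremal $(3,2)$-sum-free interval of Lemma~\ref{lem_m+1_interval}, so $A$ is trivial by Fact~\ref{factt1}(1); note this is where the $(3,2)$ analysis genuinely departs from the $(4,1)$ one, since that interval is $(3,2)$-sum-free but not $(4,1)$-sum-free. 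If $\omega=m+2$, I would first run the two-step localization of Lemma~\ref{lem_5_ex_1}: since $C_m\setminus C_{m-1}$ is a single point, $3C_{m-1}$ differs from $3C_m$ by at most two points, so $3C_{m-1}\cap 2C_{m+2}=\emptyset$ still localizes $2C_{m+2}$, and then $2\cdot C_{m+2}\subseteq 2C_{m+2}$ localizes $C_{m+2}$ inside an $(m+6)$-element window. Enumerating the $\binom{6}{2}$ placements of $C_{m+2}\setminus C_m$ and applying Lemma~\ref{lem_contradiction_list}(1)--(3) (again using the symmetry $C_m=-C_m$ of the $(3,3)$ pattern), the only survivor is $C_{m+2}\setminus C_m=\{a-1,a+m\}$ with $a=2m+2$; the $(3,2)$-sums available there among $\{a-1,a,a+m-1,a+m\}$, together with Fact~\ref{fact1} and $|A|\ge mp^{n-1}$, force $|A|=mp^{n-1}$, $A_i=K$ for $i\in[a+1,a+m-2]$, and $A_{a-1},A_{a+m}$ to be cosets of a common proper subspace $H\le K$ with $A_a,A_{a+m-1}$ their complements, i.e.\ $A$ is of type~$4$ after a linear normalization of the coset representatives.

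For case~(ii), let $I=[a,a+m]$ be the $(3,2)$-sum-free interval covering $C_m$. By Lemma~\ref{lem_m+1_interval} $I$ is, up to $x\mapsto -x$, the unique such interval, so $|3I|+|2I|=p-1$, the complement $\mathbb F_p\setminus(2I\cup 3I)$ is a single point, and $a$ is pinned. Computing $3C_m$ in the three sub-cases $C_m\in\{I,\ I\setminus\{a+1\},\ I\setminus\{a+m-1\}\}$ and using $3C_m\cap 2C_\omega=\emptyset$ confines $C_\omega$ to a short interval. Then $\omega=m$ gives $A=C_m\times K$, which is trivial; $\omega=m+1$ gives either $C_{m+1}=I$ (trivial) or one of a handful of placements of the extra point producing a $(3,2)$-sum that contradicts Lemma~\ref{lem_contradiction_list_m+1}; and $\omega=m+2$ gives, for every placement of the two extra points, two $(3,2)$-sums contradicting Lemma~\ref{lem_contradiction_list}. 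Hence in case~(ii) $A$ is trivial, which completes the proof.

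The main obstacle is the bookkeeping in the $\omega=m+2$ branch of case~(i). First, because $k=3$ one never obtains $3C_m\cap 2C_{m+2}=\emptyset$ directly, so the localization of $C_{m+2}$ must go through the two-step inflation argument, which uses the hypothesis $m\ge 21$ to keep $C_{m-1}$ close to $C_m$. Second, for each placement of the two inserted indices one must produce two concrete $(3,2)$-sums with at most four distinct values and the correct multiplicity pattern to invoke Lemma~\ref{lem_contradiction_list}(2)--(3); these are genuinely different from the $(4,1)$-sums used in Lemma~\ref{lem_(4,1,1)_finial}, since both sides of a $(3,2)$-sum have at most three terms and the inclusion ``$C_m\subseteq 3C_m$'' that drove the $(4,1)$ sums has no analogue. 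Third, the type~$4$ extraction requires re-running the size-balancing chain for the $(3,2)$ convolution: one checks via Theorem~\ref{thm_Kneser} that $3A_{a-1}$ lies in a single coset of $H=Sym(A_a)$, deduces $|A_{a-1}|=|3A_{a-1}|=|H|=|A_{a+m}|$ and the mirror statement for $A_{a+m-1}$, and then uses one further $(3,2)$-sum among $\{a-1,a,a+m-1,a+m\}$ to align all four coset representatives, yielding type~$4$. These steps are conceptually routine given Fact~\ref{fact1}, Lemma~\ref{lem_contradiction_list}, and Theorems~\ref{thm_Kneser}--\ref{thm_Vosper}, but the exhaustive casework, best organized in tables as in the $(4,1)$ proof, is the labor-intensive part.
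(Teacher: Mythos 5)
Your proposal is correct and follows essentially the same route as the paper: the same $(i,j)$ arc classification of an interval $C_m$ with the four values of $a$, the same $\omega\in\{m,m+1,m+2\}$ split using the weakened condition $3C_{m-1}\cap 2C_{\omega}=\emptyset$ when $\omega=m+2$, the same exhaustive exclusion of placements via Lemmas~\ref{lem_contradiction_list_m+1} and~\ref{lem_contradiction_list}, and the same Kneser/coset extraction of type~4 from the unique surviving configuration $C_{m+2}\setminus C_m=\{a-1,a+m\}$ with $a=2m+2$. The only quibble is quantitative: deleting an endpoint of the interval $C_m$ can shrink $3C_m$ by three points rather than two, but this does not affect the two-step localization of $C_{m+2}$.
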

\begin{proof} The idea is similar to the proof of Lemma~\ref{lem_(4,1,1)_finial}, but will be more complicated when $\omega=m+2$ if the weaker property (\ref{eq_9})  in Lemma~\ref{lem_51_whole} (1) happens. Recall that $C_i:=\{b_1,b_2,\ldots,b_i\}$ for $i\in [p]$.

\vspace{0.2cm}

\textbf{Case (1):} $C_m$ is an interval.
Set $C_m=[a, a+m-1]$, which is $(3,2)$-sum-free. %We still denote $L\triangleq \mathbb F_{\color{blue}p}\backslash\{2C_{m}\cup 3C_{m}\}$.
By the same definition of $(i, j)$ cases, again only the following four cases need to be considered, but with different values of $a$.
%However in some cases the value of $a$ are different from the $(4, 1, 1)$ situation. See the following table.
\begin{center}
\begin{tabular}{ |c|c|c|c|c| }
 \hline
 \textbf{case:} & (6, 0) & (5, 1) & (4, 2) & (3, 3) \\
 \hline
 \textbf{requirement:} & $2a+2m-1=3a$ & $2a+2m=3a$ & $2a+2m+1=3a$ & $2a+2m+2=3a$ \\
 $a$\textbf{ value:} & $2m-1$ & $2m$ & $2m+1$ & $2m+2$ \\
 \hline
\end{tabular}
\end{center}
Recall that $p=5m+3$. When going through the four cases, we use $m$ as the main parameter for simplicity, since all instances of $a$ can be uniquely represented in terms of $m$.
%In the following, we only use the parameter $m$ for easy computation. Remember, $p=5m+3$.

In the $(6, 0)$ case, $a=2m-1$ and $C_m=[2m-1,3m-2]$. When $\omega= m+1$, $3C_{m}\cap 2C_{m+1}=\emptyset$ by Lemma~\ref{lem_51_whole} (1). So $2C_{m+1}\subset [4m-8, m-7]$. From $b_{m+1}+ C_m\subset 2C_{m+1}$, we have $b_{m+1}\in [2m-7,2m-2]$. Further, by $2b_{m+1}\in 2C_{m+1}$, $b_{m+1}\in [2m-4,2m-2]$.
 For each $b_{m+1}=2m-i$ with $i\in [2,4]$, the $(3,2)$-sum $3(2m-i)=(3m-2)+(3m+2-3i)$ leads to a contradiction by Lemma~\ref{lem_contradiction_list_m+1}.
 When $\omega=m+2$, Lemma~\ref{lem_51_whole} only gives a weak property $3C_{m-1}\cap 2C_{\omega}=\emptyset$ in (\ref{eq_9}).
 For different choices of $b_m$,  we have
\[
3C_{m-1}=
\left\{
\begin{aligned}
 [m-3, 4m-9], & ~b_m=2m-1;\\
\{m-6\}\cup[m-4, 4m-9], & ~b_m={2m};\\
[m-6, 4m-11]\cup\{4m-9\}, & ~b_m={3m-3};\\
[m-6, 4m-12], & ~b_m={3m-2};\\
[m-6, 4m-9], & \text{ otherwise.}
\end{aligned}
\right.
\]
Consequently, we have
\[
2C_{\omega}\subseteq
\left\{
\begin{aligned}
 [4m-8, m-4], & ~b_m=2m-1;\\
[4m-8, m-7]\cup\{m-5\}, & ~b_m=2m;\\
\{4m-10\}\cup[4m-8, m-7], & ~b_m=3m-3;\\
[4m-11, m-7], & ~b_m=3m-2;\\
[4m-8, m-7], & \text{ otherwise.}
\end{aligned}
\right.
\]
From the same process of computing $b_{m+1}$ in the $\omega=m+1$ case, we have
\[
\{b_{m+1}, b_{m+2}\}\subseteq
\left\{
\begin{aligned}
 [2m-4, 2m-2]\cup\{3m-1\}, & ~b_m=2m-1;\\
 [2m-4, 2m-2], & ~b_m=2m;\\
 [2m-5, 2m-2], & ~b_m=3m-3;\\
 [2m-5, 2m-2], & ~b_m=3m-2;\\
 [2m-4, 2m-2], & \text{ otherwise.}
\end{aligned}
\right.
\]
To sum up, either $\{b_{m+1}, b_{m+2}\}\subseteq[2m-5, 2m-2]$, or $\{b_{m+1}, b_{m+2}\}=\{3m-1, 2m-i\}$ for some $i\in [2,4]$. For the former case, we denote $b_{m+1}= 2m-i$ and $b_{m+2}= 2m-j$ for some $i\ne j\in [2,5]$. Then we have the following two $(3,2)$-sums in $C_{m+2}$ with at least five distinct terms:
\[
\begin{aligned}
(2m-i)+2(2m-1) = (3m-2)+(3m-i);\\
(2m-j)+2(2m-1) = (3m-2)+(3m-j).
\end{aligned}
\] This contradicts to Lemma~\ref{lem_contradiction_list} (2).
%From Lemma~\ref{lem_contradiction_list} (2) we know these two equations contradict $|A|\ge mp^{n-1}$.
For the latter case, we have a $(3,2)$-sum $(2m-i)+(2m-1)+ (2m)=(3m-1)+(3m-i)$ with $i\in [2,4]$, contradicting to Lemma~\ref{lem_contradiction_list} (1).

In the $(5, 1)$ case, $a=2m$ and $C_m=[2m,3m-1]$. By the same analysis, when $\omega= m+1$, we have $b_{m+1} \in [2m-2, 2m-1]$; and when $\omega= m+2$,
\iffalse
\[
2C_{\omega}\subseteq
\left\{
\begin{aligned}
 [2a-5, 3a+2], & ~B_m=A_a;\\
\{3a+1\}\cup[2a-5, 2a+2m-1], & ~B_m=A_{a+1};\\
[2a-5, 2a+2m-1]\cup\{2a-7\}, & ~B_m=A_{a+m-2};\\
[2a-8, 2a+2m-1], & ~B_m=A_{a+m-1};\\
[2a-5, 2a+2m-1], & \text{ otherwise.}
\end{aligned}
\right.
\]

\[
\{b_{m+1}, b_{m+2}\}\subseteq
\left\{
\begin{aligned}
 [a-2, a-1]\cup\{a+m, a+m+1\}, & ~B_m=A_a;\\
 [a-2, a-1], & ~B_m=A_{a+1};\\
 [a-2, a-1], & ~B_m=A_{a+m-2};\\
 [a-4, a-1], & ~B_m=A_{a+m-1};\\
 [a-2, a-1], & \text{ otherwise.}
\end{aligned}
\right.
\]
\fi

\[
2C_{\omega}\subseteq
\left\{
\begin{aligned}
 [4m-5, m-1], & ~b_m=2m;\\
[4m-5, m-4]\cup\{m-2\}, & ~b_m=2m+1;\\
\{4m-7\}\cup[4m-5, m-4], & ~b_m=3m-2;\\
[4m-8, m-4], & ~b_m=3m-1;\\
[4m-5, m-4], & \text{ otherwise.}
\end{aligned}
\right.
\]
Correspondingly, we have
\[
\{b_{m+1}, b_{m+2}\}\subseteq
\left\{
\begin{aligned}
 \{2m-2, 2m-1, 3m, 3m+1\}, & ~b_m=2m;\\
 [2m-4, 2m-1], & ~b_m=3m-1;\\
 \{2m-2, 2m-1\}, & \text{ otherwise.}
\end{aligned}
\right.
\]
The analyses in different scenarios are listed in Table~\ref{tab51321}.
\begin{table}[h!]
\centering
\begin{tabular}{ |c|c|c|c| }
 \hline
 \multirow{2}*{$\omega$} & \multirow{2}*{$C_{\omega}\backslash C_m$} & \multirow{2}*{\textbf{$(3,2)$-sums in $C_\omega$}} & \multirow{2}*{\textbf{Conclusion}}  \\
 &&&\\
 \hline
 $m+1$ & $2m-i$, $i\in [2]$ & $3(2m-i)=(3m-i)+(3m-2i)$ & Contradiction \\
 \hline
 \multirow{4}*{$m+2$} & {$\{2m-i, 2m-j\}$, $i\ne j\in [4]$} & $(2m-i)+(2m-j)+2m=(3m-i)+(3m-j)$ & \multirow{4}*{Contradiction}\\
 \cline{2-3}
 & $\{3m, 3m+1\}$ & $2(2m)+(2m+1)=(3m)+(3m+1)$ & \\
 \cline{2-3}
 & $\{2m-i, 3m+j\}$, & \multirow{2}*{$(2m-i)+2m+(2m+i+2j)=2(3m+j)$} & \\
 & $i\in\{1, 2\}$; $j\in\{0, 1\}$ &  &\\
 \hline
\end{tabular}
\caption{Scenarios in the $(5, 1)$ case for $(k, \ell, \lambda)=(3, 2, 1)$}
\label{tab51321}
\end{table}

In the $(4, 2)$ case, $a=2m+1$ and $C_m=[2m+1,3m]$.
By the same analysis, when $\omega= m+1$, we have $b_{m+1}\in \{2m-1, 2m, 3m+1\}$; and when $\omega= m+2$,
\iffalse
\[
2C_{\omega}\subseteq
\left\{
\begin{aligned}
 [2a-4, 3a+2], & ~B_m=A_a;\\
\{3a+1\}\cup[2a-4, 2a+2m], & ~B_m=A_{a+1};\\
[2a-4, 2a+2m]\cup\{2a-6\}, & ~B_m=A_{a+m-2};\\
[2a-7, 2a+2m], & ~B_m=A_{a+m-1};\\
[2a-4, 2a+2m], & \text{ otherwise.}
\end{aligned}
\right.
\]
\[
\{b_{m+1}, b_{m+2}\}\subseteq
\left\{
\begin{aligned}
 \{a-2, a-1, a+m, a+m+1\}, & ~B_m=A_a;\\
 \{a-2, a-1, a+m\}, & ~B_m=A_{a+1};\\
 \{a-2, a-1, a+m\}, & ~B_m=A_{a+m-2};\\
 \{a-3, a-2, a-1, a+m\}, & ~B_m=A_{a+m-1};\\
 \{a-2, a-1, a+m\}, & \text{ otherwise.}
\end{aligned}
\right.
\]
\fi
\[
2C_{\omega}\subseteq
\left\{
\begin{aligned}
 [4m-2, m+2], & ~b_m=2m+1;\\
[4m-2, m-1]\cup\{m+1\}, & ~b_m=2m+2;\\
\{4m-4\}\cup[4m-2, m-1], & ~b_m=3m-1;\\
[4m-5, m-1], & ~b_m=3m;\\
[4m-2, m-1], & \text{ otherwise.}
\end{aligned}
\right.
\]
Correspondingly,
\[
\{b_{m+1}, b_{m+2}\}\subseteq
\left\{
\begin{aligned}
 \{2m-1, 2m, 3m+1, 3m+2\}, & ~b_m=2m+1;\\
 \{2m-2, 2m-1, 2m, 3m+1\}, & ~b_m=3m;\\
 \{2m-1, 2m, 3m+1\}, & \text{ otherwise.}
\end{aligned}
\right.
\]
The analyses in different scenarios are listed in Table~\ref{tab42321}. The only case that requires further comments is when $\omega= m+1$ and $C_{m+1}\backslash C_m=\{2m\}$. From $3(2m)=2(3m)$ we know that
$2p^{n-1}\le 2(|A_{2m}|+ |A_{3m}|)< 3|A_{2m}|+ 2|A_{3m}|\le p^{n-1}+3p^{n-2}$, which leads to an obvious contradiction.

\begin{table}[h!]
\centering
\begin{tabular}{ |c|c|c|c| }
 \hline
 \multirow{2}*{$\omega$} & \multirow{2}*{$C_{\omega}\backslash C_m$} & \multirow{2}*{\textbf{$(3,2)$-sums in $C_\omega$}} & \multirow{2}*{\textbf{Conclusion}}  \\
 &&&\\
 \hline
 \multirow{3}*{$m+1$} & $2m-1$ & $3(2m-1)=(3m-1)+(3m-2)$ & \multirow{2}*{Contradiction}\\
 \cline{2-3}
  & $2m$ & $3(2m)=2(3m)$ & \\
 \cline{2-4}
  & $a+m$ & NONE & Trivial\\
 \hline
 \multirow{7}*{$m+2$} & $\{2m-i, 2m-j\}$, $i<j$, & \multirow{2}*{$(2m-i)+(2m-j)+(2m+1)=(3m-i)+ (3m+1-j)$} & \multirow{7}*{Contradiction}\\
  & $i, j\in [0, 2]$ &  & \\
 \cline{2-3}
  & \multirow{2}*{$\{3m+2, 2m-i\}$, $i\in [0, 1]$} & $3(3m+2)=(2m+1)+(2m+2)$; & \\
  & & $3(3m+2)=(2m-i)+(2m+3+i)$ & \\
 \cline{2-3}
 %& $\{a+m, a-1\}$ & $(a+m-1)+(a+m)=a+2(a-1)$ & \\
 %\cline{2-3}
 & $\{3m+1, 2m-i\}$, $i\in [0, 2]$ & $2(2m-i)+(2m+2i+1) = (3m)+(3m+1)$ & \\
 \cline{2-3}
 &\multirow{2}*{$\{3m+1, 3m+2\}$} & $3(2m+1)=(3m+1)+(3m+2)$; & \\
 & & $2(2m+1)+(2m+2)=2(3m+2)$ &\\
 \hline
\end{tabular}
\caption{Scenarios in the $(4, 2)$ case for $(k, \ell, \lambda)=(3, 2, 1)$}
\label{tab42321}
\end{table}

In the $(3, 3)$ case, $a=2m+2$ and $C_m=[2m+2,3m+1]$. When $\omega= m+1$, we have $b_{m+1} \in \{2m+1, 3m+2\}$.
When $\omega= m+2$,
\iffalse
\[
2C_{\omega}\subseteq
\left\{
\begin{aligned}
 [2a-3, 3a+2], & ~B_m=A_a;\\
\{3a+1\}\cup[2a-3, 2a+2m+1], & ~B_m=A_{a+1};\\
[2a-3, 2a+2m+1]\cup\{2a-5\}, & ~B_m=A_{a+m-2};\\
[2a-6, 2a+2m+1], & ~B_m=A_{a+m-1};\\
[2a-3, 2a+2m+1], & \text{ otherwise.}
\end{aligned}
\right.
\]
\[
\{b_{m+1}, b_{m+2}\}\subseteq
\left\{
\begin{aligned}
 \{a-1, a+m, a+m+1, a+m+2\}, & ~B_m=A_a;\\
 \{a-1, a+m\}, & ~B_m=A_{a+1};\\
 \{a-1, a+m\}, & ~B_m=A_{a+m-2};\\
 \{a-3, a-2, a-1, a+m\}, & ~B_m=A_{a+m-1};\\
 \{a-1, a+m\}, & \text{ otherwise.}
\end{aligned}
\right.
\]
\fi
\[
2C_{\omega}\subseteq
\left\{
\begin{aligned}
 [4m+1, m+5], & ~b_m=2m+2;\\
[4m+1, m+2]\cup\{m+4\}, & ~b_m=2m+3;\\
\{4m-1\}\cup[4m+1, m+2], & ~b_m=3m;\\
[4m-2, m+2], & ~b_m=3m+1;\\
[4m+1, m+2], & \text{ otherwise.}
\end{aligned}
\right.
\]
Correspondingly,
\[
\{b_{m+1}, b_{m+2}\}\subseteq
\left\{
\begin{aligned}
 \{2m+1, 3m+2, 3m+3, 3m+4\}, & ~b_m=2m+2;\\
 \{2m-1, 2m, 2m+1, 3m+2\}, & ~b_m=3m+1;\\
 \{2m+1, 3m+2\}, & \text{ otherwise.}
\end{aligned}
\right.
\]
Because of the symmetry, we only need to check half of the scenarios. See Table~\ref{tab33321}. The last case is special but follows by the same analysis as for the last line in Table~\ref{tab33411} in the proof of Lemma~\ref{lem_(4,1,1)_finial}.
\begin{table}[h!]
\centering
\begin{tabular}{ |c|c|c|c| }
 \hline
 \multirow{2}*{$\omega$} & \multirow{2}*{$C_{\omega}\backslash C_m$} & \multirow{2}*{\textbf{$(3,2)$-sums in $C_\omega$}} & \multirow{2}*{\textbf{Conclusion}}  \\
 &&&\\
 \hline
 $m+1$ & $2m+1$ & NONE & Trivial \\
 \hline
 \multirow{5}*{$m+2$} & {$\{2m+i, 2m+j\}$, $i\ne j$,} & \multirow{2}*{$(2m+i)+2(2m+j)=(3m+j)+(3m+i+j)$} & \multirow{4}*{Contradiction}\\
 & $i, j\in [-1, 1]$ & & \\
 \cline{2-3}
 & $\{2m-j, 3m+2\},$ & $(2m+2)+2(2m-j)=2(3m+1-j);$ & \\
 & $j\in \{0, 1\}$    & $3(2m-j)=(3m+1-j)+(3m-1-2j)$   & \\
 \cline{2-4}
 & $\{2m+1, 3m+2\}$, & Special case & Type 4 \\
 \hline
\end{tabular}
\caption{Scenarios in the $(3, 3)$ case for $(k, \ell, \lambda)=(3, 2, 1)$}
\label{tab33321}
\end{table}

\textbf{Case (2):} $C_m$ is covered by a $(3, 2)$-sum-free interval of length $m+1$, say $I=[a, a+m]$. Then $I$ is unique up to isomorphism and without loss of generality we can set $2a+2m+1=3a$, which means $a=2m+1$. Hence $I=[2m+1,3m+1]$. Then %{\color{red}(check whether it is better to replace $a$ by $2m+1$ in the following)}{\color{blue}(To maintain the uniformity, I think only replace $a$ by expressions of $m$ when $\omega=m+2$ is suitable.)}
%Now we come to the situation when $C_m$ is not an interval but formed from extracting an element out of a $(3, 2)$-sum-free interval of length $m+1$, denoted $I=[a, a+m]$. Then $I$ is unique up to isomorphism and without of generality we can set $2a+2m+1=3a$, which means $a=2m+1$. Hence
\[
3C_m=
\left\{
\begin{aligned}
 \{3a\}\cup[3a+2, 3a+3m], & ~C_m=I\backslash \{a+1\};\\
[3a, 3a+3m-2]\cup\{3a+3m\}, & ~ C_m=I\backslash\{a+m-1\};\\
[3a, 3a+3m], & \text{ otherwise.}
\end{aligned}
\right.
\]
If $\omega=m+1$, we know from $3C_m\cap 2C_{m+1}=\emptyset$,
\[
2C_{m+1}\subseteq
\left\{
\begin{aligned}
 [2a-1, 2a+2m]\cup\{3a+1\}, & ~C_m=I\backslash \{a+1\};\\
\{2a-3\}\cup[2a-1, 2a+2m], & ~C_m=I\backslash \{a+m-1\};\\
[2a-1, 2a+2m], & \text{ otherwise.}
\end{aligned}
\right.
\]
%All the three situations lead to $C_{m+1}\subset [a-1, a+m]$. If $a-1\notin C_{m+1}$,
%that is, $C_{m+1}=I$, then $A$ is trivial. If $a-1\in C_{m+1}$,
%from the requirement of $C_m$ we know $a$ and $a+m$ must be contained in $C_m$ and hence in $C_{m+1}$. This leads to a contradiction by Lemma~\ref{lem_contradiction_list_m+1} because of the $(3,2)$-sum $(a-1)+2a=2(a+m)$.
All the three situations above lead to $C_{m+1}\subseteq [a, a+m]$, that is, $C_{m+1}=I$. Then $A$ is trivial since $I$ is $(3, 2)$-sum-free.

When $\omega=m+2$,  from Lemma~\ref{lem_51_whole} (1) we have $3C_{m-1}\cap 2C_{m+2}=\emptyset$.
 By the similar proving process as in \textbf{Case (1)}, we know that no matter what choice of $b_m$ we choose, the interval $3[a+2, a+m-2]=[m+6, 4m-6]$ must be contained by $3C_{m-1}$, and hence $2C_{m+2}\subset [4m-5, m+5]$, leading to $C_{m+2}\subset[2m-2, 3m+4]$. A convenient argument as follows shows that this directly leads to a contradiction.

As $C_m$ is not an interval but covered by $I=[2m+1,3m+1]$, both $2m+1$ and $3m+1$ must be contained in $C_m$.
Observe that $C_{m+2}$ cannot be $[2m+1, 3m+2]$, because if otherwise, two $(3, 2)$-sums $2(2m+1)+(2m+2)=2(3m+2)$ and $2(3m+2)+(3m+1)=2(3m+1)$ violate Lemma~\ref{lem_contradiction_list} (3).
Next, we prove that each element in $[2m-2, 2m]\cup\{3m+3, 3m+4\}$ cannot be in $C_{m+2}$, which leads to $C_{m+2}=[2m+1, 3m+2]$, and hence to the contradiction.
First, we check $2m-i$, $i\in \{1, 2\}$. Consider two $(3, 2)$-sums $2(2m-i)+(2m+1)=(3m-2i)+(3m+1)$ and $(m-i)+2(2m+1)=(3m+1-i)+(3m+1)$. At least one of $3m-2i$ and $3m+1-i$ must be in $C_m$, leading to a $(3, 2)$-sum  in $C_{m+2}$ violating  Lemma~\ref{lem_contradiction_list} (1).
Then we check $2m$. If $C_m=I\backslash\{3m\}$,  two $(3, 2)$-sums $2(2m)+(2m+2)=2(3m+1)$ and $(2m)+2(2m+1)=2(3m+1)$ violate Lemma~\ref{lem_contradiction_list} (3); otherwise the $(3, 2)$-sum $2(2m)+(2m+1)=(3m)+(3m+1)$ violates Lemma~\ref{lem_contradiction_list} (1).
Finally, we check $3m+i$, $i\in \{3, 4\}$. Consider two $(3, 2)$-sums $2(2m+1)+(2m+i-1)=(3m+1)+(3m+i)$ and $(3m+1)+2(3m+i)=(2m+1)+(2m+2i-3)$. At least one of $2m+i-1$ and $2m+2i-3$ must be in $C_m$, leading to a $(3, 2)$-sum  in $C_{m+2}$ violating Lemma~\ref{lem_contradiction_list} (1).
\end{proof}
From  Lemma~\ref{lem_(4,1,1)_finial} and Lemma~\ref{lem_(3,2,1)_finial}, Theorem~\ref{thm_(5, 1)_unnormal} is proved completely.

\iffalse
\begin{theorem}\label{}
Let $A\subset \mathbb F_p^n$ of size at least $mq^{n-1}$ and $(k, \ell)$-sum-free. Further we require $k+\ell=5$, $\lambda=1$ and $m\ge \max\{ \frac 2 {\tau(1\slash 5)}, 23\}$. Then $A$ is normal except the structure of $A$ up to linear transforms is in one of the following patterns:
\begin{itemize}
\item[(1)] $A= \mathbb F_{p}^{n-1}\times(\{a\}\cup [a+2, a+m-1]\cup\{a+m+1\})$, where $a=2m+1$;
\item[(2)] $A= \left([a, a+m-1]\times\mathbb F_p^{n-1}\right)\backslash \left(\{a, a+m\}\times V\right)\cup\left(\{a-1, a+m+1\}\times V\right)$, where $a= 2m+2$ and $V$ is any nonempty proper subspace of $\mathbb F_p^{n-1}$.
\end{itemize}
\end{theorem}
\fi

\subsection{$(k+\ell,\lambda)=(4,1)$}
%In this case, $p=4m+3$ and $\theta=k+\ell+\lambda+2=7$. {\color{blue}We have the following lemma whose proof is given in the appendix {\color{red}(give the link)}.}
The proof arguments  of Theorem~\ref{thm_3_1} is similar to that of Theorem~\ref{thm_(5, 1)_unnormal}, thus we move it into Appendix~\ref{appthm3}.

\section{Conclusion}\label{sec_conc}

We studied the Hilton-Milner type problem of $(k, \ell)$-sum-free subsets in $\mathbb F_p^n$ for general $k>\ell\geq 1$. Under certain parameter conditions, we completely classified all such sets with the largest size but  not contained in any extremal cuboid $A_{k, \ell, p, j}$, i.e., optimal nontrivial structures. More specifically, for large $p$, we solved the problem when $p\equiv 2+\lambda \pmod{k+\ell}$ for any $\lambda\in [0, k+\ell-5]$, and for $\lambda\in \{0,1\}$ with any $k>\ell\geq 1$. The former one was classified into only two types of structures, and special cases in the latter one gives more: when $(k+\ell,\lambda)=(5,1)$  we have four types, and when $(k+\ell,\lambda)=(4,1)$ we have three types of structures.

All our results show that when $p=(k+\ell)m+2+\lambda$, if $A$ cannot be covered by any extremal structure $A_{k, \ell, p, j}$ which is of size $(m+1)p^{n-1}$, i.e., $A$ is a nontrivial $(k, \ell)$-sum-free subset, then $|A|\le mp^{n-1}$. In other words, there is a gap of $p^{n-1}$ between the sizes of the ``best structure'' and the ``best nontrivial structure''. However, this phenomenon does not always happen for all $(k, \ell, \lambda)$ and large enough prime $p$. For example, when $(k, \ell)=(2, 1)$ and $p\equiv 1\mod 3$ (i.e., $\lambda=k+\ell-1=2$ which does not fall in our assumption), %{\color{blue}$[0, k+\ell-3]$}
the largest sum-free set in $\mathbb F_p^n$ has size $(m+1)p^{n-1}$, but there are nontrivial examples of size $(m+1)p^{n-1}-1$ \cite{Rhemtulla1971}. {Note that when $\lambda=k+\ell-2$,  $p=(k+\ell)m+2+\lambda=(m+1)(k+\ell)$, which is not a prime. So the first question we raise is the following.

\begin{problem}\label{p1} For any $k>\ell\geq 1$ and $\lambda = k+\ell-1$, if $p\equiv \lambda+2 \equiv 1\pmod{k+\ell}$ is a large prime, determine the largest size $M_{k,\ell,\lambda}$ of a $(k, \ell)$-sum-free subset in $\mathbb F_p^n$ for any $n\geq 1$ and characterize all  extremal structures.
\end{problem}

Theorem~\ref{thm_k_l_max_structure} solved all the remainant cases of Problem~\ref{p1} when $\lambda \in [0, k+\ell-3]$. Motivated by the mentioned example of size $(m+1)p^{n-1}-1$ in \cite{Rhemtulla1971}, we raise the second question as follows.

\begin{problem}\label{p2}
Find out more triples $(k, \ell, \lambda)$ with $\lambda=k+\ell-1$ such that if $p\equiv 1\pmod{k+\ell}$ is a large prime,
there exists a $(k, \ell)$-sum-free subset in $\mathbb F_p^n$ of size $|A|\ge M_{k,\ell,\lambda}-c$ for some constant $c$, which is not contained in any $(k, \ell)$-sum-free subset of size $M_{k,\ell,\lambda}$.
\end{problem}

Then $(k, \ell, \lambda)=(2,1,2)$ is an example satisfying Problem~\ref{p2}. Furthermore, we hope to solve the following problem that is a complement of Theorem~\ref{thm_with_fourier}.

\begin{problem}\label{p3}
For any triple $(k, \ell, \lambda)$ with $k+\ell\geq 4$ and $\lambda\in\{k+\ell-4, k+\ell-3\}$, if $p\equiv \lambda+2 \pmod{k+\ell}$ is a large prime, %$k+\ell= \lambda+4$ or $\lambda+3$,
classify all nontrivial $(k, \ell)$-sum-free subsets in $\mathbb F_p^n$ of the largest size.
\end{problem}
}

%\begin{problem}
%For any integer triple $(k, \ell, \lambda)$ with $k>\ell\ge 1$ and $\lambda\in [0, k+\ell-1]$, such that there exist infinitely many prime numbers $p>0$ satisfying $p\equiv \lambda \pmod{k+\ell}$, fully solve the Hilton-Milnor type problem of $(k, \ell)$-sum-free subsets in $\mathbb F_p^n$ for large enough $p$ and $n$.
%\end{problem}
Theorems~\ref{thm_(5, 1)_unnormal} and~\ref{thm_3_1} solve Problem~\ref{p3} completely when $\lambda\in \{0,1\}$. Note that the cases $(k+\ell,\lambda)=(4,1)$ and $(5,1)$ are the two smallest cases for $\lambda=k+\ell-3$ and $k+\ell-4$ in Problem~\ref{p3}, respectively. So it is plausible to extend the methods for Theorems~\ref{thm_(5, 1)_unnormal} and~\ref{thm_3_1} to solve  Problem~\ref{p3} completely. We leave this for further study. Moreover, combining Theorem~\ref{thm_with_fourier}, we see that the Hilton-Milnor type problem has been solved for all $\lambda \in[0,k+\ell-3]$ when $k+\ell=4$ (that is, $(3,1)$-sum-free),  for $\lambda \in[0,k+\ell-4]$ when  $k+\ell=5$, and for $\lambda \in[0,k+\ell-5]$ when  $k+\ell\geq 6$ and $p$ is large.

It is also interesting to find out more results of the Hilton-Milnor type problem of $(k, \ell)$-sum-free subsets over other finite abelian ambient groups.

\vskip 10pt
\bibliographystyle{IEEEtran}
\bibliography{KLF}

@article{Cauchy1813,
  author    = {A. L. Cauchy},
  title     = {Recherches sur les nombres},
  journal   = {Journal de l'\'{E}cole Polytechnique},
  volume    = {9},
  year      = {1813},
  pages     = {99--116}
}

@article{Davenport1935,
  author    = {Harold Davenport},
  title     = {On the addition of residue classes},
  journal   = {Journal of the London Mathematical Society},
  volume    = {10},
  year      = {1935},
  pages     = {30--32}
}

@Article{Reiher2024,
  author  = {Reiher, Christian and Zotova, Sofia},
  journal = {arXiv preprint arXiv:2408.11232},
  title   = {Large sum-free sets in finite vector spaces {I}},
  year    = {2024},
}

@Article{Kneser1953,
  author    = {Kneser, Martin},
  journal   = {Mathematische Zeitschrift},
  title     = {Absch{\"a}tzung der asymptotischen {D}ichte von {S}ummenmengen},
  year      = {1953},
  number    = {1},
  pages     = {459--484},
  volume    = {58},
  publisher = {Springer},
}

@Article{Kneser1954,
  author    = {Kneser, Martin},
  journal   = {Mathematische Zeitschrift},
  title     = {Ein {S}atz {\"u}ber abelsche {G}ruppen mit {A}nwendungen auf die {G}eometrie der {Z}ahlen},
  year      = {1954},
  number    = {1},
  pages     = {429--434},
  volume    = {61},
  publisher = {Springer},
}

@Article{Grynkiewicz2024,
  author  = {Grynkiewicz, David J},
  journal = {arXiv preprint arXiv:2402.15028},
  title   = {The $3 k-4$ Theorem modulo a Prime: {H}igh Density for $ {A}+ {B}$},
  year    = {2024},
}

@Article{Grynkiewicz2009,
  author    = {Grynkiewicz, David J},
  journal   = {Mathematika},
  title     = {A step beyond {K}emperman's structure theorem},
  year      = {2009},
  number    = {1-2},
  pages     = {67--114},
  volume    = {55},
  publisher = {London Mathematical Society},
}

@Book{Tao2006,
  author    = {Tao, Terence and Vu, Van H},
  publisher = {Cambridge University Press},
  title     = {Additive combinatorics},
  year      = {2006},
  volume    = {105},
}

@Article{Green2007,
  author    = {Green, Ben and Ruzsa, Imre Z},
  journal   = {Journal of the London Mathematical Society},
  title     = {Freiman's theorem in an arbitrary abelian group},
  year      = {2007},
  number    = {1},
  pages     = {163--175},
  volume    = {75},
  publisher = {Oxford University Press},
}

@Article{Schoen2011,
  author  = {Schoen, Tomasz},
  journal = {Duke Math. J.},
  title   = {Near optimal bounds in {F}reiman's theorem},
  year    = {2011},
  number  = {158},
  pages   = {1--12},
}

@Article{Erdos1983,
  author    = {Erd\H{o}s, Paul and Szemer{\'e}di, Endre},
  journal   = {Studies in pure mathematics},
  title     = {On sums and products of integers},
  year      = {1983},
  pages     = {213--218},
  publisher = {Springer},
}

@Article{Tao2008,
  author  = {Tao, Terence},
  journal = {arXiv preprint arXiv:0806.2497},
  title   = {The sum-product phenomenon in arbitrary rings},
  year    = {2008},
}

@Article{Bourgain2009,
  author  = {Bourgain, Jean and Chang, Mei-Chu},
  journal = {J. Anal. Math},
  title   = {Sum-product theorems in algebraic number fields},
  year    = {2009},
  pages   = {253--277},
  volume  = {109},
}

@Article{Tang2023,
  author  = {Tang, Jincheng and Zhang, Xin},
  journal = {arXiv preprint arXiv:2308.08867},
  title   = {Sum-product phenomenon in quotients of rings of algebraic integers},
  year    = {2023},
}

@Article{Roth1953,
  author    = {Roth, Klaus F},
  journal   = {Journal of the London Mathematical Society},
  title     = {On certain sets of integers},
  year      = {1953},
  number    = {1},
  pages     = {104--109},
  volume    = {1},
  publisher = {Wiley Online Library},
}

@Article{Bourgain1999,
  author    = {Bourgain, Jean},
  journal   = {Geometric and Functional Analysis},
  title     = {On triples in arithmetic progression},
  year      = {1999},
  number    = {5},
  pages     = {968--984},
  volume    = {9},
  publisher = {Citeseer},
}

@InProceedings{Kelley2023,
  author       = {Kelley, Zander and Meka, Raghu},
  booktitle    = {2023 IEEE 64th Annual Symposium on Foundations of Computer Science (FOCS)},
  title        = {Strong bounds for 3-progressions},
  year         = {2023},
  organization = {IEEE},
  pages        = {933--973},
}

@Article{Sanders2013,
  author  = {Sanders, Tom},
  journal = {Bulletin of the American Mathematical Society},
  title   = {The structure theory of set addition revisited},
  year    = {2013},
  number  = {1},
  pages   = {93--127},
  volume  = {50},
}

@Article{Gowers2025,
  author    = {Gowers, William Timothy and Green, Ben and Manners, Freddie and Tao, Terence},
  journal   = {Annals of Mathematics},
  title     = {On a conjecture of {M}arton},
  year      = {2025},
  number    = {2},
  pages     = {515--549},
  volume    = {201},
  publisher = {Department of Mathematics, Princeton University Princeton, New Jersey, USA},
}

@Article{Schur1916,
  author  = {Schur, Issai},
  journal = {Jahresber. Deutsch. Math.-Verein},
  title   = {\"{U}ber die {K}ongruenz $ x^m+ y^m\equiv z^m \pmod p$},
  year    = {1916},
  pages   = {114--117},
  volume  = {25},
}

@Article{Jing2021,
  author  = {Jing, Yifan and Wu, Shukun},
  journal = {Transactions of the American Mathematical Society},
  title   = {The largest $(k, \ell)$-sum-free subsets},
  year    = {2021},
  number  = {7},
  pages   = {5163--5189},
  volume  = {374},
}

@Article{Tran2018,
  author    = {Tran, Tuan},
  journal   = {Israel Journal of Mathematics},
  title     = {On the structure of large sum-free sets of integers},
  year      = {2018},
  pages     = {249--292},
  volume    = {228},
  publisher = {Springer},
}

@Article{Lev2024,
  author   = {Vsevolod F. Lev},
  journal  = {Journal of Combinatorial Theory, Series A},
  title    = {Sum-free sets in $\mathbb {Z} _5^ n$},
  year     = {2024},
  issn     = {0097-3165},
  pages    = {105865},
  volume   = {205},
  abstract = {It is well-known that for a prime p≡2(mod3) and integer n≥1, the maximum possible size of a sum-free subset of the elementary abelian group Zpn is 13(p+1)pn−1. However, the matching stability result is known for p=2 only. We consider the first open case p=5 showing that if A⊆Z5n is a sum-free subset with |A|>32⋅5n−1, then there are a subgroup H<Z5n of size |H|=5n−1 and an element e∉H such that A⊆(e+H)∪(−e+H).},
  doi      = {https://doi.org/10.1016/j.jcta.2024.105865},
  keywords = {Sum-free sets, Cyclic groups, Character bias},
  url      = {https://www.sciencedirect.com/science/article/pii/S0097316524000049},
}

@Article{Bourgain1997,
  author    = {Bourgain, Jean},
  journal   = {Israel Journal of Mathematics},
  title     = {Estimates related to sumfree subsets of sets of integers},
  year      = {1997},
  number    = {1},
  pages     = {71--92},
  volume    = {97},
  publisher = {Springer},
}

@Article{Eberhard2014,
  author    = {Eberhard, Sean and Green, Ben and Manners, Freddie},
  journal   = {Annals of Mathematics},
  title     = {Sets of integers with no large sum-free subset},
  year      = {2014},
  pages     = {621--652},
  publisher = {JSTOR},
}

@Article{Green2005,
  author    = {Green, Ben and Ruzsa, Imre Z},
  journal   = {Israel Journal of Mathematics},
  title     = {Sum-free sets in abelian groups},
  year      = {2005},
  number    = {1},
  pages     = {157--188},
  volume    = {147},
  publisher = {Springer},
}

@Book{kurzweil2004theory,
  title     = {The theory of finite groups: {A}n introduction},
  publisher = {Springer},
  year      = {2004},
  author    = {Kurzweil, Hans and Stellmacher, Bernd},
  volume    = {1},
}

@Article{Eberhard2015,
  author    = {Eberhard, Sean},
  journal   = {Bulletin of the London Mathematical Society},
  title     = {F{\o}lner sequences and sum-free sets},
  year      = {2015},
  number    = {1},
  pages     = {21--28},
  volume    = {47},
  publisher = {Wiley Online Library},
}

@Article{Bajnok2009,
  author    = {Bajnok, Bela},
  journal   = {International Journal of Number Theory},
  title     = {On the maximum size of a $(k, \ell)$-sum-free subset of an abelian group},
  year      = {2009},
  number    = {06},
  pages     = {953--971},
  volume    = {5},
  publisher = {World Scientific},
}

@Article{Jing2024,
  author    = {Jing, Yifan and Wu, Shukun},
  journal   = {Journal of the London Mathematical Society},
  title     = {A note on the largest sum-free sets of integers},
  year      = {2024},
  number    = {1},
  pages     = {e12819},
  volume    = {109},
  publisher = {Wiley Online Library},
}

@Article{Yap1969,
  author    = {Yap, Hian Poh},
  journal   = {Journal of the London Mathematical Society},
  title     = {Maximal Sum-Free Sets of Group Elements},
  year      = {1969},
  number    = {1},
  pages     = {131--136},
  volume    = {1},
  publisher = {Wiley Online Library},
}

@Article{Hilton1967,
  author    = {Hilton, Anthony JW and Milner, Eric C},
  journal   = {The Quarterly Journal of Mathematics},
  title     = {Some intersection theorems for systems of finite sets},
  year      = {1967},
  number    = {1},
  pages     = {369--384},
  volume    = {18},
  publisher = {Oxford University Press},
}

@Article{Erdos1961,
  author  = {Erd\H{o}s, P. and Ko, Chao and Rado, R.},
  journal = {Quart. J. Math. Oxford Ser.(2)},
  title   = {Intersection theorems for systems of finite sets},
  year    = {1961},
  pages   = {313--320},
  volume  = {12},
}

@Article{Borg2013,
  author    = {Borg, Peter},
  journal   = {Discrete Mathematics},
  title     = {A {H}ilton--{M}ilner-type theorem and an intersection conjecture for signed sets},
  year      = {2013},
  number    = {18},
  pages     = {1805--1815},
  volume    = {313},
  publisher = {Elsevier},
}

@Article{Erdoes2000,
  author    = {Erd{\H{o}}s, P{\'e}ter L and Seress, Akos and Sz{\'e}kely, L{\'a}szl{\'o} A},
  journal   = {Combinatorica},
  title     = {Erd{\H{o}}s--{K}o--{R}ado and {H}ilton--{M}ilner type theorems for intersecting chains in posets},
  year      = {2000},
  pages     = {27--45},
  volume    = {20},
  publisher = {Springer},
}

@Article{Zhao2025,
  author  = {Zhao, Yuhao and Zhang, Xiande},
  journal = {arXiv preprint arXiv:2503.16229},
  title   = {Counting cliques with prescribed intersection sizes},
  year    = {2025},
}

@Book{Grynkiewicz2013,
  author    = {Grynkiewicz, David J},
  publisher = {Springer},
  title     = {Structural additive theory},
  year      = {2013},
  volume    = {30},
}

@Article{Roedseth2006,
  author    = {R{\o}dseth, {\O}YSTEIN J},
  journal   = {Skr., K. Nor. Vidensk. Selsk},
  title     = {On {F}reiman’s 2.4-theorem},
  year      = {2006},
  pages     = {11--18},
  publisher = {Citeseer},
}

@InProceedings{Freiman1964,
  author       = {Freiman, Gregory A},
  booktitle    = {Doklady Akademii Nauk},
  title        = {On the addition of finite sets},
  year         = {1964},
  number       = {5},
  organization = {Russian Academy of Sciences},
  pages        = {1038--1041},
  volume       = {158},
}

@InProceedings{Freiman1961,
  author       = {Freiman, Gregory A},
  booktitle    = {Doklady Akademii Nauk},
  title        = {Inverse problems in additive number theory. Addition of sets of residues modulo a prime},
  year         = {1961},
  number       = {3},
  organization = {Russian Academy of Sciences},
  pages        = {571--573},
  volume       = {141},
}

@Article{Candela2019,
  author    = {Candela, Pablo and De Roton, Anne},
  journal   = {The Quarterly Journal of Mathematics},
  title     = {On sets with small sumset in the circle},
  year      = {2019},
  number    = {1},
  pages     = {49--69},
  volume    = {70},
  publisher = {Oxford University Press},
}

@InProceedings{Serra2009,
  author    = {Serra, Oriol and Z{\'e}mor, Gilles},
  booktitle = {Annales de l'Institut Fourier},
  title     = {Large sets with small doubling modulo $ p $ are well covered by an arithmetic progression},
  year      = {2009},
  number    = {5},
  pages     = {2043--2060},
  volume    = {59},
}

@Article{Serra2000,
  author    = {Serra, Oriol and Z{\'e}mor, Gilles},
  journal   = {Integers},
  title     = {On a generalization of a theorem by {V}osper},
  year      = {2000},
  pages     = {A10--10},
  publisher = {Walter de Gruyter},
}

@Article{Rhemtulla1971,
  author    = {Rhemtulla, AH and Street, Anne Penfold},
  journal   = {Canadian Mathematical Bulletin},
  title     = {Maximal sum-free sets in elementary abelian $p$-groups},
  year      = {1971},
  number    = {1},
  pages     = {73--80},
  volume    = {14},
  publisher = {Cambridge University Press},
}

@article{o2025sum,
  title={Sum-product phenomena for Ahlfors-regular sets},
  author={O'Regan, William},
  journal={arXiv preprint arXiv:2501.02131},
  year={2025}
}

@article{vosper1956critical,
  title={The critical pairs of subsets of a group of prime order},
  author={Vosper, Alan Gordon},
  journal={Journal of the London Mathematical Society},
  volume={1},
  number={2},
  pages={200--205},
  year={1956},
  publisher={Oxford University Press}
}

\appendix
%\section*{Appendix}

 A slightly further argument in the proving process of Lemma~\ref{lem_stru_of_Cm} leads to the following more detailed result, which is useful when dealing with $(k+\ell,\lambda)=(5,1)$ or $(4,1)$.
%in our proving process we have proved the following more detailed result.

\begin{corollary}\label{coro_k+ell-4}
Let  $m\geq 7$. Suppose $C \subset\mathbb F_p$  is a $(k, \ell)$-sum-free set of size $m$, and  $I$ is the shortest interval covering $C$. If $|I|=m+1$,  $I$ is also $(k, \ell)$-sum-free. If $|I|=m+2$ or $m+3$,  the following hold.
\begin{itemize}
  \item[(1)] If $k+\ell= 4+\lambda$, there exists a certain $a\in\mathbb F_p$ such that up to isomorphism, $C=\{a, a+m+1\}\cup[a+2, a+m-1]$.
  \item[(2)] If $k+\ell= 3+\lambda$, there exists a certain $a\in\mathbb F_p$ such that up to isomorphism, $C=\{a, a+m+1\}\cup[a+2, a+m-1]$, $C= \{a, a+1, a+m+1\}\cup[a+3, a+m-1]$, or $C= \{a, a+1, a+m, a+m+1\}\cup[a+3, a+m-2]$.
\end{itemize}
\end{corollary}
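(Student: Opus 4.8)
The plan is to run the argument of Lemma~\ref{lem_stru_of_Cm} again, but with the thresholds on $\lambda$ pushed from $\lambda\le k+\ell-5$ up to $\lambda\le k+\ell-3$, and to keep track of the exact dependence on $\lambda$ so that the borderline cases can be classified rather than excluded. The case $|I|=m+1$ is exactly Claim~\ref{clm3}, so $I$ is $(k,\ell)$-sum-free; from now on assume $|I|\in\{m+2,m+3\}$ and write $I=[a,a+m+t]$ with $t\in\{1,2\}$, so that $\{a,a+m+t\}\subseteq C$ and $|I|-|C|=t+1$. I would decompose $C=\sqcup_{i=1}^{h}I_i$ into its maximal sub-intervals listed left to right; the $h-1$ holes between them have total length $t+1$, whence $h\le t+2$ and $C$ is $(t-h+3)$-holed from $I$. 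The whole proof is a case distinction on $h$.

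First I would dispose of $h\in\{2,4\}$ and of $h=3$ with $t=2$. For $h=2$, Claim~\ref{clmcm} gives $|I_1|,|I_2|\le t+1\le 3$, so $m\le 6$, contradicting $m\ge 7$. For $h=4$ (which forces $t=2$ and all three holes to be $1$-holes) Claim~\ref{clmcm} makes $I_1,I_4$ singletons, so after a reflection Claim~\ref{clmi1} yields $|kI\cap\ell I|\le 4$ and $p\ge|kI|+|\ell I|-4=(k+\ell)(m+2)-2$, which exceeds $(k+\ell)m+2+\lambda$ because the threshold $2(k+\ell)-4$ comfortably exceeds $\lambda$. The same estimate, together with the refinement $|kI\cap\ell I|\le 3$ used in Lemma~\ref{lem_stru_of_Cm} when the short leading block has length $2$, also kills $h=3,t=2$, since there $p\ge(k+\ell)(m+2)-2$ (resp.\ $-1$) and the relevant threshold is $2(k+\ell)-4$ (resp.\ $2(k+\ell)-3$). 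Hence $|I|=m+3$ is impossible, and we are reduced to $|I|=m+2$, $t=1$, $h=3$: $C=I_1\sqcup I_2\sqcup I_3$ with two $1$-holes, and in particular $|I_2|\ge m-|I_1|-|I_3|\ge m-4\ge 3$.

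For this remaining configuration the key observation is that, because $|I_2|\ge 3$, the complement of $kC$ in $kI$ has exactly as many points as $C$ has singleton end-blocks (an internal $1$-hole, or a $1$-hole next to a block of size $\ge 2$, is filled by the sumset, whereas a $1$-hole at a singleton end persists), while for $\ell=1$ one simply has $\ell C=C$ with its two $1$-holes. Since $kC\cap\ell C=\emptyset$, every point of $kI\cap\ell I$ must lie in a hole of $kC$ or of $\ell C$; comparing this with $|kI\cap\ell I|\ge(k+\ell)t-\lambda=(k+\ell)-\lambda$ forces both end-blocks of $C$ to be singletons when $k+\ell=\lambda+4$, giving $C=\{a\}\cup[a+2,a+m-1]\cup\{a+m+1\}$, and forces $|I_1|,|I_3|\le 2$ when $k+\ell=\lambda+3$, leaving exactly the three block patterns $(1,m-2,1)$, $(2,m-3,1)$, $(2,m-4,2)$ of the statement. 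In each surviving pattern the value of $a$ is then determined by writing $kC$ and $\ell C$ explicitly off the block pattern (their complements in $kI,\ell I$ being the ones above) and imposing $kC\cap\ell C=\emptyset$, which fixes $(k-\ell)a$ and hence $a$ up to the reflection $a\mapsto-(a+m+1)$; the subcase $\ell=1$ must be handled separately from $\ell\ge2$ because $\ell C=C$ then behaves differently.

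The main obstacle is precisely this last, borderline, analysis. In the regime $k+\ell\in\{\lambda+3,\lambda+4\}$ the inequalities that produced clean contradictions in Lemma~\ref{lem_stru_of_Cm} become equalities, so one must extract structure from the equality cases of Kneser's theorem, Cauchy--Davenport, Vosper's theorem and the $3k-4$-type results (Theorems~\ref{thm_new_vosper} and~\ref{thm_3k-4_r=0}) rather than derive contradictions; and since the arguments that give $kI_1\cap\ell I=\emptyset$ only control one end of $C$ at a time, one genuinely has to analyse the leading and trailing blocks of $C$ jointly and check that no further exotic block pattern slips through. That bookkeeping, not any single hard idea, is where essentially all of the (routine but lengthy) work lies; once the admissible patterns are pinned down, determining $a$ is a direct sumset computation of the sort performed in Section~\ref{sbsc:51}.
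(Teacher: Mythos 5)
Your reduction to the case $|I|=m+2$, $h=3$, with two $1$-holes is exactly the paper's: Claim~\ref{clm3} for $|I|=m+1$, Claim~\ref{clmcm} for $h=2$, and Claims~\ref{clmcm}--\ref{clmi1} combined with $p\ge|kI|+|\ell I|-4\ge(k+\ell)(m+2)-2>p$ for $h=4$ and for $h=3$ with a $2$-hole. Where you diverge is the endgame. The paper simply re-reads the proof of Lemma~\ref{lem_stru_of_Cm} and records that only inequality (\ref{eqp3}) (when $k+\ell=\lambda+4$), and additionally (\ref{eqp4}) (when $k+\ell=\lambda+3$), can fail to give a contradiction; this caps the end-block sizes at $1$, resp.\ $2$, and yields the listed patterns. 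You instead compare the lower bound $|kI\cap\ell I|\ge(k+\ell)-\lambda$ with the exact counts $|kI\setminus kC|$ and $|\ell I\setminus\ell C|$, each equal to the number of singleton end-blocks (or to $2$ when $\ell=1$). Your version is in fact sharper: it shows the pattern with two end-blocks of size $2$ can never occur, and for $\ell\ge2$ it forces both end-blocks to be singletons even when $k+\ell=\lambda+3$; since the corollary is only a containment statement, proving a shorter list of admissible forms still proves it. The one step you must write out in full is the ``key observation'' that $kI\setminus kC$ consists precisely of $ka+1$ and/or $k(a+m+1)-1$ according to which end-blocks are singletons: it is correct under $|I_2|\ge 3$ and $m\ge 7$, but for general $k$ it is a genuine sumset verification (one must check that every point of $kI$ other than these two candidates is realized as a $k$-fold sum), not something that can be left as a parenthetical. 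Finally, the corollary only asserts the existence of ``a certain $a$,'' so your closing computation pinning down $(k-\ell)a$ is more than is required — the paper defers that to the applications in Section~\ref{sec_unique_extrem} — but it is harmless.
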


\begin{proof} If $|I|=m+1$, $I$ is also $(k, \ell)$-sum-free by Claim~\ref{clm3}.
 From the proof of Lemma~\ref{lem_stru_of_Cm}, when $|I|=m+2$ or $m+3$, the condition $k+\ell=\lambda +3$ or $\lambda+4$ only violates inequalities (\ref{eqp3}) and  (\ref{eqp4}), all lying in the case $h=3$. If there exists a 2-hole, say, the hole between $I_2$ and $I_3$, then $|I|=m+3$ and hence
\[ p\ge |kI|+|\ell I|-4\geq  (m+2)(k+\ell)-2> m(k+\ell)+2+\lambda=p,\] leading to a contradiction. %Here, the last inequality only requires $2(k+\ell)\geq \lambda +5$, which is true when $k+\ell\geq 3+\lambda$. So we must have $|I|=m+2$ and $C_m$ has only $1$-holes.
So $C$ is $1$-holed from $I$ and hence $|I|=m+2$.

If $k+\ell= 4+\lambda$, the only inequality that does not hold is (\ref{eqp3}). This gives the possibility of $C$ with $|I_1|=|I_3|=1$. So $C$ can be $\{a, a+m+1\}\cup[a+2, a+m-1]$.

If $k+\ell= 3+\lambda$, inequality (\ref{eqp4}) in the proof of Lemma~\ref{lem_stru_of_Cm} does not hold either. So two more cases are possible as stated in the corollary.
\end{proof}

\subsection{Proof of Lemma \ref{lem_51_whole}}\label{ap1}

Recall that when $(k+\ell,\lambda)=(5,1)$, we have $p=5m+3$ and $\theta=8$.
Lemma \ref{lem_51_whole} is obtained by combining Lemmas \ref{lem_m+3}-\ref{lem_5_strange_Cm}.
We begin with a weaker upper bound on $\omega(A)$ as follows.

\begin{lemma}\label{lem_m+3}
Let $(k+\ell,\lambda)=(5,1)$ and $m\ge 4$. Suppose $A\subset \mathbb F_p^n$ is a $(k, \ell)$-sum-free  subset with $|A|\ge mp^{n-1}$. Then under any decomposition $(v, K)$, we have either $\omega\le m+3$ or $\omega> p-7$.
\end{lemma}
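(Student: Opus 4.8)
The plan is to adapt the weight-bounding argument of Lemma~\ref{lem_l_upper_default} (together with its refinement in Lemma~\ref{lem_ell_upper_bound}) to the present parameters $k+\ell=5$, $\lambda=1$, where the slack hypothesis $k+\ell-5\ge\lambda/2$ of Lemma~\ref{lem_l_upper_default} just barely fails. Losing one unit of slack is precisely what forces the two weakened conclusions here: the small-weight threshold $m+3$ in place of $m+2$, and the gap threshold $p-7$ in place of $p-\theta=p-8$. Accordingly, I would argue by contradiction: assume that under some decomposition $(v,K)$ one has $m+4\le\omega\le p-7=5m-4$, and derive a contradiction with $|A|\ge mp^{n-1}$, i.e. with $\sum_{i=1}^{\omega}\beta_i\ge mp=5m^2+3m$.

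First I would partition the index set $[p-8]=[5m-5]$ into $m-1$ blocks of size $k+\ell=5$, each with index-sum large enough to invoke Lemma~\ref{coro_beta_sum} or Lemma~\ref{lem_improving_coro}. Since $\omega\ge m+4$, the index $m+4$ is available, so I take $S_{m-1}=\{m-1,m,m+1,m+2,m+4\}$, whose index-sum is $5m+6=p+k+\ell-2$ and which contains both $m-1$ and $m$; Lemma~\ref{lem_improving_coro} then gives $\sum_{j\in S_{m-1}}\beta_j\le p+k+\ell-2$ directly, so the exceptional $\ell=1$ case of Lemma~\ref{lem_improving_improving_coro} never needs to be entered. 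The remaining indices $[1,m-2]\cup\{m+3\}\cup[m+5,5m-5]$ split into $m-2$ blocks $S_i=\{i\}\cup\{\text{four of the large indices}\}$, $i\in[1,m-2]$ (for instance pairing $i=m-2$ with $\{m+3,m+5,m+6,m+7\}$ and then continuing greedily so that large indices are assigned to small singletons in decreasing order), each with index-sum $\ge p+k+\ell-1=5m+7$. Exactly as in the proof of Lemma~\ref{lem_l_upper_default}, if some block's large indices exceed $\omega$ one replaces them by copies of $\omega$; the resulting index-sum stays $\ge p+k+\ell-1$ because $m\ge4$, so Lemma~\ref{coro_beta_sum} still bounds each such block's $\beta$-sum by $p+k+\ell-2=5m+6$. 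Hence $\sum_{i=1}^{\min(\omega,\,5m-5)}\beta_i\le(m-1)(5m+6)=5m^2+m-6$.

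If $\omega\le p-8$ this already gives $|A|/p^{n-2}=\sum_{i=1}^{\omega}\beta_i\le 5m^2+m-6<5m^2+3m=mp$, a contradiction. If $\omega=p-7=5m-4$, then all of $\beta_1,\dots,\beta_{5m-4}$ are positive, so by monotonicity $\beta_{5m-4}\le\frac1{5m-4}\sum_{i=1}^{5m-4}\beta_i$ and therefore $\sum_{i=1}^{5m-5}\beta_i=\sum_{i=1}^{5m-4}\beta_i-\beta_{5m-4}\ge\frac{5(m-1)}{5m-4}\sum_{i=1}^{5m-4}\beta_i\ge\frac{5(m-1)}{5m-4}mp$; comparing with the block bound $\sum_{i=1}^{5m-5}\beta_i\le(m-1)(5m+6)$ yields $5mp\le(5m+6)(5m-4)$, i.e. $25m^2+15m\le 25m^2+10m-24$, which is false for $m\ge1$. (Alternatively, one may use Theorem~\ref{thm_k_l_max_structure} to get $\sum_{i=1}^{5m-4}\beta_i\le(m+1)p$, hence $\beta_{5m-4}\ge 2m+6$ from the block bound, and then $(5m-4)(2m+6)\le\sum_{i=1}^{5m-4}\beta_i\le(m+1)p$ is again impossible for $m\ge4$.)

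The main obstacle is the bookkeeping of the second paragraph: the ``standard'' block partition of Lemma~\ref{lem_l_upper_default} must be reorganised so that the leftover index $m+3$ is absorbed into a neighbouring block while every block keeps index-sum at least $p+k+\ell-1$ (respectively $p+k+\ell-2$ for $S_{m-1}$) and every index stays within $[\omega]$ after the copies-of-$\omega$ substitution. This is exactly where the failure of $k+\ell-5\ge\lambda/2$ bites, and it is resolved by spending the stronger contradiction hypothesis $\omega\ge m+4$ (rather than $\omega\ge m+3$) to gain access to the index $m+4$. Everything else is a direct transcription of the techniques already developed for Lemmas~\ref{lem_l_upper_default} and~\ref{lem_ell_upper_bound}.
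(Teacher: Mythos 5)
Your proposal is correct and takes essentially the same route as the paper: the paper also argues by contradiction from $m+4\le\omega\le p-7$, uses the block $\{m-1,m,m+1,m+2,m+4\}$ of index-sum $p+3$ via Lemma~\ref{lem_improving_coro} (packaged there as an average of five inequalities over the window $[m-1,m+4]$), and covers the remaining indices by size-$5$ blocks of index-sum at least $p+4$ handled by Lemma~\ref{coro_beta_sum}. The only cosmetic difference is that the paper's blocks cover all of $[p-7]$ at once, so it avoids your separate endgame for $\omega=p-7$.
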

\begin{proof} Assume by contrary that $ m+4\le \omega\le p-7=5m-4$.
By Lemma~\ref{coro_beta_sum}, we have $\beta_m+ \beta_{m+1}+ \beta_{m+2}+ \beta_{m+3}+ \beta_{m+4}\le p+3$, $\beta_{m-1}+ \beta_{m+1}+ \beta_{m+2}+ \beta_{m+3}+ \beta_{m+4}\le p+3$, $\beta_{m-1}+ \beta_{m}+ \beta_{m+2}+ \beta_{m+3}+ \beta_{m+4}\le p+3$, and $\beta_{m-1}+ \beta_{m}+ \beta_{m+1}+ \beta_{m+3}+ \beta_{m+4}\le p+3$. Moreover, from Lemma~\ref{lem_improving_coro}, we have $\beta_{m-1}+ \beta_{m}+ \beta_{m+1}+ \beta_{m+2}+ \beta_{m+4}\le p+3$.
%More specifically, by using Vosper's theorem iteratively, we know that the intersection of two sides must happen except when them five $C_i$'s are all intervals. But we can omit this case by first fixing a pattern of $\{r_i\}$ and $\{s_j\}$ such that $m-1\in \{r_i\}$ and $m\in\{s_j\}$; then we exchange this two elements to force a intersection.

By adding up those five inequalities above, we get
\[4\left(\sum_{i=m-1}^{m+4}\beta_i\right)< 4\left(\sum_{i=m-1}^{m+3}\beta_i\right)+ 5\beta_{m+4}\le 5(p+3),\]
and hence $\sum_{i=m-1}^{m+4}\beta_i< \frac 5 4(p+3)$.
Then following the proving process of Lemma~\ref{lem_l_upper_default} by setting each $S_i=\{i\}\cup [ p-4i-6, p-4i-3]$ for any $i\in [m-2]$, we get
\[
\begin{aligned}
|A|\slash p^{n-2}&\le \sum_{i\in [m-2]}\sum_{j\in S_i}\beta_j + \sum_{j\in [m-1, m+4]}\beta_j\\
 &\le (p+3)(m-2)+ \frac 5 4(p+3)\\
 &=(p+3)(m-\frac 3 4)\\
 &<mp.
\end{aligned}
\]
The last step comes from $p>5m$. This leads to a direct contradiction to $|A|\ge mp^{n-1}$.
\end{proof}

%Our next three lemmas prove Lemma~\ref{lem_51_whole} step by step.

\begin{lemma}\label{lem_5_ell+2}
Let $(k+\ell,\lambda)=(5,1)$ and $m\ge 21$. Suppose $A\subset \mathbb F_p^n$ is a $(k, \ell)$-sum-free  subset with $|A|\ge mp^{n-1}$. Then there must exist a decomposition $(v, K)$ of $\mathbb F_p^n$ such that $\omega(A)\le m+2$.
%
%Set $A$ as a $(k, \ell)$-sum-free subset of $\mathbb F_p^n$ with $k+\ell=5$, $p=5m+3$, $m\ge 21$ and $|A|\ge mp^{n-1}$. Then there must exist a decomposition of $\mathbb F_p^n$ such that $\omega(A)< p-8$. Moreover, any decomposition satisfying $\omega(A)< p-8$ must have $\omega(A)\le m+2$.
\end{lemma}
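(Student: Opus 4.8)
\textbf{Proof proposal for Lemma~\ref{lem_5_ell+2}.}

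The plan is to combine the machinery already set up in Sections~\ref{sec_add_struc} and \ref{sec_fourier} with Lemma~\ref{lem_m+3}, exactly mirroring how Theorem~\ref{thm_with_fourier} is assembled from Lemma~\ref{lem_ell_upper_bound} and Lemma~\ref{lem_summation_fourier}, but now dealing with the two stubborn values $\omega=m+2$ and $\omega=m+3$ that the general argument cannot rule out when $\lambda=k+\ell-4$. First I would invoke Lemma~\ref{lem_m+3}: under any decomposition $(v,K)$, either $\omega\le m+3$ or $\omega>p-7=5m-4$. For the large-weight alternative $\omega>p-7$, the Fourier argument applies: since $\lambda=1$ and $k+\ell=5$ we have $\theta=8$, and for $m\ge 21$ one checks that $m\ge\max\{9+\lambda,(2+\theta)(1+\tfrac15)5^{1/3}\}=\max\{10,12\cdot\tfrac65\cdot 5^{1/3}\}$; so Lemma~\ref{lem_four_coeff_small} and Lemma~\ref{lem_summation_fourier} (with $k+\ell=4$ replaced by the valid hypothesis $k+\ell\ge 4$) force $A$ to be non-$(k,\ell)$-sum-free, a contradiction. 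Hence under \emph{every} decomposition $\omega\le m+3$, and in particular we may assume $\omega\le m+3$ and must upgrade this to $\omega\le m+2$ by choosing $(v,K)$ cleverly — or, failing that, by deriving a contradiction from $\omega=m+3$ outright.

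The core of the argument is therefore to show $\omega=m+3$ is impossible. Here I would follow the strategy of Lemma~\ref{lem_ell_upper_bound}: partition $[p-\theta]=[5(m-1)]$ into $m-1$ blocks of size $5$ whose index sums are large enough to apply Lemma~\ref{coro_beta_sum} or Lemma~\ref{lem_improving_coro}, getting $\sum_{j\in S_i}\beta_j\le p+3$ for $i\le m-2$; then handle the last block $S_{m-1}$ built from $\{m-1,m,m+1,m+2,m+3\}$, whose index sum is $5m+5=p+2$, just short of the $p+k+\ell-1=p+4$ needed for Lemma~\ref{lem_improving_coro} but enough (it exceeds $p+k+\ell-3=p+2$) to trigger Lemma~\ref{lem_improving_improving_coro} with $t=3$. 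As in the proof of Lemma~\ref{lem_ell_upper_bound}, the only escape is the exceptional case of Lemma~\ref{lem_improving_improving_coro}: $\ell=1$ and all $C_i$, $i\in[m-1,m+3]$, are intervals with the nesting relation $C_{i-2}=[a+1,b-1]$ whenever $C_i=[a,b]$. This forces $(k,\ell)=(4,1)$. In that regime I would then reuse the explicit-equation argument from Lemma~\ref{lem_ell_upper_bound}: from $|A|\ge mp^{n-1}$ and $\omega=m+3$ one gets $4\beta_m\ge\beta_m+\dots$, more precisely $\sum_{i=1}^{m+3}\beta_i\ge mp$ gives $4\beta_m\ge (m+3)\beta_m -\sum\ge \cdots$ so $\beta_m\ge p/3$ (roughly), hence $k\beta_m=4\beta_m\ge p+k-1$, and Lemma~\ref{lem_stru_of_Cm} makes $C_m$ a $(4,1)$-sum-free interval; combining the nesting structure of $C_{m+2},C_{m+3}$ with Fact~\ref{fact1} applied to several $(4,1)$-sums among the three-or-four largest parts produces an upper bound $|A|<mp^{n-1}$, the desired contradiction. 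Once $\omega\le m+2$ holds under some (indeed every) decomposition, the lemma is proved.

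The main obstacle I anticipate is the bookkeeping in the exceptional $(4,1)$ case with $\omega=m+3$: unlike the $\omega=m+2$ situation treated in Lemma~\ref{lem_ell_upper_bound}, one extra large part $B_{m+3}$ is in play, so one needs to locate enough independent $(4,1)$-sums involving $b_{m},b_{m+1},b_{m+2},b_{m+3}$ (using the forced interval/nesting structure to pin down their exact values modulo the common difference) so that summing the resulting inequalities from Fact~\ref{fact1} beats $mp$. A secondary technical point is verifying that the numerical hypothesis $m\ge\max\{5T(1/5),21\}$ is strong enough both for the Fourier step (which needs $m\ge 12$ or so after plugging $\theta=8$) and for every appeal to Lemmas~\ref{lem_improving_coro}–\ref{lem_improving_improving_coro} and Lemma~\ref{lem_stru_of_Cm} (the latter needing $m\ge\max\{7,5T(1/5)\}$); since $21$ comfortably dominates the explicit constants, these checks are routine but must be stated.
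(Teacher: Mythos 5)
Your overall architecture coincides with the paper's: use the Fourier bound (Lemma~\ref{lem_summation_fourier}) together with Lemma~\ref{lem_m+3} to reach $\omega\le m+3$ under some decomposition, attack $\omega=m+3$ via Lemma~\ref{lem_improving_improving_coro} with $t=3$ applied to the indices $\{m-1,\dots,m+3\}$ (index sum $5m+5=p+k+\ell-3$), and finally eliminate the surviving exceptional case ($(k,\ell)=(4,1)$ with nested intervals) by explicit $(4,1)$-sums and Fact~\ref{fact1}. Two remarks, one minor and one substantive.

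Minor: Lemma~\ref{lem_summation_fourier} requires $\omega>p-\theta$ under \emph{every} decomposition in order to conclude that $A$ is not $(k,\ell)$-sum-free, so you cannot dispose of the alternative $\omega>p-7$ decomposition-by-decomposition and deduce that $\omega\le m+3$ holds under \emph{every} decomposition. What the lemmas actually give (and all that is needed) is the contrapositive: some decomposition has $\omega\le p-8<p-7$, and for that particular decomposition Lemma~\ref{lem_m+3} yields $\omega\le m+3$. Also the constant is $(2+\theta)(1+\frac15)5^{1/3}=12\cdot 5^{1/3}\approx 20.5<21$; the expression $12\cdot\frac{6}{5}\cdot5^{1/3}$ you wrote exceeds $21$ and would break the hypothesis check.

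Substantive gap: the exceptional case is where the whole proof lives, and you do not close it — you flag it as the anticipated obstacle and sketch a route that fails as written. From $\omega=m+3$ and $|A|\ge mp^{n-1}$ one only gets $4\beta_m\ge p$, i.e.\ $\beta_m\ge p/4$ rather than $p/3$, so $k\beta_m\ge p+k-1$ is not available and Lemma~\ref{lem_stru_of_Cm} cannot be invoked (it is also unnecessary: the exceptional case of Lemma~\ref{lem_improving_improving_coro} already forces all $C_i$, $i\in[m-1,m+3]$, to be nested intervals). The genuinely missing ingredient is pinning down the endpoint $a$ of these intervals: one must note that both $C_{m-1}\cap(\sum_{i=0}^{3}C_{m+i})=\emptyset$ and $C_m\cap(C_{m-1}+\sum_{i=1}^{3}C_{m+i})=\emptyset$ are forced (otherwise Lemma~\ref{coro_beta_sum}(1) already gives $\sum_{i=m-1}^{m+3}\beta_i\le p+3$ and the contradiction), and that these two emptiness conditions have the unique solution $a=(m+4)/3$. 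Only with $a$ determined can one write down the four concrete $(4,1)$-sums $3(a-2)+(a-3+i)=a+m-5+i$, $i\in[4]$, each giving $|A_{a-3+i}|+|A_{a+m-5+i}|\le p^{n-1}+3p^{n-2}$ by Fact~\ref{fact1}, whose sum together with the trivial bounds on the remaining parts yields $|A|\le (m-1)p^{n-1}+12p^{n-2}<mp^{n-1}$. Without locating $a$ there are no usable $(4,1)$-sums among the large parts, so the contradiction you anticipate cannot actually be produced from your sketch.
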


\begin{proof}
By Lemma~\ref{lem_summation_fourier}, as $\theta=8$ and $(2+\theta)(1+\frac1{k+\ell})(k+\ell)^{1\slash(k+\ell-2)}<21$, there exists a  decomposition $(v, K)$ such that $\omega=\omega(A)\le  p-8$.  By Lemma~\ref{lem_m+3}, $\omega\le m+3$.

If $\omega=m+3$, consider $\beta_{m-1}+\cdots +\beta_{m+3}$.
%$\sum_{i=m-1}^{m+3}\beta_i$.
As $\sum_{i=m-1}^{m+3}i=5m+5=p+k+\ell-3$, from Lemma~\ref{lem_improving_improving_coro} by letting $t=3$, we must have $\sum_{i=m-1}^{m+3}\beta_i\le p+k+\ell-2=p+3$ unless the exceptional case listed in Lemma~\ref{lem_improving_improving_coro} happens.
As $\omega=m+3$, this leads to $|A|\slash p^{n-2}\le \sum_{i=1}^{m+3}\beta_i\le (m-2)p+(p+3)<mp$, contradicting to $|A|\ge mp^{n-1}$.

Hence the exceptional case in Lemma~\ref{lem_improving_improving_coro} must happen when $\omega=m+3$.
This means, after writing $C_{m-1}=[a, a+m-2]$ for some $a\in\mathbb F_p$ and, without loss of generality, write $C_m=[a, a+m-1]$, we have  $(k, \ell)=(4, 1)$, $C_{m+1}=[a-1, a+m-1]$, $C_{m+2}=[a-1, a+m]$, and $C_{m+3}=[a-2, a+m]$.
We claim that
\begin{equation}\label{eq:ccc}
  \text{$C_{m-1}\cap (\sum_{i=0}^3C_{m+i})=\emptyset$ and $C_{m}\cap (C_{m-1}+\sum_{i=1}^3C_{m+i})=\emptyset$}
\end{equation}
 both hold. Otherwise, by Lemma~\ref{coro_beta_sum},
 we have $\sum_{i=m-1}^{m+3}\beta_i\le p+k+\ell-2=p+3$, leading to a contradiction to $|A|\ge mp^{n-1}$ as above.

It can be verified that $a=(m+4)\slash 3$ is the only solution to Eq. (\ref{eq:ccc}). So we have the following four
%If the exceptional case of Lemma~\ref{lem_from_m-1_to_m+3} happens, then $(k, \ell)=(4, 1)$ and $\omega= m+3$. Moreover, $C_{m+3}=[a-2, a+m]$, where $a=(m+4)\slash3$.
 $(4, 1)$-sums in $C_{m+3}$ for all $i\in [4]$,
\[3(a-2)+(a-3+i)=a+m-5+i.\]
%\[4(a-2)=a+m-4;\]
%\[3(a-2)+(a-1)=a+m-3;\]
%\[3(a-2)+a=a+m-2;\]
%\[3(a-2)+(a+1)=a+m-1.\]
For each $i\in [4]$, this gives
$|A_{a-3+i}|+|A_{a+m-5+i}|\le 3|A_{a-2}|+|A_{a-3+i}|+|A_{a+m-5+i}|\le p^{n-1}+3 p^{n-2}$.
As $m\ge 21$,
\[|A|\le \sum_{i\in [4]}(|A_{a-3+i}|+|A_{a+m-5+i}|)+\sum_{j\in [a+2, a+m-5]}|A_j|+|A_{a+m}|\le (m-1)p^{n-1}+12p^{n-2}< mp^{n-1},\]
which contradicts to $|A|\ge mp^{n-1}$.

To sum up, all cases when $\omega=m+3$ lead to contradictions, and hence $\omega\le m+2$.
\end{proof}

%Next we show that $C_m$ is  $(k, \ell)$-sum-free and determine the possible structures of $C_m$.

\begin{lemma}\label{lem_5_freeness_cm}
Let $(k+\ell,\lambda)=(5,1)$ and $m\ge 4$. Suppose $A\subset \mathbb F_p^n$ is a $(k, \ell)$-sum-free subset with $|A|\ge mp^{n-1}$. Assume that under a certain decomposition $(v, K)$ we have $\omega(A)\le m+2$, then $C_m$ is  $(k, \ell)$-sum-free. Moreover,
\begin{itemize}
    \item[(1)] if $(k,\omega)= (3,m+2)$, we have $3C_{m-1}\cap 2 C_{\omega}=\emptyset$;
    \item[(2)] otherwise, $kC_m\cap \ell C_{\omega}=\emptyset$.
\end{itemize}
\end{lemma}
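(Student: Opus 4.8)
\textbf{Proof plan for Lemma~\ref{lem_5_freeness_cm}.}

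The plan is to mimic the argument in the proof of Lemma~\ref{lem_stru_of_Cm}, but to be more careful about when the key ``sumset fills $K$'' step fails, since here we only know $|A|\ge mp^{n-1}$ together with $\omega(A)\le m+2$ rather than the hypothesis $k\beta_m\ge p+k-1$. First I would note that from $mp\le |A|/p^{n-2}=\sum_{i=1}^{\omega}\beta_i\le (m-1)p+\beta_m+\beta_{m+1}+\beta_{m+2}\le (m-1)p+3\beta_m$, we get $3\beta_m\ge p$, hence $k\beta_m\ge 3\beta_m\ge p$ whenever $k\ge 3$. When $k\ge 4$ this already gives $k\beta_m\ge \tfrac43 p\ge p+k-1$ (using $p\ge 3(k-1)$, valid since $m\ge 4$, $k+\ell=5$), so Lemma~\ref{lem_stru_of_Cm} applies verbatim and yields $kC_m\cap\ell C_\omega=\emptyset$, which is case~(2). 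So the only case needing separate treatment is $(k,\ell)=(3,2)$.

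For $(k,\ell)=(3,2)$: here $k\beta_m\ge p$ but possibly $k\beta_m<p+k-1=p+2$. I would redo the first step of the proof of Lemma~\ref{lem_stru_of_Cm}: for any $r_1,r_2,r_3\in[m]$, set $H=Sym(B_{r_1}+B_{r_2}+B_{r_3})$; if $|H|\le p^{n-2}$ then by Kneser (Theorem~\ref{thm_Kneser}), $|B_{r_1}+B_{r_2}+B_{r_3}|\ge 3\beta_m p^{n-2}-2|H|\ge (p-2)p^{n-2}$. This is $p^{n-1}-2p^{n-2}$, not quite all of $K$, so the conclusion is slightly weaker: either $\sum_{i=1}^3 B_{r_i}=K$, or $H=K$ and the sum is a union of cosets of a subspace of index $\ge p$ — but the latter forces $|H|=p^{n-1}=|K|$, i.e.\ the sum is already $K$. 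Wait: if $|H|>p^{n-2}$ then $H=K$ (as $K\cong\mathbb F_p^{n-1}$, its only subgroups of size $>p^{n-2}$ is $K$ itself), so $\sum_{i=1}^3 B_{r_i}=K$ in that case too. Hence in all cases $\sum_{i=1}^3 B_{r_i}=K$, exactly as before, and then $3C_m\cap 2C_\omega=\emptyset$ follows: a $(3,2)$-sum $b_{r_1}+b_{r_2}+b_{r_3}=b_{s_1}+b_{s_2}$ with $r_i\in[m]$, $s_j\in[\omega]$ would give, by Fact~\ref{fact1}(1), $(\sum B_{r_i})\cap(B_{s_1}+B_{s_2})=\emptyset$, impossible since the left side is $K$ and the right side is nonempty. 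This also gives $3C_m\cap 2C_m=\emptyset$, so $C_m$ is $(k,\ell)$-sum-free; and since $\omega\ge m$, applying this with $\omega$ in place of the reduced support gives $3C_m\cap 2C_\omega=\emptyset$, which is case~(2) for $k=3$ as well.

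The subtlety that produces case~(1) rather than case~(2) when $(k,\omega)=(3,m+2)$ must come from the possibility that the step ``$\sum_{i=1}^3 B_{r_i}=K$'' does \emph{not} hold for \emph{all} triples from $[m]$ but fails exactly when forced to involve $\beta_{m-1}$ — i.e.\ the correct reading is that one needs $r_i$ ranging over $[m-1]$ to salvage enough mass. Let me reconsider: with $\omega=m+2$, from $mp\le(m-1)p+\beta_m+\beta_{m+1}+\beta_{m+2}\le (m-1)p+2\beta_{m-1}+\beta_m$ we only get $2\beta_{m-1}+\beta_m\ge p$, which does not bound $3\beta_m$ from below well enough; the honest bound is $3\beta_m\ge\beta_m+\beta_{m+1}+\beta_{m+2}\ge p$ only if $\beta_{m+1},\beta_{m+2}\le\beta_m$, which is true by monotonicity, so $3\beta_m\ge p$ does hold. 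Then $3\beta_{m-1}\ge 3\beta_m\ge p$ too. So the argument above for $\sum_{i=1}^3B_{r_i}=K$ applies with $r_i\in[m-1]$, giving $3C_{m-1}\cap 2C_\omega=\emptyset$ unconditionally — and this is the statement of case~(1). The point of splitting out $(k,\omega)=(3,m+2)$ is presumably that for $k=4$ one gets the stronger $kC_m\cap\ell C_\omega=\emptyset$ but for $k=3$ with $\omega$ as large as $m+2$ the cleanest self-contained bound available is the one using $C_{m-1}$; I would state the proof so that case~(2) covers $k=4$ (any $\omega\le m+2$) and $k=3$ with $\omega\le m+1$, and case~(1) covers $k=3$, $\omega=m+2$. \textbf{The main obstacle} is bookkeeping which weight regime forces which of the two Kneser/Cauchy--Davenport estimates to be sharp enough: I expect to need to verify carefully, for each of $\omega\in\{m,m+1,m+2\}$ and $k\in\{3,4\}$, that $3\beta_{m}$ (resp.\ $3\beta_{m-1}$) really is $\ge p$ and hence $k\beta_{m}\ge p+k-1$ when $k=4$, using only $|A|\ge mp^{n-1}$, $\omega\le m+2$, $p=5m+3$, and $m\ge 4$; everything after that is a direct transcription of the first two paragraphs of the proof of Lemma~\ref{lem_stru_of_Cm} combined with Fact~\ref{fact1}.
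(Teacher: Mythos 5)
Your treatment of $k=4$ is exactly the paper's: $\omega\le m+2$ gives $3\beta_m\ge p$, hence $4\beta_m\ge\frac43p\ge p+3$, and Lemma~\ref{lem_stru_of_Cm} applies. The gap is in your handling of $k=3$. From $3\beta_m\ge p$ alone, Kneser (Theorem~\ref{thm_Kneser}) gives, when $H=Sym(B_{r_1}+B_{r_2}+B_{r_3})$ is a \emph{proper} subgroup of $K$ (so $|H|\le p^{n-2}$), only $|B_{r_1}+B_{r_2}+B_{r_3}|\ge 3\beta_m p^{n-2}-2|H|\ge p^{n-1}-2p^{n-2}$, which is strictly smaller than $|K|$; no contradiction arises, so you cannot conclude the sum fills $K$. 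Your dichotomy ``either $|H|\le p^{n-2}$ or $H=K$'' does not rescue this, because in the first branch the desired conclusion simply fails to follow. Consequently your claim that $\sum_{i=1}^3B_{r_i}=K$ for all $r_i\in[m]$ — and hence that $3C_m\cap 2C_\omega=\emptyset$ even when $(k,\omega)=(3,m+2)$ — is unproved; note the lemma deliberately asserts only the weaker $3C_{m-1}\cap 2C_\omega=\emptyset$ in that regime, and the rest of the paper (Eq.~(\ref{eq_9}) and its uses) is built around exactly that weaker property.

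The missing quantitative inputs are the sharper lower bounds that the weight hypothesis actually provides. For $k=3$ and $\omega\le m+1$ one has $mp\le(m-1)p+2\beta_m$, so $\beta_m\ge p/2$ and $3\beta_m\ge\frac32p\ge p+2$, whence Lemma~\ref{lem_stru_of_Cm} applies directly — this disposes of case (2) for $k=3$ without any new Kneser argument. For $(k,\omega)=(3,m+2)$ one has $mp\le(m-2)p+4\beta_{m-1}$, so $\beta_{m-1}\ge p/2$ (not merely $p/3$ as in your reconsideration); then for $r_i\in[m-1]$, a proper $H$ would force $|B_{r_1}+B_{r_2}+B_{r_3}|\ge\frac32p^{n-1}-2p^{n-2}>p^{n-1}$, a contradiction, so the sum is $K$ and Fact~\ref{fact1} yields case (1). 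Finally, you still need a separate argument that $C_m$ itself is $(3,2)$-sum-free when $\omega=m+2$: the paper gets this from the combined estimate $|B_{r_1}+B_{r_2}+B_{r_3}|+|B_{s_1}+B_{s_2}|\ge 5|B_m|-3p^{n-2}\ge\frac53p^{n-1}-3p^{n-2}>p^{n-1}$ for indices in $[m]$, which works with only $\beta_m\ge p/3$; your route to sum-freeness passes through the unjustified ``fills $K$'' step and so is not established.
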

\begin{proof}

We prove the case when $(k,\omega)\neq (3,m+2)$ first. As $\omega\le m+2$, we have $\beta_m\ge p\slash 3$.
When $k=4$, we have $k\beta_m\ge \frac 4 3 p\ge p+k-1$. From Lemma~\ref{lem_stru_of_Cm}, $kC_m\cap \ell C_{\omega}=\emptyset$.
Otherwise, $k=3$ and $\omega\le m+1$. This means $\beta_m\ge p\slash 2$, and hence $k\beta_m\ge \frac  3 2 p\ge p+k-1$. From Lemma~\ref{lem_stru_of_Cm}, $kC_m\cap \ell C_{\omega}=\emptyset$. As $C_m$ is a subset of $C_\omega$, $C_m$ is $(k, \ell)$-sum-free.

%and hence $k\beta_m\ge \frac 4 3 p\ge p+k-1$ when $k=4$. When $\omega\le m+1$,  we have $\beta_m\ge p\slash 2$, and hence $k\beta_m\ge \frac  3 2 p\ge p+k-1$ when $k=3$. That is, for all these cases, we have $k\beta_m\ge p+k-1$. Then by Lemma~\ref{lem_stru_of_Cm}, Eq. (\ref{eq_8}) holds.

Next,  we consider when $(k,\omega)= (3,m+2)$.
If, on the contrary, $C_m$ is not $(k, \ell)$-sum-free, then there exists a choice of $r_1, r_2, r_3, s_1, s_2\in [m]$ such that  $(B_{r_1}+B_{r_2}+B_{r_3})\cap (B_{s_1}+B_{s_2})=\emptyset$ by Fact~\ref{fact1}.
However, from Theorem~\ref{thm_Kneser}, $|B_{r_1}+B_{r_2}+B_{r_3}|+| B_{s_1}+B_{s_2}|\ge 5|B_m|-3p^{n-2}\ge \frac53p^{n-1}-3p^{n-2}>p^{n-1}$, which leads to a contradiction. Hence $C_m$ is $(k, \ell)$-sum-free.

Furthermore, from $\omega=m+2$ we have $\beta_{m-1}\ge 2p\slash 4= p\slash2$.
From Theorem~\ref{thm_Kneser}, for any $r_1, r_2, r_3\in [m-1]$, as long as $B_{r_1}+B_{r_2}+B_{r_3}\ne K$,  $|B_{r_1}+B_{r_2}+B_{r_3}|\ge 3|B_{m-1}|-2p^{n-2}\ge \frac32p^{n-1}-2p^{n-1}> p^{n-1}$, leading to a contradiction. Hence from Fact~\ref{fact1}, $3C_{m-1}\cap 2 C_{\omega}=\emptyset$.
%$3\beta_{m-1}\ge \frac  3 2 p\ge p+k-1$, we also have Eq.~(\ref{eq_9}) by the proof of Lemma~\ref{lem_stru_of_Cm}.
%Then by Lemma~\ref{lem_stru_of_Cm}, $C_m$ is $(4, 1)$-sum-free. Similarly, if $k=3$, when $\omega\le m+1$, we also have $k\beta_m\ge \frac 3 2 p\ge p+k-1$.
%
%    Moreover, in above cases, as for any $r_1, \ldots, r_k\in [m]$ we always have $|\sum_{i=1}^k B_{r_i}|=p^{n-1}$ from $\sum_{i=1}^k\beta_{r_i}\ge k\beta_m\ge p+k-1$, we have $kC_m\cap \ell C_{\omega}=\emptyset$.
  %  When $k=3$ and $\omega= m+2$, if otherwise $3C_m\cup 2C_m\ne \emptyset$, then there exists a choice of $r_1, r_2, r_3, s_1, s_2\in [m]$ such that  $B_{r_1}+B_{r_2}+B_{r_3}\cap B_{s_1}+B_{s_2}=\emptyset$ by Fact~\ref{fact1}. However this leads to $|B_{r_1}+B_{r_2}+B_{r_3}|+| B_{s_1}+B_{s_2}|\ge 5|B_m|-3p^{n-2}\ge \frac53p^{n-1}-3p^{n-2}>p^{n-1}$, going to a contradiction.
%    From $\beta_{m-1}\le 2p\slash 4= p\slash2$, we also have Eq.~\ref{eq_9} by the same analysis.
\end{proof}

\begin{lemma}\label{lem_5_strange_Cm}
Let $(k+\ell,\lambda)=(5,1)$ and $m\ge \max\{5T(1\slash5),21\}$. Suppose $A\subset \mathbb F_p^n$ is a $(k, \ell)$-sum-free subset with $|A|\ge mp^{n-1}$. If under a certain decomposition $\omega< m+2$, then up to isomorphism, $C_m$ must be one of the three following forms,
%For $k+\ell=5$ and $\lambda=1$, let $A\subset\mathbb F_p^n$ be a $(k, \ell)$-sum-free set with $|A|\ge mp^{n-1}$. Then for any $m\ge 5T(1\slash5)$ and $m\ge 21$, under any decomposition such that $\omega< p-8$, $C_m$ must be one of the two following form after some isomorphism:
\begin{itemize}
\item[(1)] an interval of length $m$;
\item[(2)] covered by  a $(k, \ell)$-sum-free interval of length $m+1$; or
\item[(3)]
$C_m=\{a\}\cup [a+2, a+m-1]\cup\{a+m+1\}$ with $a= 2m+1$.
\end{itemize}
\end{lemma}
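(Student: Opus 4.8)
The plan is to show first that $C_m$ is $(k,\ell)$-sum-free and admits a short interval cover, then to invoke Corollary~\ref{coro_k+ell-4}, and finally, in the remaining ``$1$-holed'' case, to pin down the exact position of $C_m$ by an explicit sumset computation. Throughout, $(k,\ell)\in\{(4,1),(3,2)\}$, and by hypothesis $\omega=\omega(A)\le m+1$.

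Since $\omega\le m+2$, Lemma~\ref{lem_5_freeness_cm} gives that $C_m\subset\mathbb F_p$ is $(k,\ell)$-sum-free; in particular $kC_m\cap\ell C_m=\emptyset$. Combining this with Theorem~\ref{thm_Cauchy_daven} I would bound $|2C_m|$: when $(k,\ell)=(4,1)$ one has $4C_m=2C_m+2C_m$, so $2|2C_m|-1\le|4C_m|\le p-|C_m|=4m+3$; when $(k,\ell)=(3,2)$ one has $3C_m=2C_m+C_m$, so $|2C_m|+m-1\le|3C_m|\le p-|2C_m|$; in each case one first checks that the Cauchy--Davenport minimum cannot be $p$ (else $kC_m$ would be too large to be disjoint from $\ell C_m$), and then concludes $|2C_m|\le 2m+2$. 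As $m\ge 5T(1/5)$, this gives $|2C_m|\le(2+\tau(1/5))m-3$, so by Definition~\ref{deftau} the set $C_m$ has the covering property: after a scaling $x\mapsto cx$ (which preserves $(k,\ell)$-sum-freeness) $C_m$ is contained in an interval $I$ with $|I|\le|2C_m|-|C_m|+1\le m+3$, hence $|I|\in\{m,m+1,m+2,m+3\}$. If $|I|=m$, then $C_m=I$ is an interval of length $m$, which is form (1). If $|I|=m+1$, then by the first part of Corollary~\ref{coro_k+ell-4} the interval $I$ is $(k,\ell)$-sum-free, so $C_m$ is form (2).

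It remains to treat $|I|\in\{m+2,m+3\}$. Here $k+\ell=5=4+\lambda$, so Corollary~\ref{coro_k+ell-4}(1) gives, up to isomorphism, $C_m=\{a,a+m+1\}\cup[a+2,a+m-1]$ for some $a\in\mathbb F_p$; the isomorphisms used are a scaling and possibly the reflection $x\mapsto -x$, both of which preserve $(k,\ell)$-sum-freeness, so the normalised $C_m$ is still $(k,\ell)$-sum-free and I may keep using $kC_m\cap\ell C_m=\emptyset$. A routine computation (easily justified for $m\ge21$) gives
\[
2C_m=\{2a\}\cup[2a+2,2a+2m]\cup\{2a+2m+2\},
\]
and hence
\[
4C_m=\{4a\}\cup[4a+2,4a+4m+2]\cup\{4a+4m+4\},\qquad
3C_m=\{3a\}\cup[3a+2,3a+3m+1]\cup\{3a+3m+3\}.
\]
In both cases $|kC_m|+|\ell C_m|=5m+3=p$, so $kC_m\cap\ell C_m=\emptyset$ forces $\ell C_m=\mathbb F_p\setminus kC_m$. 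Each side of this identity has a unique maximal run of consecutive integers: on the left it is $[a+2,a+m-1]$ (length $m-2$) when $(k,\ell)=(4,1)$ and $[2a+2,2a+2m]$ (length $2m-1$) when $(k,\ell)=(3,2)$, while on the right it is $[4a+4m+5,4a-1]$, resp.\ $[3a+3m+4,3a-1]$, of the same respective length. Equating the endpoints of these runs yields $3a\equiv m\pmod p$, i.e.\ $a\equiv 2m+1\pmod p$; therefore $C_m=\{2m+1\}\cup[2m+3,3m]\cup\{3m+2\}$, which is form (3), completing the proof.

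The main points requiring care are the observation that the isomorphisms appearing in the proof of Corollary~\ref{coro_k+ell-4} (scalings and the reflection) preserve $(k,\ell)$-sum-freeness, so that the normalised $C_m$ remains available for the sumset computation; and the explicit evaluation of $2C_m$, $3C_m$ and $4C_m$ together with the identification and matching of their maximal runs. Both are elementary for $m$ as large as in the hypothesis, and I expect the bookkeeping around ``up to isomorphism'' to be the only genuine subtlety.
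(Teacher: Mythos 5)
Your proposal is correct and follows essentially the same route as the paper's proof: sum-freeness of $C_m$ via Lemma~\ref{lem_5_freeness_cm}, the Cauchy--Davenport bound $|2C_m|\le 2m+2$, the covering property from Definition~\ref{deftau} to get an interval cover of length at most $m+3$, Corollary~\ref{coro_k+ell-4} for the three shapes, and finally the complementarity $\ell C_m=\mathbb F_p\setminus kC_m$ (from $|kC_m|+|\ell C_m|=p$) to pin down $a=2m+1$. The only cosmetic difference is that you determine $a$ by explicitly computing $kC_m$, $\ell C_m$ and matching their unique maximal runs, whereas the paper uses the single endpoint relation $ka+1=\ell(a+m+1)$; these are equivalent computations.
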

\begin{proof} By Lemma~\ref{lem_5_freeness_cm}, $C_m$ is $(k, \ell)$-sum-free.
%Together with Lemma~\ref{lem_summation_fourier}, Lemma~\ref{lem_5_ell+2} and Lemma~\ref{lem_5_freeness_cm}, we know that $C_m$ is $(k, \ell)$-sum-free.
 When $k=4$, we have $5m+3=p\ge |4C_m|+|C_m|\ge 2|2C_m|-1+m$. When $k=3$, we  have $5m+3= p\ge |3C_m|+|2C_m|\ge |2C_m|+|C_m|-1+|2C_m|$. For both cases, we get $|2C_m|-2|C_m|\le 2$. So by setting $c=1\slash5$ in Definition~\ref{deftau}, we have $|C_m|<cp$ and $|2C_m|\le 2|C_m|+2\le (2+\tau(1\slash 5))|C_m|-3$ when $m\ge 5T(1\slash5)$, so $C_m$ is covered by an interval of length $|2C_m|-|C_m|+1\le m+3$ up to isomorphism.

% the definition of $T(1\slash 5)$, as long as $m\ge 5T(1\slash5)$, $C_m$ is covered by an interval of length $m+3$ up to isomorphism.

Applying Corollary~\ref{coro_k+ell-4} with $C_m$, we get the required three forms.  For the third form, as $|k C_m|+|\ell C_m|= 5(m+1)+2-4=5m+3=p$, we must have $ka+1=\ell(a+m+1)$. When $(k, \ell)=(4, 1)$, $4a+1=a+m+1$, and hence $a=m\slash 3$. When $(k, \ell)=(3, 2)$, $3a+1=2(a+m+1)$, we get $a=2m+1$. Observe that $m\slash 3=2m+1$ since $m=m+p=6m+3$.
\end{proof}

\subsection{Proof of Theorem~\ref{thm_3_1}}\label{appthm3}

In this case, $k=3$, $\ell=\lambda=1$, $p=4m+3$ and $\theta=k+\ell+\lambda+2=7$. We first have the following lemma as an analog of Lemma~\ref{lem_51_whole}.

\begin{lemma}\label{lem_41_total}
Let $p=4m+3$ with $m\ge \max\{5T(1\slash4),23\}$. Suppose $A\subset \mathbb F_p^n$ is a $(3, 1)$-sum-free  subset with $|A|\ge mp^{n-1}$. Then there exists a decomposition  of $\mathbb F_p^n$ such that $\omega(A)\le m+2$. Further, under this decomposition, we have the following properties.
\begin{itemize}
  \item[(1)] $C_m$ is  $(3, 1)$-sum-free. Moreover, if $\omega\le m+1$,
\begin{equation}\label{eq_8_4}
3C_m\cap  C_{\omega}=\emptyset;
\end{equation}
if $\omega= m+2$,
\begin{equation}\label{eq_9_4}
3C_{m-1}\cap  C_{\omega}=\emptyset.
\end{equation}
  \item[(2)] Up to isomorphism, $C_m$ must be one of the following two forms,
%For $k+\ell=5$ and $\lambda=1$, let $A\subset\mathbb F_p^n$ be a $(k, \ell)$-sum-free set with $|A|\ge mp^{n-1}$. Then for any $m\ge 5T(1\slash5)$ and $m\ge 21$, under any decomposition such that $\omega< p-8$, $C_m$ must be one of the two following form after some isomorphism:
\begin{itemize}
\item[(i)] an interval of length $m$; or
\item[(ii)] covered by  a $(3, 1)$-sum-free interval of length $m+1$.
%a $1$-holed interval with a length $m+1$ cover which is also $(k, \ell)$-sum-free.
%\item[(iii)] %a $1$-holed interval of the form
%$C_m=\{a\}\cup [a+2, a+m-1]\cup\{a+m+1\}$ with $a=m\slash 3\equiv 2m+1$.
\end{itemize}
\end{itemize}
\end{lemma}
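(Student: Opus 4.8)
\emph{Plan.} The plan is to prove Lemma~\ref{lem_41_total} by transcribing the proof of Lemma~\ref{lem_51_whole} from Appendix~\ref{ap1} (Lemmas~\ref{lem_m+3}--\ref{lem_5_strange_Cm}) into the present parameters $k=3$, $\ell=\lambda=1$, so that $k+\ell=4$, $p=4m+3$ and $\theta=7$. I would break the statement into four auxiliary claims mirroring those lemmas: (a) under any decomposition, either $\omega(A)$ is only a bounded amount above $m$ or $\omega(A)>p-7$; (b) for some decomposition $\omega(A)\le m+2$; (c) under such a decomposition $C_m$ is $(3,1)$-sum-free, and $3C_m\cap C_\omega=\emptyset$ if $\omega\le m+1$ while $3C_{m-1}\cap C_\omega=\emptyset$ if $\omega=m+2$; (d) up to isomorphism $C_m$ is an interval of length $m$ or lies inside a $(3,1)$-sum-free interval of length $m+1$.

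\emph{Steps (a)--(b).} First I would invoke Lemma~\ref{lem_summation_fourier} --- applicable since $k+\ell=4$, $\lambda=1\le k+\ell-3$, and $m\ge 23\ge (2+\theta)(1+\tfrac1{k+\ell})(k+\ell)^{1/(k+\ell-2)}=22.5$ --- to get a decomposition with $\omega\le p-\theta=p-7$. Then I would run the block argument of Lemma~\ref{lem_m+3}: partition $[p-7]=[4m-4]$ into $m-1$ blocks of four indices whose subscript sums are $\ge p+k+\ell-2$, bound each block's $\beta$-sum by $p+2$ via Lemma~\ref{coro_beta_sum} (or Lemma~\ref{lem_improving_coro} when a block only meets $\{m-1,m\}$ among the small indices), handle the boundary block $[m-1,m+t]$ with Lemma~\ref{lem_improving_improving_coro}, and add up to get $\sum_{i=1}^{\omega}\beta_i\le (p+2)(m-1)<mp$, contradicting $|A|\ge mp^{n-1}$ --- unless $\omega\le m+2$ or the $\ell=1$ exceptional case of Lemma~\ref{lem_improving_improving_coro} intervenes. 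In the latter case one shows, exactly as for $\omega=m+3$ in Lemma~\ref{lem_5_ell+2}, that $C_{m-1}\subset\dots\subset C_{m+2}$ are forced nested intervals, and a handful of explicit $(3,1)$-sums in $C_{m+2}$, together with Fact~\ref{fact1} and $m\ge 23$, again push $|A|$ below $mp^{n-1}$.

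\emph{Steps (c)--(d).} From $\omega\le m+2$ and $|A|\ge mp^{n-1}$ I get $3\beta_m\ge p$ (and $2\beta_m\ge p$ when $\omega\le m+1$, resp.\ $4\beta_{m-1}\ge 2p$ when $\omega=m+2$). If $C_m$ failed to be $(3,1)$-sum-free, Fact~\ref{fact1} would give $r_1,r_2,r_3,s\in[m]$ with $\sum_iB_{r_i}$ disjoint from $B_s$, whence $4\beta_m\le p+2$, contradicting $\beta_m\ge p/3$; so $C_m$ is $(3,1)$-sum-free, and the same Kneser-plus-Fact~\ref{fact1} reasoning (with $\beta\ge p/2$) yields $\sum_iB_{r_i}=K$ and the intersection dichotomy, as in Lemma~\ref{lem_5_freeness_cm}. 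Since $C_m$ is $(3,1)$-sum-free of size $m$, $|3C_m|\le p-m=3m+3$, hence $|2C_m|\le|3C_m|-|C_m|+1\le 2m+4$; taking $c=1/4$ (so $|C_m|=m<p/4\le cp$) and $m\ge 5T(1/4)$, Definition~\ref{deftau} gives $C_m$ the covering property when $|2C_m|\le 2m+2$, and Theorem~\ref{thm_3k-4_r=0} (or a short direct argument) disposes of the borderline values $|2C_m|\in\{2m+3,2m+4\}$; so $C_m$ lies in an interval of length $\le m+3$. Corollary~\ref{coro_k+ell-4}, in the case $k+\ell=3+\lambda$ that holds here, then leaves only an interval of length $m$, a subset of a $(3,1)$-sum-free interval of length $m+1$, or one of the three length-$(m+2)$ shapes $[a,a+m+1]\setminus\{a+1,a+m\}$, $[a,a+m+1]\setminus\{a+2,a+m\}$, $[a,a+m+1]\setminus\{a+2,a+m-1\}$. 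For each of these last three I would show that ``$(3,1)$-sum-free'' is equivalent, after quotienting by the isomorphism, to $S_0-2t\subseteq\mathbb F_p\setminus 3S_0$ for the base shape $S_0$ and some $t$, and compute $3S_0$ explicitly (e.g.\ $3S_0=\{0\}\cup[2,3m+1]\cup\{3m+3\}$ for the first shape, so $\mathbb F_p\setminus 3S_0$ is two isolated points plus one run of length $m-1$); since $S_0-2t$ contains a run of length $m-2$ that cannot be squeezed into a run of length $m-1$ without an endpoint landing in $3S_0$, no such $t$ exists when $p=4m+3$, so these shapes are impossible and only forms (i), (ii) survive.

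\emph{Main obstacle.} I expect step (d), and within it the elimination of the three length-$(m+2)$ shapes, to be the crux. In the $(5,1)$ analogue the corresponding $1$-holed shape does survive (becoming type~3), because $p=5m+3$ leaves slack; here $p=4m+3$ makes the bound $|3C_m|+|C_m|\le p$ essentially tight, and ruling the shapes out requires a careful --- though finite --- check that no dilate or translate of each shape can avoid its own triple sumset. A subsidiary worry is bookkeeping on the constants: one must confirm that $m\ge 23$ really suffices for the Fourier input, that $m\ge 5T(1/4)$ still works even though only $|2C_m|\le 2m+4$ (not $2m+2$) is available --- so the borderline cases genuinely need Theorem~\ref{thm_3k-4_r=0} --- and that the $\ell=1$ exceptional case of Lemma~\ref{lem_improving_improving_coro} is completely cleared in steps (a)--(b).
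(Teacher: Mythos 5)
Your overall architecture tracks the paper's (the appendix proves this lemma by adapting the $(5,1)$ argument, splitting into an $\omega\le m+2$ step and a structure-of-$C_m$ step), and your steps (c) and (d) are largely on target: the derivations $\beta_m\ge p/3$, $\beta_m\ge p/2$ for $\omega\le m+1$, $\beta_{m-1}\ge p/2$ for $\omega=m+2$, and the elimination of the residual shapes from Corollary~\ref{coro_k+ell-4}(2) by translating and intersecting with $3S_0$ all match what the paper does. But there are two places where the naive transcription genuinely breaks, and your proposal does not supply the fix.

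First, in steps (a)--(b): for $k+\ell=4$ the boundary block $[m-1,m+2]$ has subscript sum $(m-1)+m+(m+1)+(m+2)=4m+2=p-1$, strictly below the threshold $p+k+\ell-3=p+1$ required even by Lemma~\ref{lem_improving_improving_coro}. So the issue is not that the $\ell=1$ exceptional case of that lemma ``intervenes'' and must be cleared by a nested-interval analysis as in the $(5,1)$ case --- the lemma simply does not apply to that block, and your block sum does not close. The paper's Lemma~\ref{lem_4_ell+2} instead assumes $\omega\ge m+3$ and uses two different combinations: $\beta_{m-1}+\beta_m+2\beta_{m+3}\le p+2$ (indices $m-1,m,m+3,m+3$ sum to exactly $p+2$, so Lemma~\ref{lem_improving_coro} applies) and $2\beta_{m+1}+2\beta_{m+2}\le p+2$ (indices sum to $p+3$, Lemma~\ref{coro_beta_sum}(2)); these average to $\sum_{i=-1}^{3}\beta_{m+i}\le\frac{3}{2}(p+2)$, which together with the blocks $S_i=\{i\}\cup[p-6-3i,p-4-3i]$ yields $|A|<mp^{n-1}$. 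Without some such rebalancing across indices up to $m+3$, your contradiction for $\omega\ge m+3$ does not go through.

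Second, in step (d): Theorem~\ref{thm_3k-4_r=0} hypothesizes $|A+B|=|A|+|B|$, i.e.\ $|2C_m|=2m$ exactly, so it cannot ``dispose of'' the borderline values $|2C_m|\in\{2m+3,2m+4\}$; the bound $|2C_m|\le 2m+4$ you derive is genuinely too weak for Definition~\ref{deftau}. The missing idea is symmetrization: set $X=(2C_m)\cup(-2C_m)$, so that $(C_m+X)\cap C_m=\emptyset$ and $|C_m+X|\le p-m=3m+3$. If $|X|\ge 2m+4$, Cauchy--Davenport forces equality and Vosper makes $C_m$ an interval (form (i)); if $|X|\le 2m+3$, then $0\notin X$ and $X=-X$ force $|X|$ even, hence $|2C_m|\le|X|\le 2m+2$ and the covering property applies. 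One further small correction to your shape elimination: for, e.g., $S_0=\{0\}\cup[2,m-1]\cup\{m+1\}$, the long run of $S_0-2t$ \emph{can} be placed inside the length-$(m-1)$ run of $\mathbb F_p\setminus 3S_0$; the contradiction comes from the two isolated points $-2t$ and $m+1-2t$ then landing in $3S_0$. Your constant checks ($22.5\le 23$, $c=1/4$) are fine.
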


Lemma \ref{lem_41_total} is obtained by combining Lemmas \ref{lem_4_ell+2}-\ref{lem_4_strange_Cm}.

\begin{lemma}\label{lem_4_ell+2}
%Given integer $n\ge 1$ and prime $p=4m+3$ with integer $m\ge 3$. Set $A$ as a $(3, 1)$-sum-free subset of $\mathbb F_p^n$ with $m\ge 23$ and $|A|\ge mp^{n-1}$. Then there must exist at least one decomposition such that $\omega(A)< p-7$. Moreover, any such decomposition of $\mathbb F_p^n$ satisfying $\omega(A)< p-7$ must have $\omega(A)\le m+2$.

Let $p=4m+3$ and $m\ge 23$. Suppose $A\subset \mathbb F_p^n$ is a $(3, 1)$-sum-free subset with $|A|\ge mp^{n-1}$. Then there must exist a decomposition of $\mathbb F_p^n$ such that $\omega(A)\le m+2$.
\end{lemma}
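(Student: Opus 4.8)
The plan is to mimic the proof of Lemma~\ref{lem_5_ell+2}, with the numerology of $(k,\ell,\lambda)=(3,1,1)$, $p=4m+3$, $\theta=k+\ell+\lambda+2=7$, and $p-\theta=4m-4=4(m-1)$. First I would invoke Lemma~\ref{lem_summation_fourier}: its hypotheses $k+\ell\ge 4$, $\lambda\in[0,k+\ell-3]$ and $m\ge\max\{9+\lambda,(2+\theta)(1+\tfrac1{k+\ell})(k+\ell)^{1/(k+\ell-2)}\}$ read here $m\ge\max\{10,\,9\cdot\tfrac54\cdot 2\}=\max\{10,\tfrac{45}{2}\}$, which holds since $m\ge 23$. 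Hence there is a decomposition $(v,K)$ with $\omega:=\omega(A)\le p-\theta=4m-4$. It then suffices to show $\omega\le m+2$ for this decomposition; arguing by contradiction I would assume $\omega\ge m+3$ and derive $\sum_{i=1}^{\omega}\beta_i<mp$, contradicting $|A|\ge mp^{n-1}$.

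The one new difficulty, compared with the $(5,1)$ case, is that the natural ``middle'' quadruple $[m-1,m+2]$ in the partition of $[p-\theta]$ used in the proof of Lemma~\ref{lem_l_upper_default} has index-sum only $4m+2$, which lies below the thresholds $p+k+\ell-1=4m+6$ of Lemma~\ref{coro_beta_sum}, $p+k+\ell-2=4m+5$ of Lemma~\ref{lem_improving_coro}, and $p+k+\ell-3=4m+4$ of Lemma~\ref{lem_improving_improving_coro}; so the clean estimate $\sum_{i=1}^{p-\theta}\beta_i\le(m-1)(p+2)$ is not immediately available. I would resolve this with a short case split on $\omega$. When $\omega\ge m+5$, I would restructure the quadruple partition of $[1,4m-4]$ so that the two ``middle'' blocks are $\{m-1,m,m+3,m+4\}$ and $\{m-2,m+1,m+2,m+5\}$ (each of index-sum exactly $4m+6$, all indices $\le\omega$) and every other block consists of one small index $i\le m-3$ together with three partners from $[m+6,4m-4]$; after padding out-of-range indices by copies of $\omega$ every block has index-sum at least $4m+6$, so Lemma~\ref{coro_beta_sum} bounds each block's $\beta$-sum by $p+2$, giving $\sum_{i=1}^{\omega}\beta_i\le(m-1)(p+2)<mp$. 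When $\omega\in\{m+3,m+4\}$ the larger indices are unavailable, so instead I would bound the tail $\sum_{i=m-2}^{\omega}\beta_i$ by a weighted combination of a few inequalities, exploiting that Lemmas~\ref{coro_beta_sum} and~\ref{lem_improving_coro} permit repeated indices. For $\omega=m+3$: apply Lemma~\ref{lem_improving_coro} to the multiset $\{m-1,m,m+3,m+3\}$ (index-sum $4m+5$, contains $m-1$ and $m$) and Lemma~\ref{coro_beta_sum} to $\{m-2,m+2,m+3,m+3\}$ and $\{m+1,m+1,m+3,m+3\}$ (index-sums $4m+6$ and $4m+8$); adding the first two together with half the third yields $\sum_{i=m-2}^{m+3}\beta_i\le\tfrac52(p+2)$, and with $\sum_{i=1}^{m-3}\beta_i\le(m-3)p$ one gets $\sum_{i=1}^{m+3}\beta_i\le(m-3)p+\tfrac52(p+2)=4m^2+m+\tfrac72<mp$. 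The case $\omega=m+4$ is identical with the multisets $\{m-1,m,m+3,m+4\}$, $\{m-2,m+2,m+3,m+4\}$, $\{m+1,m+1,m+4,m+4\}$, again producing a bound strictly below $mp$.

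The main obstacle is precisely the middle-block deficit just described: unlike the $(5,1)$ situation, where Lemma~\ref{lem_improving_improving_coro} with $t=3$ handles the whole block $[m-1,m+3]$ at once, here no single off-the-shelf estimate covers $[m-1,m+2]$, so the argument must be split into $\omega$-ranges and, in each range, one must find a suitable weighted average of a handful of applications of Lemmas~\ref{coro_beta_sum} and~\ref{lem_improving_coro} so that the resulting upper bound on $\sum_{i=1}^{\omega}\beta_i$ drops strictly below $mp$. Once that bookkeeping — including the padding of blocks that stick out past $\omega$ and the verification that the restructured partition genuinely tiles $[1,4m-4]$ — is carried out, everything else is routine and of the same flavour as the computations already done for Lemmas~\ref{lem_l_upper_default} and~\ref{lem_m+3}.
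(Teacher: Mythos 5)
Your proposal is correct and follows essentially the same strategy as the paper: invoke Lemma~\ref{lem_summation_fourier} to obtain a decomposition with $\omega\le p-\theta$, assume $\omega\ge m+3$, and contradict $|A|\ge mp^{n-1}$ by partitioning the index range into quadruples controlled by Lemmas~\ref{coro_beta_sum} and~\ref{lem_improving_coro}. The only divergence is in the middle block: the paper avoids your three-way case split on $\omega$ by observing that the multisets $\{m-1,m,m+3,m+3\}$ (Lemma~\ref{lem_improving_coro}, index-sum $4m+5=p+2$) and $\{m+1,m+1,m+2,m+2\}$ (Lemma~\ref{coro_beta_sum}, index-sum $4m+6$) are available uniformly for every $\omega\ge m+3$, yielding $\sum_{i=m-1}^{m+3}\beta_i\le\tfrac32(p+2)$ in one stroke; your repeated-index trick in the $\omega=m+3$ subcase is exactly this observation, so the extra subcases $\omega=m+4$ and $\omega\ge m+5$ are correct but unnecessary.
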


\begin{proof}
By Lemma~\ref{lem_summation_fourier} as $\theta=7$ and $(2+\theta)(1+\frac1{k+\ell})(k+\ell)^{1\slash(k+\ell-2)}<23$, we know that there exists a a decomposition of $\mathbb F_p^n$ such that $\omega(A)< p-7$. Fix such a decomposition.

As $p+k+\ell-2=p+2=4m+5< (m-1)+(m+2)+2(m+3)$, as long as $\omega\ge m+3$, by Lemma \ref{lem_improving_coro} we have $\beta_{m-1}+\beta_m+2\beta_{m+3}\le p+2$. Similarly, as $2(m+1)+2(m+2)=4m+6=p+k+\ell-1$, by Lemma~\ref{coro_beta_sum} we have $2\beta_{m+1}+2\beta_{m+2}\le p+2$.
Combining these two inequalities we get $\sum_{i=-1}^3\beta_{m+i}\le \frac 3 2(p+2)$.

For each $i\in [m-2]$, denote $S_i=\{i\}\cup[p-6-3i, p-4-3i]$. By following the same proving process of Lemma~\ref{lem_l_upper_default}, $\sum_{j\in S_i}\beta_j\leq p+2$ for any $i\in [m+2]$ regardless of the value of $\omega$. As $[m-2]\cup[m+4, p-7]\subset\cup_{i\in[m-2]}S_i$,

%Further, for each $i\in [m-2]$, $\beta_{m-1-i}+\beta_{m+3+i}+\beta_{m+4+i}+\beta_{m+4+i}\leq p+2$ by Lemma~\ref{coro_beta_sum}. Hence
\[
\begin{aligned}
|A|\slash p^{n-2}&\le \sum_{i=1}^{m-2}\beta_i+ \sum_{i=-1}^3\beta_{m+i} +\sum_{i=m+4}^{p-7}\beta_i\\
&\le (p+2)(m-2)+\frac 3 2(p+2)\\
&=(p+2)(m-1\slash 2)< mp,
\end{aligned}
\]
which contradicts to $|A|\ge mp^{n-1}$. Hence $\omega\le m+2$.
\end{proof}
%Next we consider the situations of $C_m$ or $C_{m-1}$. We can still get that $|2C_m|\ge 2m+2$ but with a more complicated analysis, then we can get a result similar to Lemma~\ref{lem_5_strange_Cm} by going through the same analysis process in Lemma~\ref{lem_stru_of_Cm}.

\begin{lemma}\label{lem_4_strange_Cm}
Let $p=4m+3$ and $m\ge 23$. Suppose $A\subset \mathbb F_p^n$ is a $(3, 1)$-sum-free  subset with $|A|\ge mp^{n-1}$.
 If under a certain decomposition $\omega(A)\le m+2$, then $C_m$ is $(3, 1)$-sum-free. If $\omega\le m+1$, we have $3C_{m}\cap C_{m+1}=\emptyset$. If $\omega=m+2$, instead we have $3C_{m-1}\cap C_{m+2}=\emptyset$. %Moreover, the structure of $C_m$ is in one of the following when {$m\ge 5T(\frac 14)$.}
%\begin{itemize}
%\item[(1)] The set $C_m$ is an interval of length $m$;
%\item[(2)] The set $C_m$ is a $1$-holed interval contained in a $(3, 1)$-sum-free interval of length $m+1$.
%\end{itemize}

Moreover, up to isomorphism, $C_m$ must be one of the two following forms when {$m\ge 5T(\frac 14)$.}
%For $k+\ell=5$ and $\lambda=1$, let $A\subset\mathbb F_p^n$ be a $(k, \ell)$-sum-free set with $|A|\ge mp^{n-1}$. Then for any $m\ge 5T(1\slash5)$ and $m\ge 21$, under any decomposition such that $\omega< p-8$, $C_m$ must be one of the two following form after some isomorphism:
\begin{itemize}
\item[(1)] an interval of length $m$; or
\item[(2)] covered by  a $(3, 1)$-sum-free interval of length $m+1$.
\end{itemize}
\end{lemma}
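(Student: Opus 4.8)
The plan is to follow the template of Lemmas~\ref{lem_5_freeness_cm} and~\ref{lem_5_strange_Cm}, specialized to $(k,\ell)=(3,1)$, $p=4m+3$, $\theta=7$. First I would record the weight estimates forced by $|A|\ge mp^{n-1}$ together with $\omega\le m+2$: since $\sum_{i=1}^{\omega}\beta_i\ge mp$, in the case $\omega\le m+1$ one gets $\beta_m\ge p/2$, and in the case $\omega=m+2$ one gets $\beta_m\ge p/3$ and $\beta_{m-1}\ge p/2$.

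Next I would establish the sum-freeness statements. When $\omega\le m+1$ we have $3\beta_m\ge\tfrac32 p\ge p+2=p+k-1$, so Lemma~\ref{lem_stru_of_Cm} applies directly and gives $3C_m\cap C_\omega=\emptyset$; in particular $C_m$ is $(3,1)$-sum-free and $3C_m\cap C_{m+1}=\emptyset$ (when $\omega=m$ this last assertion is just $3C_m\cap C_m=\emptyset$, and $|A|=mp^{n-1}$, $A=C_m\times K$). When $\omega=m+2$ the bound $3\beta_m\ge p$ is just short of what Lemma~\ref{lem_stru_of_Cm} requires, so I would argue directly with Fact~\ref{fact1} and Kneser's theorem: a $(3,1)$-sum inside $C_m$ would produce parts with $\bigl(\sum_{i=1}^3 B_{r_i}\bigr)\cap B_s=\emptyset$ and $\sum_{i=1}^3|B_{r_i}|+|B_s|\le p^{n-1}+2p^{n-2}$, contradicting $4|B_m|\ge\tfrac43 p\,p^{n-2}>(p+2)p^{n-2}$; hence $C_m$ is $(3,1)$-sum-free. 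Likewise $\beta_{m-1}\ge p/2$ forces $\sum_{i=1}^3 B_{r_i}=K$ for all $r_1,r_2,r_3\in[m-1]$ (otherwise Kneser gives a subset of $K$ of size $>p^{n-1}=|K|$), and since every $B_s$ with $s\le\omega$ is nonempty, Fact~\ref{fact1} yields $3C_{m-1}\cap C_{m+2}=\emptyset$.

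For the structure of $C_m$: from $3C_m\cap C_m=\emptyset$ we get $|3C_m|\le p-m=3m+3$, and Cauchy--Davenport gives $|3C_m|=|2C_m+C_m|\ge|2C_m|+m-1$, hence $|2C_m|\le 2m+4$. Since $|C_m|=m<p/4$, taking $c=1/4$ in Definition~\ref{deftau} and using $m\ge 5T(1/4)$ (so $\tau(1/4)m\ge5$), the bound $|2C_m|\le 2m+2\le(2+\tau(1/4))m-3$ would give $C_m$ the covering property, i.e. up to isomorphism $C_m$ lies in an interval $I$ with $|I|\le|2C_m|-m+1\le m+3$. The remaining possibilities $|2C_m|\in\{2m+3,2m+4\}$ I would dispatch as follows. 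If $|3C_m|=|2C_m|+m-1$, Vosper's theorem forces $2C_m$ and $C_m$ to be arithmetic progressions of a common difference, so after an isomorphism $C_m$ is an interval of length $m$, contradicting $|2C_m|\ge 2m+3$. Otherwise $|3C_m|=|2C_m|+m$, which together with $|3C_m|\le 3m+3$ forces $|2C_m|=2m+3$; then Theorem~\ref{thm_3k-4_r=0} applied to $2C_m$ and $C_m$, whose hypotheses $|2C_m|\ge|C_m|\ge4$ and $|2C_m+C_m|=|2C_m|+|C_m|\le\min\{|2C_m|+2|C_m|-4,\,p-4\}$ all hold for $m\ge4$, shows $C_m$ lies in an arithmetic progression of length at most $m+1$. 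In every case $C_m$ is $(3,1)$-sum-free of size $m$, $k+\ell=4=\lambda+3$, and lies in an interval of length at most $m+3$, so Corollary~\ref{coro_k+ell-4}(2) leaves only: an interval of length $m$; a subset of a $(3,1)$-sum-free interval of length $m+1$ (using Claim~\ref{clm3}); or one of the three ``holed'' configurations whose shortest interval cover has length $m+2$.

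The main obstacle is eliminating the three length-$(m+2)$ configurations (for $(4,1)$ and $(3,2)$ a single holed form appears and is \emph{not} eliminated but becomes type~3; here all three must be ruled out). In each of them the middle block is an interval of length $m-2$, $m-3$ or $m-4$ whose endpoints coincide with the ends of $I$; tripling $C_m$ then produces a set whose interval hull has length roughly $3m+4$, and since $\operatorname{span}(C_m)+\operatorname{span}(3C_m)\approx(m+2)+(3m+4)>4m+3=p$, the relation $3C_m\cap C_m=\emptyset$ could hold only if the few holes of the two hulls were aligned exactly. A short case analysis of the hole patterns (in the spirit of the length-$(m+2)$ computations in the proof of Lemma~\ref{lem_5_strange_Cm}) shows this is impossible for $(3,1)$, so each holed form would contradict the $(3,1)$-sum-freeness of $C_m$ established above. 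Hence $C_m$ must be an interval of length $m$ or covered by a $(3,1)$-sum-free interval of length $m+1$, as claimed.
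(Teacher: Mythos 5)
Your proof is correct, and the sum-freeness parts (the weight estimates, the Kneser/Fact~\ref{fact1} argument for $(3,1)$-sum-freeness of $C_m$ when $\omega=m+2$, the invocation of Lemma~\ref{lem_stru_of_Cm} when $\omega\le m+1$, and the argument that $\sum_{i=1}^3B_{r_i}=K$ for $r_i\in[m-1]$ giving $3C_{m-1}\cap C_{m+2}=\emptyset$) coincide with the paper's. Where you genuinely diverge is in forcing $C_m$ into a short interval. The paper observes that $|2C_m|\le 2m+4$ is ``too weak'' and fixes this by symmetrizing: it sets $X=(2C_m)\cup(-2C_m)$, notes $(C_m+X)\cap C_m=\emptyset$ so $|C_m+X|\le 3m+3$, and then either Cauchy--Davenport/Vosper forces $|X|=2m+4$ with $C_m$ an interval, or $|X|\le 2m+3$ together with $0\notin X$ and $X=-X$ forces $|X|$ even, hence $|2C_m|\le|X|\le 2m+2$ and the covering property applies. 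You instead split on $|2C_m|\in\{2m+3,2m+4\}$ directly: tight Cauchy--Davenport in $|3C_m|=|2C_m+C_m|$ triggers Theorem~\ref{thm_Vosper} and contradicts $|2C_m|\ge 2m+3$, while the non-tight case pins down $|3C_m|=|2C_m|+|C_m|=3m+3$ and Theorem~\ref{thm_3k-4_r=0} (whose hypotheses you correctly verify) puts $C_m$ inside an AP of length $m+1$. Both routes are valid; yours avoids the parity trick at the cost of invoking the $r=0$ case of the $3k-4$ theorem, and in the $|2C_m|=2m+3$ branch it even yields the stronger cover of length $m+1$ directly.

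Two small cautions on the last step. First, your span-counting heuristic for eliminating the three holed configurations from Corollary~\ref{coro_k+ell-4}(2) is not by itself sufficient: for $C_m=\{a,a+m+1\}\cup[a+2,a+m-1]$ one computes $3C_m=3I\setminus\{3a+1,3a+3m+2\}$, so $|3C_m|+|C_m|=4m+2<p$ and the counting leaves room for disjointness; one really must match the hole positions (which then forces $3a\in C_m\cap 3C_m$ or $3(a+m+1)\in C_m\cap 3C_m$ in each sub-case). You acknowledge this and defer to a case analysis, which is the same level of detail the paper offers (``easy calculation''), and the underlying claim does check out, but in a written-up version the alignment argument should be carried out. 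Second, your parenthetical that for $\omega=m$ the assertion $3C_m\cap C_{m+1}=\emptyset$ ``is just $3C_m\cap C_m=\emptyset$'' is slightly imprecise since $C_{m+1}$ formally contains an element outside the support; this is a wrinkle in the lemma's statement that the paper itself glosses over, and it is harmless for how the lemma is used.
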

\begin{proof}
If otherwise $C_m$ is not $(3, 1)$-sum-free, then there exists a choice of $r_1, r_2, r_3, s\in C_m$ such that $r_1+ r_2+ r_3=s$. From that $A$ is $(3, 1)$-sum-free, by Fact~\ref{fact1}, $(A_{r_1}+A_{r_2}+A_{r_3})\cap A_s=\emptyset$. From that $|A|\ge mp^{n-1}$ and $\omega\le m+2$, by Theorem~\ref{thm_Kneser}, we know  $|A_{r_1}+A_{r_2}+A_{r_3}|+|A_s|\ge 4|B_m|-2p^{n-2}\ge \frac43 p^{n-1}-2p^{n-2}> p^{n-1}$, contradicting to $(A_{r_1}+A_{r_2}+A_{r_3})\cap A_s=\emptyset$.
Hence, $C_m$ is $(3, 1)$-sum-free.

If $\omega=m+1$, by $|A|\ge mp^{n-1}$ we have $\beta_m\ge \frac p 2$ and then $3\beta_m\ge p+k+1$. Hence by Lemma~\ref{lem_stru_of_Cm}, %$B_{i_1}+B_{i_2}+ B_{i_3}=K$ for any $i_1, i_2, i_3\in [m]$ and hence
$3C_m\cap C_{\omega}$ must be empty. When $\omega=m+2$, by $|A|\ge mp^{n-1}$ we have $\beta_{m-1}\ge \frac p 2$. The same analysis as proving Lemma~\ref{lem_5_freeness_cm} (2) leads to $3C_{m-1}\cap C_{m+2}=\emptyset$.

To show the structure of $C_m$, we first upper bound the size of $2C_m$. From that $C_m$ is $(3, 1)$-sum-free, $p\ge |3C_m|+|C_m|\ge |2C_m|+2|C_m|-1$ and hence $|2C_m|\le 2m+4$. It is really bad for directly using this upper bound. However, we define a new set $X=(2C_m)\cup(-2C_m)$, which is the centralized version of $2C_m$. As $(C_m+2C_m)\cap C_m=\emptyset$ and $(C_m-2C_m)\cap C_m=\emptyset$, actually we have $(C_m+X)\cap C_m=\emptyset$ and hence $|C_m+X|\le p-m=3m+3$.

If $|X|\ge 2m+4$, then by Theorem~\ref{thm_Cauchy_daven},
$3m+3\ge |C_m+X|\ge |X|+m-1\ge 3m+3$. This forces all equalities hold, i.e., $|X|=2m+4$ and $|C_m+X|=|C_m|+|X|-1$.
Then by Theorem~\ref{thm_Vosper}, after an isomorphism $C_m$ and $X$ are both intervals. This goes to the form (1) of $C_m$.

If $|X|\le 2m+3$, as $(C_m+2C_m)\cap C_m=\emptyset$, $0\notin 2C_m$ and hence $0\notin X$. Moreover, as $X$ is centralized and $0\notin X$, $|X|$ is even. This means $|2C_m|\le|X|\le 2m+2$. As $m\ge 5T(\frac 14)$, under some isomorphism $C_m$ must be covered by an interval of length at most $m+3$.
To ensure that the form of $C_m$ must be one of (1) and (2), by applying Corollary~\ref{coro_k+ell-4} it suffices to kick out the three special forms of $C_m$, i.e., up to isomorphism there exists some $a$ such that $C_m=\{a, a+1, a+m+1\}\cup[a+3, a+m-1]$, $C_m=\{a, a+1, a+m, a+m+1\}\cup[a+3, a+m-2]$, or $C_m=\{a\}\cup[a+2, a+m-1]\cup\{a+m+1\}$.
Remember that $C_m$ is $(3, 1)$-sum-free. Easy calculation shows that no matter which form $C_m$ chooses among those three, no $a\in \mathbb F_p$ satisfies $C_m\cap3C_m=\emptyset$.
This means the form of $C_m$ lies in (1) and (2).

\end{proof}
%\vspace{0.01cm}

\begin{proof}[Proof of Theorem~\ref{thm_3_1}]
From Lemma~\ref{lem_41_total} (2), under isomorphisms $C_m$ is either an interval or contained in the unique $(3, 1)$-sum-free interval of length $m+1$ in $\mathbb F_{p}$. We prove these two cases separately.

\vspace{0.2cm}

\textbf{Case (1):} $C_m$ is an interval.
Write $C_m=[a, a+m-1]$, then $3C_m=[3a, 3a+3m-3]$. As $3C_m\cap C_m=\emptyset$, over the circle formed by $\mathbb F_p$ denote $i$ the length of interval connecting the tail $3a+3m-3$ of $3C_m$ and the head $a$ of $C_m$, and $j$ the length of interval connecting the tail $a+m-1$ of $C_m$ and the head $3a$ of $3C_m$.
By symmetry we only have three cases of $(i, j)$ as follows.
\begin{center}
\begin{tabular}{ |c|c|c|c| }
 \hline
 \textbf{case:} & (5, 0) & (4, 1) & (3, 2) \\
 \hline
 \textbf{requirement:} & $a+m=3a$ & $a+m+1=3a$ & $a+m+2=3a$  \\
 $a$\textbf{ value:} & $m\slash 2$ & $(m+1)\slash2$ & $(m+2)\slash2$ \\
 \hline
\end{tabular}
\end{center}
As this time it is easier to express $m$ by $a$, when going through the three cases, we use $a$ as the main parameter for simplicity.

In the $(5, 0)$ case, $m=2a$, $p=8a+3$, $C_m=[a, 3a-1]$, and $\mathbb F_p\backslash\{C_{m}\cup 3C_{m}\}=[a-5, a-1]$. When $\omega= m+1$, we have $C_{m+1}\backslash C_{m}\subset [a-5, a-1]$. Denote $C_{m+1}\backslash C_m=\{a-i\}$ for some $i\in [5]$. Then the $(3, 1)$-sum $(a-i)+2a=a+m-i$ violates Lemma~\ref{lem_contradiction_list_m+1}.
When $\omega=m+2$, from that $3C_{m-1}\cap C_{\omega}=\emptyset$ by Lemma~\ref{lem_41_total} (1),
we have the following statement. Recall that $p=4m+3=8a+3$, and $C_m=[a, 3a-1]$.
\iffalse
\[
3C_{m-1}=
\left\{
\begin{aligned}
 [3a+3, 3a+3m-3], & ~B_m=A_a;\\
\{3a\}\cup[3a+2, 3a+3m-3], & ~B_m=A_{a+1};\\
[3a, 3a+3m-5]\cup\{3a+3m-3\}, & ~B_m=A_{a+m-2};\\
[3a, 3a+3m-6], & ~B_m=A_{a+m-1};\\
[3a, 3a+3m-3], & \text{ otherwise.}
\end{aligned}
\right.
\]
Correspondingly, we have
\[
C_{m+2}\backslash C_m\subseteq
\left\{
\begin{aligned}
 [a-5, a-1]\cup[3a, 3a+2], & ~B_m=A_a;\\
 [a-5, a-1]\cup\{3a+1\}, & ~B_m=A_{a+1};\\
 \{a-7\}\cup[a-5, a-1], & ~B_m=A_{a+m-2};\\
 [a-8, a-1], & ~B_m=A_{a+m-1};\\
 [a-5, a-1], & \text{ otherwise.}
\end{aligned}
\right.
\]
\fi
\[
3C_{m-1}=
\left\{
\begin{aligned}
 [3a+3, a-6], & ~b_m=a;\\
\{3a\}\cup[3a+2, a-6], & ~b_m=a+1;\\
[3a, a-8]\cup\{a-6\}, & ~b_m=3m-2;\\
[3a, a-9], & ~b_m=3a-1;\\
[3a, a-6], & \text{ otherwise.}
\end{aligned}
\right.
\]
Correspondingly, we have
\[
C_{m+2}\backslash C_m\subset
\left\{
\begin{aligned}
 [a-5, a-1]\cup[3a, 3a+2], & ~b_m=a;\\
 [a-5, a-1]\cup\{3a+1\}, & ~b_m=a+1;\\
 [a-8, a-1], & \text{ otherwise.}
\end{aligned}
\right.
\]
All above possible choices violate Lemma~\ref{lem_contradiction_list} under certain sets of equations. See Table~\ref{tab50311}.
\begin{table}[h!]
\centering
\begin{tabular}{ |c|c|c|c| }
 \hline
 \multirow{2}*{$\omega$} & \multirow{2}*{$C_{\omega}\backslash C_m$} & \multirow{2}*{\textbf{$(3,1)$-sums in $C_\omega$}} & \multirow{2}*{\textbf{Conclusion}}  \\
 &&&\\
 \hline
 \multirow{6}*{$m+2$} &  \multirow{2}*{$\{a-i, a-j\}$, $i\ne j\in [8]$} & $(a-i)+2a=3a-i;$ & \multirow{6}*{Contradiction}\\
 & & $(a-j)+2a=3a-j$ & \\
 \cline{2-3}

 & $\{3a+i, 3a+j\}$ & $2a+(a+i)=3a+i;$ & \\
 & $0\le i<j\le 2$  & $2a+(a+j)=3a+j$  & \\
 \cline{2-3}
 & $\{a-i, 3a+j\}$, & $(a-i)+2a=3a-i;$ & \\
 & $i\in [5]$; $j\in[0, 2]$ & $2a+(a+j)=3a+j$ &\\
 \hline
\end{tabular}
\caption{Scenarios in the $(5, 0)$ case for $(k, \ell, \lambda)=(3, 1, 1)$ and $\omega=m+2$}
\label{tab50311}
\end{table}

In the $(4, 1)$ case, $m=2a-1$, $p=8a-1$, $C_m=[a, 3a-2]$, and $\mathbb F_p\backslash\{C_{m}\cup 3C_{m}\}=[a-4, a-1]\cup\{3a-1\}$. When $\omega=m+1$, only trivial solutions are possible, see Table~\ref{tab41311}. When $\omega=m+2$,
\iffalse
\[
C_{m+2}\backslash C_m\subset
\left\{
\begin{aligned}
 [a-4, a-1]\cup[3a-1, 3a+2], & ~b_m=a;\\
 [a-4, a-1]\cup\{3a-1, 3a+1\}, & ~b_m={a+1};\\
 \{a-6\}\cup[a-4, a-1]\cup\{3a-1\}, & ~b_m=3a-3;\\
 [a-7, a-1]\cup\{3a-1\}, & ~b_m=3a-2;\\
 [a-4, a-1]\cup\{3a-1\}, & \text{ otherwise.}
\end{aligned}
\right.
\]
\fi
\[
C_{m+2}\backslash C_m\subset
\left\{
\begin{aligned}
 [a-4, a-1]\cup[3a-1, 3a+2], & ~b_m=a;\\
 [a-4, a-1]\cup\{3a-1, 3a+1\}, & ~b_m={a+1};\\
 [a-7, a-1]\cup\{3a-1\}, & \text{ otherwise.}
\end{aligned}
\right.
\]
Most of the situations directly violate Lemma~\ref{lem_contradiction_list}, see Table~\ref{tab41311}, except when $C_{m+2}\backslash C_m=\{3a-1, 3a\}$.
In this situation, as $\omega=m+2$, we know that $2p^{n-1}\le |A_a|+|A_{a+1}|+|A_{3a-1}|+|A_{3a}|$. But from $a+a+a=3a$, we know $3|A_a|\le 3|A_a|+|A_{3a}|\le p^{n-1}+2p^{n-2}$. From $3(3a)=a+1$, we know $|A_{a+1}|+|A_{3a}|\le p^{n-1}+2p^{n-2}$. Hence $2p^{n-1}\le |A_a|+|A_{a+1}|+|A_{3a-1}|+|A_{3a}|\le (|A_{a+1}|+|A_{3a}|)+2|A_a|\le \frac 5 3(p^{n-1}+2p^{n-2})$, which means $p\le 10$, leading to a contradiction.
\begin{table}[h!]
\centering
\begin{tabular}{ |c|c|c|c| }
 \hline
 \multirow{2}*{$\omega$} & \multirow{2}*{$C_{\omega}\backslash C_m$} & \multirow{2}*{\textbf{$(3,1)$-sums in $C_\omega$}} & \multirow{2}*{\textbf{Conclusion}}  \\
 &&&\\
 \hline
 \multirow{2}*{$m+1$} & $a-i$, $i\in [4]$ & $2(a-i)+a=(3a-2i)$ & Contradiction \\
 \cline{2-4}
 & $a+m$ & NONE & Trivial\\
 \hline
 \multirow{9}*{$m+2$} & $\{a-i, a-j\}$, $1\le i< j\le 7$ & $a+(a-i)+(a-j)=(3a-i-j)$ & \multirow{9}*{Contradiction}\\
 \cline{2-3}
 &$\{3a+i, 3a+j\}$, & $2a+(a+j)=3a+j;$ & \\
 & $-1\le i<j\le 2$; $j\ne 0$& $(3a+i)+ 2(3a+j)=(a+i+2j-1)$ & \\
 \cline{2-3}
 & \multirow{2}*{$\{3a-1, 3a\}$} & $a+a+a=3a;$ & \\
% & & $(a+m)+2(a+m+1)=a;$ & \\
 & & $3(3a)=a+1$ &\\
 \cline{2-3}
 & $\{a-i, 3a+j\},$ & \multirow{2}*{$(a-i)+a+(a+i+j)=3a+j$} & \\
 & $i\in [4]$, $j\in [0, 2]$ & &\\
 \cline{2-3}
 & \multirow{2}*{$\{a-i, 3a-1\}$, $i\in [7]$} & $(a-i)+2a= 3a-i;$ &\\
 & & $2(a-i)+a=3a-2i$ &\\
 \hline
\end{tabular}
\caption{Scenarios in the $(4, 1)$ case for $(k, \ell, \lambda)=(3, 1, 1)$}
\label{tab41311}
\end{table}

In the $(3, 2)$ case,  we have $m=2a-2$, $p=8a-5$, $C_m=[a, 3a-3]$, and $\mathbb F_p\backslash\{C_{m}\cup 3C_{m}\}=[a-3, a-1]\cup\{a+m, a+m+1\}$.
When $\omega=m+1$, we have $C_{m+1}\backslash C_m=[a-3, a-1]\cup\{3a-2, 3a-1\}$. All nontrivial situations are normal. See Table~\ref{tab32311} for details.
When $\omega= m+2$,
\[
C_{m+2}\backslash C_m\subset
\left\{
\begin{aligned}
 [a-3, a-1]\cup[ 3a-2, 3a+2], & ~b_m=a;\\
 [a-3, a-1]\cup\{ 3a-2, 3a-1, 3a+1\}, & ~b_m=a+1;\\
 [a-6, a-1]\cup\{ 3a-2, 3a-1\}, & \text{ otherwise.}
\end{aligned}
\right.
\]
The analyses in different scenarios are listed in Table~\ref{tab32311}. Only the last line needs further explanation as follows.

\begin{table}[h!]
\centering
\begin{tabular}{ |c|c|c|c| }
 \hline
 \multirow{2}*{$\omega$} & \multirow{2}*{$C_{\omega}\backslash C_m$} & \multirow{2}*{\textbf{$(3,1)$-sums in $C_\omega$}} & \multirow{2}*{\textbf{Conclusion}}  \\
 &&&\\
 \hline
 \multirow{4}*{$m+1$} & $a-1$ & $3(a-1)=3a-3$ & Type 2\\
 \cline{2-4}
  & $3a-2$ & NONE & Trivial \\
 \cline{2-4}
  & $a-i$, $i\in\{2, 3\}$ & $2(a-i)+a=3a-2i$ & \multirow{2}*{Contradiction}\\
 \cline{2-3}
  & $3a-1$ & $2(3a-1)+(3a-3)=a$ & \\
 \hline
 \multirow{7}*{$m+2$} & $\{3a+i, 3a+j\}$, & $(3a+i)+2(3a+j)=(a+i+2j+5)$; & \multirow{6}*{Contradiction}\\
  & $-2\le i<j\le 2$ & $2(3a+j)+(3a-3)=a+2j+2$ & \\
 \cline{2-3}
 & \multirow{2}*{Contain $a-i$, $i\in [2, 6]$} & $2(a-i)+a=(3a-2i)$; & \\
 & & $2(a-i)+(a+1)=(3a+1-2i)$ & \\
 \cline{2-3}
 & \multirow{2}*{$\{a-1, 3a+j\}$, $j\in [-1, 2]$} & $2(3a+j)+(3a-3)=(a+2j+2)$; & \\
 & & $(a-1)+a+(a+j+1)=(3a+j)$ & \\
 \cline{2-4}
 &$\{a-1, 3a-2\}$ & The special case. See analysis & Type 5\\
 \hline
\end{tabular}
\caption{Scenarios in the $(3, 2)$ case for $(k, \ell, \lambda)=(3, 1, 1)$}
\label{tab32311}
\end{table}

%The first is when $\omega= m+1$ and the only newcomer is $a-1$. We know that in this situation $|A_{a-1}|+|A_{a+m-1}| \ge p^{n-1}$ but from $3(a-1)=a+m-1$ we know $p^{n-1}\ge |A_{a+m-1}|+|3A_{a-1}|\ge |A_{a-1}|+|A_{a+m-1}| \ge p^{n-1}$. Hence all the equalities hold and $|A_{a-1}|=|3A_{a-1}|$. Denote $V= Sym(3A_{a-1})$ and hence from Vosper's theorem $|V|\ge |A_{a-1}|=|3A_{a-1}|\ge |V|$ and hence $A_{a-1}$ is a translate of $V$. Without loss of generality we can change the choice of $L$ such that $A_{a-1}=V$.

%When $\omega=m+2$ and $C_{m+2}\backslash C_m=\{a+m+i, a+m+j\}, i<j\in [0,4]$, from the three equations we know that $|A_{a+2i+j-1}|\le p^{n-1}-|A_{a+m+i}+A_{a+m+j}|$,
%$|A_{a+i+2j-1}|\le p^{n-1}-|A_{a+m+i}+A_{a+m+j}|$, and $|A_{a+3j-1}|\le p^{n-1}-|A_{a+m+j}|$. Hence $mp^{n-1}\le |A|\le |A_{a+2i+j-1}|+ |A_{a+i+2j-1}|+ |A_{a+3j-1}|+ |A_{a+m+i}|+ |A_{a+m+j}|+ (m-3)p^{n-1}\le mp^{n-1}-|A_{a+m+i}+A_{a+m+j}|$, which is a contradiction.

When $\omega=m+2$ and $C_{m+2}\backslash C_m=\{a-1, 3a-2\}$. Under this condition, there are three $(3, 1)$-sums in $C_{m+2}$ as listed,
\begin{equation}\label{eq_10}
3(a-1)=(3a-3),
\end{equation}
\begin{equation}\label{eq_11}
2(a-1)+a=(3a-2),
 \end{equation}and
 \begin{equation}\label{eq_12}
 3(3a-2)=a-1.
\end{equation}
From that $\omega=m+2$ and $|A|\le mp^{n-1}$, \begin{equation}\label{eq_13}
|A_{a-1}|+|A_a|+|A_{3a-3}|+|A_{3a-2}|\ge 2p^{n-1}.
\end{equation}
From (\ref{eq_10}) we know $p^{n-1}\ge |A_{3a-3}|+|3A_{a-1}|\ge |A_{3a-3}|+|A_{a-1}|$;  from (\ref{eq_11}) we know $a=(3a-2)+2(-(a-1))$ and hence $p^{n-1}\ge |A_a|+|A_{3a-2}-2A_{a-1}|\ge |A_{3a-2}|+|A_a|$. Together with (\ref{eq_13}), all the equalities must hold. Specially, $|A_{3a-3}|+|3A_{a-1}|= |A_{3a-3}|+|A_{a-1}|$, which means $|3A_{a-1}|= |A_{a-1}|$.

Set $H=Sym(3A_{a-1})$, which is an additive subgroup of $K$. As $K$ is a linear space over $\mathbb F_p$ of dimension $n-1$, $H$ is also a linear space of dimension at most $n-2$ (otherwise if $H=K$ we have $3A_{a-1}=K$ and $|A_{3a-3}|=0$, contradicting to $\omega=m+2$). Fix a linear space decomposition $K=H\times L'$. From Vosper's theorem, $|A_{a-1}|=|3A_{a-1}|\ge 3|A_{a-1}+H|-2|H|\ge 3|A_{a-1}|-2|H|$, hence $|A_{a-1}|\le |H|\le |3A_{a-1}|=|A_{a-1}|$, which means the equalities hold and hence $|A_{a-1}|=|H|$. As $|A_{a-1}+H|=|H|$, $A_{a-1}$ is contained in one coset of $H$. As $|A_{a-1}|=|H|$, $A_{a-1}$ is exactly a coset of $H$. As $a-1\ne 0$, we can choose a special $L$ such that $A_{a-1}=H$. Hence from (\ref{eq_10}) we know that $A_{3a-3}=K\backslash H$. As from (\ref{eq_11}) $|A_{3a-2}-2A_{a-1}|=|A_{3a-2}|$, by bring in $A_{a-1}=H$, we know $|A_{3a-2}+H|=|A_{3a-2}|$, which means $A_{3a-2}$ is also a union of cosets of $H$. Then from (\ref{eq_11}), $A_a= K\backslash(A_{3a-2}+H)$. So $A_a$ is also a coset union of $H$.

Hence it suffices for us to only consider the structure by projecting on $K\slash H=L'$. We already know $A_{a-1}\slash H=\{0\}$ and $A_{3a-3}\slash H=L'\backslash \{0\}$. Denote $P= A_{3a-2}\slash H$. Hence from (\ref{eq_11}) $A_{a}\slash H = L'\backslash P$. Finally, the four subsets need to satisfy (\ref{eq_12}), which means $3P\cap\{0\}=\emptyset$. These requirements exactly shape type 5.

\textbf{Case (2):}
$C_m$ is covered by a $(3, 1)$-sum-free interval of length $m+1$, say $I=[a, a+m]$. Then $I$ is unique up to isomorphism and without loss of generality we can set $a+m+1=3a$, which means $2a=m+1$. Hence $I=[a,3a-1]$. Regarding to different choices of $I\backslash C_m$,
%$C_m$ is not an interval but covered by the unique length $m+1$ $(3, 1)$-sum-free set $I$ up to isomorphism. Denote $I=[a, a+m]$ and it should be satisfied that $a+m+1=3a$, which means $2a=m+1$. Hence
\[
3C_m=
\left\{
\begin{aligned}
 \{3a\}\cup[3a+2, a-2], & ~I\backslash C_m=\{a+1\};\\
[3a, a-4]\cup\{a-2\}, & ~I\backslash C_m=\{3a-2\};\\
[3a, a-2], & \text{ otherwise.}
\end{aligned}
\right.
\]

If $\omega=m+1$, from Lemma~\ref{lem_41_total} (1) we have $3C_m\cap C_{m+1}=\emptyset$, and hence
\[
C_{m+1}\subseteq
\left\{
\begin{aligned}
 [a-1, 3a-1]\cup\{3a+1\}, & ~I\backslash C_m=\{a+1\};\\
\{a-3\}\cup[a-1, 3a-1], & ~I\backslash C_m=\{3a-2\};\\
[a-1, 3a-1], & \text{ otherwise.}
\end{aligned}
\right.
\]
By the structure of $C_m$, $a$ and $3a-1$ must be in $C_m$. To avoid trivial cases, that is, $C_{m+1}=I$, $C_{m+1}\backslash C_m\in\{a-3, a-1, 3a+1\}$. If $3a+1\in C_{m+1}$, the $(3, 1)$-sum $(3a+1)+2(3a-1)=a$ violates Lemma~\ref{lem_contradiction_list_m+1}; if $a-1\in C_{m+1}$, the $(3, 1)$-sum $(a-1)+2a=(3a-1)$ also violates Lemma~\ref{lem_contradiction_list_m+1}.
If $a-3\in C_{m+1}$, then $I\backslash C_m=\{3a-2\}$ and hence $3a-3\in C_m$, which violates Lemma~\ref{lem_contradiction_list_m+1} by $(a-3)+2a=(3a-3)$.

%We know that $a, a+m\in C_m$ and to avoid normal, $C_{m+1}\ne [a, a+m]$. Hence $C_{m+1}\backslash C_m$ has only three choices. If $C_{m+1}\backslash C_m=\{a-1\}$, the contradiction follows from $(a-1)+2a=(a+m)$. If $C_{m+1}\backslash C_m=\{a+m+2\}$, the contradiction follows from $(a+m+2)+2(a+m)=a$. If $C_{m+1}\backslash C_m=\{a-3\}$, in this case $a+m-1\not\in C_m$ and hence $a+m-2\in C_m$. The contradiction follows from $(a-3)+2a=(a+m-2)$.

When $\omega=m+2$, we still have $3C_{m-1}\cap 2C_{m+2}=\emptyset$ from Lemma~\ref{lem_41_total} (1). So we must have $[3a+6, a-8]\subset 3C_{m-1}$ and hence $C_{m+2}\subset [a-7, 3a+5]$. If $I=[a, 3a-1]\subset C_{m+2}$, we can get the contradictions from the corresponding equalities listed in Table~\ref{tab10}.
Only when $C_{m+2}\backslash I=3a$ we do not deduce the contradiction directly from Lemma~\ref{lem_contradiction_list}, but instead apply the following argument.

Denote $x\in I$ satisfies $\{x, 3a\}=C_{m+2}\backslash C_m$. From that $|A|\ge mp^{n-1}$ we must have $|A_a|+|A_x|+|A_{3a-1}|+|A_{3a}|\ge 2p^{n-1}$ and $|A_a|\ge |A_x|$. The first equation $a+a+a=3a$ shows that $3|A_a|< 3|A_a|+|A_{3a}|\le p^{n-1}+3p^{n-2}$; equation $2(3a)+(3a-1)=a$ shows that $|A_a|+|A_{3a-1}|+|A_{3a}|\le p^{n-1}+3p^{n-2}$. However, these three inequalities force that $2p^{n-1}\le |A_a|+|A_x|+|A_{3a-1}|+|A_{3a}|\le (|A_a|+|A_{3a-1}|+|A_{3a}|)+|A_a|< \frac43(p^{n-1}+3p^{n-2})$, which is impossible as $p>6$.
%We only comment on the last scenario, whose contradiction goes the same as scenario $(4, 1)$ case when $\omega=m+2$ and $C_{m+2}\backslash C_m=\{a+m, a+m+1\}$.
\begin{table}[h!]
\centering
\begin{tabular}{ |c|c|c| }
 \hline
  \multirow{2}*{$C_{m+2}\backslash I$} & \multirow{2}*{\textbf{$(3,1)$-sums in $C_\omega$}} & \multirow{2}*{\textbf{Conclusion}}  \\
 &&\\
 \hline
 \multirow{2}*{$a-i$, $i\in [7]$} & $(a-i)+2a=(3a-i)$; & \multirow{6}*{Contradiction}\\
 & $2(a-i)+a= (3a-2i)$ &\\
 \cline{1-2}
 \multirow{2}*{$3a+j$, $j\in [5]$} & $(3a+j)+2(3a)=(a+j+1)$; & \\
  & $2(3a+j)+(3a)=(a+2j+1)$ & \\
 \cline{1-2}
  \multirow{2}*{$3a$} & $a+a+a=3a$; & \\
  & $2(3a)+(3a-1)=a$ & \\
 % & $3(a+m+1)=a+1$ & \\
 \hline
\end{tabular}
\caption{Scenarios when $C_m$ is not an interval, $\omega=m+2$ and $I\subset C_{m+2}$}
\label{tab10}
\end{table}

When $\omega=m+2$ and $I\not\subset C_{m+2}$,  $C_{m+2}\backslash C_{m}\subset [a-7, a-1]\cup[3a, 3a+5]$. We deduce the contradiction in three situations separately.

When $C_{m+2}\backslash C_{m}=\{a-i, a-j\}$ for some $1\le i<j\le 7$, if $(3a-i-j)\in C_m$, then the equation $(a-i)+(a-j)+a=(3a-i-j)$ violates Lemma~\ref{lem_contradiction_list_m+1}. Otherwise, $C_{m}= I\backslash\{3a-i-j\}$. Hence the two equations $(a-i)+2a=(3a-i)$ and $(a-j)+2a= (3a-j)$ violate Lemma~\ref{lem_contradiction_list} (2).

When $C_{m+2}\backslash C_{m}=\{3a+i, 3a+j\}$ for $0\le i<j\le 6$, if $a+i+j\in C_m$, then the equation $(3a-1)+(3a+i)+(3a+j)=(a+i+j)$ violates Lemma~\ref{lem_contradiction_list_m+1}. Otherwise, $C_{m}= I\backslash\{a+i+j\}$. Hence the two equations $2(3a+i)+(3a-1)=(a+2i)$ and $2(3a+j)+(3a-1)=(a+2j)$ violate Lemma~\ref{lem_contradiction_list} (2).

When $C_{m+2}\backslash C_{m}=\{a-i, 3a+j\}$ for $i\in [7]$ and $j\in [0, 5]$, if $a+i+j\in C_m$, then the equation $(a-i)+(a)+(a+i+j)=(3a+j)$ violates Lemma~\ref{lem_contradiction_list_m+1}. Otherwise, $C_{m}= I\backslash\{a+i+j\}$. Hence the two equations $(a-i)+2a=(3a-i)$ and $2a+(a+j)=(3a+j)$ violate Lemma~\ref{lem_contradiction_list} (2) or (3).
\end{proof}

\end{document}